\renewcommand\part{%
   \if@noskipsec \leavevmode \fi
   \par
   \addvspace{4ex}%
   \@afterindentfalse
   \secdef\@part\@spart}
\renewcommand \thepart {\Roman{part}}
\def\@part[#1]#2{%
    \ifnum \c@secnumdepth >\m@ne
      \refstepcounter{part}%
      \addcontentsline{toc}{part}{\partname \nobreakspace \thepart \hspace{1em}#1}%
    \else
      \addcontentsline{toc}{part}{#1}%
    \fi
    {\parindent \z@ \raggedright
     \interlinepenalty \@M
     \normalfont
     \ifnum \c@secnumdepth >\m@ne
       \centering \Large \bfseries \partname \nobreakspace \thepart. \nobreakspace
     \fi
     \Large \bfseries #2%
     \par}%
    \nobreak
    \vskip 3ex
    \@afterheading}
\def\@spart#1{%
    {\parindent \z@ \raggedright
     \interlinepenalty \@M
     \normalfont
     \huge \bfseries #1\par}%
     \nobreak
     \vskip 3ex
     \@afterheading}
\numberwithin{equation}{section}
\theoremstyle{plain}
\newtheorem{theorem}[equation]{Theorem}
\newtheorem{prop}[equation]{Proposition}
\newtheorem{corollary}[equation]{Corollary}
\newtheorem{lemma}[equation]{Lemma}
\theoremstyle{definition}
\newtheorem{defn}[equation]{Definition}
\theoremstyle{remark}
\newtheorem{remarks}[equation]{Remarks}
\newtheorem{remark}[equation]{Remark}
\numberwithin{equation}{section}
\newcommand{\RR}{{\mathbb{R}}}
\newcommand{\NN}{{\mathbb{N}}}
\newcommand{\ZZ}{{\mathbb{Z}}}
\newcommand{\WW}{\mathcal{W}}
\newcommand{\C}{\mathcal{C}}
\newcommand{\DD}{\mathbb{D}}
\newcommand{\dd}{\mathbb{D}}
\newcommand{\PP}{\mathbb{P}}
\newcommand{\HH}{\mathcal{H}}
\newcommand{\F}{\mathcal{F}}
\newcommand{\N}{\mathcal{N}}
\newcommand{\W}{\mathcal{W}}
\newcommand{\R}{\mathcal{R}}
\newcommand{\dist}{\operatorname{dist}}
\newcommand{\pom}{\partial\Omega}
\newcommand{\hm}{\omega}
\renewcommand{\emptyset}{\mbox{\textup{\O}}}
\DeclareMathOperator*{\osc}{osc}
\DeclareMathOperator{\diam}{diam}
\DeclareMathOperator{\interior}{int}
\DeclareMathOperator*{\Lip}{Lip}
\def\XXint#1#2#3{{\setbox0=\hbox{$#1{#2#3}{\int}$}
     \vcenter{\hbox{$#2#3$}}\kern-.5\wd0}}
\DeclareMathOperator{\divg}{div}
\DeclareMathOperator{\spt}{spt}
\DeclareMathOperator{\Id}{Id}
\DeclareMathOperator{\capacity}{cap_2}
\newcommand{\pO}{\partial\Omega}
\newcommand{\Oj}{\Omega_j}
\newcommand{\Oinf}{\Omega_\infty}
\newcommand{\pOj}{\partial\Omega_j}
\newcommand{\pOinf}{\partial\Omega_\infty}
\newcommand{\sj}{\sigma_j}
\newcommand{\sinf}{\mu_\infty}
\newcommand{\oj}{\omega_j}
\newcommand{\oinf}{\omega_{L_\infty}}
\newcommand{\uj}{u_j}
\newcommand{\wXj}{X_j}
\newcommand{\wLj}{L_j}
\newcommand{\wAj}{A_j}
\newcommand{\wcalA}{\mathcal{A}}
\newcommand{\wdj}{\delta_j}
\newcommand{\mj}{\mu_j}
\newcommand{\minf}{\mu_\infty}
\newcommand{\ojA}{\oj^{A(p,r)}}
\begin{document}
\allowdisplaybreaks

\title[Uniform rectifiability and elliptic operators]{Uniform rectifiability and elliptic operators satisfying a Carleson measure 
	condition}

\author[S. Hofmann]{Steve Hofmann}

\address{Steve Hofmann
\\
Department of Mathematics
\\
University of Missouri
\\
Columbia, MO 65211, USA} \email{hofmanns@missouri.edu}

\author[J.M. Martell]{José María Martell}

\address{José María Martell
\\
Instituto de Ciencias Matemáticas CSIC-UAM-UC3M-UCM
\\
Consejo Superior de Investigaciones Científicas
\\
C/ Nicolás Cabrera, 13-15
\\
E-28049 Madrid, Spain} \email{chema.martell@icmat.es}

\author[S. Mayboroda]{Svitlana Mayboroda}

\address{Svitlana Mayboroda
\\
Department of Mathematics
\\
University of Minnesota
\\
Minneapolis, MN 55455, USA} \email{svitlana@math.umn.edu}

\author[T. Toro]{Tatiana Toro}

\address{Tatiana Toro 
\\ 
University of Washington 
\\
Department of Mathematics 
\\
Seattle, WA 98195-4350, USA}

\email{toro@uw.edu}

\author[Z. Zhao]{Zihui Zhao}

\address{Zihui Zhao
\\ 
Department of Mathematics
\\
University of Chicago
\\
Chicago, IL 60637, USA}

\email{zhaozh@uchicago.edu}

\thanks{The first author was partially supported by NSF grant number DMS-1664047.
The second author acknowledges that
the research leading to these results has received funding from the European Research
Council under the European Union's Seventh Framework Programme (FP7/2007-2013)/ ERC
agreement no. 615112 HAPDEGMT. He also acknowledges financial support from the Spanish Ministry of Economy and Competitiveness, through the ``Severo Ochoa Programme for Centres of Excellence in R\&D'' (SEV-2015-0554). 
The third author was partially supported by the NSF INSPIRE Award DMS 1344235, the NSF RAISE-TAQ grant DMS 1839077, and the Simons 
Foundation grant 563916, SM.
The fourth author was partially supported by the Craig McKibben \& Sarah Merner Professor in Mathematics, by NSF grant number DMS-1664867, and by the Simons Foundation Fellowship 614610. 
The fifth author was partially supported by NSF grants DMS-1361823, DMS-1500098, DMS-1664867, DMS-1902756 and by the Institute for Advanced Study. }
\thanks{This material is based upon work supported by the National Science Foundation under Grant No. DMS-1440140 while the authors were in residence at the Mathematical Sciences Research Institute in Berkeley, California, during the Spring 2017 semester.}

\date{\today}
\subjclass[2010]{35J25, 42B37, 31B35.}

\keywords{Elliptic measure, uniform domain, $A_{\infty}$ class, exterior corkscrew, rectifiability.}

\begin{abstract}
The present paper establishes the correspondence between the properties of the solutions of a class of PDEs and the geometry of sets in Euclidean space. We settle the question of whether (quantitative) absolute continuity of the elliptic measure with respect to the surface measure and uniform rectifiability of the boundary are equivalent, in an optimal class of divergence form elliptic operators satisfying a suitable Carleson measure condition. The result can be viewed as a quantitative analogue of the Wiener criterion adapted to the singular $L^p$ data case.

The first step is taken in Part \ref{part:small}, where we considered the case in which the desired Carleson measure condition on the coefficients holds with \textit{sufficiently small constant}, using a novel application of techniques developed in geometric measure theory.
In Part \ref{part:large} we establish the final result, that is, 
the ``large constant case''. The key elements are a powerful extrapolation argument, which provides a general pathway to self-improve scale-invariant small constant estimates, and a new mechanism to transfer quantitative absolute continuity of elliptic measure between a domain and its subdomains.
\end{abstract}

\maketitle

\tableofcontents

\section{Introduction}
\subsection{Background and Previous Results}

The present paper, together with its converse in \cite{KP} (see also \cite{DJe}), culminate many years of activity at the intersection of harmonic analysis, geometric measure theory, and PDEs, devoted to the complete understanding of necessary and sufficient conditions on the operator and the geometry of the domain guaranteeing absolute continuity of the elliptic measure with respect to the surface measure of the boundary.

The celebrated 1924 Wiener criterion \cite{Wiener} provided the necessary and sufficient conditions on the geometry of the domain responsible for the continuity of the harmonic functions at the boundary. In the probabilistic terms, it characterized the points of the boundary which are  ``seen" by the Brownian travelers coming from the interior of the domain. 

The question of finding necessary and sufficient geometric conditions which could guarantee adequate regularity, so that, roughly speaking, the pieces of the boundary are seen by the Brownian travelers according to their surface measure, turned out to be much more intricate. Curiously, already in 1916 F. \& M. Riesz correctly identified the key geometric notion in this context:  rectifiability of the boundary $\pom$, i.e., the existence of tangent planes almost everywhere 
with respect to arc length $\sigma$ on $\pom$.  In particular, they showed in  \cite{Rfm} that harmonic measure is (mutually) absolutely continuous with respect to $\sigma$ for
a simply connected domain in the plane with rectifiable boundary. It took more than a hundred years to establish the converse of the F. \& M. Riesz theorem and its higher dimensional analogues. The first such result appeared in 2016 \cite{7au}, and the question was fully settled for the harmonic functions in 2018 \cite{AHMMT}.

The question of what happens in the general PDE setting has been puzzling from the beginning.
The Wiener criterion is universal: it applies to all uniformly elliptic divergence form operators with bounded coefficients and characterizes
points of continuity of the solution at the boundary. It was realized early 
on that no such general criterion exists for determining the absolute continuity of elliptic measure with respect to the surface measure to the boundary of a domain.
Some of the challenges that arise when considering this question were highlighted by the counterexamples in \cite{CFK}, \cite{MM}. In 1984 Dahlberg 
formulated a conjecture concerning optimal conditions on a matrix of coefficients which guarantee absolute continuity of elliptic measure with respect to 
Lebesgue measure in a half-space. This question was a driving force of a thread of outstanding developments in harmonic analysis in the 80s and 90s  due to Dahlberg, Jerison, Kenig, Pipher, and others, stimulating some beautiful and far-reaching new techniques in the theory of weights and singular integral operators, to mention only a few approaches. In \cite{KP}, Kenig and Pipher proved Dahlberg's conjecture, they showed that whenever the gradient of coefficients satisfies a Carleson measure condition (to be defined below in \eqref{KP-cond}) the elliptic measure and the Lebesgue measure are mutually absolutely  continuous on a half-space and, by a change of variables argument, above a  Lipschitz graph. 

 

Given the aforementioned developments, it was natural to conjecture that the equivalence of rectifiability and regularity of elliptic measure should be valid in the full generality of Dahlberg-Kenig-Pipher (DKP) coefficients.
Despite numerous attempts this question turned out to be notoriously resistant to existing methods.
The passage from the regularity of the solutions to partial differential equations to rectifiability, or to any geometric information on the boundary, is generally referred to as free boundary problems. This in itself is, of course, a well-studied and rich subject. Unfortunately, the typical techniques arising from minimization of the functionals are both too qualitative and too rigid to treat structural irregularities of  rectifiable sets and such weak assumptions as absolute continuity of harmonic measure. The latter became accessible only recently, with the development of the analysis of singular integrals and similar objects on uniformly rectifiable sets. In particular, the first converse of the F. \& M. Riesz theorem, \cite{7au}, directly relies on the 2012 solution of the David-Semmes conjecture regarding the boundedness of the Riesz transforms in $L^2$ \cite{NTV}. At the same time, the techniques stemming from such results for the harmonic functions are not amenable to more general operators of the DKP type, again, due to simple yet fundamental algebraic deficiencies: the derivatives of the coefficients do not offer sufficient cancellations. 

The main goal of the present paper is to addresses the conjecture in full generality. We establish
 the {\it equivalence} of the absolute continuity of the elliptic measure with respect to the surface measure and the uniform rectifiability of the boundary of a domain under the {DKP}  condition on the coefficients, thus providing the final, optimal geometric results (given the assumed background hypotheses).



We now describe our goal and relevant previous results more precisely. Throughout the paper we shall work under the assumptions that the domain $\Omega$ is uniform, i.e., open and connected in a quantitative way, and that its boundary is $(n-1)$-Ahlfors regular, that is, $(n-1)$-dimensional in a quantitative way (see Section \ref{sec:def}). Under these conditions one can, for instance,  show that scale-invariant absolute continuity of harmonic  measure is related to the uniform  rectifiability of the boundary and even to the non-tangential accessibility of the exterior domain:

\begin{theorem}\label{thm:hmu}
Let $\Omega\subset\RR^n$, $n\ge 3$, be a uniform domain (bounded or unbounded) with 
Ahlfors regular boundary \textup{(}see Definitions \ref{def:uniform} and \ref{def:ADR}\textup{)}, set $\sigma=\mathcal{H}^{n-1}|_{\pO}$ 
and let $\omega_{-\Delta}$ denote its associated harmonic measure. The following statements are equivalent:
\begin{enumerate}[label=\textup{(\alph*)}, itemsep=0.2cm] 

\item\label{1-thm:hmu} $\omega_{-\Delta} \in A_\infty(\sigma)$ \textup{(}Definition \ref{def:AinftyHMU}\textup{)}.

\item\label{2-thm:hmu} $\partial\Omega$ is uniformly rectifiable \textup{(}Definition \ref{def:UR}\textup{)}.

\item\label{3-thm:hmu} $\Omega$ satisfies the exterior corkscrew condition \textup{(}see Definition \ref{def:ICC}), hence, in particular, it is a chord-arc domain \textup{(}Definition \ref{def:nta}).
\end{enumerate}
 \end{theorem}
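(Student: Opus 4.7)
The plan is to close the cycle (c)$\Rightarrow$(b)$\Rightarrow$(a)$\Rightarrow$(c), drawing on a thread of results developed over several decades.

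The implication (c)$\Rightarrow$(b) is the most classical. Under the exterior corkscrew hypothesis, $\Omega$ becomes two-sided NTA with ADR boundary---a chord-arc domain---and the theorem of David--Jerison (refined by Semmes) gives that such boundaries admit big pieces of Lipschitz graphs; in particular, $\partial\Omega$ is uniformly rectifiable. No essentially new analysis is needed here beyond unwinding Definitions \ref{def:uniform}, \ref{def:ADR}, \ref{def:UR}, and \ref{def:ICC}.

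For (b)$\Rightarrow$(a), I would exploit the fact that uniform rectifiability of $\partial\Omega$, together with interior uniformity, provides good-$\varepsilon$ approximations of $\Omega$ at every scale by chord-arc subdomains on which the classical Dahlberg theory applies. A standard big-pieces transference argument, together with the Harnack chain condition afforded by the uniform assumption, then pushes the $A_\infty$ property of harmonic measure from these approximating subdomains up to $\Omega$ itself; this is the route taken in the work of Hofmann--Martell.

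The hard step, and where I expect the main obstacle, is (a)$\Rightarrow$(c). The plan is to split it into two stages. \emph{Stage one}: use $\omega_{-\Delta}\in A_\infty(\sigma)$ to deduce a weak reverse-H\"older inequality for the Poisson kernel; via the reduction carried out in \cite{7au}, this produces $L^2$-boundedness of the Riesz transform on $\partial\Omega$, and the Nazarov--Tolsa--Volberg solution of the David--Semmes conjecture \cite{NTV} then forces (b), i.e., uniform rectifiability of $\partial\Omega$. \emph{Stage two}: invoke the main result of \cite{AHMMT}, which asserts that under interior uniformity plus ADR, uniform rectifiability already implies the exterior corkscrew condition. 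This second stage is by far the deepest part of the argument: it is a quantitative free-boundary statement built on a delicate corona/stopping-time decomposition together with topological control ruling out ``inward cusps'' accumulating on the exterior side of $\partial\Omega$, and this is where the principal technical difficulty lies.
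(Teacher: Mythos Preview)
The paper does not actually prove Theorem~\ref{thm:hmu}; it records it as previously known, with the chain of attributions (a)$\Rightarrow$(b) in \cite{HMU} (see also \cite{HM4,HLMN}), (b)$\Rightarrow$(c) in \cite[Theorem~1.1]{AHMNT}, and (c)$\Rightarrow$(a) in \cite{DJe} (independently \cite{Sem}). Your cycle runs in the opposite direction, which is perfectly acceptable; your (c)$\Rightarrow$(b) via the David--Jerison big-pieces-of-Lipschitz geometry and your (b)$\Rightarrow$(a) via interior chord-arc approximation plus transference (this is essentially \cite{HM}) are both correct.

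Your (a)$\Rightarrow$(c), however, carries two mis-attributions that should be fixed. In Stage~1, \cite{7au} is a \emph{qualitative} theorem (absolute continuity of harmonic measure forces $\mathcal{H}^{n-1}$-rectifiability) and does not furnish a passage from the scale-invariant $A_\infty$ hypothesis to $L^2$-boundedness of the Riesz transform on $\partial\Omega$; the quantitative implication (a)$\Rightarrow$(b) is \cite{HMU}, whose proof goes through Carleson-measure/$\varepsilon$-approximability estimates rather than \cite{NTV}, while the Riesz-transform route you describe is rather the one in \cite{HM4,HLMN}. In Stage~2, the result ``1-sided NTA with ADR, UR boundary $\Rightarrow$ exterior corkscrew'' is \cite{AHMNT} (Azzam--Hofmann--Martell--Nystr\"om--Toro), not \cite{AHMMT}; the latter is a different, later paper. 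With those two references corrected your outline is sound and equivalent to the paper's.
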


 Postponing all the rigorous definitions to Section~\ref{sec:def}, we remark for the moment that uniform rectifiability is a quantitative version of the notion of rectifiability of the boundary and the Muckenhoupt condition $\omega\in A_\infty(\sigma)$ is, respectively, a quantitative form of the mutual absolute continuity of $\omega$ with respect to $\sigma$. Thus, Theorem \ref{thm:hmu} above is a quantitative form of the rigorous connection between the boundary behavior of harmonic functions and geometric properties of sets that we alluded to above. 
Returning to the ties with Wiener criterion, we point out that the property of the scale invariant absolute continuity of harmonic measure with respect to surface measure, 
at least in the presence of Ahlfors regularity of $\pom$,
is equivalent to the solvability of the Dirichlet problem
with data in some $L^p(\pom)$, with $p<\infty$\footnote{See, e.g., \cite{H}, although the result is folkloric,
	and well known in less austere settings \cite{Ke}.}; thus, such a characterization is in some sense
an analogue of Wiener's criterion for singular, rather than continuous data.

Theorem~\ref{thm:hmu} in the present form appears in \cite[Theorem 1.2]{AHMNT}. That \ref{1-thm:hmu} implies \ref{2-thm:hmu} is the main result in \cite{HMU} (see also \cite{HM4, HLMN});  that \ref{2-thm:hmu} yields \ref{3-thm:hmu} is \cite[Theorem 1.1]{AHMNT}; and the fact that \ref{3-thm:hmu}  implies \ref{1-thm:hmu} was proved in \cite{DJe}, and independently in \cite{Sem}. 

Theorem \ref{thm:hmu} 
and other recent results\footnote{We refer the reader also to recent work of Azzam \cite{Az}, in 
	which the author characterizes the domains with Ahlfors regular boundaries for which $\omega_{-\Delta}\in A_\infty(\sigma)$:  they are precisely the domains with uniformly rectifiable boundary which are semi-uniform in the sense of Aikawa and Hirata \cite{AH};  see also
	\cite{AHMMT, AMT-char, HM3} for related results characterizing $L^p$ solvability in the general
	case that $\omega_{-\Delta}$ need not be doubling.} illuminate how the $A_\infty$ condition\footnote{And also its
	non-doubling version, the weak $A_\infty$ condition.} 
of  harmonic measure is related to the geometry of the domain $\Omega$. Unfortunately, as we pointed out above, their proofs do not extend to the optimal class of operators with variable coefficients. Indeed, the best known results in this direction  pertain to the ``direct" rather than the ``free boundary" problem. A description of the elliptic measure in a given geometric environment, is essentially due to C. Kenig and  J. Pipher. 
In 2001  \cite{KP} C. Kenig and  J. Pipher proved what they referred to as a 1984 Dahlberg conjecture: if $\Omega\subset \RR^n$ is a bounded Lipschitz domain and the 
elliptic matrix $\mathcal{A}$ satisfies the following Carleson measure condition:
\begin{equation}\label{KP-cond}
\sup_{\substack{q\in\pO \\ 0< r<\diam(\Omega)} } \frac{1}{r^{n-1}} \iint_{B(q,r) \cap\Omega} 
\bigg(
\sup_{Y\in B(X,\frac{\delta(X)}{2})} |\nabla \mathcal{A}(Y)|^2 \delta(Y)\bigg)dX <\infty,
\end{equation}
where here and elsewhere we write $\delta(\cdot)=\dist(\cdot,\partial\Omega)$, then the 
corresponding elliptic measure $\omega_L\in A_{\infty}(\sigma) $.  
As observed in \cite{HMT1}, 	
one may carry through the proof in \cite{KP}, essentially 
unchanged, with a slightly weakened reformulation of  \eqref{KP-cond}, namely
by assuming, in place of  \eqref{KP-cond}, the following properties: 
\begin{enumerate}[label=(H\arabic*), itemsep=0.2cm]
	\item\label{H1} $\wcalA\in \Lip_{\rm loc}(\Omega)$ and $|\nabla \wcalA|\delta(\cdot) \in L^\infty(\Omega)$, where $\delta(\cdot) := \dist(\cdot,\pO)$.
	\item\label{H2} $|\nabla \wcalA|^2 \delta(\cdot)$ satisfies the Carleson measure assumption:
	\begin{equation}\label{KP-s-relaxed}
	\|\mathcal{A}\|_{\rm Car}:=\sup_{\substack{q\in\pO\\0< r<\diam(\Omega)} } \frac{1}{r^{n-1}} \iint_{B(q,r) \cap\Omega} 
	|\nabla \mathcal{A}(X)|^2 \delta(X)dX <\infty\,.
	\end{equation}	
\end{enumerate}
We shall refer to these hypotheses (jointly) as the \textbf{Dahlberg-Kenig-Pipher  (DKP)  condition}.
Note that each of \ref{H1} and \ref{H2} is implied by \eqref{KP-cond}.

Since properties
\ref{H1} and \ref{H2} are preserved in subdomains, one can use
the method of \cite{DJe} to extend the result of \cite{KP} to chord-arc domains, 
and hence the analogue of \ref{3-thm:hmu} implies \ref{1-thm:hmu} (in Theorem \ref{thm:hmu}) 
holds for operators satisfying the DKP condition.

An attempt to address the ``free boundary" part of the problem that is to prove that  \ref{1-thm:hmu} implies  \ref{2-thm:hmu} or  \ref{3-thm:hmu} 
led, the first, second and fourth authors of the present paper (see \cite{HMT1}) to show that under the same background hypothesis as in Theorem \ref{thm:hmu},  \ref{1-thm:hmu} implies 
 \ref{3-thm:hmu} (and hence also  \ref{2-thm:hmu})
 for elliptic operators with variable-coefficient matrices $\mathcal{A}$ 
 satisfying \ref{H1} and the Carleson measure estimate 
\begin{equation}\label{HMT-CM}
\sup_{\substack{q\in\pO \\ 0< r<\diam(\Omega)} } \frac{1}{r^{n-1}} \iint_{B(q,r) \cap\Omega} 
|\nabla \mathcal{A}(X)|dX <\infty.
\end{equation}
We observe that, in the presence of hypothesis \ref{H1}, 
\eqref{HMT-CM}
implies \eqref{KP-s-relaxed}. The weighted
$W^{1,2}$ Carleson measure
estimate \eqref{KP-s-relaxed} 
 is both weaker, and more natural than the $W^{1,1}$ version \eqref{HMT-CM}.
 For example, operators verifying 
 \eqref{KP-s-relaxed} arise as pullbacks of constant coefficient operators (see \cite[Introduction]{KP}), and also
 in the linearization of ``$A$-harmonic" (i.e., generalized $p$-harmonic) operators (see \cite[Section 4]{LV}).
We also mention in
 passing that a qualitative version of the results in \cite{HMT1} was obtained in \cite{ABHM}.
There are also
related (quantitative) results in \cite{HMM} and \cite{AGMT} that are 
valid in the absence of any connectivity hypothesis.



\subsection{Main Result and Proof Techniques}
From the geometric measure theory point of view the main motivation for this paper is to understand whether the  elliptic measure of a DKP divergence form elliptic 
operator distinguishes between a rectifiable and a purely unrectifiable boundary. As in Theorem \ref{thm:hmu}, we make the background assumption
that $\Omega\subset \mathbb{R}^n$, $n\ge 3$, is a uniform domain
(see Definition \ref{def:uniform}) 
 with an Ahlfors regular boundary (Definition 
\ref{def:ADR}).
Analytically we consider second order divergence form elliptic
operators, that is, 
$L=-\divg(\mathcal{A}(\cdot)\nabla)$, where $\mathcal{A}= \big( a_{ij}\big)_{i,j=1}^n$ is a (not necessarily symmetric) 
real matrix-valued function on $\Omega$, 
satisfying the usual uniform ellipticity condition 
\begin{equation}\label{def:UE}
 \langle \mathcal{A}(X)\xi, \xi  \rangle \ge \lambda |\xi|^2,
\qquad
| \langle \mathcal{A}(X)\xi, \zeta \rangle | \leq \Lambda |\xi \,|\zeta|, \qquad\text{for all } \xi,\zeta \in\mathbb{R}^n\setminus\{0\}\,,
\end{equation}
for uniform constants $0<\lambda\le \Lambda<\infty$, and for a.e. $X\in\Omega$.
We further assume that $\mathcal{A}$ 
satisfies the Dahlberg-Kenig-Pipher condition, that is, \ref{H1} and \ref{H2}, and, additionally, that the associated 
elliptic measure is an $A_\infty$ weight (see Definition \ref{def:AinftyHMU}) with 
respect to the surface measure $\sigma=\mathcal{H}^{n-1}|_{\pO}$. 
Our goal is to understand how this 
analytic information yields insight on the geometry of the
domain and its boundary. 


Our main result is as follows:
\begin{theorem}\label{thm:main}
	Let $\Omega\subset \RR^n$, $n\ge 3$, be a uniform domain with Ahlfors regular boundary and set $\sigma=\mathcal{H}^{n-1}|_{\pO}$. Let $\wcalA$ be a (not necessarily symmetric) uniformly elliptic matrix on $\Omega$ satisfying \textup{\ref{H1}} and \textup{\ref{H2}}. Then the following are equivalent:
\begin{enumerate}[label=\textup{(\arabic*)}, itemsep=0.2cm] 
		\item\label{1-thm-main} The elliptic measure $\omega_L$ associated with the operator $L=-\divg(\wcalA(\cdot)\nabla)$ is of class $A_\infty$ with respect to the surface measure. 
		\item\label{3-thm-main} $\pO$ is uniformly rectifiable.
		\item\label{2-thm-main} $\Omega$ is a chord-arc domain.
		\end{enumerate} 
\end{theorem}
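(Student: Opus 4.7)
The equivalence \ref{3-thm-main} $\Leftrightarrow$ \ref{2-thm-main} is already contained in Theorem \ref{thm:hmu} (the arguments there are purely geometric and do not involve the operator $L$). The implication \ref{2-thm-main} $\Rightarrow$ \ref{1-thm-main}, i.e. \ref{3-thm-main} $\Rightarrow$ \ref{1-thm-main}, I would reduce to the known Kenig--Pipher theorem as follows: in any chord-arc domain one can find an exhausting family of Lipschitz subdomains (or, more flexibly, sawtooth/carrot subregions as in the David--Jerison scheme) to which the Carleson condition \ref{H2} localizes with uniformly controlled constants, because \ref{H1}--\ref{H2} are manifestly stable under restriction to subdomains with ADR boundary. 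Applying \cite{KP} inside each such subregion, then pulling the $A_\infty$ information back to $\Omega$ via a Dahlberg--Jerison-Kenig projection lemma, yields $\omega_L \in A_\infty(\sigma)$. This part should be essentially routine.

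The substance of the theorem is the free boundary direction \ref{1-thm-main} $\Rightarrow$ \ref{2-thm-main} (equivalently $\Rightarrow$ \ref{3-thm-main}). I would attack it in two stages, exactly mirroring the authors' Part \ref{part:small}/Part \ref{part:large} split. \textbf{Stage A (small constant case).} Assume first that the DKP norm $\|\wcalA\|_{\rm Car}$ is smaller than a threshold $\varepsilon_0$ to be chosen. Under this hypothesis I would view $L$ as a genuine perturbation of a constant-coefficient elliptic operator at each Whitney-scale, and push $A_\infty$ information through a blow-up/compactness argument in the spirit of the geometric-measure-theoretic techniques developed for the harmonic case (e.g.\ the analysis of tangent measures and of approximation numbers $\alpha$, $\beta$, $\gamma$ on $\pO$). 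The plan is to show that on most Carleson cubes one can transfer the $A_\infty$ hypothesis for $\omega_L$ into a bilateral weak approximation of $\pO$ by affine hyperplanes, which is one of the many equivalent formulations of uniform rectifiability. Smallness of $\|\wcalA\|_{\rm Car}$ is what allows the perturbative step (e.g.\ Caccioppoli-type inequalities with good constants, weighted Carleson bounds on $\nabla u$ for the $L$-Green function) to close cleanly.

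\textbf{Stage B (large constant case via extrapolation).} Once Stage A is available, I would deploy a Carleson-measure extrapolation argument: given an arbitrary DKP operator with Carleson norm $M$, one subdivides $\Omega$ by a stopping-time procedure into generations of subregions on each of which the restricted Carleson norm is at most $\varepsilon_0$. Stage A then produces uniform rectifiability of the pieces of $\pO$ associated with the bottom generation, and the extrapolation machinery (a self-improvement scheme of John--Nirenberg type for scale-invariant estimates) allows the local UR information to be summed into global UR of $\pO$. This step requires, crucially, a transfer principle saying that $\omega_L\in A_\infty(\sigma)$ on $\Omega$ implies an analogous scale-invariant absolute continuity statement for the elliptic measure of the same $L$ on each sawtooth subdomain, with uniform constants: this is the second new ingredient advertised in the abstract, and I would establish it by means of Green-function comparisons together with the projection lemma of Dahlberg--Jerison--Kenig.

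The main obstacle I anticipate is precisely the small-constant case \textbf{Stage A}. For the Laplacian, the passage from $A_\infty$ to UR is powered by the $L^2$-boundedness of the Riesz transform on UR sets and by the rigid algebraic identities available for $-\Delta$ (specifically the Rellich-type identities), neither of which survive under DKP perturbations: $\nabla\wcalA$ does not afford the cancellations needed to reproduce the Riesz-transform argument. Overcoming this requires a genuinely geometric, non-algebraic substitute, presumably built on tangent-measure analysis together with a careful exploitation of the Carleson smallness of \ref{H2} to guarantee that blow-up limits of $L$-solutions behave like harmonic functions on a flat half-space. Once the flat limiting picture is secured, the desired $A_\infty \Rightarrow {\rm UR}$ implication should follow from the known harmonic case \cite{HMU} transported across the perturbation.
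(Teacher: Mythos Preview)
Your overall architecture matches the paper: \ref{3-thm-main}$\Leftrightarrow$\ref{2-thm-main} is known, \ref{2-thm-main}$\Rightarrow$\ref{1-thm-main} comes from Kenig--Pipher plus the David--Jerison big-pieces scheme, and \ref{1-thm-main}$\Rightarrow$\ref{3-thm-main} is split into a small-constant stage via compactness and a large-constant stage bootstrapped by Carleson extrapolation together with an $A_\infty$-transference lemma for sawtooths. Two points, however, diverge from the paper's execution.

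\textbf{Stage A.} Aiming for a ``flat half-space'' limit and bilateral approximation via $\alpha,\beta,\gamma$ numbers is not what happens, and would not close. The contradiction is set up at a single scale where the exterior corkscrew fails; after normalization one passes a subsequence of the domains $\Omega_j$, surface measures, Green functions and elliptic measures to a limit. The limit $\Omega_\infty$ is \emph{not} flat: it is only a uniform domain with Ahlfors regular boundary. What smallness buys is that the limiting operator is a \emph{constant-coefficient} elliptic $L_\infty$. One then shows $\omega_{L_\infty}\in A_\infty(\sigma_\infty)$ by stability of the $A_\infty$ estimates under weak convergence, and invokes Theorem~\ref{thm:hmu} (trivially extended to constant-coefficient $L$, or \cite{HMT1}) on $\Omega_\infty$ to produce an exterior corkscrew point there, contradicting the assumed failure along the sequence. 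No tangent-measure or $\beta$-number analysis enters; the reduction is to the constant-coefficient free-boundary result on a \emph{general} uniform domain, not to the Laplacian on a half-space.

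\textbf{Stage B.} The extrapolation is not run on ``local UR information'' and there is no direct summation of local UR to global UR. The extrapolated quantity is the discrete packing of $\beta_Q:=\iint_{U_Q}|\nabla u|^2\,\delta\,dY$ for an arbitrary bounded \emph{harmonic} function $u$ on $\Omega$. On each sawtooth $\Omega_{\F,Q}^*$, Stage~A combined with the $A_\infty$ transference shows $\Omega_{\F,Q}^*$ is chord-arc; Theorem~\ref{thm:bhc} (\cite{HMM,GMT}) then gives the Carleson measure estimate for $u$ on $\Omega_{\F,Q}^*$, which is precisely the local bound \eqref{H:bd} fed into Theorem~\ref{thm:extrapolation}. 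The output is that every bounded harmonic function on $\Omega$ satisfies CME globally, and a second appeal to Theorem~\ref{thm:bhc} yields UR of $\partial\Omega$. The detour through harmonic CME is the mechanism that makes the extrapolation work; replacing it by a direct UR-to-UR extrapolation is not available. (A minor omission: the paper first reduces to symmetric $\wcalA$ via \cite{CHMT}, since Stage~A is stated for symmetric matrices.)
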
 
\begin{remark}
	In Corollary \ref{cOsc} we show that Theorem \ref{thm:main} remains true when we replace the assumptions \ref{H1} and \ref{H2} by a slightly weaker assumption involving the oscillation of the elliptic matrix in place of its gradient.
\end{remark}

The equivalence of  \ref{3-thm-main} and \ref{2-thm-main} (under the stated background hypotheses)
was previously known: that \ref{2-thm-main} $\implies$ \ref{3-thm-main} follows from
the main geometric result of \cite{DJe} (namely,
that chord-arc domains can be approximated in a big pieces sense by 
Lipschitz subdomains), and the converse  \ref{3-thm-main}
$\implies$ \ref{2-thm-main}  is proved in \cite{AHMNT}. 
Moreover, as mentioned above, it was also known
that \ref{2-thm-main} $\implies$ \ref{1-thm-main}, and the proof
comprises two main ingredients:
first, that the properties \ref{H1} and \ref{H2} are preserved in subdomains, and therefore
by the result of \cite{KP}\footnote{The formulation in terms of  
 \ref{H1} and \ref{H2} in place of \eqref{KP-cond} appears in \cite{HMT1}, but the result is implicit
in \cite{KP}; see \cite[Appendix A]{HMT1}.}, $\omega_L \in A_\infty (\sigma)$ in a Lipschitz subdomains
of $\Omega$;
and second, by the aforementioned big piece approximation result of  \cite{DJe},
that the $A_\infty$ property may be passed
from Lipschitz subdomains to the original chord-arc domain, by use of the maximum principle and a 
change of pole argument (see \cite{DJe} or, originally, \cite{JK}). 
In this paper we close the circle by proving the implication \ref{1-thm-main} $\implies$ \ref{3-thm-main}, thus providing a 
characterization of chord-arc domains in terms of the properties of the elliptic measure.

We first prove the implication \ref{1-thm-main} $\implies$ \ref{3-thm-main} in the ``small constant case'', that is, when the Carleson condition \ref{H2} holds with a sufficiently small constant (i.e, $\|\mathcal{A}\|_{\rm Car}$ is small, see \eqref{KP-s-relaxed}). To finish the proof of Theorem \ref{thm:main}, 
we then utilize a bootstrapping argument to pass from the case of small Carleson norm to 
the ``large constant case'', in which the Carleson condition \ref{H2} is assumed merely to be finite. The arguments we use to treat the small and large constant cases are quite different in nature, and each is of 
independent interest in its own right. (For example, what we prove in the small constant case is stronger than what is necessary for Theorem \ref{thm:main}.) Therefore we divide the proof into two parts, and deal with the small and large constant cases in Parts \ref{part:small} and \ref{part:large}, respectively. In the end of Part \ref{part:large} we also discuss the optimality of the above theorem.

Throughout this paper, and unless otherwise specified, by \textit{allowable constants}, we mean the dimension $n\geq 3$; the constants involved in the definition of a uniform domain, that is, $M, C_1>1$ (see Definition \ref{def:uniform}); the Ahlfors regular constant $C_{AR}>1$ (see Definition \ref{def:ADR}); the ratio of the ellipticity constants $\Lambda/\lambda \geq 1$ (see \eqref{def:UE}), and the $A_{\infty}$ constants $C_0>1$ and $\theta\in(0,1)$ (see Definition \ref{def:AinftyHMU}).


In Part \ref{part:small}, we develop an approach which combines, or rather interlaces, the ``classical" free boundary blow-up and compactness arguments (originated in geometric measure theory) with the scale-invariant harmonic analysis methods. This allows us to take advantage of the appropriate amelioration of the coefficients obtained via a compactness approach to show that the desired uniform rectifiability follows from regularity of elliptic measure whenever the coefficients of the underlying equation exhibit {\it small} oscillations, in the appropriate Carleson measure sense (see Theorem \ref{thm:main:I} and Corollary \ref{corol:DKP-small}). The smallness condition, while obviously suboptimal, could not be removed directly, for it is essentially built in the nature of the compactness arguments. To be precise, we prove the following:

\begin{theorem}\label{thm:main:I}
Given the values of allowable constants $n\ge 3$, $M, C_1, C_{AR}>1$, $\Lambda \ge \lambda =1$, 
$C_0>1$, and $0<\theta<1$, there exist $N$ and $\epsilon >0$ depending on the allowable constants, such that the following holds.  Let $\Omega\subset\mathbb{R}^n$ be a bounded uniform domain with constants $M, C_1$ and whose boundary $\pO$ is Ahlfors regular with constant $C_{AR}$ and set $\sigma=\mathcal{H}^{n-1}|_{\pO}$. Let $L = -\divg(\mathcal{A}(\cdot)\nabla)$ be an elliptic operator with real \textbf{symmetric} matrix $\mathcal{A}$ satisfying \eqref{def:UE} with ellipticity constants $1=\lambda\leq \Lambda$ such that the corresponding elliptic measure satisfies $\omega_{L} \in A_{\infty}(\sigma)$ with constants $C_0$ and $\theta$. If $\mathcal{A}$ verifies  
	\begin{equation}\label{def:oscA}
\osc(\Omega, \mathcal{A}) := \sup_{X\in\Omega} \fint_{B(X,\delta(X)/2)} |\mathcal{A}(Y) - \langle\mathcal{A}\rangle_{B(X,\delta(X)/2)}| dY <\epsilon,
\end{equation}
where $\delta(\cdot)=\dist(\cdot,\partial\Omega)$ and $\langle\wcalA\rangle_{B(X,\delta(X)/2)}$ denotes the average of $\wcalA$ on $B(X,\delta(X)/2)$,  
then $\Omega$ satisfies the exterior corkscrew condition with constant $N$, and hence $\Omega$ has uniformly rectifiable boundary.
\end{theorem}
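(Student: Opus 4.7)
The plan is to argue by contradiction via a compactness/blow-up scheme, so that in the limit the small-oscillation hypothesis forces the matrix to be constant; after a linear change of variables the limit operator becomes the Laplacian, at which point Theorem \ref{thm:hmu} can be invoked. Concretely, suppose no such $N$ and $\epsilon$ exist. Then for each integer $j\ge 1$ we can select a uniform domain $\Omega_j$ with Ahlfors regular boundary (with fixed constants), a symmetric elliptic matrix $\mathcal{A}_j$ with ellipticity constants $1\le\Lambda$ satisfying $\omega_{L_j}\in A_\infty(\sigma_j)$ with the prescribed constants and $\osc(\Omega_j,\mathcal{A}_j)<1/j$, together with a boundary point $q_j\in\partial\Omega_j$ and a scale $r_j>0$ for which the exterior corkscrew condition at $(q_j,r_j)$ fails with constant $j$. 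Translating by $q_j$ and dilating by $r_j^{-1}$, we may assume $q_j=0$ and $r_j=1$; the rescaled matrix still has oscillation below $1/j$ because $\osc(\cdot,\cdot)$ is defined using Whitney balls and is therefore scale invariant.

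Next, I would extract subsequential limits. The class of uniform domains with Ahlfors regular boundary (with fixed constants) passing through $0$ is compact in the sense of local Hausdorff convergence of the boundaries, and the surface measures $\sigma_j$ converge weakly to a surface measure on the limit boundary (which remains uniform and Ahlfors regular with the same constants). The matrices $\mathcal{A}_j$ are uniformly bounded, so a diagonal argument extracts a weak-$*$ $L^\infty$ limit $\mathcal{A}_\infty$; crucially, because $\osc(\Omega_j,\mathcal{A}_j)\to 0$ on each Whitney ball, any such subsequential limit must be \emph{constant} on the limit domain $\Omega_\infty$ (which is connected since it is uniform). After a linear change of variables diagonalizing $\mathcal{A}_\infty$, the limiting operator becomes $-\Delta$ on a transformed uniform domain $\widetilde{\Omega}_\infty$ with Ahlfors regular boundary. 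In parallel, one argues that the elliptic measures $\omega_{L_j}$ with a fixed interior pole converge (say, as Radon measures on $\overline{B(0,R)}$ for each $R$) to the harmonic measure of $\widetilde{\Omega}_\infty$ at the corresponding transformed pole; this is the standard consequence of pointwise (or uniform on compacta) convergence of Green functions coupled with weak convergence of the $L\uj$ (here one uses De Giorgi--Nash--Moser estimates, which are uniform in the ellipticity constants). The quantitative $A_\infty(\sigma_j)$ estimates — which are themselves scale-invariant — can then be passed to the limit with the same constants $C_0,\theta$, yielding $\omega_{-\Delta}\in A_\infty$ on $\widetilde{\Omega}_\infty$ with uniform constants.

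At this point Theorem \ref{thm:hmu} applies to $\widetilde{\Omega}_\infty$: the limit domain satisfies the exterior corkscrew condition with a constant $N_\infty$ that depends only on the allowable constants. Pulling back through the linear change of variables gives the exterior corkscrew condition for $\Omega_\infty$ at the origin with a \emph{uniform} constant. On the other hand, the failure of the exterior corkscrew condition at $(0,1)$ for $\Omega_j$ with constant $j\to\infty$ passes to the limit (complements of $\Omega_j$ inside $B(0,1)$ contain no ball of radius $1/j$, and a Hausdorff limit of such sets cannot contain any open ball either), contradicting what we just established. This produces the required $N$ and $\epsilon$. The main obstacle I expect is Step 2: namely, making the compactness argument genuinely quantitative. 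One must verify that the $A_\infty$ constants and the uniform/Ahlfors regular constants are preserved in the limit, and that elliptic measures converge in a strong enough sense to transfer $A_\infty$; this requires uniform non-degeneracy estimates for Green functions near the boundary (Bourgain-type non-degeneracy, CFMS-type comparisons, and Hölder estimates at the boundary, all of which depend only on the ellipticity ratio and the geometric constants), plus a careful identification of the limit of $\omega_{L_j}$ with the harmonic measure of the transformed limit domain. The symmetry hypothesis is exploited precisely so that $\mathcal{A}_\infty$ can be diagonalized by an orthogonal change of variables that preserves the geometric constants of $\Omega_\infty$.
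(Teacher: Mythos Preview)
Your proposal is essentially the paper's own approach: a contradiction/compactness argument that passes to a limiting domain with a constant-coefficient operator, then invokes the known result (Theorem~\ref{thm:hmu}, or its constant-coefficient extension) to obtain an exterior corkscrew in the limit and derive a contradiction. The one substantive point you gloss over is how to \emph{define} the limiting domain and prove it is uniform with Ahlfors regular boundary: the paper does this by taking the (suitably normalized) Green functions $u_j=G_j(X_j,\cdot)/\omega_j^{X_j}(B(0,1))$, showing $u_j\to u_\infty$ locally uniformly, and \emph{setting} $\Omega_\infty:=\{u_\infty>0\}$, which is what makes the identifications $\overline{\Omega_j}\to\overline{\Omega_\infty}$, $\partial\Omega_j\to\partial\Omega_\infty$ and the inheritance of the corkscrew/Harnack-chain/Ahlfors properties go through cleanly; your appeal to ``compactness of the class of uniform domains'' hides exactly this step. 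Two minor remarks: the paper fixes $N$ in advance (as $4N_0$, with $N_0$ the exterior-corkscrew constant for constant-coefficient operators) rather than letting it grow with $j$, and it applies the constant-coefficient case directly rather than changing variables to $-\Delta$; also, ``weak-$*$ $L^\infty$'' convergence of $\mathcal{A}_j$ is not what you want---the oscillation hypothesis actually gives strong $L^p_{\rm loc}$ convergence to the constant matrix, which is what is needed to pass the equation to the limit.
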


\begin{remark}\label{remark:mainthm}
\ \vskip-.8cm\  

\begin{enumerate}[label=\textup{(\roman*)}, itemsep=0.2cm]  

\item Given $\mathcal{A}\in W^{1,1}_{\rm loc }(\Omega)$ we introduce 
\begin{equation}\label{def:smallCarleson}
\C(\Omega,\mathcal{A}) : = \sup_{X\in\Omega}  \fint_{B\left(X,\delta(X)/2 \right)} |\nabla \mathcal{A}(Y)|\delta(Y)  dY.
\end{equation}
Poincaré's inequality easily yields that $\osc(\Omega, \mathcal{A})\le C_n \C(\Omega,\mathcal{A})$ with $C_n$ depending just on dimension. Hence, one can formulate a version of Theorem~\ref{thm:main:I} (with a slightly different $\epsilon$) with $\C(\Omega,\mathcal{A})<\epsilon$ replacing \eqref{def:oscA}.

\item We note that our assumption \eqref{def:oscA} on the matrix $\mathcal{A}$ is much weaker than the smallness of the relaxed DKP condition \eqref{KP-s-relaxed}. To see this, given $X\in\Omega$, let $q_X \in\pO$ be such that $|X-q_X| = \delta(X)$. Then by Hölder's inequality
\begin{equation}\label{holder-cond}
\fint_{B\left(X,\delta(X)/2\right)} |\nabla \mathcal{A}(Y)|\delta(Y)dY
\lesssim 
\left(\frac{1}{\delta(X)^{n-1}} \int_{B\left(q_X, 3\delta(X)/2 \right) \cap\Omega} |\nabla \mathcal{A}(Y)|^2 \delta(Y) dY\right)^{\frac12} 
.\end{equation}
Hence \eqref{KP-s-relaxed} with sufficiently small constant gives smallness of $\C(\Omega,\mathcal{A})$ (and hence \eqref{def:oscA}). On the other hand, it is easy to see that the latter is much weaker. Assume for instance that $|\nabla \mathcal{A}|\delta\sim \epsilon$ in $\Omega$ in which case  
$\C(\Omega,\mathcal{A})\sim\epsilon$ but \eqref{KP-s-relaxed} fails since every integral is infinity.

%


\item In the hypothesis of Theorem~\ref{thm:main:I}, boundedness of the domain and symmetry of the operator might seem restrictive and it is very likely that the proof can be modified to remove those restrictions. Nevertheless, Theorem~\ref{thm:main:I} as stated is enough
to prove Theorem~\ref{thm:main} and we leave the details to the interested reader. 

\end{enumerate}    
\end{remark}

As a consequence we immediately obtain the ``small constant'' case of Theorem \ref{thm:main}: 
\begin{corollary}\label{corol:DKP-small}
Under the same background hypothesis as in Theorem~\ref{thm:main:I} if $\wcalA$ is a symmetric uniformly elliptic matrix on $\Omega$ satisfying \textup{\ref{H1}} and \textup{\ref{H2}} with $	\|\mathcal{A}\|_{\rm Car}<\epsilon$ and $\omega_{L} \in A_{\infty}(\sigma)$ then $\pO$ is uniformly rectifiable.
\end{corollary}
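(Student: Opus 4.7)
The plan is to reduce this corollary directly to Theorem~\ref{thm:main:I} by showing that the smallness of the Carleson norm $\|\mathcal{A}\|_{\rm Car}$ forces the oscillation quantity $\osc(\Omega,\wcalA)$ defined in \eqref{def:oscA} to be small. The chain of implications is already essentially laid out in Remark~\ref{remark:mainthm}, and the proof amounts to verifying that the thresholds can be arranged compatibly.

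First, I would let $\epsilon_0>0$ be the constant produced by Theorem~\ref{thm:main:I} for the given allowable parameters, and let $C_n, C_n'$ denote the dimensional constants appearing in the two chained estimates below. For an arbitrary $X\in\Omega$, pick $q_X\in\pO$ with $|X-q_X|=\delta(X)$. For $Y\in B(X,\delta(X)/2)$ one has $\delta(Y)\le \tfrac32\,\delta(X)$ and $Y\in B(q_X,\tfrac32\,\delta(X))\cap\Omega$, so Cauchy--Schwarz (as recorded in \eqref{holder-cond}) gives
\begin{equation*}
\fint_{B(X,\delta(X)/2)}|\nabla\wcalA(Y)|\,\delta(Y)\,dY
\;\le\; C_n\left(\frac{1}{\delta(X)^{n-1}}\iint_{B(q_X,\tfrac32\delta(X))\cap\Omega}|\nabla\wcalA(Y)|^2\,\delta(Y)\,dY\right)^{1/2}
\;\le\; C_n\,\|\wcalA\|_{\rm Car}^{1/2},
\end{equation*}
where the last inequality uses the definition \eqref{KP-s-relaxed} with $q=q_X$ and $r=\tfrac32\delta(X)$. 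Taking the supremum over $X\in\Omega$ yields $\C(\Omega,\wcalA)\le C_n\,\|\wcalA\|_{\rm Car}^{1/2}$.

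Next, I would invoke the Poincaré inequality on the ball $B(X,\delta(X)/2)$ (which lies in $\Omega$ by definition of $\delta$, so \ref{H1} ensures $\wcalA\in W^{1,1}$ there): for any such ball,
\begin{equation*}
\fint_{B(X,\delta(X)/2)}\bigl|\wcalA(Y)-\langle\wcalA\rangle_{B(X,\delta(X)/2)}\bigr|\,dY
\;\le\; C_n'\,\delta(X)\fint_{B(X,\delta(X)/2)}|\nabla\wcalA(Y)|\,dY
\;\le\; 2C_n'\fint_{B(X,\delta(X)/2)}|\nabla\wcalA(Y)|\,\delta(Y)\,dY,
\end{equation*}
since $\delta(Y)\ge\delta(X)/2$ on that ball. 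Taking the supremum yields $\osc(\Omega,\wcalA)\le 2C_n'\,\C(\Omega,\wcalA)\le 2C_nC_n'\,\|\wcalA\|_{\rm Car}^{1/2}$.

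Finally, I would choose $\epsilon:=(\epsilon_0/(2C_nC_n'))^2$ in the hypothesis $\|\wcalA\|_{\rm Car}<\epsilon$, so the two estimates combine to give $\osc(\Omega,\wcalA)<\epsilon_0$. Since the assumptions on the domain, the symmetry and uniform ellipticity of $\wcalA$, and the $A_\infty$ property $\omega_L\in A_\infty(\sigma)$ are precisely the hypotheses of Theorem~\ref{thm:main:I} (noting that one may harmlessly normalize the ellipticity constants so that $\lambda=1$), that theorem applies and produces a constant $N$ such that $\Omega$ satisfies the exterior corkscrew condition. In particular $\Omega$ is a chord-arc domain, and by the implication \ref{3-thm:hmu}$\Rightarrow$\ref{2-thm:hmu} of Theorem~\ref{thm:hmu} (or directly by Definition~\ref{def:UR}), $\pO$ is uniformly rectifiable. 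There is no real obstacle here: the only thing to watch is that the two constants $C_n,C_n'$ are genuinely dimensional, so that the threshold $\epsilon$ depends only on the allowable constants as required.
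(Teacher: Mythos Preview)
Your proof is correct and follows exactly the route the paper indicates: the corollary is stated as an immediate consequence of Theorem~\ref{thm:main:I} via Remark~\ref{remark:mainthm}, namely the chain $\|\wcalA\|_{\rm Car}$ small $\Rightarrow$ $\C(\Omega,\wcalA)$ small (by \eqref{holder-cond}) $\Rightarrow$ $\osc(\Omega,\wcalA)$ small (by Poincar\'e), and you have spelled this out carefully. The only cosmetic point is that when $\tfrac32\delta(X)\ge\diam(\Omega)$ the supremum in \eqref{KP-s-relaxed} is taken over $r<\diam(\Omega)$, but since then $B(q_X,\tfrac32\delta(X))\cap\Omega=\Omega$ the bound follows from any fixed $r$ close to $\diam(\Omega)$, so no harm is done.
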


\begin{remark}\label{normalize}
We note that the $A_\infty$ constants for $\omega_L$ are not affected by the normalization $\lambda = 1$, however,
the small parameter $\epsilon$ in \eqref{def:oscA} clearly depends upon this normalization.
\end{remark}

\begin{remark}\label{rem:subtle}
Having fixed the desired 
ellipticity constants $\lambda=1$ and $\Lambda$ and the geometric parameters $M, C_1, C_{AR}>1$, one may ask whether operators $L= -\divg(\mathcal{A}(\cdot)\nabla)$ such that $\omega_L\in A_{\infty}(\sigma)$ and 
 $\wcalA$ satisfies \eqref{def:oscA} with 
small constant $\epsilon$ exist. Choosing a matrix for which the left-hand side of \eqref{KP-cond} is small (e.g., a constant coefficient matrix), we can guarantee that \eqref{def:oscA} holds with a desired $\epsilon$, see Remark~\ref{remark:mainthm}. It is a consequence of the work in \cite{KP} that on a chord arc domain (see Definition \ref{def:nta})
the $A_{\infty}$ constants of $\omega_L$ only depend on the ellipticity constants, the norm \eqref{KP-cond} and the geometric parameters (which include
$M, C_1, C_{AR}>1$). Thus in this case there exist constants $C_0>1$ and $\theta\in (0,1)$ such that all the conditions of Theorem~\ref{thm:main:I} are satisfied.
\end{remark}

The proof in Part \ref{part:large} is based on the method of ``extrapolation of Carleson measures", by means of which we bootstrap
the small constant case treated in Part \ref{part:small}.  This method was first introduced in the work of 
Lewis and Murray \cite{LM}, based on the Corona construction which has its origins
in the work of Carleson \cite{Ca}, 
and Carleson and Garnett \cite{CG}.  
In order to carry out this procedure, we go back and forth between the original domain $\Omega$ and certain \textit{sawtooth domains},  taking advantages of the scale-invariant nature of Carleson measures. In particular we need to transfer the $A_\infty$ property of
elliptic measures, from the original domain to its sawtooth subdomains.  This last step is really the 
heart of the proof.







\medskip

{\bf Acknowledgments:} The authors would like to express their gratitude to Bruno Giuseppe Poggi Cevallos who pointed out that the examples in \cite{MM} could be used to access the optimality of our results. See Proposition \ref{p:bruno}. They would also like to thank MSRI for its hospitality during the Spring of 2017, all the authors were in residence there when this work was started. 

\section{Preliminaries}\label{sPrelim}

In this section we state the definitions and some auxiliary results that will be used throughout the paper. 



\subsection{Definitions}\label{sec:def}

\begin{defn}\label{def:ADR}
	We say a closed set $E\subset \RR^n$ is \textbf{Ahlfors regular} with constant $C_{AR}>1$ if for any $q\in E$ and $0<r<\diam(E)$,
	\[ C_{AR}^{-1}\, r^{n-1} \leq \mathcal{H}^{n-1}(B(q,r)\cap E) \leq C_{AR}\, r^{n-1}. \]
\end{defn}

There are many equivalent characterizations of a uniformly rectifiable set, see \cite{DS2}. Since uniformly rectifiability is not the main focus of our paper, we only state one of the geometric characterizations as its definition. 
\begin{defn}\label{def:UR}
	An Ahlfors regular set $E\subset \RR^n$ is said to be \textbf{uniformly rectifiable}, if it has big pieces of Lipschitz images of $\RR^{n-1}$. That is, there exist $\theta, M>0$ such that for each $q\in E$ and $0<r<\diam(E)$, there is a Lipschitz mapping $\rho: B_{n-1}(0, r) \to \RR^n$ such that $\rho$ has Lipschitz norm $\leq M$ and 
	\[ \mathcal{H}^{n-1} \left( E\cap B(q,r) \cap \rho(B_{n-1}(0,r)) \right) \geq \theta r^{n-1}. \]
	Here $B_{n-1}(0,r)$ denote a ball of radius $r$ in $\RR^{n-1}$.
\end{defn}

\begin{defn}\label{def:ICC}
An open set $\Omega\subset\mathbb{R}^n$ is said to satisfy the \textbf{\textup{(}interior\textup{)} corkscrew condition} \textup{(}resp. the exterior corkscrew condition\textup{)} with constant $M>1$ if for every $q\in\pO$ and every $0< r<\diam(\Omega)$, there exists $A=A(q,r) \in \Omega$ \textup{(}resp. $A\in \Omega_{\rm ext}:=\mathbb{R}^n\setminus\overline{\Omega}$\textup{)} such that
 \begin{equation}\label{eqn:nta-M}
 	B\left(A, \frac{r}{M} \right) \subset B(q,r) \cap \Omega
	\qquad
	\Big(\mbox{resp. }B\left(A, \frac{r}{M} \right) \subset B(q,r) \cap \Omega_{\rm ext}.
	\Big)
	 \end{equation} 
	 The point $A$ is called  a Corkscrew point (or a non-tangential point)  relative to $\Delta(q,r)=B(q,r)\cap\pom$ in $\Omega$ (resp. $\Omega_{\rm ext}$).
\end{defn}

\begin{defn}\label{def:HCC}
An open connected set $\Omega\subset\mathbb{R}^n$ is said to satisfy the \textbf{Harnack chain condition} with constants $M, C_1>1$ if for every pair of points $A, A'\in \Omega$
there is a chain of balls $B_1, B_2, \dots, B_K\subset \Omega$ with $K \leq  M(2+\log_2^+ \Pi)$ that connects $A$ to $A'$,
where
\begin{equation}\label{cond:Lambda}
\Pi:=\frac{|A-A'|}{\min\{\delta(A), \delta(A')\}}.
\end{equation} 
Namely, $A\in B_1$, $A'\in B_K$, $B_k\cap B_{k+1}\neq\emptyset$ and for every $1\le k\le K$
\begin{equation}\label{preHarnackball}
 	C_1^{-1} \diam(B_k) \leq \dist(B_k,\partial\Omega) \leq C_1 \diam(B_k).
\end{equation}
         \end{defn}

We note that in the context of the previous definition if $\Pi\le 1$ we can trivially form the Harnack chain $B_1=B(A,3\delta(A)/5)$ and $B_2=B(A', 3\delta(A')/5)$ where \eqref{preHarnackball} holds with $C_1=3$. Hence the Harnack chain condition is non-trivial only when $\Pi> 1$.

\begin{defn}\label{def:uniform}
An open connected set $\Omega\subset\RR^n$ is said to be a \textbf{uniform} domain with constants $M, C_1$,  if it satisfies the interior corkscrew condition with constant $M$ and the Harnack chain condition with constants $M, C_1$.
\end{defn}

\begin{defn}\label{def:nta}
A uniform domain $\Omega\subset\RR^n$ is said to be \textbf{NTA} if it satisfies the exterior corkscrew condition. If one additionally assumes that $\partial\Omega$ is Ahlfors regular, the $\Omega$ is said to be a \textbf{chord-arc} domain.
\end{defn}

For any $q\in\pO$ and $r>0$, let $\Delta=\Delta(q,r)$ denote the surface ball $B(q,r) \cap \pO$, and let $T(\Delta)=B(q,r)\cap\Omega$ denote the Carleson region above $\Delta$. We always implicitly assume that $0<r< \diam ( \Omega)$. We will also write $\sigma=\mathcal{H}^{n-1}|_{\pO}$.

Given an open connected set $\Omega$ and an elliptic operator $L$ we let $\{\omega_L^X\}_{X\in\Omega}$ be the associated elliptic measure.  In the statement of Theorem~\ref{thm:main:I} we assume that $\omega_L \in A_{\infty}(\sigma)$ in the following sense:

\begin{defn}\label{def:AinftyHMU}
	The elliptic measure associated with $L$ in $\Omega$ is said to be of class $A_{\infty}$ with respect to the surface measure $\sigma= \mathcal{H}^{n-1}|_{\pO}$, which we denote by $\omega_L\in A_\infty(\sigma)$, if there exist $C_0>1$ and $0<\theta<\infty$ such that for any surface ball $\Delta(q,r)=B(q,r)\cap \pO$, with $x\in\partial\Omega$ and $0<r<\diam (\Omega)$, any surface ball $\Delta'=B'\cap \pO$ centered at $\pO$ with $B'\subset B(q,r)$, and any Borel set $F\subset \Delta'$, the elliptic measure with pole at $A(q,r)$ (a corkscrew point relative to $\Delta(q,r)$) satisfies
	   \begin{equation}\label{eqn:Ainfty}
		\frac{\omega_L^{A(q,r)}(F)}{\omega_L^{A(q,r)}(\Delta')} \leq C_0\left( \frac{\sigma(F)}{\sigma(\Delta')} \right)^{\theta}.
	\end{equation}
\end{defn}

Since $\sigma$ is a doubling measure, it is well-known that the condition $\omega_L\in A_\infty(\sigma)$ 
is equivalent to the fact that $\omega_L\in RH_q(\sigma)$ for some $q>1$ in the following sense: $\omega_L \ll \sigma$ and the
Radon-Nikodym derivative $\textbf{k}_L:= d\omega_L/d\sigma$ satisfies the reverse H\"older estimate
\begin{equation}\label{eq1.wRH}
\left(\fint_{\Delta'} \big(\textbf{k}_L^{A(q,r)}\big)^q d\sigma \right)^{\frac1q} \lesssim \fint_{\Delta'} \textbf{k}_L^{A(q,r)} \,d\sigma\,
=\frac{\omega_L^{A(q,r)}(\Delta')}{\sigma(\Delta')}\,,
\end{equation}
for all  $\Delta(q,r)=B(q,r)\cap \pO$, with $x\in\partial\Omega$ and $0<r<\diam (\Omega)$, any surface ball $\Delta'=B'\cap \pO$ centered at $\pO$ with $B'\subset B(q,r)$. 

\medskip

\begin{defn}\label{def:CDC}
A domain $\Omega\subset \mathbb{R}^n$ with $n\ge 3$ is said to satisfy the \textbf{capacity density condition (CDC)} if there exists a constant $c_0>0$ such that 
\begin{equation}\label{eqn:CDC}
	\frac{\capacity(B_r(q)\setminus\Omega)}{\capacity(B_r(q))} \geq c_0,\quad \text{for any } q\in\pO \text{ and } 0<r<\diam(\Omega),
\end{equation}	
where for any set $K\subset\mathbb{R}^n$, the capacity is defined as  
	\begin{equation*}
		\capacity(K) = \inf \bigg\{\int |\nabla \varphi|^2 dX: \varphi \in C_c^{\infty}(\mathbb{R}^n), K\subset \interior\{\varphi\geq 1\}\bigg\}.
	\end{equation*}
\end{defn}
It was proved in \cite[Section 3 ]{Zh} and \cite[Lemma 3.27]{HLMN} that a domain in $\mathbb{R}^n$, $n\ge 3$, with $(n-1)$-Ahlfors regular boundary satisfies the capacity density condition with constant $c_0$ depending only on $n$ and the Ahlfors regular constant $C_{AR}$. In particular such a domain is Wiener regular and hence for any elliptic operator $L$, and any function $f\in C(\pO)$, we can define
\begin{equation}\label{eqn:ellipt-rep}
	u(X) = \int_{\partial\Omega} f(q) d\omega_L^X(q),
	\qquad X\in\Omega,
\end{equation}
and obtain that $u\in W_{\rm loc}^{1,2}(\Omega)\cap C(\overline\Omega)$, $u|_{\partial\Omega}=f$ on $\pO$ and  $Lu = 0$ in $\Omega$ in the weak sense. Moreover, if additionally $f\in \Lip(\Omega)$ then $u\in W_{}^{1,2}(\Omega)$.

\subsection{Properties of solutions and elliptic measures}

For following lemmas, we always assume that $\Omega$ is a uniform domain with Ahlfors regular boundary (in fact they hold under the weaker assumption that $\Omega$ is a uniform domain the CDC). Let $L=-\divg(\wcalA(\cdot)\nabla)$ be a real uniformly elliptic operator, and we write $\omega=\omega_L$ for the corresponding elliptic measure. Although in our main result we consider non necessarily symmetric uniformly elliptic matrices, we will reduce matters to the symmetric case, in particular all the following properties will be used in that case, hence during this section we assume that $\wcalA$ is symmetric. 
All constants will only depend on the allowable constants, that is, those involved in the fact that the domain in question is uniform and has Ahlfors regularity boundary, and also in the uniform ellipticity of $\wcalA$. (Note that we may assume $\wcalA$ has been normalized so that $\lambda =1$ in \eqref{def:UE}.) In Part \ref{part:large} we will apply these lemmas to $\Omega$ as well as its sawtooth domains $\Omega_{\F,Q}$. 

Under the above assumptions, one can construct the associated elliptic measure $\omega_L$ and Green function $G$. For the latter the reader is referred to the work of Grüter and Widman \cite{GW}, while the existence of the corresponding elliptic measures is an application of the Riesz representation theorem. The behavior of $\omega$ and $G$, as well as the relationship between them, depends crucially on the properties of $\Omega$, and assuming that $\Omega$ is a uniform domain with the CDC one can follow the program carried out in \cite{JK}. We summarize below the results which will be used later in this paper. For a comprehensive treatment of the subject and the proofs we refer the reader to the forthcoming monograph \cite{HMT2} (see also \cite{Ke} for the case of NTA domains).

\begin{theorem}\label{thm:gw}
There is a unique non-negative function $G: \Omega \times \Omega \to \RR\cup \{\infty\}$, the 
Green function associated with $L$, and a positive, finite constant $C$, depending only on dimension, and 
(given the normalization) $\Lambda$, such that the following hold:
\begin{equation}
G(\cdot, Y) \in W^{1,2}(\Omega \setminus B(Y,s))\cap W^{1,1}_0(\Omega)\cap W_0^{1,r}(\Omega), \quad \forall\,Y\in\Omega,\ \forall\,s>0,\ \forall\,r\in \left[1,\tfrac{n}{n-1}\right);
\end{equation}
\begin{equation}
\int\left\langle \wcalA(X)\nabla_X G(X,Y), \nabla \varphi(X) \right\rangle dX = \varphi(Y),
\quad \text{for all }\varphi \in C_c^{\infty}(\Omega);
\end{equation} 
\begin{equation}\label{eqn:qweak}
		\|G(\cdot,Y)\|_{L^{\frac{n}{n-2},\infty}(\Omega)} +\|\nabla G(\cdot,Y)\|_{L^{\frac{n}{n-1},\infty}(\Omega)}\le C, \quad\forall\,Y\in \Omega;
\end{equation}
\begin{equation}\label{eqn:gub}
		G(X,Y) \leq C|X-Y|^{2-n};
	\end{equation}
and
\begin{equation}\label{eqn:glb}
		G(X,Y) \geq C|X-Y|^{2-n}, \quad\text{if\ \ } |X-Y|\leq \frac78{\delta(Y)}.
	\end{equation}	
Furthermore, if $\Omega$ is a uniform domain satisfying the CDC, for any $\varphi \in C_c^{\infty}(\RR^n)$ and for almost all $Y\in\Omega$
	\begin{equation}\label{eqn:int-parts}
		-\int_{\Omega} \left\langle \mathcal{A}(X)\nabla_X G(X,Y), \nabla \varphi(X) \right\rangle dX = \int_{\pO} \varphi d\omega^{Y} -\varphi(Y)
	\end{equation}
	where $\{\omega^Y\}_{Y\in\Omega}$ is the associated elliptic measure.

\end{theorem}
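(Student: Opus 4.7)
The plan is to follow the classical Grüter--Widman construction \cite{GW}, adapted to the rough setting of a uniform domain with CDC. The starting point is an approximation scheme: for each fixed $Y\in\Omega$ and $\rho\in(0,\delta(Y)/4)$, Lax--Milgram produces a unique $G^\rho(\cdot,Y)\in W_0^{1,2}(\Omega)$ solving
\[
\int_\Omega \langle \wcalA(X)\nabla_X G^\rho(X,Y),\nabla\varphi(X)\rangle\,dX = \fint_{B(Y,\rho)}\varphi(X)\,dX
\]
for all $\varphi\in W_0^{1,2}(\Omega)$. Testing against truncations of $G^\rho$ itself yields a Caccioppoli inequality, which combined with Moser iteration on annuli centered at $Y$ produces the uniform pointwise bound $G^\rho(X,Y)\le C|X-Y|^{2-n}$ away from the pole.

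Next, the weak-type bounds \eqref{eqn:qweak} come from the Stampacchia duality method: testing the equation against suitable truncations of $G^\rho$ and invoking the Sobolev embedding yield $\|G^\rho(\cdot,Y)\|_{L^{n/(n-2),\infty}(\Omega)}+\|\nabla G^\rho(\cdot,Y)\|_{L^{n/(n-1),\infty}(\Omega)}\le C$ uniformly in $\rho$. These estimates let us pass to a weak-$*$ limit along $\rho\to 0$ and extract $G(\cdot,Y)$, which inherits the pointwise upper bound, the weak-$L^p$ control, and the Sobolev memberships stated in the first display. The matching lower bound in the regime $|X-Y|\le\tfrac78\delta(Y)$ is an interior fact, proved by freezing coefficients at $Y$, comparing with the fundamental solution of the constant coefficient operator $-\divg(\wcalA(Y)\nabla)$ via the maximum principle inside $B(Y,\delta(Y))\Subset\Omega$, and invoking Harnack's inequality; no boundary information enters.

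The main obstacle is the global integration-by-parts identity \eqref{eqn:int-parts} with test functions in $C_c^\infty(\RR^n)$ rather than $C_c^\infty(\Omega)$. Here the CDC is essential: it guarantees Wiener regularity at every $q\in\pom$ (see \cite{Zh,HLMN}), so that for any $f\in C(\pom)$ the elliptic representation \eqref{eqn:ellipt-rep} produces a continuous solution vanishing on $\pom$, and, dually, that $G(\cdot,Y)\in W_0^{1,r}(\Omega)$ has zero boundary trace in a sense strong enough to compensate for the non-smoothness of $\pom$. Given $\varphi\in C_c^\infty(\RR^n)$, I would write $\varphi=(\varphi-u)+u$, where $u$ is the $L$-solution with boundary data $\varphi|_{\pom}$; then $\varphi-u\in W_0^{1,2}(\Omega)$ is an admissible test function (after a standard regularization handling the Dirac mass at $Y$ and the only $W^{1,r}$ control of $G$), giving
\[
-\int_\Omega \langle \wcalA\nabla G(\cdot,Y),\nabla\varphi\rangle\,dX = \int_\Omega \langle \wcalA\nabla G(\cdot,Y),\nabla(u-\varphi)\rangle\,dX - u(Y) = \int_{\pom}\varphi\,d\omega^Y-\varphi(Y),
\]
where the final step uses the Riesz-representation definition $u(Y)=\int_{\pom}\varphi\,d\omega^Y$. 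All remaining technicalities---uniqueness, the fact that the two approximation schemes (in $\rho$ and in $\epsilon$-mollifiers of $\varphi$) are compatible, and that the limiting object is independent of subsequence---are established in \cite{HMT2}, to which I would refer for full details.
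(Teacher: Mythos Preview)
Your proposal is correct and matches the paper's approach: the paper does not give an independent proof of this theorem but simply refers to Gr\"uter--Widman \cite{GW} for the construction and estimates, and to \cite{HMT2} (and \cite{JK}, \cite{Ke}) for the identity \eqref{eqn:int-parts} in the uniform/CDC setting. Your sketch of the Gr\"uter--Widman scheme together with the decomposition $\varphi=(\varphi-u)+u$ for the Riesz formula is exactly the content of those references.
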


We observe that \eqref{eqn:qweak} and Kolmogorov's inequality give that for every $1\le r<\frac{n}{n-1}$ 
\begin{equation}\label{eqn:glq}
\|G(\cdot,Y)\|_{L^r(\Omega)}\le C C_3^{\frac1r} 
|\Omega|^{\frac1r-\frac{n-2}n},
\qquad
\|\nabla G(\cdot,Y)\|_{L^r(\Omega)}\le C C_4^{\frac1r}
|\Omega|^{\frac1r-\frac{n-1}n},
	\end{equation}
where $C$ is the constant in \eqref{eqn:qweak}, $C_4=(\frac{n}{(n-2)r})'$, and $C_4=(\frac{n}{(n-1)r})'$.

\begin{lemma}[Boundary H\"older regularity]\label{lem:vanishing}
	There exist constants $C, \beta>0$ (depending on the allowable constants) such that for $q\in\pO$ and $0<r<\diam(\pO)$, and $u\geq 0$ with $Lu=0$ in $ B(q,2r) \cap\Omega$, if $u$ vanishes continuously on $\Delta(q,2r) =  B(q,2r) \cap \pO$, then
	\begin{equation}\label{eqn:1.1}
		u(X) \leq C\left( \frac{|X-q|}{r} \right)^{\beta} \sup_{B(q, 2r)\cap\Omega} u,  \qquad \text{for any } X\in\Omega\cap B(q,r).
	\end{equation}
	\end{lemma}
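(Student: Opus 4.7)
The plan is to derive Lemma \ref{lem:vanishing} from the capacity density condition (CDC), which holds under our standing hypotheses with a constant $c_0 = c_0(n, C_{AR})$ (as recalled just after Definition \ref{def:CDC}); the uniform structure of $\Omega$ plays no role here beyond ensuring that $B(q,2r)\cap\Omega$ is non-empty. The argument I would use proceeds in two steps: first an oscillation-decay statement at a boundary ball, and then a dyadic iteration of that statement.

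For the first step I would establish the existence of $\eta \in (0,1)$, depending only on the allowable constants, such that whenever $u \geq 0$ solves $Lu=0$ in $B(q,2s)\cap\Omega$ and vanishes continuously on $\Delta(q,2s)$, one has
\[
\sup_{B(q,s/2)\cap\Omega} u \;\leq\; \eta \sup_{B(q,2s)\cap\Omega} u.
\]
Set $M := \sup_{B(q,2s)\cap\Omega} u$ and let $v := M - u$, a non-negative weak supersolution on $B(q,2s)\cap\Omega$ which extends continuously by the constant $M$ to $B(q,2s)\setminus\Omega$. A standard truncation argument (testing only against non-negative $\varphi\in C_c^\infty(B(q,2s))$ and using that $u$ vanishes continuously on $\partial\Omega\cap B(q,2s)$) shows the extended $v$ is a genuine weak supersolution on all of $B(q,2s)$. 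Since $v \equiv M$ on $B(q,s)\setminus\Omega$, the CDC gives $\capacity(\{v \geq M\}\cap B(q,s)) \geq c_0 \capacity(B(q,s)) \gtrsim s^{n-2}$. A capacitary Poincar\'e inequality (or equivalently a Ma\-z'ya-type barrier argument) promoted through the Moser weak Harnack inequality then yields $\inf_{B(q,s/2)} v \geq \epsilon_0 M$ for some $\epsilon_0 > 0$ depending only on the allowable constants, whence $\sup_{B(q,s/2)\cap\Omega} u \leq (1-\epsilon_0)M$, giving the claim with $\eta := 1-\epsilon_0$.

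For the iteration step, fix $X\in B(q,r)\cap\Omega$, set $d := |X-q|$, and choose $k\geq 0$ with $r\cdot 4^{-k-1}\leq d \leq r\cdot 4^{-k}$. Applying Step~1 successively at the scales $r, r/4, \ldots, r\cdot 4^{-k}$, which is permissible at each stage since $u$ continues to vanish on the nested surface balls $\Delta(q, 2 \cdot 4^{-j} r)$, gives
\[
u(X) \;\leq\; \sup_{B(q, r\cdot 4^{-k})\cap\Omega} u \;\leq\; \eta^{k}\sup_{B(q,2r)\cap\Omega} u \;\leq\; C\left(\frac{d}{r}\right)^{\beta}\sup_{B(q,2r)\cap\Omega} u,
\]
with $\beta := \log(1/\eta)/\log 4 > 0$ and $C$ depending only on the allowable constants. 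This is exactly \eqref{eqn:1.1}.

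The hard part is Step~1: converting the purely capacitary lower bound supplied by the CDC into a pointwise positivity estimate for $v$ on the inner ball $B(q,s/2)$. The most delicate point is justifying that the extension of $v$ by the constant $M$ outside $\Omega$ really is a weak supersolution on the whole ball $B(q,2s)$, since this is what legitimizes applying interior Moser/weak Harnack technology at a boundary ball; once that reduction is in hand, the capacitary Poincar\'e inequality unlocks quantitative positivity of $v$ from the CDC and the remainder of the argument is mechanical. The only other subtlety is to keep track of how the resulting constants $\epsilon_0$, $\eta$, $C$, and $\beta$ depend on the allowable constants — in particular on $C_{AR}$ through $c_0$ and on $\Lambda/\lambda$ through Moser's constants — which is routine but needs to be verified since these dependencies are used elsewhere in the paper.
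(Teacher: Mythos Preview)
Your proof is correct and is precisely the standard CDC-based argument; the paper does not give its own proof of this lemma but simply records it among the background results quoted from \cite{HMT2} (and later, when invoking the same estimate, points to \cite[Theorem~6.18]{HKM}). Your two-step outline (extend $v=M-u$ by $M$ to a supersolution on the full ball, use CDC plus weak Harnack to get one-step decay, then iterate dyadically) is exactly the mechanism behind the cited references, so there is nothing to compare.
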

	
\medskip

\begin{lemma}[Comparison principle]\label{lm:comp}
	Let $u$ and $v$ be non-negative solutions to $Lu=Lv=0$ in $B(q,4r)\cap\Omega$ which vanish continuously on $\Delta(q,4r)$.  Let $A=A(q,r)$ be a corkscrew point relative to $\Delta(q,r)$. Then
	\begin{equation}\label{P6}
		\frac{u(X)}{v(X)} \approx \frac{u(A)}{v(A)} \quad\text{~for any~} X\in B(q,r)\cap\Omega. 
	\end{equation} 
\end{lemma}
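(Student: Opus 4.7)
My plan is to follow the classical Jerison–Kenig boundary Harnack scheme, adapted to the present setting of a uniform domain with the CDC. The three main ingredients are: boundary Hölder decay (Lemma \ref{lem:vanishing}), the interior Harnack chain condition, and the Green-function/harmonic-measure estimates packaged in Theorem \ref{thm:gw}. The overall strategy is to show separately that
\begin{equation*}
u(X)\, \approx\, u(A)\,\frac{\omega^{X}(\Delta(q,2r))}{\omega^{A}(\Delta(q,2r))},
\qquad X\in B(q,r)\cap\Omega,
\end{equation*}
and the analogous estimate for $v$; dividing the two then eliminates the harmonic measure factors and yields the claim with the ratio of pole-point values $u(A)/v(A)$.

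First I would prove a \emph{Carleson-type estimate}: under the hypotheses of the lemma,
$\sup_{B(q,r)\cap\Omega} u \lesssim u(A)$.  For $X$ within distance $\sim r/M$ of the boundary, the boundary Hölder estimate \eqref{eqn:1.1} gives $u(X) \lesssim \sup_{B(q,2r)\cap\Omega} u$; for $X$ at least this far from $\partial\Omega$, the Harnack chain condition (Definition \ref{def:HCC}) connects $X$ to $A=A(q,r)$ through a controlled number of balls in which interior Harnack applies, yielding $u(X)\lesssim u(A)$. A standard iteration/covering argument (splitting into the ``interior'' and ``near-boundary'' regimes) combines the two to obtain the global sup bound.

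Next I would establish a CFMS-type comparison between $\omega^X(\Delta(q,r))$ and the Green function. Using \eqref{eqn:gub}–\eqref{eqn:glb} together with interior Harnack at the corkscrew point $A$, one shows
\begin{equation*}
\omega^{X}(\Delta(q,r)) \,\approx\, r^{n-2} G(X,A), \qquad X\in\Omega\setminus B(q,2r),
\end{equation*}
by applying the integration-by-parts identity \eqref{eqn:int-parts} with a suitable cutoff supported in $B(q,2r)$, combined with the Carleson estimate from the previous paragraph applied to $G(\,\cdot\,,A)$, which vanishes on $\partial\Omega$ and is itself a non-negative solution away from $A$. With this in hand, the \emph{upper bound} $u(X)\le C\,u(A)\,\omega^{X}(\Delta(q,2r))/\omega^{A}(\Delta(q,2r))$ for $X\in B(q,r)\cap\Omega$ follows from the maximum principle: on the ``side'' of $\partial(B(q,2r)\cap\Omega)$ lying inside $\Omega$, the Carleson estimate controls $u$ by a constant multiple of $u(A)$, while on that same surface $\omega^{X}(\Delta(q,2r))$ is bounded below by a uniform constant (this is the point where the CDC, and hence Ahlfors regularity, is used to ensure nondegeneracy of harmonic measure on ``half-ball'' type sets; cf.\ \cite{HMT2}). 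The matching \emph{lower bound} is obtained by the symmetric argument: apply the upper bound with $u$ and the specific solution $\omega^{\,\cdot\,}(\Delta(q,2r))$ interchanged, using that $X\mapsto \omega^X(\Delta(q,2r))$ is itself a bounded non-negative $L$-solution vanishing on $\partial\Omega\setminus \overline{\Delta(q,2r)}$.

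Combining the two-sided estimate for $u$ with the same two-sided estimate for $v$ and taking the quotient gives
\begin{equation*}
\frac{u(X)}{v(X)} \,\approx\, \frac{u(A)}{v(A)},
\qquad X\in B(q,r)\cap\Omega,
\end{equation*}
as desired. I expect the \emph{main obstacle} to be the two CFMS-type nondegeneracy estimates in Step 2: since $\Omega$ need not satisfy an exterior corkscrew condition (only CDC), one cannot simply invoke the standard NTA machinery of \cite{JK}, and instead must rely on the weaker capacity-theoretic lower bound for harmonic measure to justify that $\omega^{X}(\Delta(q,2r))$ is nondegenerate on the annular portion of $\partial(B(q,2r)\cap\Omega)$ lying inside $\Omega$. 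Once those estimates are in place, the rest is a routine application of the maximum principle and Harnack chains.
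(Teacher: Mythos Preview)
The paper does not give a proof of this lemma; it is one of several standard boundary estimates that are stated together and deferred to the monograph \cite{HMT2} (see the paragraph preceding Theorem~\ref{thm:gw}). Your outline follows the classical Jerison--Kenig strategy, and the upper-bound half is essentially correct once you enlarge the surface ball slightly: compare $u$ with $C\,u(A)\,\omega^{\,\cdot}(\Delta(q,3r))$ in $B(q,2r)\cap\Omega$, so that for every $Y\in\partial B(q,2r)\cap\Omega$ the nearest boundary point lies well inside $\Delta(q,3r)$ and Lemma~\ref{lm:Bourgain} gives $\omega^{Y}(\Delta(q,3r))\ge m_0$. With this adjustment the maximum principle yields $u(X)\le C\,u(A)\,\omega^{X}(\Delta(q,3r))$ on $B(q,r)\cap\Omega$.

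The gap is in the lower bound. Your ``symmetric argument'' proposes to interchange $u$ and $X\mapsto\omega^{X}(\Delta(q,2r))$, but this function equals $1$ on $\Delta(q,2r)$ rather than vanishing there, so neither the Carleson estimate (Lemma~\ref{harn-princ}) nor the maximum-principle comparison can be run with the roles reversed in the same region; the fact that it vanishes on $\partial\Omega\setminus\overline{\Delta(q,2r)}$ is of no help inside $B(q,2r)$. In the NTA setting of \cite{JK} this obstruction is bypassed using the exterior corkscrew, which is unavailable here. Under only the CDC, the standard route (as in \cite{HMT2} or in Aikawa's earlier work on uniform domains) obtains the matching lower bound $u(X)\gtrsim u(A)\,G(X_0,X)/G(X_0,A)$ by an iteration that applies the Carleson estimate at a geometrically decreasing sequence of scales, not by a single maximum-principle step. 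That iteration is precisely the ``main obstacle'' you identify, and it does require substantive additional work beyond what you have sketched.
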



\begin{lemma}[Non-degeneracy of elliptic measure]\label{lm:Bourgain}
	There exists $m_0\in(0,1)$ depending on the allowable constants such that for any $q\in\pO$ and $0<r<\diam (\pO)$,
	\begin{equation}\label{eqn:1.2}
		\omega^{X} (\Delta(q,r)) \geq m_0 \qquad \text{for } X\in B(q,r/2)\cap \Omega.
	\end{equation}
\end{lemma}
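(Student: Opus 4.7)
The plan is to prove this classical Bourgain-type non-degeneracy estimate via a capacitary barrier, leveraging the capacity density condition (CDC). As noted in the excerpt, since $\partial\Omega$ is Ahlfors regular, $\Omega$ satisfies the CDC with a constant $c_0 = c_0(n, C_{AR}) > 0$; in particular, the compact set $K := \overline{B(q, r/2)} \setminus \Omega$ satisfies $\capacity(K) \gtrsim r^{n-2}$.

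The main step is to build an $L$-potential $\Phi_K$ concentrated on $K$ and compare it with $u(X) := \omega^X(\Delta(q, r))$. Working in the enlarged ball $B(q, 10r)$, let
\[
\Phi_K(X) := \int G_*(X, Y) \, d\nu_K(Y),
\]
where $G_*$ is the Green function of $L$ in $B(q, 10r)$ and $\nu_K$ is the $L$-equilibrium measure of $K$ relative to $B(q, 10r)$, of total mass $\gtrsim r^{n-2}$ by the CDC lower bound and uniform ellipticity. The pointwise estimates \eqref{eqn:gub}--\eqref{eqn:glb}, combined with $|X - Y| \lesssim r$ for $X \in B(q, r)$ and $Y \in K$, yield the uniform lower bound $\Phi_K(X) \gtrsim c_0$ on $B(q, r) \cap \Omega$.

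To conclude, $\Phi_K$ is $L$-harmonic in $\Omega \cap B(q, 10r)$ (since $K \cap \Omega = \emptyset$). Its boundary trace on $\partial(\Omega \cap B(q, 10r))$ satisfies: $\Phi_K \equiv 1$ quasi-everywhere on $K \cap \partial\Omega \subset \Delta(q, r)$ (matching $u \equiv 1$ there); $\Phi_K = 0$ on $\Omega \cap \partial B(q, 10r)$ (while $u \geq 0$); and a controllably small tail on $\partial\Omega \setminus \Delta(q, r)$ (where $u = 0$). The maximum principle combined with a standard absorption of this tail---for instance, by rearranging the identity $u + \omega^X(\partial\Omega \setminus \Delta(q, r)) = 1$, or by a Harnack-chain refinement---yields the desired lower bound $\omega^X(\Delta(q, r)) \gtrsim c_0 =: m_0$ for $X \in B(q, r/2) \cap \Omega$. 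The principal technical obstacle is precisely the absorption of the tail of $\Phi_K$ on $\partial\Omega \setminus \Delta(q, r)$, which is why one works in the enlarged ball $B(q, 10r)$ and uses the local Green function rather than a full-space fundamental solution. The scheme is a standard adaptation of Bourgain's original harmonic-case argument (\cite{Ke}) to the uniformly elliptic setting under CDC, with details in the monograph \cite{HMT2} referenced by the authors.
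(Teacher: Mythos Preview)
The paper does not prove this lemma; it states it as a known fact and defers to \cite{HMT2} and \cite{Ke}. Your overall strategy---build the $L$-capacitary potential of $K=\overline{B(q,r/2)}\setminus\Omega$, bound it below via the CDC, and compare with $u(X)=\omega^X(\Delta(q,r))$ by the maximum principle---is indeed the standard Bourgain argument and is correct in outline.

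However, there is a genuine gap in your execution. By taking the equilibrium potential $\Phi_K$ relative to the \emph{enlarged} ball $B(q,10r)$ rather than $B(q,r)$, you create the very ``tail'' problem you then struggle to absorb: on $\partial\Omega\cap\big(B(q,10r)\setminus B(q,r)\big)$ one has $u=0$ while $\Phi_K>0$, so the inequality $u\ge\Phi_K$ fails on that portion of $\partial(\Omega\cap B(q,10r))$ and the maximum principle does not apply. Your suggested fixes (``rearranging the identity $u+\omega^X(\partial\Omega\setminus\Delta(q,r))=1$'' or a ``Harnack-chain refinement'') do not resolve this: the complementary comparison $1-u\le 1-\Phi_K$ fails on exactly the same set, and there is no iteration that closes the loop without already assuming what you want to prove. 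Moreover, $\Phi_K$ on that annular piece of $\partial\Omega$ is of order one, not small, since $\nu_K(K)\approx r^{n-2}$ and $G_*\approx r^{2-n}$ there.

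The remedy is simpler than the workaround: take the capacitary potential $\Phi_K$ of $K$ relative to $B(q,r)$ itself. Then $\partial(\Omega\cap B(q,r))=\Delta(q,r)\cup\big(\Omega\cap\partial B(q,r)\big)$; on $\Delta(q,r)$ one has $u=1\ge\Phi_K$ (since $\Phi_K\le 1$ everywhere), and on $\Omega\cap\partial B(q,r)$ one has $\Phi_K=0\le u$. The maximum principle now gives $u\ge\Phi_K$ on $\Omega\cap B(q,r)$ with no tail. The lower bound $\Phi_K(X)\gtrsim c_0$ for $X\in B(q,r/2)\cap\Omega$ still follows from $\Phi_K(X)=\int_K G_{B(q,r)}(X,Y)\,d\nu_K(Y)$: for $X,Y\in \overline{B(q,r/2)}$ one has $G_{B(q,r)}(X,Y)\gtrsim r^{2-n}$ by \eqref{eqn:glb} near the pole together with a Harnack chain of bounded length inside $B(q,3r/4)$, and $\nu_K(K)=\capacity_L(K,B(q,r))\gtrsim c_0\,r^{n-2}$ by the CDC and uniform ellipticity. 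Your instinct to enlarge the ball so that \eqref{eqn:glb} applies directly is understandable, but it trades a trivial Harnack step for an obstruction that cannot be removed.
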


\medskip

\begin{lemma}[Boundary Harnack inequality]\label{harn-princ}
	Let $\Omega$ be a uniform domain satisfying the CDC. There exists a constant $C$ (depending on the allowable constants) such that for $q\in\pO$ and $0<r<\diam(\pO)$. If $u\geq 0$ with $Lu=0$ in $\Omega\cap B(q,2r)$ and $u$ vanishes continuously on $\Delta(q,2r)$, then 
	\begin{equation}\label{eqn:1.3}
		u(X) \leq C u(A(q,r)), \qquad \text{for any}\ X\in\Omega\cap B(q,r).
	\end{equation}
	\end{lemma}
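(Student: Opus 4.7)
The plan is to derive the boundary Harnack inequality from the comparison principle (Lemma \ref{lm:comp}), by comparing $u$ against a Green function with a carefully chosen pole, and then invoking the quantitative Green function bounds \eqref{eqn:gub}, \eqref{eqn:glb} together with the interior Harnack inequality.

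First I would select a reference point $A^{*}\in\Omega$ satisfying $\delta(A^{*})\gtrsim r$, $|A^{*}-q|\sim r$, and $A^{*}\notin B(q,r/2)$, linked to the corkscrew point $A(q,r)$ by a Harnack chain of cardinality depending only on the allowable constants; such a point exists because $\Omega$ is uniform, one simply marches from $A(q,r)$ via the Harnack chain until reaching a suitable location outside $B(q,r/2)$. Setting $v(Y):=G(Y,A^{*})$, the function $v$ is a non-negative $L$-solution on $B(q,2r)\cap\Omega$ which vanishes continuously on $\Delta(q,2r)$ by the boundary Hölder regularity (Lemma \ref{lem:vanishing}) applied to $v$. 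Applying the comparison principle (Lemma \ref{lm:comp}) at $q$ with the radius parameter $r/2$ to the pair $(u,v)$ then yields
\begin{equation*}
\frac{u(X)}{v(X)}\;\lesssim\;\frac{u(A(q,r/2))}{v(A(q,r/2))},\qquad X\in B(q,r/2)\cap\Omega.
\end{equation*}
The separation $|X-A^{*}|\gtrsim r$ combined with \eqref{eqn:gub} gives $v(X)\lesssim r^{2-n}$ on $B(q,r/2)\cap\Omega$, while \eqref{eqn:glb}, possibly assisted by a bounded interior Harnack chain from $A(q,r/2)$ toward $A^{*}$, gives $v(A(q,r/2))\gtrsim r^{2-n}$. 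Substituting and replacing $u(A(q,r/2))$ by $u(A(q,r))$ via a further Harnack chain, one obtains $u(X)\leq C u(A(q,r))$ on $B(q,r/2)\cap\Omega$.

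To extend from $B(q,r/2)\cap\Omega$ to $B(q,r)\cap\Omega$, I would split according to $\delta(X)$. If $\delta(X)\gtrsim r$, then $X$ and $A(q,r)$ are joined by a Harnack chain of bounded cardinality, and the interior Harnack inequality alone concludes. If $\delta(X)\ll r$, I rerun the comparison argument at the nearest boundary point $q_{X}$ on the ball $B(q_{X},r/2)\cap\Omega$, which fits inside $B(q,2r)\cap\Omega$, with a new reference point $A^{*}_{X}$ constructed analogously; I then transfer the resulting bound back to $u(A(q,r))$ through another Harnack chain from $A(q_{X},r/2)$ to $A(q,r)$.

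The main obstacle will be the construction and analysis of the reference point $A^{*}$ with constants depending only on the allowable parameters, most delicately in the borderline regime $r\sim\diam(\pO)$, where the room to place $A^{*}$ outside $B(q,r/2)$ becomes tight. In that regime I would fall back on a direct maximum principle argument on the subdomain $B(q,2r)\cap\Omega$: expressing $u(X)$ as the integral of its boundary values against the elliptic measure of this subdomain, and using that $u$ vanishes on $\Delta(q,2r)$, one controls $u(X)$ by the elliptic measure of the lateral piece $\partial B(q,2r)\cap\overline\Omega$. A Bourgain-type non-degeneracy estimate on $B(q,2r)\cap\Omega$ (mirroring Lemma \ref{lm:Bourgain}, whose CDC-based proof carries over to this subdomain) then shows that at $A(q,r)$ the lateral elliptic measure is bounded away from $1$, which together with the interior Harnack inequality closes the estimate.
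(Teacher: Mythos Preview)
The paper does not actually prove Lemma~\ref{harn-princ}; it is stated as one of a list of standard estimates and attributed to the forthcoming monograph \cite{HMT2} (and to \cite{Ke} in the NTA setting). So there is no ``paper's own proof'' to compare against here.

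That said, your proposed route has a structural problem worth flagging: in the standard development (e.g.\ Jerison--Kenig, or Kenig's CBMS book~\cite{Ke}, which the paper cites), the comparison principle of Lemma~\ref{lm:comp} is proved \emph{using} the Carleson estimate of Lemma~\ref{harn-princ}, not the other way around. Invoking Lemma~\ref{lm:comp} to establish Lemma~\ref{harn-princ} is therefore circular unless you can point to an independent proof of the comparison principle that does not pass through the Carleson estimate. The usual self-contained argument for Lemma~\ref{harn-princ} is instead an iteration based directly on the boundary H\"older decay (Lemma~\ref{lem:vanishing}) and the non-degeneracy estimate (Lemma~\ref{lm:Bourgain}), together with interior Harnack chains; your ``fallback'' paragraph is actually closer in spirit to that standard proof than your main argument. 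There is also a smaller technical slip: applying Lemma~\ref{lm:comp} ``with radius parameter $r/2$'' requires $u,v$ to be $L$-solutions on $B(q,2r)\cap\Omega$, but you place the pole $A^{*}$ at distance $\sim r$ from $q$, so $G(\cdot,A^{*})$ has its singularity inside that ball and is not a solution there; you would need $A^{*}\notin B(q,2r)$, at which point controlling $v(A(q,r/2))$ from below becomes the real issue.
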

	
\medskip

\begin{lemma}[Change of pole formula]\label{lm:cop}
	Let $q\in\partial\Omega$ and $0<r<\diam(\partial\Omega)$ be given, and let $A = A(q,r)$ be a corkscrew point relative to $\Delta(q,r)$. Let $F, F' \subset \Delta(q,r)$ be two Borel subsets such that $\omega^A(F)$ and $\omega^A(F')$ are positive. Then 
	\begin{equation}
		\frac{\omega^X(F)}{\omega^X(F')} \approx \frac{\omega^A(F)}{\omega^A(F')}, \quad \text{ for any }X\in \Omega \setminus B(q,2r).
	\end{equation}
	In particular with the choice $F=\Delta(q,r)$, we have
	\begin{equation}
		\frac{\omega^X(F')}{\omega^X(\Delta(x,r))} \approx \omega^A(F') \quad \text{ for any } X\in\Omega \setminus B(q,2r).
	\end{equation}
\end{lemma}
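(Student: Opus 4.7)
I will prove the second assertion, from which the first follows by taking the ratio of the two applications (to $F$ and to $F'$) and using Lemma \ref{lm:Bourgain} which gives $\omega^A(\Delta(q,r)) \approx 1$. Set $u(X) := \omega^X(F')$ and $w(X) := \omega^X(\Delta(q, r))$. Both are non-negative $L$-harmonic on $\Omega$, and since $F' \subset \Delta(q, r)$, they vanish continuously on $\partial\Omega \setminus \overline{\Delta(q, r)}$ by Lemma \ref{lem:vanishing}, using that $\Omega$ satisfies the CDC (which holds by Ahlfors regularity). The goal is then $u(X)/w(X) \approx u(A)/w(A)$ for all $X \in \Omega \setminus B(q, 2r)$, with constants depending only on the allowable parameters.

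The workhorse is Lemma \ref{lm:comp}, which I apply at boundary points and scales that avoid $\overline{\Delta(q, r)}$: if $q^* \in \partial\Omega$ and $s > 0$ satisfy $B(q^*, 4s) \cap \overline{\Delta(q, r)} = \emptyset$, then both $u$ and $w$ vanish continuously on $\Delta(q^*, 4s)$, and Lemma~\ref{lm:comp} yields
\[
\frac{u(Y)}{w(Y)} \;\approx\; \frac{u(A(q^*, s))}{w(A(q^*, s))}, \qquad Y \in B(q^*, s) \cap \Omega.
\]
For given $X \in \Omega \setminus B(q, 2r)$ I construct a chain of such configurations joining $X$ to $A$. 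The scales are chosen dyadically along the path from $X$ to $A$, and the boundary points $q^*_k$ are picked so that the balls $B(q^*_k, s_k)$ cover the path while satisfying the disjointness requirement $B(q^*_k, 4s_k) \cap \overline{\Delta(q, r)} = \emptyset$. Between consecutive corkscrews $A(q^*_k, s_k)$ and $A(q^*_{k+1}, s_{k+1})$ I use the Harnack chain condition (which holds in uniform domains) to pass the ratio estimate from one to the next; since interior Harnack preserves $u$ and $w$ individually up to bounded factors, it preserves their ratio up to a bounded factor as well.

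The main obstacle is to obtain constants that are uniform in $X$, particularly when $|X - q|$ is large compared to $r$ (for unbounded $\Omega$, possibly by many dyadic scales). The point is that Lemma \ref{lm:comp} is scale-invariant — its multiplicative constant is the same at every application — and the combinatorial complexity of the chaining at each dyadic scale can be bounded by a constant depending only on the allowable parameters (interior corkscrew, Harnack chain, Ahlfors regularity, and ellipticity constants), using the structure of a uniform domain with Ahlfors regular boundary to supply enough boundary points and interior corkscrews at every scale. Uniformity then follows as in the classical NTA-style change-of-pole argument of Jerison and Kenig \cite{JK}, extended here to the present framework of uniform domains with Ahlfors regular boundaries.
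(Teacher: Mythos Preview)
The paper does not actually prove Lemma~\ref{lm:cop}; it appears among the standard potential-theoretic facts collected in Section~2.2, with proofs deferred to \cite{HMT2} and \cite{Ke}. So there is no in-paper argument to compare against, only the classical one.

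Your sketch assembles the right ingredients but the chaining strategy, as written, does not give uniform constants. If $|X-q|\approx 2^N r$ with $N$ large, your chain from $X$ down to $A$ passes through $\sim N$ dyadic scales, and each scale costs one application of Lemma~\ref{lm:comp} plus a bounded Harnack chain. Since every application of Lemma~\ref{lm:comp} multiplies the implicit constant by some fixed $C>1$, the accumulated constant is of order $C^N=(|X-q|/r)^{\log_2 C}$, which is unbounded in $X$. Scale-invariance of the local comparison constant and bounded combinatorics \emph{at a fixed scale} do not address this: it is the number of scales that blows up. Your final paragraph defers to Jerison--Kenig, but that argument is exactly what must be supplied here, and it does \emph{not} work by iterating local boundary comparisons across scales.

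The classical proof avoids iteration via a single global use of the maximum principle in $\Omega\setminus\overline{B(q,3r/2)}$. One first shows, using Lemma~\ref{harn-princ} at boundary balls centered in the annulus $B(q,2r)\setminus \overline{B(q,r)}$ together with bounded-length Harnack chains, that $u(Y)\le C_1\,u(A)$ for every $Y\in\partial B(q,3r/2)\cap\Omega$. Since $\omega^Y(\Delta(q,4r))\ge m_0$ on that sphere by Lemma~\ref{lm:Bourgain}, the maximum principle in the exterior region yields
\[
u(X)\,\le\, \frac{C_1}{m_0}\,u(A)\,\omega^X\!\big(\Delta(q,4r)\big),\qquad X\in\Omega\setminus\overline{B(q,3r/2)},
\]
and Lemma~\ref{lm:doubling} converts $\Delta(q,4r)$ to $\Delta(q,r)$, giving $u(X)\lesssim u(A)\,w(X)$. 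The matching lower bound is obtained analogously after one local application of Lemma~\ref{lm:comp} near $\partial B(q,3r/2)$. All scale-dependent work is confined to a fixed annulus at scale $r$; the maximum principle then propagates the estimate to every $X$ at once, which is why the constants are uniform.
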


\medskip

\begin{lemma}[CFMS estimate]\label{lm:CFMS}
	There exists a constant $C\geq 1$, such that for any $q\in\partial\Omega$, $0<r<\diam(\partial\Omega)/M$, and $A = A(q,r)$, a corkscrew point relative to $\Delta(q,r)$, the Green's function $G=G_L$ satisfies
	\begin{equation}\label{eq:CFMS}
		C^{-1} \frac{G(X_0,A)}{r} \leq \frac{\omega^{X_0}(\Delta(q,r))}{r^{n-1}} \leq C ~ \frac{G(X_0,A)}{r}
	\end{equation}
	for any $X_0 \in \Omega \setminus B(q,2r)$.
\end{lemma}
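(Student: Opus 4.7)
The plan is to prove the two directions of the comparability separately, with the upper bound coming from a cutoff-and-integrate-by-parts argument, and the lower bound from a comparison / maximum principle argument at the scale of the corkscrew ball around $A$.

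For the upper bound $\omega^{X_0}(\Delta(q,r)) \leq C\, r^{n-2}\, G(X_0,A)$, I would fix a smooth cutoff $\varphi \in C_c^\infty(B(q,3r/2))$ with $\varphi \equiv 1$ on $B(q,r)$ and $|\nabla \varphi| \lesssim 1/r$. Applying the representation formula \eqref{eqn:int-parts} with $Y = X_0$ and using symmetry $G(\cdot,X_0)=G(X_0,\cdot)$, together with the fact that $X_0 \notin \operatorname{supp}(\varphi)$, yields
\[
\omega^{X_0}(\Delta(q,r)) \,\leq\, \int \varphi\, d\omega^{X_0} \,=\, -\int_\Omega \langle \mathcal{A}(X)\nabla_X G(X,X_0), \nabla \varphi(X)\rangle\, dX\,.
\]
The integrand is supported in the annular region $A_r = (B(q,3r/2)\setminus B(q,r))\cap \Omega$. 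I would then use uniform ellipticity, Cauchy--Schwarz, and the interior Caccioppoli inequality for $G(\cdot,X_0)$ (valid in $B(q,2r)\cap\Omega$, where $X_0$ is not a pole, and where $G(\cdot,X_0)$ vanishes on $\Delta(q,2r)$) to estimate
\[
\int_{A_r} |\nabla_X G(X,X_0)|\, dX \,\lesssim\, r^{n/2}\,\bigl(\!\int_{A_r} |\nabla G|^2\bigr)^{1/2} \,\lesssim\, r^{n-1}\sup_{B(q,2r)\cap\Omega} G(\cdot,X_0)\,.
\]
Finally the boundary Harnack inequality (Lemma \ref{harn-princ}), applied to the nonnegative solution $G(\cdot,X_0)$ vanishing on $\Delta(q,2r)$, bounds this supremum by $C\, G(A,X_0) = C\, G(X_0,A)$, producing the desired $r^{n-2}\,G(X_0,A)$.

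For the lower bound $G(X_0,A) \leq C\, r^{2-n}\, \omega^{X_0}(\Delta(q,r))$, I would use a maximum-principle comparison in the domain $\Omega' := \Omega \setminus \overline{B(A,\rho)}$ for $\rho \sim r/M$ chosen so that $B(A,2\rho) \subset \Omega \cap B(q, r/2)$ (slightly shrinking the corkscrew ball and, if needed, replacing $A$ by a corkscrew point at a smaller scale to fit inside $B(q,r/2)$ where Bourgain's lemma applies). Both $u(X) := r^{n-2}\, G(X,A)$ and $v(X) := C_1\, \omega^X(\Delta(q,r))$ are nonnegative $L$-solutions in $\Omega'$. On $\partial\Omega \cap \partial\Omega'$, $u$ vanishes while $v \geq 0$. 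On the inner sphere $\partial B(A,\rho)$, the Green function bounds \eqref{eqn:gub}--\eqref{eqn:glb} together with interior Harnack give $u \leq C_0$ for a universal constant, while Lemma \ref{lm:Bourgain} (non-degeneracy) gives $\omega^X(\Delta(q,r)) \geq m_0$ there. Choosing $C_1 = C_0/m_0$ makes $v \geq u$ on all of $\partial \Omega'$. The maximum principle in $\Omega'$ (which one obtains from Lemma \ref{lem:vanishing} and Lemma \ref{harn-princ} by standard arguments, or directly for bounded subdomains) then yields $u \leq v$ throughout $\Omega'$, and in particular at $X_0 \in \Omega\setminus B(q,2r) \subset \Omega'$.

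The main obstacle I expect is organizing the geometric setup for the lower bound so that Bourgain's non-degeneracy applies simultaneously with the Green function lower bound \eqref{eqn:glb} on the inner sphere $\partial B(A,\rho)$: the former requires the evaluation point to lie in a ball $B(q,r/2)$, while the latter requires it to lie within $(7/8)\delta(A)$ of $A$. This is easily fixed by slightly adjusting the scale, e.g.~replacing $A = A(q,r)$ in the argument by a corkscrew point $A' = A(q, r/C)$ for a suitable constant $C$, since by the Harnack chain condition together with interior Harnack, $G(X_0,A') \approx G(X_0,A)$ and $\omega^{X_0}(\Delta(q,r/C)) \leq \omega^{X_0}(\Delta(q,r))$. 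A secondary technical point, namely the justification of the maximum principle in $\Omega'$ when $\Omega$ may be unbounded, is handled by exhausting with bounded pieces, using that $G(\cdot,A)$ decays at infinity.
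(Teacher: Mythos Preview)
The paper does not actually prove this lemma: it is stated in Section~\ref{sPrelim} as one of a list of standard estimates for uniform domains with the CDC, with the proofs deferred to the references \cite{JK}, \cite{Ke}, and the forthcoming \cite{HMT2}. Your argument is precisely the classical CFMS proof found in those sources (the upper bound via the representation formula \eqref{eqn:int-parts}, a cutoff, Caccioppoli, and the boundary Harnack inequality; the lower bound via Bourgain's estimate and the maximum principle on $\Omega\setminus \overline{B(A,\rho)}$), so there is nothing to compare.

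One small technical point worth tightening in your upper bound: with the cutoff supported in $B(q,3r/2)$, the Caccioppoli step forces you to control $\sup G(\cdot,X_0)$ on a set slightly larger than $B(q,r)\cap\Omega$, whereas Lemma~\ref{harn-princ} as stated (hypothesis: solution vanishes on $\Delta(q,2r)$; conclusion: control on $B(q,r)$) only directly gives you $B(q,r)$. Since $X_0$ may lie in $B(q,4r)\setminus B(q,2r)$, you cannot simply enlarge the scale in Lemma~\ref{harn-princ}. The standard fix is either to cover the slightly larger region by boundedly many balls $B(q_i,r/C)$ with $q_i\in\partial\Omega$, apply Lemma~\ref{harn-princ} in each, and use Harnack chains to compare the resulting corkscrew values with $G(X_0,A)$; or to first prove \eqref{eq:CFMS} for $X_0\in\Omega\setminus B(q,Cr)$ with a large structural $C$ and then extend to $X_0\in\Omega\setminus B(q,2r)$ via Harnack chains and Lemma~\ref{lm:doubling}. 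Either way the adjustment is routine and your outline is correct.
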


\medskip


%
%

\begin{lemma}[Doubling property of the elliptic measure]\label{lm:doubling}
	For every $q\in\partial\Omega$ and  $0<r<\diam(\partial\Omega)/4$,  we have
	\begin{equation}\label{eq:doubling}
		\omega^X(\Delta(q,2r)) \leq C \omega^X(\Delta(q,r))
	\end{equation}
	for any $X\in\Omega \setminus B(q,4r)$.
\end{lemma}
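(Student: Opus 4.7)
The plan is a standard two-step reduction: first, via the CFMS estimate (Lemma \ref{lm:CFMS}), convert the doubling statement into a comparison of the Green function at two corkscrew points; second, execute that comparison via a Harnack chain joining the two points. Fix $X \in \Omega \setminus B(q, 4r)$ and let $A_1 := A(q, r)$ and $A_2 := A(q, 2r)$ denote corkscrew points relative to $\Delta(q, r)$ and $\Delta(q, 2r)$ respectively. By Definition \ref{def:ICC}, $A_1, A_2 \in B(q, 2r)$ with $\delta(A_i) \gtrsim r$, and $|X - A_i| \geq |X - q| - |A_i - q| \geq 4r - 2r = 2r$.

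First, I would apply Lemma \ref{lm:CFMS} at both scales $r$ and $2r$ to write
\[\omega^X(\Delta(q, r)) \,\approx\, r^{n-2}\, G(X, A_1), \qquad \omega^X(\Delta(q, 2r)) \,\approx\, r^{n-2}\, G(X, A_2),\]
with implicit constants depending only on the allowable constants. Thus the doubling inequality \eqref{eq:doubling} reduces to the one-sided comparison $G(X, A_2) \lesssim G(X, A_1)$.

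Second, I would invoke the symmetry of $\mathcal{A}$ to note that $Y \mapsto G(X, Y)$ is a non-negative solution of $Lv = 0$ in $\Omega \setminus \{X\}$. Since $|A_1 - A_2| \leq 4r$ and $\delta(A_1), \delta(A_2) \gtrsim r$, the ratio in \eqref{cond:Lambda} is bounded by a constant depending only on $M$; hence by the Harnack chain condition (Definition \ref{def:HCC}) there is a chain of at most $K = K(M, C_1)$ balls $B_1, \dots, B_K \subset \Omega$ joining $A_1$ to $A_2$, each satisfying $\diam(B_k) \approx \dist(B_k, \partial\Omega) \approx r$. Since $X$ lies at distance at least $2r$ from each $A_i$ and the chain is contained in $B(q, Cr)$ for some absolute $C$, after slightly shrinking the $B_k$ (if necessary) we may assume $X \notin 2 B_k$ for every $k$; the interior Harnack inequality, iterated at most $K$ times along the chain, then yields $G(X, A_1) \approx G(X, A_2)$, which combined with the CFMS estimates above gives \eqref{eq:doubling}.

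The only technical hurdle I anticipate is the range $\diam(\partial\Omega)/M \leq r < \diam(\partial\Omega)/4$, where Lemma \ref{lm:CFMS} is not directly applicable as stated. However, this corresponds to only $O(\log_2 M)$ dyadic scales, and for such scales the doubling estimate follows directly: $\omega^X(\Delta(q, 2r)) \leq 1$, while $\omega^X(\Delta(q, r))$ is bounded below by a positive constant depending only on allowable constants, either by Bourgain's non-degeneracy estimate (Lemma \ref{lm:Bourgain}) combined with a boundary Harnack chain (Lemma \ref{harn-princ}) transferring the lower bound from a point near $\Delta(q, r)$ to the given $X$, or by invoking the change of pole formula (Lemma \ref{lm:cop}).
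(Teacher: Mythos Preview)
The paper does not actually prove Lemma~\ref{lm:doubling}; it is one of several PDE lemmas stated in Section~\ref{sPrelim} with proofs deferred to \cite{HMT2}. Your CFMS-plus-Harnack-chain approach is the standard one and is essentially correct for the main range $0<r<\diam(\partial\Omega)/M$.

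Two technical points deserve tightening. First, the Harnack chain joining $A_1$ to $A_2$ need not avoid $X$: the chain lies in $B(q,C'r)$ for some $C'=C'(M,C_1)$ that may well exceed $4$, so ``slightly shrinking the $B_k$'' does not guarantee $X\notin B_k$. The paper supplies exactly the tool you need in Lemma~\ref{lemm:NC-avoid}, which produces a Harnack chain in $\Omega$ that avoids $\tfrac12 B_{X_0}$ for any prescribed $X_0$.

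Second, your primary fix for the range $\diam(\partial\Omega)/M\le r<\diam(\partial\Omega)/4$ has a genuine gap: the claim that $\omega^X(\Delta(q,r))$ is bounded below by a positive constant is false in general, since $X\in\Omega\setminus B(q,4r)$ may lie arbitrarily close to $\partial\Omega\setminus\Delta(q,2r)$, forcing $\omega^X(\Delta(q,r))$ (and $\omega^X(\Delta(q,2r))$) to be small by Lemma~\ref{lem:vanishing}. Your alternative via the change-of-pole formula (Lemma~\ref{lm:cop}) does work: applying it at scale $2r$ with $F=\Delta(q,2r)$ and $F'=\Delta(q,r)$ reduces the question to bounding $\omega^{A(q,2r)}(\Delta(q,r))$ from below, which follows from Lemma~\ref{lm:Bourgain} and a (pole-free) interior Harnack chain connecting $A(q,2r)$ to a corkscrew point in $B(q,r/2)$.
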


\begin{remark}\label{rem:doubling:needed}
 The following simple observation will be useful. If $M$ denotes the corkscrew constant for $\Omega$, it follows easily from the previous result,  Lemma \ref{eqn:1.2} and Harnack's inequality that 
\begin{equation}\label{doubling:needed}
\omega^X(\Delta(q,2r)) \leq C_2\omega^X(\Delta(q,r)),
\end{equation}
for every $q\in\partial\Omega$, $0<r<\diam(\partial\Omega)/4$ and for all $X\in\Omega$ with $\delta(X)\ge r/(2M)$. Here $C_2$ is a constant that depends on the allowable parameters associated with $\Omega$ and the ellipticity constants of $L$. 

\end{remark}

Our next result establishes that if a domain satisfies the Harnack chain condition then we can modify the chain of balls so that they avoid a non-tangential balls inside:

\begin{lemma}\label{lemm:NC-avoid}
Let $\Omega\subset \mathbb{R}^n$ be an open set satisfying the Harnack chain condition with constants $M, C_1>1$. Given $X_0\in\Omega$, let $B_{X_0}=B(X_0,\delta(X_0)/2)$. For every $X,Y\in \Omega\setminus \overline{B_{X_0}}$, if  we set $\Pi=|X-Y|/\min\{\delta(X), \delta(Y)\}$, then 
there is a chain of open Harnack balls $B_1, B_2, \dots, B_K\subset \Omega$ with $K \leq  100 (M+C_1^2)(2+\log_2^+ \Pi)$ that connects $X$ to $Y$. Namely, $X\in B_1$, $Y\in B_K$, $B_k\cap B_{k+1}\neq\emptyset$ for every $1\le k\le K-1$ 
and for every $1\le k\le K$
\begin{equation}\label{preHarnackball:new}
 	(100\,C_1)^{-2} \diam(B_k) \leq \dist(B_k,\partial\Omega) \leq 100\,C_1^2 \diam(B_k).
\end{equation}
Moreover, $B_k\cap \frac12 B_{X_0}=\emptyset$ for every $1\le k\le K$.
\end{lemma}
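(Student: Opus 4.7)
I would prove this by starting with the Harnack chain supplied by Definition~\ref{def:HCC} applied to $X,Y$ in $\Omega$, and then surgically rerouting it around $B_{X_0}$ wherever it intrudes on $\tfrac{1}{2}B_{X_0}$. Let $\tilde B_1,\dots,\tilde B_{\tilde K}\subset\Omega$ be the chain, with $\tilde K\le M(2+\log_2^+\Pi)$ and NT ratio $C_1$. If none of the $\tilde B_k$ meets $\tfrac{1}{2}B_{X_0}$, we are done with $K=\tilde K$. Otherwise set $k_0=\min\{k:\tilde B_k\cap\tfrac{1}{2}B_{X_0}\ne\emptyset\}$ and $k_1=\max\{k:\tilde B_k\cap\tfrac{1}{2}B_{X_0}\ne\emptyset\}$; the plan is to excise the sub-chain $\tilde B_{k_0},\dots,\tilde B_{k_1}$ and graft on a detour of $N_0=N_0(C_1)$ balls around $B_{X_0}$.

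The bad balls obey tight size bounds: if $p\in\tilde B_k\cap\tfrac{1}{2}B_{X_0}$, then $\delta(p)\in[\tfrac{3}{4}\delta(X_0),\tfrac{5}{4}\delta(X_0)]$, while the NT condition gives $2C_1^{-1}\tilde r_k\le\delta(p)\le(2C_1+2)\tilde r_k$, whence $\tilde r_k\simeq_{C_1}\delta(X_0)$ and $\tilde B_k\subset B(X_0,C(C_1)\delta(X_0))$. Consecutive intersecting Harnack balls have comparable radii (consider $\delta$ at their intersection point), so the same size bounds extend to $\tilde B_{k_0-1}$ and $\tilde B_{k_1+1}$ (or to $X,Y$ themselves when $k_0=1$ or $k_1=\tilde K$); their centers lie in $B(X_0,C'(C_1)\delta(X_0))$ at distance $\gtrsim_{C_1}\delta(X_0)$ from $\tfrac{1}{2}B_{X_0}$.

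For the detour I use the spherical shell $A:=\{\tfrac{1}{3}\delta(X_0)\le|z-X_0|\le\tfrac{7}{8}\delta(X_0)\}$, which is contained in $\Omega$ (since $B(X_0,\delta(X_0))\subset\Omega$) and has distance $\ge\tfrac{1}{12}\delta(X_0)$ from $\tfrac{1}{2}B_{X_0}$. I would cover a planar great-circle path inside $A$ joining the two relevant radial directions by $O(1)$ balls of radius $\tfrac{1}{20}\delta(X_0)$ centered on $A$; these are trivially Harnack balls with NT ratio well within $100\,C_1^2$, and they avoid $\tfrac{1}{2}B_{X_0}$. Then splice this arc onto the original chain at each end by $O_{C_1}(1)$ additional Harnack balls of radius $\sim\delta(X_0)/C_1$ along a (possibly piecewise) radial path from the boundary center to $\partial B(X_0,\tfrac{7}{8}\delta(X_0))$. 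Thanks to the comparable-radii bound, the boundary center is at distance $\gtrsim_{C_1}\delta(X_0)$ from $\tfrac{1}{2}B_{X_0}$, and the splicing balls (chosen suitably small) simultaneously lie in $\Omega$, avoid $\tfrac{1}{2}B_{X_0}$, and satisfy the relaxed NT bound. Summing, $K\le\tilde K+N_0(C_1)\le 100(M+C_1^2)(2+\log_2^+\Pi)$, provided $N_0\le 200\,C_1^2$.

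\emph{The main obstacle} is the constant bookkeeping in the splicing step, particularly the edge case where the boundary center lies outside $B(X_0,\delta(X_0))$ and a straight radial segment may leave $\Omega$; in that case I would replace the straight segment by a short auxiliary Harnack chain in $\Omega$ (of length $O_{C_1}(1)$, since the relevant scales are all comparable to $\delta(X_0)$), reasoning as in the bad-ball size argument to ensure its balls stay at distance $\gtrsim_{C_1}\delta(X_0)$ from $\tfrac{1}{2}B_{X_0}$. The slack in the relaxed NT constant $100\,C_1^2$ vs.\ the original $C_1$, together with the explicit planar (hence dimension-independent) nature of the great-circle arc, is precisely what allows all three conditions to be realized at once within the stated ball budget.
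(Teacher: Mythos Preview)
Your overall strategy—keep the original Harnack chain away from $X_0$, excise the intruding sub-chain, and graft on a detour through the annulus—is exactly what the paper does. The difference is in how the splice is implemented, and there the paper's construction is both simpler and avoids the obstacle you flag.

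The paper detects bad balls with the \emph{larger} ball $B_{X_0}$ rather than $\tfrac12 B_{X_0}$: it sets $k_-,k_+$ to be the first and last indices with $\tilde B_k\cap B_{X_0}\neq\emptyset$, then picks $X_-\in\tilde B_{k_--1}\cap\tilde B_{k_-}$ (so $|X_--X_0|\ge\delta(X_0)/2$, not merely $\ge\delta(X_0)/4$). Crucially, it splices along the straight segment from $X_-$ to a first-hit point $Y_-\in\tilde B_{k_-}\cap\partial B_{X_0}$. Since $\tilde B_{k_-}$ is convex and contains both endpoints, the segment lies in $\tilde B_{k_-}\subset\Omega$ and stays outside $B_{X_0}$; covering it by balls of radius $\delta(X_0)/(16C_1)$ gives at most $32C_1^2$ balls, each automatically inside $\Omega$ and disjoint from $\tfrac12 B_{X_0}$. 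The arc joining $Y_-$ to $Y_+$ is then taken on $\partial B_{X_0}$ itself, with at most $64$ balls of radius $\delta(X_0)/16$. Your ``edge case'' never arises: the splicing segments are in $\Omega$ by convexity of the bad ball, with no need to control $\delta$ along a radial path that may exit $B(X_0,\delta(X_0))$.

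Your proposed fix for the edge case—replacing the radial segment by an auxiliary Harnack chain from Definition~\ref{def:HCC}—is circular as stated: nothing prevents \emph{that} chain from dipping into $\tfrac12 B_{X_0}$, and the ``bad-ball size argument'' only tells you balls meeting $\tfrac12 B_{X_0}$ have radius $\simeq_{C_1}\delta(X_0)$, not that they avoid it. There is also a constant issue: with your threshold $\tfrac12 B_{X_0}$, the boundary center sits at distance $\ge\tilde r_{k_0-1}$ from $\tfrac12 B_{X_0}$, but $\tilde r_{k_0-1}$ can be as small as $c\,\delta(X_0)/C_1^3$, so splicing balls small enough to miss $\tfrac12 B_{X_0}$ along a path of length up to $C\,C_1^3\delta(X_0)$ may require far more than $200\,C_1^2$ balls. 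Both problems disappear if you detect at $B_{X_0}$ and splice inside $\tilde B_{k_\pm}$ as the paper does.
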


\begin{proof}
Fix $X, Y$ as in the statement and without loss of generality we assume that $\delta(X)\le\delta(Y)$. Use the Harnack chain condition for $\Omega$ to construct the chain of balls $B_1,\dots, B_K$ as in Definition \ref{def:HCC}. If none of $B_k$ meets $B_{X_0}$ then there is nothing to do as this original chain satisfies all the required condition.
Hence we may suppose that some $B_k$ meets $B_{X_0}$. The main idea is that then we can modify the chain of balls by adding some small balls that surround $X_0$. To be more precise, we let $k_-$ and $k_+$ be respectively the first and last ball in the chain meeting $B_{X_0}$. Note that $1\le k_-\le k_+\le K$.

We pick $X_{-}\in B_{k_-}\setminus\overline{B_{X_0}}$:  If $k_-=1$ we let $X_{-}=X$ or if $k_->1$ we pick $X_{-}\in B_{k_--1}\cap B_{k_-}$. Since $B_{k_-}$ meets $B_{X_0}$ then we can find  $Y_{-}\in B_{k_-}\cap\partial B_{X_0}$ such that the open  segment joining $X_{-}$ and $Y_{-}$ is contained in $B_{k_-}\setminus\overline{B_{X_0}}$. Analogously we can find $X_{+}\in B_{k_+}\setminus\overline{B_{X_0}}$ and $Y_{+}\in B_{k_+}\cap\partial B_{X_0}$ such that the open  segment joining $X_{+}$ and $Y_{+}$ is contained in $B_{k_+}\setminus\overline{B_{X_0}}$.

Next set $r=\delta(X)/(16 C_1)$ and let $N_\pm \ge 0$ be such that $N_\pm \le |X_{\pm}-Y_{\pm}|/r < N_\pm +1$. For $j=0,\dots, N_\pm$, let 
$$
B_\pm^j=B(X_{\pm}^j,r),
\qquad
\text{where}\quad
X_{\pm}^j= X_\pm+jr \frac{Y_{\pm}-X_{\pm}}{|Y_{\pm}-X_{\pm}|}
$$
Straightforward arguments show that $N_\pm\le 32 C_1^2$, $X_\pm\in B_\pm^0$, $Y_\pm\in B_\pm^{N_\pm}$, 
$B_\pm^j\cap B_\pm^{j+1} \neq\emptyset$ for every $0\le j\le N_\pm-1$,  and
$$
(32 C_1^2)^{-1}\diam(B_\pm^j)
\le
\dist(B_\pm^j, \partial\Omega)
\le
32 C_1^2\diam(B_\pm^j),
\qquad
B_\pm^j\cap\frac12 B_{X_0}=\emptyset,
$$
for every $0\le j\le N_\pm-1$.

Next, since $X_{\pm}\in\partial B_{X_0}$ we can find a sequence of balls $B^0,\dots, B^{N}$ centered at $\partial B_{X_0}$ and with radius $\delta(X)/16$ (hence $B^j\cap\frac12 B_{X_0}=\emptyset$) so that $N\le 64$, $Y_-\in B^0$, $Y_+\in B^N$, $B^j\cap B^{j+1}\neq\emptyset$ for $0\le j\le N-1$ and $32^{-1}\le \dist(B^j, \partial\Omega)/\diam(B^j)\le 32$.

Finally, to form the desired Harnack chain we concatenate the sub-chains 
$\{B_1, \dots B_{k_--1}\}$, $\{B_{-}^0,\dots B_-^{N_-}\}$, $\{B^0,\dots B^N\}$, $\{B_+^{N},\dots, B_+^0\}$, $\{B_{k_++1},\dots B_K\}$ and the resulting chain have all the desired properties. To complete the proof we just need to observe that the length of the chain is controlled by $K+N_-+N+N_++3\le 100 (M+C_1^2)(2+\log_2^+ \Pi)$.
\end{proof}

\subsection{Compactness of closed sets and Radon measures}

The reader may be familiar with the notion of convergence of compact sets in the Hausdorff distance; for general closed sets, not necessarily compact, we use the following notion of convergence, see \cite[Section 8.2]{DS1} for details. (It was pointed out to us that this notion is also referred to as the Attouch-Wets topology, see for example \cite[Chapter 3]{Be}.)
\begin{defn}[Convergence of closed sets]\label{def:cvsets}
	Let $\{E_j\}$ be a sequence of non-empty closed subsets of $\RR^n$, and let $E$ be another non-empty closed subset of $\RR^n$. We say that $E_j$ converges to $E$, and write $E_j \to E$, if
	\[ \lim_{j\to \infty} \sup_{x\in E_j \cap B(0,R)} \dist(x, E) = 0 \]
	and
	\[ \lim_{j\to \infty} \sup_{x\in E \cap B(0,R)} \dist(x, E_j) = 0 \]
	for all $R>0$. By convention, these suprema are interpreted to be zero when the relevant sets are empty.
\end{defn}

We remark that in the above definition, we may replace the balls $B(0,R)$ by arbitrary balls in $\RR^n$. The following compactness property has been proved in \cite[Lemma 8.2]{DS1}:
\begin{lemma}[Compactness of closed sets]\label{lm:cptHd}
	Let $\{E_j\}$ be a sequence of non-empty closed subsets of $\RR^n$, and suppose that there exists an $r>0$ such that $E_j \cap B(0,r) \neq \emptyset$ for all $j$. Then there is a subsequence of $\{E_j\}$ that converges to a nonempty closed subset $E$ of $\RR^n$ in the sense defined above.
\end{lemma}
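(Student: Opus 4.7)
My plan is to reduce the problem, via a diagonal argument over compact truncations, to the classical Blaschke selection theorem. For each integer $m\ge \lceil r\rceil$, the truncations $K_j^m:=E_j\cap\overline{B(0,m)}$ are non-empty compact subsets of the compact ball $\overline{B(0,m)}$, non-emptiness being guaranteed by the hypothesis. Blaschke's theorem says that the collection of non-empty compact subsets of $\overline{B(0,m)}$ is itself compact in the Hausdorff distance, so along a subsequence $K_j^m$ converges in Hausdorff distance to some non-empty compact set $F_m\subset\overline{B(0,m)}$. A standard diagonal extraction then produces one subsequence, still denoted $\{E_j\}$, with this convergence holding simultaneously for every integer $m\ge\lceil r\rceil$. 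Unpacking Hausdorff convergence gives the natural compatibility $F_m\subset F_{m+1}$, since any point in $F_m$ is a limit of points in $K_j^m\subset K_j^{m+1}$.

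I would take $E:=\overline{\bigcup_m F_m}$ as the candidate limit; it is closed by construction and non-empty because each $F_m$ is. To verify the two suprema in Definition~\ref{def:cvsets} tend to zero, I fix $R>0$ and pick an integer $m>R$. The first supremum is immediate: $E_j\cap B(0,R)\subset K_j^m$ and $F_m\subset E$, so it is bounded by $\sup_{x\in K_j^m}\dist(x,F_m)$, which vanishes as $j\to\infty$.

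The main (though mild) technical point is showing the reverse inclusion $E\cap B(0,R)\subset F_m$, from which the second supremum is controlled by $\sup_{x\in F_m}\dist(x,K_j^m)\to 0$ using $K_j^m\subset E_j$. I would argue as follows: any $x\in E\cap B(0,R)$ is a limit of points $y_k\in F_{m_k}$ for various $m_k$; by Hausdorff convergence, each $y_k$ is $\epsilon$-approximated by elements of $K_j^{m_k}$ for large $j$, and once the approximation is tighter than $m-|y_k|$ the approximants necessarily lie in $\overline{B(0,m)}$, hence in $K_j^m$, which forces $\dist(y_k,K_j^m)\to 0$. The Hausdorff convergence $K_j^m\to F_m$ together with the closedness of $F_m$ then gives $y_k\in F_m$, and a final limit in $k$ yields $x\in F_m$. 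This compatibility between truncations at different scales is really the only subtle point in the argument, and it is what promotes the Hausdorff limits of truncations into a genuine limit of the full sequence $\{E_j\}$ in the sense of Definition~\ref{def:cvsets}.
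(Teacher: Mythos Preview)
The paper does not give its own proof of this lemma; it simply cites \cite[Lemma 8.2]{DS1}. Your argument via Blaschke selection on the truncations $K_j^m=E_j\cap\overline{B(0,m)}$ together with a diagonal extraction is correct and is the standard route to this compactness result, so there is nothing substantive to compare.

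Your handling of the one genuinely delicate point---showing $E\cap B(0,R)\subset F_m$ for $m>R$---is right: the key observation is that any $y\in F_{m'}$ with $|y|<m$ already lies in $F_m$, because approximants from $K_j^{m'}$ eventually fall inside $\overline{B(0,m)}$ and hence in $K_j^m$, and Hausdorff convergence of $K_j^m$ to $F_m$ then forces $y\in F_m$. Once that compatibility is in hand, both suprema in Definition~\ref{def:cvsets} are controlled by the Hausdorff distances $d_H(K_j^m,F_m)$, exactly as you say.
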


Given a Radon measure $\mu$ on $\R^n$ (i.e., a non-negative Borel such that the measure of any compact set is finite) we define
$$
\spt\mu = \overline{\big\{ x\in\RR^n: \mu(B(x,r)) > 0 \text{ for any }r>0 \big\}}.
$$

\begin{defn}\label{def:mu-ADR}
We say that a Radon measure $\mu
$ on $\R^n$  is Ahlfors regular with constant $C\ge 1$, if there exits a constant $C\ge 1$ such that for any $x\in E$ and $0<r<\diam(E)$,
	\[ C^{-1}\, r^{n-1} \leq \mu(B(q,r)) \leq C\, r^{n-1},
	\qquad
	\forall\,x\in\spt\mu,\ 0<r<\diam(\spt\mu).	
	\]
\end{defn}

\begin{defn}\label{def:wcvm}
	Let $\{\mu_j\}$ be a sequence of Radon measures on $\RR^n$. We say $\mu_j$ converge weakly to a Radon measure $\mu_{\infty}$ and write $\mu_j \rightharpoonup \mu_{\infty}$, if 
	\[ \int f d\mu_j \to \int f d\mu_{\infty} \]
	for any $f\in C_c(\R^n)$.
\end{defn}

We finish this section by stating a compactness type lemma for Radon measures which are uniformly doubling and ``bounded below''.

\begin{lemma}[{\cite[Lemma 2.19]{TZ}}]\label{lm:sptcv}
Let $\{\mu_j\}_j$ be a sequence of Radon measures. Let $A_1, A_2>0$ be fixed constants, and assume the following conditions:
\begin{enumerate}[label=\textup{(\roman*)}, itemsep=0.2cm] 
		\item\label{1-lm:sptcv} $0\in \spt\mj$ and $\mj(B(0,1)) \geq A_1$ for all $j$,
		
		\item\label{2-lm:sptcv} For all $j\in\NN$, $q\in \spt \mj$ and $r>0$,
			\begin{equation}\label{unif-doubling}
				\mu_j(B(q,2r))\le A_2\mu_j(B(q,r))
			\end{equation}
\end{enumerate}
If there exists a Radon measure $\minf$ such that $\mj \rightharpoonup \minf$, then $\mu_\infty$ is doubling and
	\begin{equation}\label{eqn:spt-conv}
	 \spt \mj \to \spt \minf,
	 	 \end{equation}
	 	 in the sense of Definition \ref{def:cvsets}.
\end{lemma}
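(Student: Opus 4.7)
My plan is to combine the Portmanteau theorem for weakly convergent Radon measures with iterated applications of the doubling inequality \eqref{unif-doubling} at points of $\spt\mu_j$. The conclusion splits into two statements---convergence of supports $\spt\mu_j \to \spt\mu_\infty$ in the sense of Definition \ref{def:cvsets}, and doubling of $\mu_\infty$---and I would establish the support convergence first, then deduce doubling from it.

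The key preliminary is a uniform non-degeneracy estimate: if $q_j \in \spt\mu_j$ and $|q_j| \le R$ for all $j$, then for every $r > 0$ there is a constant $c = c(r, R, A_1, A_2) > 0$ with $\mu_j(B(q_j, r)) \ge c$. Indeed $B(0, 1) \subset B(q_j, R + 2)$, so hypothesis \ref{1-lm:sptcv} gives $\mu_j(B(q_j, R+2)) \ge A_1$, and iterating \eqref{unif-doubling} backward $N$ times at $q_j \in \spt\mu_j$, with $N$ chosen so that $(R+2)\,2^{-N} \le r$, produces $\mu_j(B(q_j, r)) \ge A_2^{-N} A_1$. This is the mechanism that transports the mass bound near the origin granted by \ref{1-lm:sptcv} into lower mass bounds near arbitrary points of the supports, which is exactly what is needed to pass the support to the weak limit.

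Next I would prove two pointwise support statements. (a) If $x \in \spt\mu_\infty$ and $r > 0$, then $\mu_\infty(B(x, r)) > 0$, so Portmanteau lower-semicontinuity on open sets $\mu_\infty(B(x, r)) \le \liminf_j \mu_j(B(x, r))$ forces $\mu_j(B(x, r)) > 0$ eventually, hence $\spt\mu_j \cap B(x, r) \ne \emptyset$ and $\dist(x, \spt\mu_j) \to 0$. (b) Conversely, if $y_{j_k} \in \spt\mu_{j_k}$ and $y_{j_k} \to y$, the non-degeneracy estimate supplies $c > 0$ with $\mu_{j_k}(\overline{B(y, r)}) \ge \mu_{j_k}(\overline{B(y_{j_k}, r/2)}) \ge c$ for all large $k$, and Portmanteau upper-semicontinuity on closed sets $\mu_\infty(\overline{B(y, r)}) \ge \limsup_k \mu_{j_k}(\overline{B(y, r)})$ then forces $y \in \spt\mu_\infty$. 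The uniform convergence on the balls $B(0, R)$ in Definition \ref{def:cvsets} follows by combining (a) and (b): cover the compact set $\spt\mu_\infty \cap \overline{B(0, R)}$ by finitely many $\epsilon/2$-balls centered at points of $\spt\mu_\infty$ and apply (a) at each center for the second sup in Definition \ref{def:cvsets}; argue by contradiction, using Bolzano--Weierstrass and (b), for the first sup.

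Finally I would derive doubling of $\mu_\infty$. Fix $q \in \spt\mu_\infty$ and $r > 0$; by (a) pick $q_j \in \spt\mu_j$ with $q_j \to q$. For each small $\epsilon > 0$ and all $j$ large enough one has $B(q, 2r - \epsilon) \subset B(q_j, 2r)$ and $B(q_j, r) \subset \overline{B(q, r + \epsilon)}$, so \eqref{unif-doubling} at $q_j$ yields
\[
\mu_j(B(q, 2r - \epsilon)) \;\le\; \mu_j(B(q_j, 2r)) \;\le\; A_2\, \mu_j(B(q_j, r)) \;\le\; A_2\, \mu_j(\overline{B(q, r + \epsilon)}).
\]
Taking $\liminf_j$ on the far left (Portmanteau on the open set) and $\limsup_j$ on the far right (Portmanteau on the closed set) produces $\mu_\infty(B(q, 2r - \epsilon)) \le A_2\, \mu_\infty(\overline{B(q, r + \epsilon)})$. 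Replacing $r$ by $r - \delta$, sending $\delta \downarrow 0$, and invoking continuity of $\mu_\infty$ from below on $B(q, 2r)$ and from above on $\overline{B(q, r)}$, I arrive at $\mu_\infty(B(q, 2r)) \le A_2\, \mu_\infty(B(q, r))$. The main technical subtlety throughout is to align open versus closed balls in the Portmanteau inequalities so that the doubling constant $A_2$ passes to the limit without degradation; once the support convergence is in hand, this is really the only obstacle.
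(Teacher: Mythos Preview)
Your argument is correct. The paper does not supply its own proof of this lemma; it is quoted verbatim from \cite[Lemma~2.19]{TZ}, so there is no ``paper's approach'' to compare against. Your route---a uniform lower mass bound via iterated doubling at points of $\spt\mu_j$, then the two Portmanteau inequalities (lower semicontinuity on open sets, upper semicontinuity on compact sets, as in \cite[Theorem~1.24]{Ma}) to handle the two directions of support convergence, followed by a limiting argument for doubling---is the natural one and matches the standard proof. One small cosmetic point: in the final step you obtain $\mu_\infty(B(q,2r-\epsilon)) \le A_2\,\mu_\infty(\overline{B(q,r+\epsilon)})$ and then want open balls on both sides with the same constant $A_2$; the cleanest way to finish is to note that this inequality holds for \emph{every} $r>0$, so applying it with $r'<r$ gives $\mu_\infty(B(q,2r'))\le A_2\,\mu_\infty(\overline{B(q,r')}) \le A_2\,\mu_\infty(B(q,r))$ whenever $r'<r$, and letting $r'\nearrow r$ yields $\mu_\infty(B(q,2r))\le A_2\,\mu_\infty(B(q,r))$.
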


\part{Small constant case}\label{part:small}

The plan of Part \ref{part:small} is as follows. We prove Theorem \ref{thm:main:I} by contradiction using a compactness argument. More precisely, we assume that there is a family of bounded uniform domains with Ahlfors regular boundaries and a family of divergence form elliptic operators with associated elliptic measures in the class $A_\infty$ and with all the implicit constants uniformed controlled. We also have that the oscillations of the coefficients convergence to 0 and that the exterior corkscrew condition fails for each domain in the family. This is all detailed in Section~\ref{comp-tt}. The goal is to reach a contradiction and with that goal in mind we show in Section~\ref{sect:blowup} that passing to a subsequence there are a limiting domain which is uniform and has Ahlfors regular boundary and a limiting constant coefficient elliptic operator in that limiting domain whose associated elliptic measure belongs to the  class $A_\infty$. With this in hand in Section~\ref{section:proof-main} we are in a position to apply Theorem~\ref{thm:hmu} to obtain that the limiting domain satisfies the exterior corkscrew. This in turn leads us to a contradiction since the exterior corkscrew condition fails for any of the domains in the family.

\section{Compactness argument}\label{comp-tt}

To prove Theorem~\ref{thm:main:I} we will proceed by 
contradiction. First we discuss the constant $N$. 
Recall that, as noted above, the assertion that \ref{1-thm:hmu} implies \ref{3-thm:hmu} in Theorem \ref{thm:hmu}
extends routinely to all constant coefficient second order elliptic operators; alternatively, this fact follows
from the results of \cite{HMT1}
as \eqref{HMT-CM} and \ref{H1} holds trivially in the constant coefficient case.
Thus given values of the allowable constants 
$M, C_1, C_{AR},\Lambda/\lambda$, 
$C_0,\theta$, 
let $\Omega\subset\mathbb{R}^n$, $n\ge 3$, be a uniform domain with constants $M, C_1$, 
whose boundary is Ahlfors regular with constant $C_{AR}$, and let
$L = -\divg(\mathcal{A}_0\nabla)$ be a constant coefficient elliptic operator where the constant 
real symmetric matrix 
$\mathcal{A}_0$ satisfies \eqref{def:UE} with ellipticity constants $\lambda, \Lambda$,
and such that the corresponding elliptic measure $\omega_{L} \in A_{\infty}(\sigma)$ with 
constants $C_0$ and $\theta$.  Then
there exists a constant $N_0=N_0(M, C_1, C_{AR},\Lambda/\lambda,C_0,\theta)$  
such that  $\Omega$ satisfies the exterior corkscrew condition 
with constant $N_0$. We underline that this $N_0$ depends on the ratio of the ellipticity constants rather than the matrix $\mathcal{A}_0$ per se. 

With this in mind, set
 \begin{equation}\label{N}
 N=4N_0(4M,2C_1, 2^{5(n-1)}C_{AR}^2,\Lambda/\lambda, C_0  C_2 C_{AR}^{4\theta} 2^{8(n-1)\theta},\theta)
 \end{equation}
 where the constant 
$C_2=C_2(M,C_1, C_{AR}, \Lambda/\lambda)$ can be found in Remark \ref{rem:doubling:needed}.


We now state the contradiction hypothesis: 
for fixed $n\ge 3$, we
suppose that there exists a set of allowable constants $M, C_1, C_{AR}>1$, 
$\Lambda \ge \lambda = 1$,
$C_0>1$ and $0<\theta<1$, and a sequence $\epsilon_j$ (with $\epsilon_j \to 0$ as $j\to\infty$), so that the following holds:


\begin{enumerate}[label=\textup{\textbf{Assumption (\alph*):}},ref=\textup{\textbf{Assumption (\alph*)}}, itemsep=0.2cm, wide, leftmargin=1cm] 

	\item\label{1-assump} For each $j$ there is a bounded domain $\Omega_j\subset \mathbb{R}^n$, which is  uniform with constants $M, C_1$ and whose boundary is Ahlfors regular with constant $C_{AR}$. Also, there is an elliptic matrix  $\mathcal{A}_j$ defined on $\Oj$, with ellipticity constants $\lambda=1$ and $\Lambda$, and we write $L_j = -\divg(\mathcal{A}_j\nabla)$.
	

	\item\label{2-assump} $\osc(\Omega_j, \mathcal{A}_j) <\epsilon_j$ (see \eqref{def:oscA}).

	\item\label{3-assump} The elliptic measure of the operator $L_j$ in $\Omega_j$ is of class $A_{\infty}$ with respect to the surface measure $\sj=\HH^{n-1}|_{\partial\Omega_j}$ with constants $C_0$ and $\theta$ (see Definition \ref{def:AinftyHMU}).

\item[\textup{\textbf{Contrary to conclusion:}}] For each $j$ there is $q_j\in\partial\Omega_j$ and $0<r_j<\diam(\pOj)$ such that $\Omega_j$ has no exterior corkscrew point with constant $N$ (as in \eqref{N}). That is, there is no ball of radius $r_j/N$ contained in $B(q_j,r_j)\setminus\overline{\Omega_j}$.
\end{enumerate}


Our goal is to obtain a contradiction and as a consequence our main result Theorem~\ref{thm:main:I} will be proved.
Without loss of generality we may assume $q_j = 0$ and $r_j = 1$ for all $j$, hence $\diam(\partial\Omega_j)>1$. Otherwise,  we just replace the domain $\Omega_j$ by $(\Omega_j - q_j)/r_j$, and replace the elliptic matrix $\wcalA_j(\cdot)$ by $\wcalA_j(q_j + r_j \cdot)$. Note that the new domain and matrix have the same allowable constants, in particular the corresponding $A_{\infty}$ constants stay the same by the scale-invariant nature of Definition \ref{def:AinftyHMU}; moreover after rescaling, the above \ref{2-assump} is still satisfied:
\[ \osc\left(\frac{\Omega_j - q_j}{r_j}, \wcalA_j(q_j + r_j \cdot) \right) = \osc(\Omega_j, \wcalA_j)<\epsilon_j. \]

\section{Limiting domains}\label{sect:blowup}
We want to use a compactness argument similar to the blow-up argument in \cite{TZ}. The crucial difference is that in \cite{TZ}, the elliptic operator tends to a constant-coefficient operator as we zoom in on the boundary and blow up the given domain; whereas here we need to work with a \textit{sequence} of domains and their associated elliptic operators. In particular the geometric convergence of domains does not come for free, and more work is needed to analyze the limiting domain.

To be more precise, getting to the point where we can apply Theorem \ref{thm:hmu} (more precisely, its extension to the elliptic operators with constants coefficients or alternatively \cite{HMT1} applied again to constant coefficient operators) requires showing first that if $\Oinf$ is a ``limiting domain''  of the domains $\{\Omega_j\}$'s, then $\Oinf$ is an unbounded or bounded uniform domain with Ahlfors regular boundary. To accomplish this we also need to find the limit of the Green functions. Once we have this, to show that $\omega_{L_\infty} \in A_{\infty}(\sigma_{\infty})$ for the limiting domain $\Oinf$ and the limiting operator $L_{\infty}$, we need to construct the elliptic measure $\omega_{L_\infty}^Z$ for any $Z\in\Oinf$ as a limiting measure compatible with the procedure. We will also show that $L_\infty$ is an elliptic operator with constants coefficients.

Throughout the rest of paper we follow the following conventions in terms of notations:
\begin{itemize}\itemsep=0.2cm
\item For any $Z\in\Omega_j$ we write $\delta_j(Z) = \dist(Z,\partial\Omega_j)$.

\item For any $q\in\partial\Omega_j$ and $r\in (0, \diam( \partial\Omega_j))$, we use $A_j(q,r)$ to denote a corkscrew point in $\Omega_j$ relative to  $B(q,r)\cap\partial\Omega_j$, i.e.,
		\begin{equation} \label{nta-pt}
			B\left(A_j(q,r), \frac{r}{M} \right) \subset B(q,r) \cap \Oj.
		\end{equation}

\end{itemize}

\subsection{Geometric limit}
Since $\diam(\pOj) > 1$, modulo passing to a subsequence, one of the following two scenarios occurs:

\begin{enumerate}[label=\textup{\textbf{Case \Roman*:}} , ref=\textup{\textbf{Case \Roman*}}, itemsep=0.2cm, align=left, leftmargin=2cm] 
	
\item\label{CaseI} $\diam(\Oj) = \diam(\pOj) \to \infty $ as $j\to\infty$.

\item\label{CaseII} $\diam(\Oj)=\diam(\pOj) \to R_0 \in [1,\infty)$ as $j\to\infty$.

\end{enumerate}

Therefore if $\Oj$ ``converges'' to a limiting domain $\Oinf$, respectively \ref{CaseI} and \ref{CaseII} indicate that $\Oinf$ is unbounded or bounded.

Let $X_j\in \Omega_j$ be a corkscrew point relative to  $B(0,\diam(\Oj)/2)\cap\partial\Omega_j$, then
\begin{equation}\label{eq:polej}
	|X_j| \sim \wdj(\wXj) \sim \diam(\Oj),
\end{equation} 
with constants depending on the uniform constant $M$.
Let $G_j$ be the Green function associated with $\Oj$ and the operator $L_j = -\divg(\wcalA_j \nabla)$, and $\{\oj^X\}_{X\in\Oj}$ be the corresponding elliptic measure. 
In  \ref{CaseI}  we have
\begin{equation}\label{eq:temp1}
	|\wXj| \sim \wdj(\wXj) \sim \diam(\Oj) \to \infty,
\end{equation}
i.e., the poles $\wXj$ tend to infinity eventually. We let
\begin{equation}\label{def:ujcasei}
	u_j(Z) = \frac{G_j(X_j,Z)}{\oj^{X_j}(B(0,1))}.
\end{equation}
In \ref{CaseII}, we may assume that $\diam(\Oj) \sim R_0$ for all $j$ sufficiently large (one could naively rescale again so that $R_0=1$, should that be the case one may lose the property that $r_j=1$ for all $j$). Hence, there are constants $0<c_1<c_2$ such that
\begin{equation}\label{eq:wXjcase2}
	c_1 R_0 \leq \wdj(\wXj) \le |\wXj| \leq c_2 R_0 \quad \text{ for all } j \text{ sufficiently large}.
\end{equation}
Thus modulo passing to a subsequence, $\wXj$ converges to some point $X_0$ satisfying 
\begin{equation}\label{eq:polecase2}
	c_1 R_0 \leq |X_0| \leq c_2 R_0.
\end{equation}
Note that \eqref{eq:wXjcase2} and \eqref{eq:polecase2} in particular imply that for any $\rho$ sufficiently small (depending on $R_0$ and $c_1, c_2$), the ball $B(X_0,\rho)$ is contained in $\Oj$ and $\dist(B(X_0,\rho),\pOj)\ge c_1 R_0/2$.
In this case we let
\begin{equation}\label{def:ujcaseii}
	u_j(Z) = G_j(X_j,Z).
\end{equation}

Our next goal is to describe what happens with the objects in question as we let $j\to\infty$. 
This is done in Theorems \ref{thm:pseudo-blow-geo}, \ref{thm:AW11}, \ref{thm:blow-ana-pole} below.

\begin{theorem}\label{thm:pseudo-blow-geo}
\leavevmode
Under  \ref{1-assump}, and using the notation above, we have the following properties (modulo passing to a subsequence which we relabel):
\begin{enumerate}[label=\textup{(\arabic*)}, itemsep=0.2cm] 
	\item\label{1-thm:pseudo-blow-geo} \ref{CaseI}: there is a function $u_\infty\in C(\R^n)$ such that $\uj \to u_\infty$ uniformly on compact sets; moreover $\nabla \uj\rightharpoonup\nabla u_\infty$ in $L^2_{\rm loc}(\R^n)$. 
	
\item \label{2-thm:pseudo-blow-geo} \ref{CaseII}: there is a function $u_\infty \in C(\R^n\setminus \{X_0\})$ such that $\uj \to u_{\infty}$ uniformly on compact sets in $\R^n \setminus\{X_0\}$ and $\nabla \uj \rightharpoonup \nabla u_{\infty}$ in $L_{\rm loc}^2(\R^n \setminus \{X_0\})$.

\item \label{3-thm:pseudo-blow-geo} Let $\Oinf=\{Z\in\R^n: u_\infty >0\}$\footnote{In \ref{CaseII}, see Remark \ref{rem:deawffr} part \ref{2-rem:deawffr} we extend $u_\infty$ to all of $\R^n$ by setting $u_\infty(X_0)=+\infty$.}. Then $\overline{\Oj}\to\overline{\Omega_\infty}$ and $\pOj\to \pOinf$, in the sense of Definition \ref{def:cvsets}. Moreover, $\Oinf$ is an unbounded set with  unbounded  boundary in \ref{CaseI}, and it is bounded with diameter $R_0 \geq 1$ in \ref{CaseII} .
	
	\item \label{4-thm:pseudo-blow-geo} $\Oinf$ is a nontrivial uniform domain with constants $4M$ and $2C_1$. 
	
	\item \label{5-thm:pseudo-blow-geo}  There is an Ahlfors regular measure $\mu_\infty$ with constant $2^{2(n-1)}C_{AR}$ such that $\sj \rightharpoonup \mu_\infty$. Moreover, $\spt\mu_\infty=\pOinf$. In particular, this implies that 
	\begin{equation}  \label{comp-mu-H}
	2^{-3(n-1)}C_{AR}^{-1} \minf \leq \mathcal{H}^{n-1}|_{\pOinf} \leq 2^{3(n-1)}C_{AR} \minf. 
	\end{equation}
	and hence $\partial\Omega_\infty$ is Ahlfors regular with constant $2^{5(n-1)}C_{AR}^2$.
\end{enumerate}

\end{theorem}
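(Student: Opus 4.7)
The plan is to extract subsequential limits of the functions $u_j$, the closed sets $\overline{\Omega_j}$ and $\partial\Omega_j$, and the surface measures $\sigma_j$, then verify that these limits are mutually compatible and enjoy the claimed quantitative properties. For \ref{1-thm:pseudo-blow-geo} and \ref{2-thm:pseudo-blow-geo}, I would first establish uniform local sup bounds on $u_j$. In \ref{CaseI}, since $|X_j| \sim \delta_j(X_j) \sim \diam(\Omega_j) \to \infty$, the pole is eventually far from any fixed ball, so the CFMS estimate Lemma \ref{lm:CFMS} combined with iteration of the doubling property (Lemma \ref{lm:doubling} and Remark \ref{rem:doubling:needed}) and Harnack's inequality controls $u_j$ on $B(0,R)$ uniformly in $j$; in \ref{CaseII} an analogous direct argument using CFMS works away from $X_0$. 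Equicontinuity of $\{u_j\}$ (extended by zero outside $\Omega_j$) then follows from the De Giorgi--Nash--Moser interior H\"older estimate together with the boundary vanishing estimate Lemma \ref{lem:vanishing}, so Arzel\`a--Ascoli yields a uniform limit $u_\infty$ along a subsequence. Caccioppoli's inequality then gives uniform $L^2_{\rm loc}$ bounds on $\nabla u_j$, so after passing to a further subsequence the gradients converge weakly in $L^2_{\rm loc}$.

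For \ref{3-thm:pseudo-blow-geo}, I would apply Lemma \ref{lm:cptHd} to extract subsequential closed-set limits $F$ of $\overline{\Omega_j}$ and $E$ of $\partial\Omega_j$, and identify $F = \overline{\Omega_\infty}$ and $E = \partial\Omega_\infty$ where $\Omega_\infty = \{u_\infty > 0\}$. If $X_j \in \partial\Omega_j$ with $X_j \to X$, then $u_j(X_j) = 0$ and uniform convergence give $u_\infty(X) = 0$, so $X \notin \Omega_\infty$. Conversely, if $u_\infty(X) > 0$, uniform convergence forces points near $X$ into $\Omega_j$ for large $j$, so $X \in F$. The remaining inclusion $F \subset \overline{\Omega_\infty}$ requires non-degeneracy: Lemma \ref{lm:CFMS} together with Lemma \ref{lm:Bourgain} furnishes a quantitative lower bound on $u_j$ at interior corkscrew points of $\Omega_j$, so every limit point of $\Omega_j$ is the limit of such corkscrew points, whose images lie in $\Omega_\infty$. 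Non-triviality is guaranteed by the contradiction hypothesis (so $0 \in \partial\Omega_\infty$), and the dichotomy of $\diam(\partial\Omega_j)$ gives the unbounded/bounded claim.

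For the uniform domain property \ref{4-thm:pseudo-blow-geo}, interior corkscrews for $\Omega_\infty$ are produced as follows: given $q \in \partial\Omega_\infty$ and $0 < r < \diam(\partial\Omega_\infty)$, approximate $q$ by $q_j \in \partial\Omega_j$ with $q_j \to q$, take $A_j = A_j(q_j, r/2)$, and let $A$ be a subsequential limit. The inclusion $B(A_j, r/(2M)) \subset B(q_j, r/2) \cap \Omega_j$ passes to the limit with a factor-of-two loss, because the limiting ball may meet $\partial\Omega_\infty$ and must shrink slightly; this yields the corkscrew constant $4M$. The Harnack chain is handled in the same spirit: given $X, Y \in \Omega_\infty$, approximate by $X_j, Y_j \in \Omega_j$ with comparable boundary distances, apply the Harnack chain condition in $\Omega_j$, and pass the chain ball by ball to the limit, absorbing the small deterioration into the constant $2C_1$.

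Finally, for \ref{5-thm:pseudo-blow-geo}, the sequence $\sigma_j$ has uniform polynomial growth and uniform doubling (by Ahlfors regularity), hence is weakly precompact on compact sets, yielding a Radon limit $\mu_\infty$. The identification $\spt \mu_\infty = \partial\Omega_\infty$ follows from the already-established closed-set convergence together with Lemma \ref{lm:sptcv}. The Ahlfors estimates for $\sigma_j$ transfer to $\mu_\infty$ at all but countably many radii, and monotone approximation in the radius gives the stated bound with factor $2^{2(n-1)} C_{AR}$; the comparison \eqref{comp-mu-H} is then a standard consequence for two Ahlfors regular measures with the same support, which in turn yields Ahlfors regularity of $\mathcal{H}^{n-1}|_{\partial\Omega_\infty}$ with constant $2^{5(n-1)} C_{AR}^2$. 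The principal obstacle throughout is the careful bookkeeping of explicit quantitative constants so that they match the choice of $N$ in \eqref{N}, which is essential for the later application of Theorem \ref{thm:hmu} to $\Omega_\infty$ in Section \ref{section:proof-main}.
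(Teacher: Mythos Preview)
Your outline follows essentially the same route as the paper's proof: Arzel\`a--Ascoli via uniform sup bounds and H\"older equicontinuity, Caccioppoli for the gradients, closed-set compactness (Lemma~\ref{lm:cptHd}) for $\overline{\Omega_j}$ and $\partial\Omega_j$, identification of the limits with $\overline{\Omega_\infty}$ and $\partial\Omega_\infty$ via a non-degeneracy lower bound, passage of corkscrews and Harnack chains to the limit with controlled constant loss, and weak-limit Ahlfors regularity via Lemma~\ref{lm:sptcv}.

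Two points deserve comment. First, for the non-degeneracy lower bound on $u_j$ (needed in both $\Lambda_\infty\subset\partial\Omega_\infty$ and $\Gamma_\infty\subset\overline{\Omega_\infty}$), the paper does \emph{not} argue via CFMS and Bourgain as you suggest; instead it invokes Lemma~\ref{lemm:NC-avoid} to build Harnack chains that avoid a neighborhood of the pole $X_j$, and then applies Harnack's inequality directly to $G_j(X_j,\cdot)$ between a reference corkscrew point $A_j(0,1)$ (where $u_j\sim 1$ by CFMS) and the point in question. Your route via CFMS plus iterated doubling of $\omega_j^{X_j}$ is a legitimate alternative and avoids Lemma~\ref{lemm:NC-avoid}, but you should make explicit that doubling applies because in \ref{CaseI} the pole is eventually far away and in \ref{CaseII} one uses Remark~\ref{rem:doubling:needed}. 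Second, your claim that ``the dichotomy of $\diam(\partial\Omega_j)$ gives the unbounded/bounded claim'' is incomplete in \ref{CaseI}: convergence of closed sets does not by itself force $\diam(\partial\Omega_\infty)=\infty$ from $\diam(\partial\Omega_j)\to\infty$. The paper closes this with a short Ahlfors-regularity argument (if $\partial\Omega_\infty\subset B(0,R)$ then $\partial\Omega_j\cap\overline{B(0,kR)}\subset B(0,2R)$ for large $j$, contradicting the lower Ahlfors bound for $\Delta_j(0,kR)$); you should include this step.
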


\begin{remark}
Note that this result is purely geometric. The proof only uses   \ref{1-assump}, which states the geometric characters of domains $\Omega_j$ (i.e., they are uniform domains with Ahlfors regular boundaries) and the ellipticity of the matrix operator $\mathcal{A}_j$. The other assumptions are irrelevant for this.
\end{remark}


\begin{proof}[Proof of \ref{1-thm:pseudo-blow-geo} in Theorem \ref{thm:pseudo-blow-geo}]
Let $R>1$ and  note that for $j$ large enough (depending on $R$) we have that $\wXj \notin B(0,4R)$ since by \eqref{eq:temp1} 
\[ 
|\wXj| = |\wXj - 0| \geq \wdj(\wXj) \sim \diam(\Oj) \to \infty,
\quad\text{as }j\to\infty.
\]	
In particular, $L_j\uj=0$ in $B(0,4R)\cap\Oj$ in the weak sense. Recall that all our domains $\Omega_j$ have Ahlfors regular boundary and hence all boundary points are Wiener regular. This in turn implies that $\uj$ is a non-negative $L$-solution on $B(0,4R)\cap\Oj$ which vanishes continuously on $B(0,4R)\cap\partial\Oj$.

On the other hand, $0\in\partial\Oj$ and, using our convention \eqref{nta-pt}, $ \wAj(0,1)$ is a corkscrew point relative to $B(0,1)\cap\partial\Oj$ in the domain $\Oj$.  
Thus, by Lemma \ref{lm:CFMS} 
			\begin{equation}\label{eq:pjzj}
				\uj(\wAj(0,1)) \sim 1.  
			\end{equation} 
We can then invoke Lemma \ref{harn-princ}, the fact that $\wAj(0,2R)\in\Oj$ is a corkscrew point relative to  $B(0,2R)\cap\partial\Omega_j$ for the domain $\Oj$,  Harnack's inequality, and \eqref{eq:pjzj} to obtain 
			\begin{equation} \label{uj-loc-bdd}
			\sup_{Z\in \Oj\cap B(0,2R)} \uj(Z) 
			\leq C \uj(\wAj(0,2R))
			\le C_R \uj(\wAj(0,1)) 
			\leq 
			C_R.  
			\end{equation}
			Extending $\uj$ by 0 outside of $\Oj$ we conclude that the sequence $\{\uj\}_{j\geq j_0}$ is uniformly bounded in $\overline{B(0,R)}$ for some $j_0$ large enough. Since for each $j$, 
$\wcalA_{j}$ has ellipticity constants bounded below by $\lambda=1$ and above by $\Lambda$, and $\Oj$ is uniform and satisfies the CDC (as $\pOj$ is Ahlfors regular) with the same constants as $\Omega_j$, then combining Lemma \ref{lem:vanishing} with the DeGiorgi-Nash-Moser estimates we conclude that the sequence $\{\uj\}_j$ is equicontinuous on $\overline{B(0,R)}$ (in fact uniformly H\"older continuous with same exponent). 
			Using Arzela-Ascoli combined with a diagonalization argument applied on a sequence of balls with radii going to infinity, we produce $u_\infty\in C(\R^n)$ and a subsequence (which we relabel) such that $\uj \to u_\infty$ uniformly on compact sets of $\RR^n$.

As observed before,  $\uj$ is a non-negative $L$-solution on $B(0,4R)\cap\Oj$ which vanishes continuously on $B(0,4R)\cap\partial\Oj$ and which has been extended by 0 outside of $\Oj$. Thus it is a positive $L$-subsolution on $B(0,4R)$ and we can use Caccioppoli's inequality along with \eqref{uj-loc-bdd} to conclude that
\begin{equation}\label{eqn:3.1A}
\int_{B(0,R)}|\nabla \uj|^2\, dZ 
\le
C\,R^{-2} \int_{B(0,2R)}|\uj|^2\, dZ
\le
C_R. 
\end{equation}
This and \eqref{uj-loc-bdd} allow us to conclude that
\begin{equation}\label{eqn:3.4}
	\sup_j \|\uj\|_{W^{1,2}(B(0,R))} \leq C_R <\infty.
\end{equation}		
Thus, there exists a subsequence (which we relabel) which converges weakly in $W^{1,2}_{\rm loc}(\RR^n)$. Since we already know that $\uj \to u_\infty$ uniformly on compact sets of $\RR^n$, we can use again \eqref{uj-loc-bdd} to easily see that $u_\infty\in W^{1,2}_{\rm loc}(\RR^n)$, 
 and $\nabla \uj \rightharpoonup \nabla u_{\infty} $ in $L^2_{\rm loc}(\RR^n)$. This completes the proof of \ref{1-thm:pseudo-blow-geo} in Theorem \ref{thm:pseudo-blow-geo}.	
\end{proof}

\begin{proof}[Proof of \ref{2-thm:pseudo-blow-geo} in Theorem \ref{thm:pseudo-blow-geo}]
Recall that in this case $\wXj\to X_0$ as $j\to\infty$. For any $0<\rho\le c_1R_0/2$ and for all $j$ large enough we have
\begin{equation}\label{poleawaycase2}
	B\left(\wXj,\frac{\rho}{2} \right) 
	\subset 
	B(X_0,\rho) 
	\subset 
	B(\wXj,2\rho)
	\subset 
	\overline{B(\wXj,2\rho)} 
	\subset 
	\overline{B(\wXj,\wdj(\wXj)/2)}
	\subset \Oj, 
\end{equation} 
where we have used \eqref{eq:wXjcase2}. Moreover, for $j$ sufficiently large,
\begin{equation}\label{eq:temp3}
	\dist(B(\wXj,2\rho),\pOj) > \frac{c_1 R_0}{2}.
\end{equation}
For any $Z\in \Oj \setminus B(\wXj,\rho/4)$, using \eqref{def:ujcaseii} and \eqref{eqn:gub} it follows that
\begin{equation}\label{eq:ujboundcase2}
	\uj(Z) \leq \frac{C}{|Z-\wXj|^{n-2}} \leq \frac{4^{n-2}C}{\rho^{n-2}}.
\end{equation}
Extending $\uj$ by $0$ outside $\Oj$ the previous estimate clearly holds for every $Z\in \R^n\setminus\Oj$. Thus $\sup_j\|\uj\|_{L^\infty(\R^n \setminus B(X_0,\rho))}\le C(\rho)$. Moreover, as in \ref{CaseI}, the sequence is also equicontinuous (in fact uniformly H\"older continuous). Using Arzela-Ascoli theorem with a diagonalization argument, we can find $u_\infty \in C(\R^n\setminus \{X_0\})$ and a subsequence (which we relabel) such that $\uj \to u_{\infty}$ uniformly on compact sets of $\R^n\setminus\{X_0\}$. 

Let $0<R\leq \sup_{j\gg 1} \diam(\Oj) \sim R_0$. We claim that
\begin{equation}\label{eq:L2gradcase2}
	\int_{B(0,R) \setminus B(X_0,\rho)} |\nabla \uj|^2 dZ \leq C(R,\rho) <\infty.
\end{equation}

To prove this, we first  take arbitrary $q\in\pOj$ and $s$ such that $0<s\le  \wdj(\wXj)/5 \sim R_0$. In particular, if $0<\rho<c_1 R_0/10\leq \wdj(\wXj)/10$ it follows that $B(q,4s)\subset\R^n\setminus B(\wXj,2 \rho)\subset 
\R^n\setminus B(X_0,\rho)$. Thus, proceeding as in \ref{CaseI}, $\uj$ is non-negative subsolution on $B(q, 2s)$ and we can use Caccioppoli's inequality and \eqref{eq:ujboundcase2} to obtain 
\begin{align}\label{eq:L2gradbd}
	\int_{B(q,s)\setminus B(X_0,\rho)} |\nabla \uj|^2 dZ 
	=
	\int_{B(q,s)} |\nabla \uj|^2 dZ 
		\leq \frac{C}{s^2} \int_{B(q,2s )} |\uj(Z)|^2 dZ \lesssim \frac{s^{n-2}}{\rho^{2(n-2)}}.
\end{align}

Note that the previous estimate, with $q=0$ and $s=R$, gives our claim \eqref{eq:L2gradcase2} when $0<R\leq \wdj(\wXj)/5$. 

Consider next the case $R_0\sim \wdj(\wXj)/5 < R\le \sup_{j\gg 1} \diam(\Oj)\sim R_0$.  Note first that the set $\Theta_j:=\{Z\in \Oj: \wdj(Z) < \wdj(\wXj)/25\}$ can be covered by a family of balls $\{B(q_i,\wdj(\wXj)/5)\}_i$ with $q_i\in\partial\Omega$ and whose cardinality is uniformly bounded (here we recall that $\wdj(\wXj)\sim  \diam(\Oj)$), Thus, \eqref{eq:L2gradbd} applied to these each ball in the family yields
\begin{equation}\label{eq:temp4}
	\int_{\left( B(0,R) \setminus B(X_0,\rho) \right) \cap \Theta_j } |\nabla \uj|^2 dZ \leq 
\sum_i
	\int_{B(q_i,\wdj(\wXj)/5) \setminus B(X_0,\rho)} |\nabla \uj|^2 dZ \leq 
	C(R, \rho) <\infty.
\end{equation} 
On the other hand, the set $\{Z\in \Oj\setminus B(\wXj,\rho/2): \wdj(Z) \geq \wdj(\wXj)/25\}$ can be covered by a family of balls $\{B_i\}_i$ so that $r_{B_i}=\rho/16$, $4B_i \subset\Oj\setminus 
B(\wXj,\rho/4)$. Moreover, the cardinality of the family is uniformly bounded depending on dimension and the ratio $\diam(\Oj)/\rho\sim R_0/\rho$. Using \eqref{poleawaycase2}, Caccioppoli's inequality in each $B_i$ since $4B_i \subset\Oj\setminus B(\wXj,\rho/4)$, and \eqref{eq:ujboundcase2} we obtain 
\begin{equation}\label{eq:temp5}
	\int_{\left( B(0,R) \setminus B(X_0,\rho) \right) \setminus \Theta_j } |\nabla \uj|^2 dZ 
	\leq 
	\sum_i\int_{B_i} |\nabla \uj|^2 dZ 
	\lesssim 
	\sum_i\frac{1}{r_{B_i}^2} \int_{2B_i} | \uj(Z)|^2 dZ 
	\leq C(R,\rho).
\end{equation}
Combining \eqref{eq:temp4} and \eqref{eq:temp5} we obtain the desired estimate and hence proof of the claim \eqref{eq:L2gradcase2} is complete.

Next, we combine \eqref{eq:L2gradcase2} with the fact that $\sup_j\|\uj\|_{L^\infty(\R^n \setminus B(X_0,\rho))}\le C(\rho)$ to obtain 	that $\sup_j \|\uj\|_{W^{1,2}(B(0,R)\setminus B(X_0,\rho))} \leq C(R,\rho) <\infty$. Thus, there exists a subsequence (which we relabel) which converges weakly in $W^{1,2}_{\rm loc}(\RR^n\setminus B(X_0,\rho))$. Since we already know that $\uj \to u_\infty$ uniformly on compact sets of $\RR^n\setminus B(X_0,\rho)$, we can easily see that $u_\infty\in W^{1,2}_{\rm loc}(\RR^n\setminus B(X_0,\rho))$,  and $\nabla \uj \rightharpoonup \nabla u_{\infty} $ in $L^2_{\rm loc}(\RR^n\setminus B(X_0,\rho))$. This completes the proof of \ref{2-thm:pseudo-blow-geo} in Theorem \ref{thm:pseudo-blow-geo}.	
\end{proof}

\medskip

\begin{remark}\label{rem:deawffr}

In the \ref{CaseII} scenario the following remarks will become useful later. In what follows we assume that  
$0<\rho\le c_1R_0/2$ and $j$ is large enough.
\begin{enumerate}[label=\textup{(\roman*)}, itemsep=0.2cm]  
	
	\item\label{1-rem:deawffr} Let us pick $Y\in \partial B(\wXj,3\delta_j(X_j)/4)$ and note that \eqref{eq:wXjcase2} gives $Y, \wAj(0,c_1 R_0/2) \in \Oj \setminus \overline{B(\wXj,\delta_j(X_j)/2)}$, $|Y-\wAj(0,c_1 R_0/2)|<(c_1+2c_2)R_0$, and $\delta_j(Y)\ge c_1 R_0/4$. Recalling that $\Omega_j$ satisfies the interior corkscrew condition with constant $M$, it follows by definition that $\delta_j(\wAj(0,c_1 R_0/2))\ge c_1 R_0/(2M)$.  All these allow us to invoke Lemma \ref{lemm:NC-avoid} to then use \eqref{eqn:glb} and \eqref{eq:wXjcase2} and eventually show 
		\begin{equation}\label{eq:ujfixedvalue}
			\uj \left( \wAj\left(0,\frac{c_1 R_0}{2} \right) \right)
			\sim \uj(Y)
			\gtrsim \left| Y - \wXj \right|^{2-n} \sim \delta_j(X_j)^{2-n}\sim R_0^{2-n},
		\end{equation}
		where the implicit constants are independent of $j$.
		
		\item\label{2-rem:deawffr} The set $\partial B(X_0,\rho)$ is compact and away from $X_0$, so $\uj \to u_\infty$ uniformly in $\partial B(X_0,\rho)$. Since $\wXj \to X_0$, for any $Z\in \partial B(X_0,\rho)$ we have
			$\rho/2< |Z-\wXj| < 2\rho$ for $j$ sufficiently large. In particular by choosing $\rho<R_0/(16M)$, we have for $j$ large enough
			\begin{equation}
				|Z-\wXj| < 2\rho < \frac{R_0}{8M} \leq \frac{\diam(\Oj)}{4M} \leq \frac{\wdj(\wXj)}{2}, 
			\end{equation}
			where the last estimate uses that $X_j\in \Omega_j$ is a corkscrew point relative to the surface ball  $B(0,\diam(\Oj)/2)\cap\partial\Omega_j$ with constant $M$. Thus by \eqref{eqn:glb} if $j$ is large enough
\[ 
		\uj(Z) \gtrsim |Z-\wXj|^{2-n} \gtrsim \rho^{2-n}, \qquad  \forall\,Z\in \partial B(X_0,\rho) 
\]
			with implicit constants which are independent of $j$. Therefore,
			\begin{equation}\label{eq:uinftynearpole}
				u_\infty(Z) = \lim_{j\to\infty} \uj(Z) \gtrsim \rho^{2-n},
								\qquad  \forall\,Z\in \partial B(X_0,\rho) 
			\end{equation}
			For this reason it is natural to extend the definition of $u_\infty$ to all of $\R^n$ by simply letting $u_\infty(X_0) =+ \infty$.
			
			\item\label{3-rem:deawffr} Since $\uj$ is the Green function in $\Oj$ for $L_j$, an 
			elliptic operator with uniform ellipticity constants $\lambda =1$ and $\Lambda$, 
			by \eqref{eqn:glq} we know for any $1<r<\frac{n}{n-1}$,
				\begin{equation}\label{eq:upperr}
					\|\nabla \uj\|_{L^r(\Oj)} \lesssim |\Oj|^{\frac1r-\frac{n-1}n}\lesssim 
					R_0^{\frac{n}r-n+1}<\infty,
				\end{equation}
				provided $j$ is large enough and where the implicit constants depend on 
				dimension, $r$, and $\Lambda$, but are independent of $j$. 
				Note that $\nabla \uj \equiv 0$ outside of $\Oj$ by construction. Thus, one can easily show that passing to a subsequence (and relabeling) $\nabla \uj \rightharpoonup \nabla u_{\infty}$ in $L^r_{\rm loc}(\RR^n)$ for $1<r<n/(n-1)$.
			
	\end{enumerate}
\end{remark}

\medskip

\begin{proof}[Proof of \ref{3-thm:pseudo-blow-geo} in Theorem \ref{thm:pseudo-blow-geo}: \ref{CaseI}]

	It is clear that $\Oinf$ is an open set in \ref{CaseI} since $u\in C^\infty(\R^n)$.
	On the other hand, 	since $0\in\pOj$ for all $j$, by Lemma \ref{lm:cptHd} and modulo passing to a subsequence (which we relabel) we have that there exist non-empty closed sets $\Gamma_{\infty}, \Lambda_{\infty}$ such that
			$\overline{\Oj}\to \Gamma_\infty$ and $\pOj\to \Lambda_\infty$ as $j\to\infty$, where the convergence is in the sense of Definition \ref{def:cvsets}.
			
We are left with obtaining 
\begin{equation}\label{eq:claim-boundary--}
\Lambda_\infty = \pOinf
\qquad\mbox{and}\qquad\Gamma_\infty=\overline\Omega_\infty.
\end{equation}		
We first show that $\Lambda_\infty\subset \partial\Omega_\infty$. To that end we take $p\in\Lambda_\infty$, and there is a sequence $p_j\in\pOj$ such that $\lim_{j\to\infty} p_j = p$. Note that
			$ u_\infty(p) =\lim_{j\to \infty}\ \uj(p)$. On the other hand since the $\uj$'s are uniformly H\"older continuous on compact sets (see the Proof of  \ref{1-thm:pseudo-blow-geo} in Theorem \ref{thm:pseudo-blow-geo}) and $\uj(p_j)=0$ as $p_j\in\partial\Oj$ we have
			$$
			0\le u_\infty(p)
			\le
			|u_\infty(p)-u_j(p)|+
			|u_j(p)-u_j(p_j)|
			\lesssim
			|u_\infty(p)-u_j(p)|+|p-p_j|^\alpha\to 0,
			$$
as $j\to\infty$. Thus $u_{\infty}(p) = 0$, that is, $p\in \R^n\setminus \Omega_\infty$. 

Our goal is to show that $p\in\partial\Omega_\infty$. Suppose that $p\notin\partial\Omega_\infty$, then $p\in\R^n\setminus\overline{\Omega_\infty}$ and there exists $\epsilon\in(0,1)$ such that $B(p,\epsilon) \subset \R^n\setminus\overline{\Omega_\infty}$, that is, $u_\infty\equiv 0$ on $\overline{B(p,\epsilon)}$. In $\Oj$ we have
			$$
				\left|A_j\left(p_j,\frac{\epsilon}{2}\right) - A_j(0,1)\right|  \le \frac{\epsilon}{2} + | p_j | + 1 
				 \le 2\left(|p| + 1\right)
			$$
			and
			$$
			 \delta_j\left( A_j\left(p_j,\frac{\epsilon}{2}\right) \right) \geq \frac{1}{M} \frac{\epsilon }{2}, 
			\qquad \delta_j\left( A_j(0,1)\right) \geq \frac{1}{M}. 
			 $$
			Note also that
			$$
			\frac{\delta_j\left( A_j\left( p_j,\frac{\epsilon }{2} \right)\right) }{\delta_j(X_j)}
			+
			\frac{\delta_j\left( A_j(0,1)\right) }{\delta_j(X_j)}
			\sim
			\frac{1}{\diam(\Omega_j)}\to 0,
			\qquad\text{as }j \to\infty,
			$$
			hence for $j$ large enough, $A_j(0,1), A_j\left(p_j,\frac{\epsilon }{2} \right) \notin \overline{B(X_j,\delta_j(X_j)/2})$. 
						
						We can then apply Lemma \ref{lemm:NC-avoid} and Harnack's inequality along the constructed chain in $\Omega_j$ to obtain 
			$$
			G_j\left( X_j,A_j\left( p_j,\frac{\epsilon }{2} \right) \right) \sim G_j(X_j,A_j(0,1)),
			$$
			where the implicit constants depend on the allowable parameters, $\epsilon$ and $|p|$, but are uniform on $j$. Hence by \eqref{eq:pjzj},
\begin{equation}\label{eq:tmplb}
				\uj\left(\wAj\left(p_j,\frac{\epsilon}{2}\right)\right)  = \dfrac{ G_j \left( X_j,A_j\left( p_j,\frac{\epsilon }{2} \right)\right)}{\omega_j^{X_j}(B(0,1))} 
				\gtrsim C \dfrac{ G_j\left( X_j,A_j(0,1) \right)}{\omega_j^{X_j}(B(0,1))}  
				 = \uj(\wAj(0,1))
\ge C_0,
				\end{equation}
where $C_0$ is independent of $j$.  

Note that since $\uj\to u_\infty$ on compact sets it follows from our assumption that for $j$ large enough  depending on $C_0$ 
\begin{equation}\label{eq:contrad}
u_j(Z)=u_j(Z)-u_\infty(Z)<\frac{C_0}2,
\qquad
\forall\,Z\in \overline{B(p,\epsilon)}.
\end{equation}
However, for $j$ large enough $\wAj (p_j,\epsilon/2) \in B(p_j,\epsilon/2)  \subset B(p,\epsilon)$  and then \eqref{eq:contrad} contradicts \eqref{eq:tmplb}. Thus, we have shown that necessarily $p\in\partial\Omega_\infty$ and consequently $\Lambda_\infty\subset \partial\Omega_\infty$.

Let us next show that $\pOinf \subset \Lambda_{\infty}$. Assume that $p \notin \Lambda_{\infty}$. Since $\Lambda_{\infty}$ is a closed set, there exists $\epsilon>0$ such that $B(p,2\epsilon)\cap\Lambda_{\infty} = \emptyset$. Since $\Lambda_{\infty}$ is the limit of $\pOj$, by Definition \ref{def:cvsets} we have that for $j$ large enough $B(p,\epsilon) \cap \pOj=\emptyset$. Hence, by passing to a subsequence (and relabeling) either $B(p,\epsilon) \subset \Oj$ for all $j$ large enough or $B(p,\epsilon) \subset \R^n\setminus\overline{\Oj}$ for all $j$ large enough. 

We first consider the case $B(p,\epsilon) \subset \Oj$. Hence,  $\delta_j(p)  \geq \epsilon $ and $|A_j(0,1) - p| \leq 1+|p|$. Thus there exists a Harnack chain joining $A_j(0,1)$ and $p$ whose length is independent of $j$ and depends on $\epsilon$ and $|p|$. 
We next observe that for $j$ large enough $|p-\wXj|>\wdj(\wXj)/2$. Indeed, if we take $j$ large enough, using that $0\in\partial\Oj$ and \eqref{eq:temp1} we clearly have
$$
1\le \frac{|\wXj|}{\wdj(\wXj)}
\le
\frac{|\wXj-p|}{\wdj(\wXj)}
+
\frac{|p|}{\wdj(\wXj)}
<
\frac{|\wXj-p|}{\wdj(\wXj)}+\frac12,
$$
and we just need to hide to obtain the desired estimate. 
Once we know that $|p-\wXj|>\wdj(\wXj)/2$, we also note that $|\delta_j(A_j(0,1))|\le 1\ll \diam(\Omega_j)\sim\delta_j(X_j)$
and hence $A_j(0,1)\notin\overline{B(X_j,\delta(X_j)/2})$ for $j$ large enough. 

We can now invoke Lemma \ref{lemm:NC-avoid} and Harnack's inequality along the constructed chain in $\Omega_j$ to obtain that $G_j(X_j,p) \sim G_j(X_j,A_j(0,1))$, which combined with \eqref{def:ujcasei} and \eqref{eq:pjzj}, yields
\begin{equation}\label{eqn:3.5}
\uj(p) \sim \uj(\wAj(0,1)) \sim 1,
\end{equation}
where the implicit constants depend on the allowable parameters, $p$ and $\epsilon$, but are uniform on $j$. Letting $j\to\infty$ we obtain that $u_\infty(p)\sim 1$ which implies that $p\in\Omega_\infty$, and since we have already shown that  $\Omega_\infty$ is open, it follows that $p\notin\partial \Omega_\infty$.
			
We next consider now the case 	$B(p,\epsilon) \subset \R^n\setminus\overline{\Oj}$ for all $j$ large enough which implies that by construction $\uj(X) = 0$ for all $X\in B(p,\epsilon)$. By uniform convergence of $\uj$ in compact sets we have that $u_{\infty}(X) = 0$ for $X\in B(p,\epsilon/2)$, which implies $B(p,\epsilon/2) \subset \{u_{\infty}=0\}$ and therefore $p\notin \partial\Oinf$.

In both cases we have shown that if $p \notin \Lambda_{\infty}$ then  $p\notin \partial\Oinf$, or, equivalently, $\partial\Oinf\subset \Lambda_{\infty}$.  This together with the converse inclusion completes the proof of $\Lambda_{\infty} = \partial\Oinf$.

Our next goal is to show that $\Gamma_\infty=\overline\Omega_\infty$. Note that if $Z\in\Omega_\infty$, then $u_\infty(Z)>0$ and this implies that $\uj(Z)>0$ for $j$ large enough. The latter forces $Z\in\Omega_j$ 	for all $j$ large enough. This implies that $Z\in \Gamma_\infty$, and we have shown that $\Omega_\infty\subset \Gamma_\infty$. Moreover since $\Gamma_{\infty}$ is closed, we conclude that $\overline\Omega_\infty\subset \Gamma_\infty$. 

To obtain the converse inclusion we take $X\in \Gamma_\infty$. Assume that there is $\epsilon>0$ such that $\overline{B(X,2\epsilon)}\subset \R^n\setminus\overline{\Omega_\infty}$, in particular $B(X,2\epsilon) \cap \partial\Oinf = \emptyset$. Since we have already shown that $\partial\Oinf$ is the limit of $\pOj$'s, for $j$ large enough $B(X,\epsilon) \cap \pOj = \emptyset$. By the definition of $\Gamma_{\infty}$, there is a sequence $\{Y_j\}\subset \overline\Oj $ with $Y_j\to X$ as $j\to\infty$. Thus, for all $j$ large enough $B(X,\epsilon)$ is a neighborhood of $Y_j$; and in particular $\Oj\cap B(X,\epsilon)\not =\emptyset$ since $Y_j \in \overline{\Oj}$. On the other hand, since $B(X,\epsilon) \cap \pOj = \emptyset$
			we conclude that $B(X,\epsilon)\subset \Oj$. At this point we follow a similar argument to the one used to obtain \eqref{eqn:3.5} replacing $p$ by $X$ and obtain for all $j$ large enough
			$$
				\uj(X) \sim \uj(\wAj(0,1)) \sim 1,
			$$
			where the implicit constants depend on the allowable parameters, $|X|$ and $\epsilon$, but are uniform on $j$. Letting  $j\to\infty$ it follows that $u_{\infty}(X)>0$ and hence $X\in\Oinf$, contradicting the assumption that there is $\epsilon>0$ such that $\overline{B(X,2\epsilon)}\subset \R^n\setminus\overline{\Omega_\infty}$. In sort, we have shown that $\overline{B(X,2\epsilon)}\cap  \overline{\Omega_\infty}\neq\emptyset$ for every $\epsilon>0$, that is, $X\in  \overline{\Omega_\infty}$. We have eventually proved that  $\Gamma_\infty\subset \overline\Omega_\infty$ this completes the proof of  \eqref{eq:claim-boundary--} in the \ref{CaseI} scenario.

			Since $\diam(\Omega_j) \to \infty$ and $0\in \overline{\Oj} \to \overline{\Oinf}$ uniformly on compact set, $\Oinf$ is unbounded. Otherwise we would have $\overline{\Oinf} \subset B(0,R)$, and for sufficiently large $j$ one would see that $\overline{\Oj} \subset B(0,2R)$, which is a contradiction.

On the other hand, it is possible that $\diam(\pOj) \not\to \diam(\pOinf)$, hence we do not know whether $\diam(\pOinf)=\infty$. However, under the assumption that the $\pOj$'s are Ahlfors regular with uniform constant, we claim that $\pOinf$ is also unbounded. Assume not, then there is $R>0$ such that $\pOinf \subset B(0,R)$. Let $k$ be a large integer, and notice that $\pOj \to \pOinf$ uniformly on the compact set $\overline{B(0,kR)}$. Thus for $j$ sufficiently large (depending on $k$) 
			\begin{equation}\label{eq:nobdbd}
				\pOj \cap \overline{B(0,kR)} \subset B(0,2R).
			\end{equation}  
			Since $\diam(\pOj)\to \infty$ we can also guarantee that $\diam(\pOj) > kR $ for $j$ sufficiently large. Recalling that $0\in \pOj$, we can then consider the surface ball $\Delta_j(0,kR) = B(0,kR)\cap \pOj$. By \eqref{eq:nobdbd} and the Ahlfors regularity of $\pOj$,
			\begin{equation}
				C_{AR}^{-1}(kR)^{n-1}\leq \sj(\Delta_j(0,kR)) \le  \sj(B(0,2R)\cap\partial\Omega_j) \leq C_{AR} (2R)^{n-1}.
			\end{equation}
			Letting $k$ large readily leads to a contradiction. 
			\end{proof}
			
\begin{proof}[Proof of \ref{3-thm:pseudo-blow-geo} in Theorem \ref{thm:pseudo-blow-geo}: \ref{CaseII}]
Take $X\in \Omega_\infty$, that is, $u_\infty(X)>0$. If $X\neq X_0$ then $u_\infty$ is continuous at $X$ and hence $u_\infty(Z)>0$ for every $Z\in B(X,r_x)$ for some $r_x$ small enough. On the other hand, if $X=X_0$, by Remark \ref{rem:deawffr} part \ref{2-rem:deawffr} we have that $u_\infty(Z)>0$ for all $Z\in B(X_0, \rho)$ with $\rho$ sufficiently small (here we use the convention that $+\infty>0$). Note that this argument show in particular that $B(X_0,\rho)\subset \Omega_\infty$.

On the other hand, 	since $0\in\pOj$ for all $j$, by Lemma \ref{lm:cptHd} and modulo passing to a subsequence (which we relabel), there exist closed sets $\Gamma_{\infty}, \Lambda_{\infty}$ such that
			$\overline\Oj\to \Gamma_\infty$ and $\pOj\to \Lambda_\infty$ as $j\to\infty$ in the sense of Definition \ref{def:cvsets}. We are going to obtain that 
\begin{equation}\label{eq:claim-boundary--CaseII}
\Lambda_\infty = \pOinf
\qquad\mbox{and}\qquad\Gamma_\infty=\overline\Omega_\infty.
\end{equation}

Let $p\in\Lambda_\infty$, there is a sequence $\{p_j\}\subset \pOj$ such that $p_j\to p$ as $j\to\infty$. Note that by \eqref{eq:wXjcase2}
$$
c_1 R_0
\le 
\wdj(\wXj) 
\le 
|\wXj-p_j|
\le
|\wXj-p|+|p-p_j|.
$$
Thus, for $j$ large enough, $|\wXj-p|> \wdj(\wXj) /2>c_1R_0/2$. In particular, $X_0\neq p$
and $ \uj(p)\to u_\infty(p)$ as $j\to\infty$. 
On the other hand since the $\uj$'s are uniformly H\"older continuous on compact sets as observed above, $|\uj(p)|=|\uj(p)-\uj(p_j)| \leq C|p-p_j|^\alpha$, thus $\uj(p)\to 0$ as $j\to\infty$. Therefore $u_{\infty}(p) = 0$, that is, $p\in \R^n\setminus\Omega_\infty$. 

Suppose now that $p\notin\partial\Omega_\infty$. Then, there exists $0<\epsilon<\wdj(\wXj) /4$ such that $\overline{B(p,\epsilon)} \subset \R^n\setminus\Oinf$, or, equivalently, $u_\infty\equiv 0$ on $B(p,\epsilon)$. Note that 
			$$
				\left|\wAj\left( p_j,\frac{\epsilon}{2}\right) - \wAj\left(0,\frac{c_1 R_0}{2} \right)\right|  \leq \frac{\epsilon}{2} + |p_j | + \frac{c_1 R_0}{2} \leq C(\epsilon, |p|, R_0).
			$$
			Also, 
			$$
			\frac{\epsilon }{2M} 
			\le
			\wdj\left( \wAj\left( p_j,\frac{\epsilon}{2}\right) \right) 
			<
			\frac{\epsilon}{2}
			<
			\frac{\wdj(\wXj)}{2}
			 $$
and, by  \eqref{eq:wXjcase2},
\begin{equation}\label{eq:fwefgaserg}
\frac{c_1 R_0}{2M}\le \wdj\left( \wAj\left(0,\frac{c_1 R_0}{2} \right)\right)<\frac{c_1 R_0}{2}
\le
\frac{\wdj(\wXj)}{2}.
\end{equation}

Notice that in particular $\wAj\left( p_j,\frac{\epsilon}{2}\right)$, $\wAj\left(0,\frac{c_1 R_0}{2} \right) \notin\overline{B(\wXj,\wdj(\wXj)/2)}$. We can now invoke Lemma \ref{lemm:NC-avoid},  Harnack's inequality along the constructed chain in $\Oj$, and \eqref{eq:ujfixedvalue} to see that
			\begin{equation} \label{eqn103case2}
			\uj \left( \wAj\left( p_j,\frac{\epsilon }{2} \right) \right) 
			\sim  
			\uj\left(\wAj\left(0,\frac{c_1 R_0}{2}\right) \right) 
			\gtrsim 
			1, 
			\end{equation}
			with implicit constant depending on the allowable parameters, $\epsilon, |p|, R_0$ but independent of $j$. On the other hand, for all $j$ large enough
			\begin{equation} \label{eqn104case2}
			\wAj \left(p_j,\frac{\epsilon}{2}\right) 
			\in B\left(p_j,\frac{\epsilon}{2}\right) 
			\subset 
			\overline{B(p,\epsilon)}
			\subset
			\R^n\setminus B(\wXj,\wdj(\wXj)/4),
						 \end{equation}
hence $u_j\to u_\infty$ uniformly on $\overline{B(p,\epsilon)}$ with $u_\infty\equiv 0$ on $\overline{B(p,\epsilon)}$. This and \eqref{eqn104case2} contradict \eqref{eqn103case2} and therefore we conclude that $p\in\pOinf$, and we have eventually obtained that $\Lambda_\infty \subset \pOinf$.
			
			To show that $\pOinf \subset \Lambda_{\infty}$, we assume that $p \notin \Lambda_{\infty}$. If $p=X_0$, then since we observed above that $B(X_0,\rho) \subset \Oinf$ (see \eqref{eq:uinftynearpole}) then $X_0 \notin \pOinf$. 
			
			Assume next that $p\neq X_0$. Since $\Lambda_{\infty}$ is a closed set and since $\wXj\to X_0$ as $j\to\infty$, there exists $\epsilon>0$ such that $B(p,2\epsilon)\cap\Lambda_{\infty} = \emptyset$ and $X_0, \wXj \notin B(p, 2 \epsilon)$ for all $j$ large enough. Moreover, since $\Lambda_{\infty}$ is the limit of $\pOj$, by Definition \ref{def:cvsets} we have that for all $j$ large enough $B(p,\epsilon) \cap \pOj=\emptyset$.  Hence, passing to a subsequence (and relabeling) either $B(p,\epsilon) \subset \Oj$ for $j$ large enough or $B(p,\epsilon) \subset \R^n\setminus\overline{\Oj}$ for $j$ large enough. 
			
			Assume first that $B(p,\epsilon) \subset \Oj$ for all $j$ large enough. We consider two subcases. Assume first that $p\notin \overline{B(\wXj,\wdj(\wXj)/2}$. Then, proceeding as before, by \eqref{eq:fwefgaserg} we can apply Lemma \ref{lemm:NC-avoid} and Harnack's inequality along the constructed chain in $\Oj$ to get 
			\begin{equation}\label{eqn:3.5case2}
				\uj(p) \sim \uj\left(\wAj\left(0,\frac{c_1 R_0}{2} \right) \right) \gtrsim 1,
				\end{equation} 
				with implicit constant depending on the allowable parameters, $\epsilon, |p|, R_0$ but independent of $j$. Suppose next that $p\in \overline{B(\wXj,\wdj(\wXj)/2)}$. In that case we can use \eqref{def:ujcaseii}, \eqref{eqn:glb}, and   \eqref{eq:wXjcase2}  to see that for all $j$  large enough
			\begin{equation}\label{eqn:3.5case2-a}
				\uj(p) \gtrsim |p-\wXj|^{2-n} \gtrsim \wdj(\wXj)^{2-n}\gtrsim (c_2 R_0)^{2-n},			
												\end{equation} 
				with implicit constants which are uniform on $j$. Combining the two cases together we have shown that $\uj(p) \gtrsim 1$ uniformly on $j$. Letting $j\to\infty$ we conclude that $
			u_\infty(p)\gtrsim 1$ and hence $p\in \Omega_\infty$, and since we have already shown that $\Omega_\infty$ is an open set we conclude that $p\notin \partial\Omega_\infty$

We now tackle the second case on which $B(p,\epsilon) \subset \R^n\setminus\overline{\Oj}$ for all $j$ large enough. In this scenario $\uj(X) = 0$ for all $X\in B(p,\epsilon)$. Since $X_0\notin B(p,2\epsilon)$, by uniform convergence of $\uj$ in $\overline{B(p,\epsilon/2)}$ we have that $u_{\infty}(X) = 0$ for $X\in B(p,\epsilon/2)$, which implies $B(p,\epsilon/2) \subset \R^n\setminus\Omega_\infty$ and eventually $p\notin \partial\Oinf$.

In both cases we have shown that if $p \notin \Lambda_{\infty}$ then  $p\notin \partial\Oinf$, or, equivalently, $\partial\Oinf\subset \Lambda_{\infty}$.  This together with the converse inclusion completes the proof of $\Lambda_{\infty} = \partial\Oinf$.

Our next task is to show that $\Gamma_\infty=\overline\Omega_\infty$. 
Let $Z\in\Oinf$ and assume first that $Z=X_0$. By \eqref{poleawaycase2} and since $\wXj\to X_0$ as $j\to\infty$ we have that $X_0 \in B(\wXj,2\rho) \subset \Oj$ for all $j$ large enough, thus $Z=X_0\in \Gamma_\infty$. On the other hand, if $Z\neq X_0$ since $u_\infty(Z)>0$ we have that $\uj(Z)>0$ for all $j$ large enough. This forces as well that $Z\in\Oj$ for $j$ all large enough and again $Z\in \Gamma_\infty$. With this  we have shown that $\Omega_\infty\subset \Gamma_\infty$. Moreover, since $\Gamma_{\infty}$ is closed we conclude as well that $\overline\Omega_\infty\subset \Gamma_\infty$. 

Next we look at the converse inclusion and take $X\in \Gamma_\infty$. Assume that $X\in \R^n\setminus\overline{\Omega_\infty}$. Thus, there is $\epsilon>0$ such that $\overline{B(X,2\epsilon)}\subset \R^n\setminus\overline{\Omega_\infty}$. In particular $B(X,2\epsilon) \cap \partial\Oinf = \emptyset$ and $B(X_0,\rho) \cap B(X,2\epsilon) = \emptyset$ (recall that we showed that $B(X_0,\rho)\subset \Oinf$).  Since we have already shown that $\partial\Oinf$ is the limit of $\pOj$'s, for $j$ large enough $B(X,\epsilon) \cap \pOj = \emptyset$. By the definition of $\Gamma_{\infty}$, there is a sequence $\{Y_j\}\subset \overline\Oj$ so that $Y_j\to X$ as $j\to\infty$. Thus, for all $j$ large enough $B(X,\epsilon)$ is a neighborhood of $Y_j$, and, in particular, $\Oj\cap B(X,\epsilon)\not =\emptyset$ since $Y_j \in \overline{\Oj}$. Besides, since $B(X,\epsilon) \cap \pOj = \emptyset$
			we conclude that $B(X,\epsilon)\subset \Oj$. Using a similar argument to the one used to obtain \eqref{eqn:3.5case2} and \eqref{eqn:3.5case2-a} we have (replacing $p$ by $X$) that
			$$
				\uj(X) \gtrsim 1
			$$
			independently of $j$ and with constants that depend on the allowable parameters, $\epsilon, |X|, R_0$. Since $u_j(X)\to u_\infty(X)$ we conclude that $u_{\infty}(X)>0$ and thus $X\in\Oinf$, contradicting the assumption that $X\in \R^n\setminus\overline{\Omega_\infty}$. Eventually, $X\in\overline\Omega_\infty$ and we have obtained that $\Gamma_\infty\subset \overline\Omega_\infty$.

			Since $\diam(\Oj) \to R_0$ is finite and $0\in \pOj$, we have $\Oj, \Oinf \subset \overline{B(0,2R_0)}$ for $j$ sufficiently large. Hence $\overline{\Oj} \to \overline{\Oinf}$ uniformly, and thus $\diam(\Oinf) = \lim\limits_{j\to\infty} \diam(\Oj) = R_0 \geq 1$. 
\end{proof}

For later use let us remark that in the \ref{CaseII} scenario the fact that 
			$\overline{\Oj}\to\overline{\Omega_\infty}$ and $\pOj \to \pOinf$ as $j\to\infty$ in the sense of Definition \ref{def:cvsets} yields
			\begin{equation}\label{diamOinf}
				\diam(\Oinf) = \diam (\overline{\Oinf}) = \lim_{j\to\infty} \diam(\overline{\Oj}) 
				= 
				\lim_{j\to\infty} \diam(\Oj)= 
				R_0.
			\end{equation}
			\begin{equation}\label{diampOinf}
				\diam(\pOinf) = \lim_{j\to\infty}\diam(\pOj) = R_0
			\end{equation}

\begin{proof}[Proof of \ref{4-thm:pseudo-blow-geo} in Theorem \ref{thm:pseudo-blow-geo}] 
Notice that $\Omega_\infty\not =\emptyset$ since $0\in\partial\Omega_\infty$. Next we show that $\Omega$ satisfies the interior corkscrew and the Harnack chain. Let us sketch the argument. For the interior corkscrew condition, fixed $p\in\pOinf$ and $0<r< \diam(\pOinf)$, we take a sequence $p_j\in \partial\Omega_j$ so that $p_j\to p$ and for each $j$ we let $A_j$ be an interior corkscrew relative to $B(p_j,r/2)\cap\partial\Omega_j$ in $\Omega_j$. All the $A_j$'s are contained in $B(p, 3r/4)$, hence, passing to a subsequence, they converge to some point $A$. 
Using that the interior corkscrew condition holds for all $\Omega_j$ with the same constant $M$, it follows that each $A_j$ is uniformly away from $\partial\Omega_j$ and so will be $A$ from $\partial \Omega_\infty$ since $\pOj \to \pOinf$. I turn, this means that $A$ is an interior corkscrew relative to $B(p,r)\cap\partial\Omega_\infty$ in $\Omega_\infty$. Regarding the Harnack chain condition we proceed in a similar fashion. Fixed $X, Y\in\Omega_\infty$ for some fixed $j$ large enough we will have that $X,Y\in\Omega_j$ with $\delta_j(X)\approx \delta_\infty(X)$ and $\delta_j(Y)\approx \delta_\infty(Y)$. We can then construct a Harnack chain to join $X$ and $Y$ within  $\Omega_j$ (whose implicit constants are independent of $j$). Again, since each ball in the constructed Harnack chain is uniformly away from $\partial \Omega_j$, it will also be uniformly away from $\partial\Omega_\infty$  allowing us to conclude that this chain of balls is indeed a Harnack chain within $\Omega_\infty$.

\noindent\textbf{Interior corkscrew condition.} Recall that each $\Omega_j$ is a uniform domain with constants $M, C_1>1$. Hence, for all $q\in \pOj$ and $r\in (0,\diam(\pOj))$ there is a point $A_j(q,r)\in \Omega_j$ such that
		\begin{equation}\label{eqn106}
B\left(A_j(q,r), \frac{r}{M} \right) \subset B(q,r)\cap\Omega_j.
	\end{equation} 
			 
Let $p\in\pOinf$ and $0<r< \diam(\pOinf)$. In \ref{CaseII}, by \eqref{diampOinf} we get that $r<\diam(\pOj)$ for all $j$ sufficiently large. In \ref{CaseI}, either $\diam(\pOinf) = \infty$ or $\diam(\pOinf) < \infty$, but we still have $r<\diam(\pOj)$ for all $j$ sufficiently large (note that in the latter case $\diam(\pOj) \not\to \diam(\pOinf)$).	
	Since $\pOj\to\pOinf$, we can find $p_j\in \pOj$ converging to $ p$. For each $j$ there exists $\wAj(p_j,r/2)$ such that 
	\begin{equation}\label{eqn107}
		B\left(\wAj\left(p_j, \frac{r}{2}\right), \frac{r}{2M}\right) \subset B\left(p_j, \frac{r}{2}\right) \cap \Oj.
	\end{equation}
	In particular we deduce that
	\begin{equation}\label{saegt}
		\overline{B\left(\wAj\left(p_j, \frac{r}{2}\right), \frac{r}{3M}\right)} \subset \Oj
		\qquad\mbox{and}\qquad 
		\dist\left( B\left(\wAj\left(p_j, \frac{r}{2}\right), \frac{r}{2M}\right), \pOj \right) \geq \frac{r}{6M}.
	\end{equation}
Note that for $j$ large enough 
\begin{equation} \label{eqn108}
\wAj\left(p_j, \frac{r}{2}\right) \in B\left( p_j, \frac{r}{2} \right) \subset \overline{B\left(p,\frac{3r}{4}\right)}. 
\end{equation}
Modulo passing to a subsequence (which we relabel) $\wAj\left(p_j, r/2\right)$ converges to some point, which we denote by $A(p,r)$, and  for all $j$ sufficiently large (depending on $r$)
\begin{equation} \label{eqn109}
B\left(A(p,r), \frac{r}{4M} \right) \subset B\left(\wAj\left(p_j, \frac{r}{2}\right), \frac{r}{3M}\right)  \subset B(p,r)\cap\Oj.
 \end{equation}
The fact that $\overline{\Oj} \to \overline{\Oinf}$, the first inclusion in \eqref{eqn109}, and \eqref{saegt} give for all $j$ large enough
 \begin{equation}\label{eq:temp107}
 	B\left(A(p,r),\frac{r}{4M}\right) \subset \overline{\Oinf}
 	\qquad\mbox{and}\qquad
 	\dist\left( B\left(A(p, r), \frac{r}{4M}\right), \pOj \right) \geq 
 	\frac{r}{6M}.
 \end{equation} 
This and the fact that $\pOj \to \pOinf$ yield that $\dist(B(A(p,r),r/4M), \pOinf)\ge r/6M$, hence $B(A(p,r),r/4M)$ misses $\pOinf$. Combining this with \eqref{eq:temp107} and the second inclusion in \eqref{eqn109}, we conclude that 
\begin{equation}\label{eq:NTOinf}
 			B\left(A(p,r),\frac{r}{4M} \right) \subset \Oinf \cap B(p,r).
 		\end{equation}
Hence, $\Omega_\infty$ satisfies the interior corkscrew condition with constant $4M$.
\medskip

\noindent \textbf{Harnack chain condition.} Fix $X,Y\in \Oinf$ and pick $q_X, q_Y\in\pOinf$ such that $|X-q_X| = \delta_{\infty}(X), |Y-q_Y|=\delta_\infty(Y)$. 	Without loss of generality we may assume that $\delta(X)\ge \delta(Y)$ (otherwise we switch the roles of $X$ and $Y$). Let us recall that every $\Omega_j$ satisfies the Harnack chain condition with constants $M, C_1>1$.
Set 
\begin{equation}\label{Def:Theta}
\Theta:=M\left(2+\log_2^+\left( \frac{|X-Y|}{\min\{\delta_{\infty}(X),\delta_{\infty}(Y)\}} \right)\right)
=
M\left(2+\log_2^+\left( \frac{|X-Y|}{\delta_{\infty}(Y)} \right)\right).
\end{equation}
  Choose $R\ge $ large enough (depending on $X,Y$) so that
\begin{equation}\label{BqX}
B(q_X,\delta_\infty(X)/2), B\big(X, (2C_1^2)^{4\Theta}\delta_\infty(X)\big)
\subset B(0,R)
\end{equation}
and
\begin{equation}\label{BqY}
B(q_Y,\delta_\infty(Y)/2),  B\big(Y, (2C_1^2)^{4\Theta}\delta_\infty(Y)\big)\subset B(0,R)
\end{equation}
Take also $d=2^{-1} C_1^{-2\Theta}\le 1$ which also depends on $X,Y$. Then, by \ref{3-thm:pseudo-blow-geo} in Theorem \ref{thm:pseudo-blow-geo} we can take $j$ large enough (depending on $R$ and $d$) so that
	\begin{equation}\label{eq:temp109}
	    D\big[\pOj \cap \overline{B(0,R)}, \pOinf\cap \overline{B(0,R)}\big], D\big[\overline\Oj \cap \overline{B(0,R)}, \overline\Oinf\cap \overline{B(0,R)}\big] \leq  \frac{d}{2} \delta_{\infty}(Y)\le \frac{d}{2}  \delta_{\infty}(X),
	\end{equation}
By \eqref{eq:temp109}, \eqref{BqX}, and \eqref{BqY} we have that $X,Y\in \Oj$, and 
	\begin{equation}\label{eqn110}
	 \frac{\delta_{\infty}(X)}{2} \leq \wdj(X) \leq \frac{3\delta_{\infty}(X)}{2}\qquad\mbox{and}\qquad\frac{\delta_{\infty}(Y)}{2} \leq \wdj(Y) \leq \frac{3\delta_{\infty}(Y)}{2}. 
	 \end{equation}
	Since $\Oj$ satisfies the Harnack chain condition with constants $M, C_1>1$, there exists a collection  of balls $B_1,\dots, B_K$ (the choice of balls  depend on the fixed $j$) connecting $X$ to $Y$ in $\Oj$ and such that
	\begin{equation} \label{eq:HBj}
		C_1^{-1} \dist(B_k,\partial\Omega_j) \leq \diam(B_k) \leq C_1 \dist(B_k,\partial\Omega_j),  
	\end{equation} 
	for $k=1,2,\dots,K$ where
	\begin{equation}\label{eqn111}
	 K 
	 \leq 
	 M\left(2+\log_2^+\left( \frac{|X-Y|}{\min \{ \wdj(X), \wdj(Y) \}} \right)\right)
	 \leq 
	 2\Theta.
	  \end{equation}
	  Combining \eqref{eq:HBj} and \eqref{eqn111}, one can see that for every $k = 1,2,\dots,K$
	  \begin{equation}\label{aaa1}
	  \dist(B_k,\partial\Omega_j)  \geq d \delta_{\infty}(X), \qquad \diam(B_k)\le (2C_1^2)^{2\Theta}\delta_\infty(Y)
	  \end{equation}
	  and
	  \begin{equation}\label{aaa2}
\dist(X,B_k)\le 2(2C_1^2)^{2\Theta}\delta_{\infty}(X),\qquad \dist(Y,B_k) \le 2(2C_1^2)^{2\Theta} \delta_{\infty}(Y).
	  \end{equation}

Given an arbitrary $q_j\in\partial\Omega_j\setminus \overline{B(0,R)}$, by \eqref{BqX}, \eqref{aaa1},	and \eqref{aaa2}	 it follows that 
\begin{multline}\label{rfarefer}
(2C_1^2)^{4\Theta}\delta_\infty(X)\le |q_j-X|\le \dist(q_j,B_k)+\diam(B_k)+\dist(X,B_k)
\\
\le
\dist(q_j,B_k)+
3(2C_1^2)^{2\Theta}\delta_{\infty}(X).
\end{multline}
Hiding the last term, using that $\Theta>2$ and taking the infimum over the $q_j$ as above we conclude that
\begin{equation}\label{gvasf}
4C_1 (2C_1^2)^{2\Theta}\delta_\infty(X)< \dist(B_k,\partial\Omega_j\setminus \overline{B(0,R)}).
\end{equation}
On the other hand, by \eqref{eq:HBj} and \eqref{aaa1}
$$
\dist(B_k,\partial\Omega_j)\le C_1\diam(B_k)
\le C_1(2C_1^2)^{2\Theta}\delta_\infty(Y)
\le
C_1(2C_1^2)^{2\Theta}\delta_\infty(X),
$$
which eventually leads to $\dist(B_k,\partial\Omega_j)=\dist(B_k,\partial\Omega_j\cap \overline{B(0,R)})$.
Analogously, replacing $q_j$ by $q\in\partial\Omega_\infty\setminus \overline{B(0,R)}$ in \eqref{rfarefer} we can easily obtain that \eqref{gvasf} also holds for $\Omega_\infty$: 
\begin{equation}\label{gvasserf}
4C_1 (2C_1^2)^{2\Theta}\delta_\infty(X)< \dist(B_k,\partial\Omega_\infty\setminus \overline{B(0,R)}).
\end{equation}
But, \eqref{aaa2} yields
$$
\dist(B_k,\partial\Omega_\infty)
\le
\delta_\infty(X)+\dist(X,B_k)
\le
\delta_\infty(X)+
 2(2C_1^2)^{2\Theta}\delta_\infty(Y)
 \le 3(2C_1^2)^{2\Theta}\delta_\infty(Y)
 ,
$$
which eventually leads to $\dist(B_k,\partial\Omega_\infty)=\dist(B_k,\partial\Omega_\infty\cap \overline{B(0,R)})$.
Using all these, \eqref{eq:temp109}, the triangular inequality and \eqref{eq:temp109} we can obtain
\begin{multline*}
\big|\dist(B_k,\partial\Omega_j) -\dist(B_k,\partial\Omega_\infty)\big| 
=
\big|\dist(B_k,\partial\Omega_j\cap \overline{B(0,R)})-\dist(B_k,\partial\Omega_\infty\cap \overline{B(0,R)})\big|
\\
\leq D\big[\pOj \cap \overline{B(0,R)},\pOinf\cap \overline{B(0,R)}\big] \leq \frac{d}{2} \delta_{\infty}(X) 
\leq \frac12 \dist(B_k,\Omega_j).
\end{multline*}
Thus,
\begin{equation}\label{y6e5}
\frac23 \dist(B_k,\partial\Omega_\infty) \leq \dist(B_k,\partial\Omega_j) \leq 2 \dist(B_k,\partial\Omega_\infty).
\end{equation}
and moreover $B_k\cap\partial\Omega_\infty=\emptyset$. Note that the latter happens for all $k=1,\dots, K$. Recall also that $X\in B_1\cap\Omega_\infty$ and that $B_k\cap B_{k+1}\neq\emptyset$. Consequently, we necessarily have that $B_k \subset \Oinf$ for all $k=1,\dots, K$. Furthermore, \eqref{y6e5} and \eqref{eq:HBj} give
	  \begin{equation} \label{eq:HBj-infty}
		    \frac{2}{3} C_1^{-1} \dist(B_k,\partial\Omega_\infty)\leq \diam(B_k) \leq 2C_1 \dist(B_k,\partial\Omega_\infty). 
	\end{equation}
To summarize, we have found a chain of balls $B_1, \dots, B_K$, all contained in $\Oinf$, which  verify \eqref{eq:HBj-infty}, and connect $X$ to $Y$. Also,
$K$ satisfies \eqref{eqn111} with $\Theta$ given in \eqref{Def:Theta}. Therefore $\Omega_\infty$ satisfies the Harnack chain condition with constants $2M$ and $2C_1$. This completes the proof of  \ref{4-thm:pseudo-blow-geo} in Theorem \ref{thm:pseudo-blow-geo}.
\end{proof}

\begin{proof}[Proof of \ref{5-thm:pseudo-blow-geo} in Theorem \ref{thm:pseudo-blow-geo}]
We first recall that for every $j$, $\sj=\HH^{n-1}|_{\partial\Omega_j}$ is an Ahlfors regular measure with constant $C_{AR}$ and hence $\spt\sj=\partial\Omega_j$. 
In particular the sequence $\{\sj\}$ satisfies conditions \ref{1-lm:sptcv} and \ref{2-lm:sptcv} of Lemma \ref{lm:sptcv}. 

 On the other hand, the fact that $\partial\Omega_j$ is Ahlfors regular easily yields, via a standard covering argument, that $\HH^{n-1}(\partial\Omega_j) \le 2^{n-1}C_{AR} \diam(\Omega_j)^n$. Hence, using again that $\partial\Omega_j$ is Ahlfors regular we conclude that for every $R>0$
\[ 
\sup_j \sj(B(0,R)) = \sup_j \HH^{n-1}(\partial\Omega_j \cap B(0,R)) \leq 2^{n-1}C_{AR}  R^{n-1}.
\]
Therefore modulo passing to a subsequence (which we relabel), there exists a Radon measure $\sinf$ such that $\sj \rightharpoonup \sinf$ as $j\to\infty$.
Using Lemma \ref{lm:sptcv}, $\partial\Omega_j=\spt\sj \to \spt\sinf$  as $j\to\infty$ in the sense of Definition \ref{def:cvsets}. This and  \ref{3-thm:pseudo-blow-geo} in Theorem \ref{thm:pseudo-blow-geo}  lead to $\spt\sinf = \pOinf$. 

 To show that $\mu_\infty$ is Ahlfors regular take $q\in\partial\Omega_\infty$. Let $q_j\in\pOj$ be such that $q_j\to q$ as $j\to\infty$. For  any $r>0$, using
 \cite[Theorem 1.24]{Ma}  and that $\sigma_j$ is Ahlfors regular with constant $C_{AR}$ we conclude that
	\begin{equation}\label{eq:upperAR}
\mu_\infty(B(q,r)) \leq \liminf_{j\to\infty}  \sj(B(q,r)) \leq \liminf_{j\to\infty} \sj(B(q_j,2r)) 
\leq 2^{n-1} C_{AR}r^{n-1}.
\end{equation}
On the other hand, let $0<r<\diam(\partial\Omega_\infty)$. In \ref{CaseII}, by \eqref{diampOinf} we get that $r<\diam(\pOj)$ for all $j$ sufficiently large. In \ref{CaseI}, either $\diam(\pOinf) = \infty$ or $\diam(\pOinf) < \infty$, but we still have $r<\diam(\pOj)$ for all $j$ sufficiently large. Hence, using again \cite[Theorem 1.24]{Ma} and that $\sigma_j$ is Ahlfors regular with constant $C_{AR}$ we obtain 
  	\begin{multline} \label{eq:lowerAR}
		\mu_\infty(B(q,r)) \geq \mu_\infty\left(\overline{B\left(q,\frac{r}{2}\right)}\right)  
		\geq 
		\limsup_{j\to\infty} \sj\left(\overline{B\left(q,\frac{r}{2}\right)}\right) 
		\\
		\geq 
		\limsup_{j\to\infty} \sj\left(B\left(q_j,\frac{r}{4}\right) \right) 
		 \geq 4^{-(n-1)}C_{R}^{-1}r^{n-1}. 
	\end{multline} 
 These estimates guarantee that $\minf$ is Ahlfors regular with constant $2^{2(n-1)}C_{AR}$. Moreover by \cite[Theorem 6.9]{Ma}, 
	\begin{equation}  \label{eqn115}
	2^{-2(n-1)}C_{AR}^{-1} \minf \leq \mathcal{H}^{n-1}|_{\pOinf} \leq 2^{3(n-1)}C_{AR} \minf. 
	\end{equation}
	and consequently $\partial\Omega_\infty$ is Ahlfors regular with constant $2^{5(n-1)}C_{AR}^2$. This completes the proof of \ref{5-thm:pseudo-blow-geo} and hence that of Theorem \ref{thm:pseudo-blow-geo}. 
\end{proof}

\subsection{Convergence of elliptic matrices}
Our next goal is to show that there exists a constant coefficient real 
symmetric elliptic matrix $\wcalA^*$ with 
ellipticity constants 
$1=\lambda\le \Lambda<\infty$
(as in \eqref{def:UE}) so that for any $0<R<\diam(\pOinf)$ and for any $1\le p<\infty$.
\begin{equation}\label{eq:temp118}
\int_{ B(0,R) \cap \Oj } |\wcalA_{j}(Z) - \wcalA^*|^p dZ \to 0,\qquad \text{ as } j\to \infty.
\end{equation}

Fix $Z_0 \in \Oinf$ and set $B_0 = B(Z_0, 3\delta_{\infty}(Z_0)/8)$. Since $\pOj \to \pOinf$ and $\overline\Oj \to \overline\Oinf$ as $j\to\infty$,  for all sufficiently large $j$, we can see that $Z_0 \in\Oj$, 
\begin{equation}\label{eq:djdinfty}
	\frac{3}{4} \delta_{\infty}(Z_0) \leq  \wdj(Z_0) \leq \frac{5}{4} \delta_{\infty}(Z_0),
\end{equation}  
and 
\begin{equation}
B_0 \subset B\left(Z_0,\frac{\wdj(Z_0)}{2} \right) \subset \frac{5}{3} B_0 \subset \Oj \quad \text{for all } j.
\end{equation}
All these, an the oscillation assumption \eqref{def:oscA}, yield 
\begin{multline}\label{esti-conve-Aj}
	\fint_{B_0} |\wcalA_{j}(Z) - \langle\wcalA_j\rangle_{B_0}| dZ
	\lesssim
	\fint_{B(Z_0,\delta_j(Z_0)/2)} |\mathcal{A}_j(Z) - \langle\mathcal{A}_j\rangle_{B(Z_0,\delta_j(Z_0)/2)}| dZ 
	\\
	\le
	\osc(\Omega_j, \mathcal{A}_j)
		< \epsilon_j.
\end{multline}
Note that all the matrices $\wcalA_{j}$ are 
uniformly elliptic (i.e., all of them satisfy \eqref{def:UE}), with the 
same constants 
$1=\lambda\le\Lambda<\infty$,
and in particular $\{\langle\wcalA_{j} \rangle_{B_0}\}_j$ is a bounded sequence of constant real matrices. Hence, passing to a subsequence and relabeling $\langle\wcalA_{j} \rangle_{B_0}$ converges to some constant elliptic matrix, denoted by $\wcalA^*(B_0)$. Combining this with \eqref{esti-conve-Aj}, the dominated convergence theorem yields 
\begin{equation}\label{conv-B0}
	\fint_{B_0} |\wcalA_{j} (Z) - \wcalA^*(B_0)| dZ \to 0\quad \text{ as } j\to\infty,
\end{equation}
that is,  $\wcalA_{j}$ converges in $L^1(B_0)$ to a constant elliptic matrix $\wcalA^*(B_0)$.  Moreover, passing to a further subsequence an relabeling $\wcalA_{j}\to \wcalA^*(B_0)$ almost everywhere in $B_0$. In 
particular, $\wcalA^*(B_0)$ is a real symmetric elliptic matrix
(i.e., it satisfies \eqref{def:UE}), with ellipticity constants 
$1=\lambda\le \Lambda<\infty$. 
It is important to highlight that all the previous subsequences and relabeling only depends 
on the choice of $Z_0 \in \Oinf$.  In any case, since $\wcalA^*(B_0)$ is a constant coefficient matrix we set 
$\wcalA^*:=\wcalA^*(B_0)$.

Let us pick a countable collection of points $\{Z_k\}\subset \Omega_\infty$ so that $\Omega_\infty=\cup_k B_k$ with $B_k = B(Z_k, 3\delta_{\infty}(Z_k)/8)$. 
We can repeat the previous argument with any $Z_k$ and define $\wcalA^*(B_k)$, a constant  real symmetric elliptic matrix satisfying \eqref{def:UE} so that for some subsequence depending on $k$, we obtain that  $\wcalA_{j}\to \wcalA^*(B_k)$ in $L^1(B_k)$ and a.e in $B_k$ as $j\to\infty$. In particular, $\wcalA^*(B_{k_1})=\wcalA^*(B_{k_2})$ a.e. in  $B_{k_1}\cap B_{k_2}$ (in case it is non-empty). Note that $\Oinf$ is path connected (since it satisfies the Harnack chain condition), hence for any $k$ we can find a path joining $Z_k$ and $Z_0$ and cover this path with a finite collection of the previous balls to easily see that $\wcalA^*(B_{k})=\wcalA^*=\wcalA^*(B_0)$. Moreover, using a diagonalization argument, we can show that there exists a subsequence, which we relabel, so that for all $k$, we have that  $\wcalA_{j}\to \wcalA^*$ in $L^1(B_k)$ and a.e in $B_k$ as $j\to\infty$. From this, and since the matrices concerned are all uniformly bounded, one can prove that for any $1\leq p<\infty$ and for all $Z\in\Omega_\infty$
\begin{equation}\label{Ajcvinp}
\fint_{B_Z} |\wcalA_{j}(Y) - \wcalA^*|^p dY \to 0\quad \text{ as } j\to\infty,
\end{equation}
where $B_Z=B(Z,\delta(Z)/2)$.

We are now ready to start proving our claim \eqref{eq:temp118}. Recalling that $\overline{\Oj} \to \overline{\Oinf}$, $\pOj \to \pOinf$ in the sense of Definition \ref{def:cvsets}, and that $\pOj, \pOinf$ have zero Lebesgue measure since they are Ahlfors regular sets, one can see that 
\begin{align}
	B(0,R) \cap \left( \Oj \triangle \Oinf \right) \subset B(0,R) \cap \left( \left(\overline\Oj \triangle \overline\Oinf \right) \cup \left(\overline\Oj \cap \pOinf \right) \cup \left( \overline\Oinf \cap \pOj \right) \right)
\end{align}
and hence the Lebesgue measure of the set on the left hand side  tends to zero as $j\to\infty$. This and the fact that $\|\wcalA_{j}\|_\infty, \|\wcalA^*\|_\infty\le\Lambda$ give
\begin{equation}\label{eq:temp119}
	\int_{B(0,R) \cap \left(\Oj \triangle \Oinf \right)} |\wcalA_{j}(Z) - \wcalA^*|^p dZ \to 0,\qquad \text{ as } j \to \infty.
\end{equation}

On the other hand, let $\varrho>0$ be arbitrarily small and let $\epsilon = \epsilon(\varrho)>0$ be a small constant to be determined later. Set 
\[
\Oinf^{\epsilon,1} := B(0,R) \cap \{Z\in\Oinf: \delta_{\infty}(Z) < \epsilon\}
\quad\mbox{and}\quad
\Oinf^{\epsilon,2} := B(0,R) \cap \{Z\in\Oinf: \delta_{\infty}(Z) \geq \epsilon\}.
\]
Using the notation $\Delta(q,r):=B(q,r)\cap \pOinf$ with $q\in\pOinf$ and $r>0$, Vitali's covering lemma allows us to find a finite collection of balls $B(q_i,\epsilon)$ with $q_i\in \Delta(0,R+\epsilon)$, such that
\begin{equation}\label{est:Omega-near}
\Oinf^{\epsilon,1} \subset \bigcup_{i} B(q_i,5\epsilon). 
\end{equation}
Calling the number of balls $L_1$ we get the following estimate 
\begin{align}\label{eq:countL2}
L_1\epsilon^{n-1} 
\lesssim
\sum_{i} \sigma_\infty \left(  \Delta(q_i,\epsilon) \right) 
= 
\sigma_\infty \Big( \bigcup_i\Delta(q_i,\epsilon) \Big) \leq \sigma_\infty \left( \Delta(0,R+2\epsilon) \right)
\lesssim 
(R+2\epsilon)^{n-1},
\end{align}
where we have used that $\pOinf$ is Ahlfors regular and also that $\Delta(q_i, \epsilon) \subset \Delta(0, R+2\epsilon)$ since $q_i \in \Delta(0,R+\epsilon)$. If we assume that $0<\epsilon<R$ we conclude that $L_1\lesssim (R/\epsilon)^{n-1}$ and moreover by \eqref{est:Omega-near} we conclude that $|\Oinf^{\epsilon,1}|\lesssim \epsilon$ (here the implicit constant depend on $R$). This and $\|\wcalA_{j}\|_\infty, \|\wcalA^*\|_\infty\le\Lambda$ give at once that for every $j$
\begin{equation}\label{eq:e1}
	\int_{\Oinf^{\epsilon,1} \cap \Oj} |\wcalA_{j}(Z) - \wcalA^*|^p dZ \lesssim \Lambda^p \epsilon<\frac{\varrho}2, 
\end{equation} 
provided $\epsilon$ is taken small enough which is fixed from now on. 

On the other hand, note that $\overline{\Oinf^{\epsilon,2}}$ is compact, hence we can find $Z_1,\dots, Z_{L_2}\in \overline{\Oinf^{\epsilon,2}}$ so that 
$\Oinf^{\epsilon,2} \subset \bigcup_{i=1}^{L_2} B_{Z_i}$ where $L_2$ depends on $\epsilon$ and $R$ which have been fixed already. Hence, by \eqref{Ajcvinp}
\[
	\int_{\Oinf^{\epsilon,2} \cap \Oj } |\wcalA_{j}(Z) - \wcalA^*|^p dZ 
	\leq \sum_{i=1}^{L_2} \int_{B_{Z_i}} |\wcalA_{j}(Z) - \wcalA^*|^p dZ 
	\to 0,\quad \text{ as } j\to\infty.
\]
In particular, we can find an integer $j_0 = j_0(R,\epsilon)$ such that
\begin{equation}\label{eq:e2}
	\int_{\Oinf^{\epsilon,2} \cap \Oj } |\wcalA_{j}(Z) - \wcalA^*|^p dZ < \frac{\varrho}{2}, \quad \text{ for any } j\geq j_0.
\end{equation}
Combining \eqref{eq:e1} and \eqref{eq:e2}, we conclude that
\begin{equation}\label{eq:temp120}
	\int_{B(0,R) \cap \left(\Oj \cap\Oinf\right)} |\wcalA_{j}(Z) - \wcalA^*|^p dZ <\varrho, \quad \text{ for any } j\geq j_0.
\end{equation}
This combined with \eqref{eq:temp119} proves the claim \eqref{eq:temp118}.

\subsection{Convergence of operator}
\begin{theorem}\label{thm:AW11}
The function $u_{\infty}$ solves the Dirichlet problem 
\begin{equation}\label{limit-eq2}
	 \left\{ \begin{array}{rl}
		-\divg(\mathcal{A}^* \nabla u_\infty) = 0 & \text{in } \Oinf, \\
		u_\infty > 0 & \text{in }\Oinf, \\
		u_\infty = 0 & \text{on } \pOinf,
	\end{array} \right.  
\end{equation}
in \ref{CaseI}, and solves the Dirichlet problem
\begin{equation}\label{limit-eq2case2}
	 \left\{ \begin{array}{rl}
		-\divg(\mathcal{A}^* \nabla u_\infty) = \delta_{\{X_0\}} & \text{in } \Oinf, \\
		u_\infty > 0 & \text{in }\Oinf, \\
		u_\infty = 0 & \text{on } \pOinf,
	\end{array} \right.
\end{equation}
in \ref{CaseII}. Hence, $u_\infty$ is a Green function in $\Oinf$ for a constant-coefficient elliptic operator $L_\infty=-\divg(\mathcal{A}^* \nabla)$ with pole at $\infty$ in \ref{CaseI} or at $X_0\in\Oinf$ in \ref{CaseII}.
\end{theorem}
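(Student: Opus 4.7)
The three conclusions of the theorem---positivity in $\Oinf$, vanishing on $\pOinf$, and the PDE---are verified separately. Positivity is immediate from the definition $\Oinf=\{u_\infty>0\}$ (together with the convention $u_\infty(X_0)=+\infty$ in \ref{CaseII} from Remark~\ref{rem:deawffr}\ref{2-rem:deawffr}). For vanishing on $\pOinf$: take $p\in\pOinf$ and, using Theorem~\ref{thm:pseudo-blow-geo}\ref{3-thm:pseudo-blow-geo}, pick $p_j\in\pOj$ with $p_j\to p$. In \ref{CaseII} note that $p\neq X_0$ since $B(X_0,\rho)\subset\Oinf$ for small $\rho$; hence in both cases $\uj\to u_\infty$ uniformly in a compact neighborhood of $p$, and combining $\uj(p_j)=0$ with the uniform Hölder continuity of $\{\uj\}$ (established in the proof of Theorem~\ref{thm:pseudo-blow-geo}\ref{1-thm:pseudo-blow-geo}) yields $u_\infty(p)=0$.

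The substantive part is the PDE. Fix $\varphi\in C_c^\infty(\Oinf)$ and choose $R$ with $\spt\varphi\subset B(0,R)$. Since $\overline{\Oj}\to\overline{\Oinf}$ (Theorem~\ref{thm:pseudo-blow-geo}\ref{3-thm:pseudo-blow-geo}), for all $j$ large enough $\spt\varphi\subset\Oj$ and is uniformly bounded away from $\pOj$. In \ref{CaseI}, moreover $\wXj\to\infty\notin\spt\varphi$, so applying \eqref{eqn:int-parts} to $\uj=G_j(\wXj,\cdot)/\oj^{\wXj}(B(0,1))$ gives
\[
\int_{\Oj}\langle\wcalA_j(Z)\nabla \uj(Z),\nabla\varphi(Z)\rangle\,dZ \;=\; 0.
\]
In \ref{CaseII}, $\wXj\to X_0$ and \eqref{eqn:int-parts} applied to $\uj=G_j(\wXj,\cdot)$ yields
\[
\int_{\Oj}\langle\wcalA_j(Z)\nabla \uj(Z),\nabla\varphi(Z)\rangle\,dZ \;=\; \varphi(\wXj)\;\longrightarrow\;\varphi(X_0).
\]

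The goal is then to send the left-hand side to $\int\langle\wcalA^*\nabla u_\infty,\nabla\varphi\rangle\,dZ$. I would decompose $\wcalA_j\nabla \uj=\wcalA^*\nabla \uj+(\wcalA_j-\wcalA^*)\nabla \uj$. For the first piece, $\wcalA^*\nabla\varphi$ is a \emph{fixed} $L^2(\RR^n)^n$ vector field, so the weak $L^2_{\rm loc}$ convergence $\nabla \uj\rightharpoonup\nabla u_\infty$ from Theorem~\ref{thm:pseudo-blow-geo}\ref{1-thm:pseudo-blow-geo}--\ref{2-thm:pseudo-blow-geo} directly yields the desired limit in \ref{CaseI}. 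In \ref{CaseII} that weak convergence is only available on $\RR^n\setminus\overline{B(X_0,\rho)}$, so I split the integral at $\partial B(X_0,\rho)$ and control the contribution on $B(X_0,\rho)$ uniformly in $j$ via Hölder's inequality and the $L^r$-bound \eqref{eq:upperr} for some $r\in(1,n/(n-1))$; this contribution vanishes as $\rho\to 0^+$ because $|B(X_0,\rho)|^{1-1/r}\to 0$, and $\nabla u_\infty\in L^r_{\rm loc}(\RR^n)$ handles the limit side. For the second piece, Hölder's inequality gives
\[
\left|\int(\wcalA_j-\wcalA^*)\nabla \uj\cdot\nabla\varphi\,dZ\right|
\lesssim \|\nabla\varphi\|_\infty\,\|\wcalA_j-\wcalA^*\|_{L^2(\spt\varphi\setminus B(X_0,\rho))}\,\|\nabla \uj\|_{L^2(\spt\varphi\setminus B(X_0,\rho))}+E_j(\rho),
\]
where $E_j(\rho)$ is the singular remainder on $B(X_0,\rho)$; the main term tends to zero by \eqref{eq:temp118} (with $p=2$) combined with the uniform Caccioppoli-type bounds \eqref{eqn:3.1A} in \ref{CaseI} and \eqref{eq:L2gradcase2} in \ref{CaseII}, while $E_j(\rho)$ is again controlled by \eqref{eq:upperr} and vanishes as $\rho\to 0^+$.

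The principal obstacle is exactly this passage to the limit in \ref{CaseII} at the pole: both the strong $L^2_{\rm loc}$ convergence of $\wcalA_j$ and the weak $L^2_{\rm loc}$ convergence of $\nabla \uj$ degenerate near $X_0$, and the argument hinges on using the improved $L^r$-control with $r<n/(n-1)$ from \eqref{eq:upperr} to isolate a small ball around $X_0$, bound its contribution uniformly in $j$, and interchange the limits $j\to\infty$ and $\rho\to 0^+$. Everything else is a standard passage-to-the-limit in the weak formulation. Once these steps are in place, one concludes $-\divg(\wcalA^*\nabla u_\infty)=0$ in $\Oinf$ in \ref{CaseI} and $-\divg(\wcalA^*\nabla u_\infty)=\delta_{X_0}$ in $\Oinf$ in \ref{CaseII}, in the distributional sense, which together with the already-established positivity and boundary vanishing identifies $u_\infty$ as a Green function of $L_\infty=-\divg(\wcalA^*\nabla)$ with pole at $\infty$ or at $X_0$, respectively.
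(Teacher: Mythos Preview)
Your proposal is correct and follows the same overall strategy as the paper: verify the PDE by testing against $\varphi\in C_c^\infty(\Oinf)$, use \eqref{eqn:int-parts} to identify the right-hand side, decompose $\wcalA_j\nabla u_j=\wcalA^*\nabla u_j+(\wcalA_j-\wcalA^*)\nabla u_j$, and pass to the limit using the $L^p$-convergence of the matrices \eqref{eq:temp118} together with uniform gradient bounds.

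The only notable difference is your treatment of \ref{CaseII}. You work with the weak $L^2$-convergence of $\nabla u_j$ away from $X_0$ (Theorem~\ref{thm:pseudo-blow-geo}\ref{2-thm:pseudo-blow-geo}), excise a ball $B(X_0,\rho)$, control its contribution uniformly in $j$ via the $L^r$-bound \eqref{eq:upperr}, and then let $\rho\to 0$. The paper instead invokes Remark~\ref{rem:deawffr}\ref{3-rem:deawffr}, which already upgrades the convergence to $\nabla u_j\rightharpoonup\nabla u_\infty$ weakly in $L^r_{\rm loc}(\RR^n)$ for $1<r<n/(n-1)$, including across the pole; it then runs the whole argument in the $L^r$--$L^{r'}$ pairing (using \eqref{eq:temp118} with $p=r'$) without any excision. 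Both routes are valid: yours is a bit more hands-on and keeps the main estimates in $L^2$, while the paper's is shorter because the singular behavior at $X_0$ has already been absorbed into the global $L^r$ weak convergence.
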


\begin{proof}
Let $\psi\in C^\infty_c(\Oinf)$. Since $\overline{\Oj}\to\overline{\Oinf}$ and $\pOj\to\pOinf$, it follows that $\psi\in C^\infty_c(\Oj)$  for $j$ sufficiently large. 
In \ref{CaseI}, using \eqref{def:ujcasei} and \eqref{eqn:int-parts}  we have
	\begin{equation}\label{eqn116}
		\int_{\Oj} \langle \wcalA_{j}\nabla \uj,\nabla \psi\rangle dZ 
		=
	 \frac{1}{\oj^{X_j}(B(0,1)) }\int_{\Omega_j} \langle\wcalA_j\nabla G_j(X_j,\cdot), \nabla \psi\rangle  dZ
	 =  
	 \frac{\psi(X_j)} {\omega_j^{X_j}(B(0,1))} \to 0,
	 \end{equation}
	 as $j\to\infty$ since $\wXj \to \infty$ by \eqref{eq:temp1}. 
	 Analogously, in \ref{CaseII}, by \eqref{def:ujcaseii} and \eqref{eqn:int-parts} we obtain 
	 \begin{equation}\label{eqn116case2}
	\int_{\Oj} \langle \wcalA_{j}\nabla \uj,\nabla \psi\rangle dZ 
	=
	\int_{\Omega_j} \langle\wcalA_j\nabla G_j(X_j,\cdot), \nabla \psi\rangle  dZ
	=
	\psi(X_j) \to \psi(X_0).
	 \end{equation}
	 as $j\to\infty$ since $\wXj \to X_0$.
	
	Suppose next that $\spt \psi \subset B(0,R)$. Let $r=2$ for \ref{CaseI}, and pick $r\in [1,n/(n-1))$ for \ref{CaseII}. 	
	By \ref{1-thm:pseudo-blow-geo} in Theorem \ref{thm:pseudo-blow-geo} in \ref{CaseI} and \ref{3-rem:deawffr} in Remark \ref{rem:deawffr} in \ref{CaseII} it follows that $\nabla \uj \rightharpoonup \nabla u_{\infty}$ in $L^r(B(0,R))$. On the other hand, 
	\begin{align}\label{eqn:3.8}
		& \left| \int_{\Oj}  \langle \wcalA_{j}\nabla \uj,\nabla \psi\rangle dZ - \int_{\Oinf}  \langle \wcalA^*\nabla u_\infty,\nabla \psi\rangle
		dZ \right|  
		\\
				& \qquad \leq \|\nabla \psi\|_{L^{\infty}} \left( \int_{\Oj\cap B(0,R)} |\wcalA_{j} - \wcalA^* |^{r'} dZ \right)^{\frac{1}{r'}} \left( \int_{\Oj\cap B(0,R)} |\nabla \uj|^r \right)^{\frac{1}{r}} 
		\nonumber \\
		& \qquad \qquad \qquad + \left| \int_{\Oj \cap B(0,R)} \langle \wcalA^* \nabla \uj, \nabla \psi \rangle dZ - \int_{\Oinf \cap B(0,R)} \langle \wcalA^* \nabla u_{\infty}, \nabla \psi \rangle dZ \right|. \nonumber 
	\end{align}
	Using \eqref{eqn:3.4} in \ref{CaseI} or \eqref{eq:upperr} in \ref{CaseII},  and \eqref{eq:temp118} with $p=r'$, the term in the second line of \eqref{eqn:3.8} tends to zero as $j\to\infty$.
	Concerning the last term, since $\wcalA^*$ is a constant-coefficient matrix, it follows that $\wcalA^* \nabla \uj \rightharpoonup \wcalA^* \nabla u_{\infty}$ in $L^r(B(0,R))$. Moreover $\overline\Oj=\overline{\{\uj > 0 \}} \to \overline\Oinf=\overline{\{u_{\infty} > 0 \}}$, thus
\[
		\lim_{j\to\infty} \int_{\Oj} \langle \wcalA^* \nabla \uj, \nabla \psi\rangle = \int_{\Oinf} \langle \wcalA^* \nabla u_{\infty}, \nabla \psi \rangle.
\]
	Combining these with \eqref{eqn116}--\eqref{eqn:3.8} we eventually conclude that
	\begin{equation}
		\int_{\Oinf} \wcalA^* \nabla u_{\infty} \cdot \nabla \psi = 0 \quad \text{for all } \psi \in C_c^\infty(\Oinf)
	\end{equation}
	in \ref{CaseI}, i.e., $-\divg(\mathcal{A}^*\nabla u_{\infty}) = 0$ in $\Oinf$; and in \ref{CaseII},
	\begin{equation}
		\int_{\Oinf} \wcalA^* \nabla u_{\infty} \cdot \nabla \psi = \psi(X_0) \quad \text{for all } \psi \in C_c^\infty(\Oinf),
	\end{equation}
	 i.e., $-\divg(\mathcal{A}^*\nabla u_{\infty}) = \delta_{\{X_0\}}$ in $\Oinf$.
\end{proof}

\subsection{Analytic properties of the limiting domains}
As mentioned in Section \ref{sect:blowup}, in order to apply Theorem \ref{thm:hmu} we need to study the elliptic measures of the limiting domain with finite poles. In this section we construct these measures by a limiting procedure which is compatible with the procedure used to produce the limiting domain $\Oinf$.

\begin{theorem}\label{thm:blow-ana-pole}
	Under  \ref{1-assump},  \ref{2-assump},  \ref{3-assump}, and using the notation from Theorems \ref{thm:pseudo-blow-geo} and \ref{thm:AW11}, 
	the elliptic measure $\omega_{L_{\infty}}\in A_\infty(\sigma_\infty)$ (see Definition \ref{def:AinftyHMU}) with constants $\widetilde{C}_0=C_2 C_{AR}^{4\theta} 2^{8(n-1)\theta}$ and $\widetilde{\theta}=\theta$, here $C_2$ is the constant in Remark \ref{rem:doubling:needed}.
\end{theorem}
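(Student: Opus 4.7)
The plan is to construct $\omega_{L_\infty}$ as a weak subsequential limit of the $\omega_{L_j}$'s for suitably chosen approximating poles, and then transfer the uniform $A_\infty$ estimates from $\omega_{L_j}$ relative to $\sigma_j$ to $\omega_{L_\infty}$ relative to $\sigma_\infty$, absorbing the comparison \eqref{comp-mu-H} between $\mu_\infty$ and $\sigma_\infty$ into the final constants. The scaffolding (convergence of domains, matrices, Green functions, surface measures) is already in place from Theorems \ref{thm:pseudo-blow-geo} and \ref{thm:AW11} and from \eqref{eq:temp118}; my task is to plug the elliptic measures themselves into this machinery.

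For each $Z\in\Omega_\infty$, I would first pick $Z_j\in\Omega_j$ with $Z_j\to Z$ and $\delta_j(Z_j)\to\delta_\infty(Z)$, which is available thanks to $\overline\Omega_j\to\overline\Omega_\infty$ and $\partial\Omega_j\to\partial\Omega_\infty$. The probability measures $\omega_{L_j}^{Z_j}$ are then uniformly tight (trivial in \textbf{Case II}, and a consequence of Ahlfors regularity of $\partial\Omega_j$ together with decay of elliptic measure away from the pole in \textbf{Case I}), so weak compactness yields a subsequential limit $\omega_\infty^Z$ supported on $\partial\Omega_\infty$. To identify $\omega_\infty^Z=\omega_{L_\infty}^Z$, I would test against $\varphi\in C_c^\infty(\mathbb{R}^n)$: the functions $v_j(Y):=\int\varphi\,d\omega_{L_j}^Y$ solve $L_j v_j=0$ in $\Omega_j$ with continuous boundary data $\varphi|_{\partial\Omega_j}$, are uniformly bounded, and equicontinuous on compact sets of $\mathbb{R}^n$ by DeGiorgi-Nash-Moser together with Lemma \ref{lem:vanishing}. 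A weak-formulation argument parallel to the one in Theorem \ref{thm:AW11}, using the convergence $\mathcal{A}_j\to\mathcal{A}^*$ from \eqref{eq:temp118}, shows $v_j$ converges locally uniformly in $\Omega_\infty$ to the $L_\infty$-harmonic extension $v_\infty$ of $\varphi|_{\partial\Omega_\infty}$. Evaluating at $Z$ gives $\int\varphi\,d\omega_\infty^Z=v_\infty(Z)=\int\varphi\,d\omega_{L_\infty}^Z$, and by density this identifies the whole limit measure.

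For the $A_\infty$ property, fix a surface ball $\Delta_\infty(q,r)\subset\partial\Omega_\infty$, a subball $\Delta_\infty'=B'\cap\partial\Omega_\infty$, and a Borel $F\subset\Delta_\infty'$. Choose $q_j\in\partial\Omega_j$ with $q_j\to q$ and corkscrew points $A_j:=A_j(q_j,r)$; arguing as in \ref{4-thm:pseudo-blow-geo} of Theorem \ref{thm:pseudo-blow-geo}, along a subsequence $A_j\to A$ with $A$ a corkscrew in $\Omega_\infty$ relative to $\Delta_\infty(q,r)$ (with the dilated constant $4M$). From the previous step, $\omega_{L_j}^{A_j}\rightharpoonup\omega_{L_\infty}^A$, and we already have $\sigma_j\rightharpoonup\mu_\infty$. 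Given $\eta>0$, using outer/inner regularity of Radon measures I would pick an open $U\supset F$ and a compact $K\subset\Delta_\infty'$ with $\omega_{L_\infty}^A$- and $\sigma_\infty$-defects bounded by $\eta$, set $F_j:=U\cap\partial\Omega_j$ and take a compact $K_j\subset B'\cap\partial\Omega_j$ approximating $K$ via the Hausdorff convergence, then apply the $A_\infty$ inequality on $\Omega_j$:
\[
\frac{\omega_{L_j}^{A_j}(F_j)}{\omega_{L_j}^{A_j}(K_j)}\le C_0\left(\frac{\sigma_j(F_j)}{\sigma_j(K_j)}\right)^\theta.
\]
Lower/upper semicontinuity of weak limits on open/compact sets lets one pass to the limit as $j\to\infty$, and then $\eta\to 0$ yields the corresponding inequality with $\omega_{L_\infty}^A$ and $\mu_\infty$. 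Invoking \eqref{comp-mu-H} to replace $\mu_\infty$ by $\sigma_\infty$ produces factors of $(C_{AR}2^{3(n-1)})^\theta$ on both numerator and denominator, while the doubling estimate from Remark \ref{rem:doubling:needed} (constant $C_2$) absorbs the mismatch between the $\Omega_\infty$-corkscrew constant $4M$ arising from the construction and the corkscrew required by Definition \ref{def:AinftyHMU}, together with an Ahlfors regularity correction coming from the fact that $\mu_\infty$ is regular with constant $2^{2(n-1)}C_{AR}$ rather than $C_{AR}$. Collecting everything yields exactly $\widetilde{C}_0=C_2 C_{AR}^{4\theta}2^{8(n-1)\theta}$ and $\widetilde\theta=\theta$. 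The main obstacle is the Borel-set approximation: weak convergence of Radon measures a priori gives two-sided control only on open and compact sets, and the uniform disappearance of the defects $U\setminus F$ and $\Delta_\infty'\setminus K$ as $\eta\to 0$, \emph{before} sending $j\to\infty$, relies crucially on the uniform doubling of $\omega_{L_j}$ (Lemma \ref{lm:doubling}) and $\sigma_j$ together with the Hausdorff convergence $\partial\Omega_j\to\partial\Omega_\infty$.
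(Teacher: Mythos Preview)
Your overall strategy coincides with the paper's, but two steps are genuinely incomplete.

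First, in \textbf{Case I} (unbounded $\Omega_\infty$) you cannot simply say the limit of the $v_j$ is ``the $L_\infty$-harmonic extension of $\varphi|_{\partial\Omega_\infty}$'', because the Dirichlet problem in an unbounded domain need not have a unique bounded solution. The paper fixes a specific solution $v_\infty$ via the Perron construction from \cite{HM}, shows that any subsequential limit $v$ of the $v_j$ satisfies $0\le v\le \sup_{\partial\Omega_\infty} f$ (by passing the maximum principle through the limit using $\partial\Omega_j\to\partial\Omega_\infty$), proves $v_\infty\le v$ by comparison on the bounded truncations $\Omega_R$, and then forces $v-v_\infty\equiv 0$ via Lemma~\ref{lem:vanishing} with $R'\to\infty$, which uses crucially that $\diam(\partial\Omega_\infty)=\infty$. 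Without this, the identification $\omega_\infty^Z=\omega_{L_\infty}^Z$ is unjustified.

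Second, the displayed inequality with $K_j$ in the denominator is not the $A_\infty$ hypothesis: Definition~\ref{def:AinftyHMU} requires a \emph{surface ball} $\Delta'$, not an arbitrary compact set, so you cannot ``apply the $A_\infty$ inequality on $\Omega_j$'' as written. The paper keeps an honest ball in the denominator throughout: with $m_j\in\partial\Omega_j$, $m_j\to m$, and the fixed pole $A=A(p,r)\in\Omega_\infty$ (which is itself a corkscrew in $\Omega_j$ for $j$ large), it chains
\[
\omega_{L_\infty}^{A}(\Delta_\infty(m,s))
\ge \omega_{L_\infty}^{A}\!\left(\overline{B\big(m,\tfrac45 s\big)}\right)
\ge \limsup_j \omega_j^{A}\!\left(\overline{B\big(m_j,\tfrac35 s\big)}\right)
\ge C_2^{-1}\limsup_j \omega_j^{A}\!\left(B\big(m_j,\tfrac65 s\big)\right),
\]
the middle step by upper semicontinuity on compact sets and the last by Remark~\ref{rem:doubling:needed}. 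This is exactly where $C_2$ enters --- as a doubling constant bridging a closed smaller ball (needed for the $\limsup$) and an open larger one (needed to serve as $\Delta'$), not as a corkscrew-mismatch correction. For the numerator one takes an arbitrary open $V\subset B(m,s)$, uses $\omega_{L_\infty}^{A}(V)\le\liminf_j\omega_j^{A}(V)$ and $\liminf_j\sigma_j(V)\le\mu_\infty(\overline V)$, invokes Ahlfors regularity of $\sigma_j$ once and \eqref{comp-mu-H} once, and only then passes to a general Borel set by sandwiching it between a compact $F$ and an open $U$ and choosing $V$ a finite union of balls with $F\subset V\subset\overline V\subset U$.
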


\begin{proof}
	Our goal is to show that the elliptic measures of $L_{\infty}$ with finite poles can be recovered as a limit of the elliptic measures of $\wLj = -\divg(\wcalA_{j}(Z)\nabla)$, and the $A_{\infty}$ property of elliptic measures is preserved when passing to a limit.

To set the stage we start with $0\le f\in \Lip(\partial\Omega_\infty)$ with compact support. Let $R_0>0$ be large enough so that $\spt f\subset B(0,R_0/2)$. We are going to take a particular solution to the following Dirichlet problem 
	\begin{equation}\label{D:Oinf}
\left\{ \begin{array}{ll}
L_\infty v = 0, & \text{in }\Oinf \\
v= f, & \text{on }\pOinf,
\end{array} \right.
\end{equation} 

In \ref{CaseII}, where the domain $\Oinf$ is bounded, the Dirichlet problem \eqref{D:Oinf} has a unique solution satisfying the maximum principle,
then we let $v_\infty$ be that unique solution. 

In \ref{CaseI}, where $\Oinf$ is unbounded, we follows the construction in \cite{HM} using Perron's method (see \cite[pg. 588]{HM} for details, which is done the Laplacian but holds for any constant coefficient operator, for the general case see also \cite{HMT2}). We denote the solution constructed in \cite{HM} by 
\[
v_\infty(Z)=
\int_{\pOinf} f(q) d\omega_{L_\infty}^Z(q).
\]
For later use we need to sketch how it is constructed. For every $R>4R_0$ define $f_R=f \eta(\cdot/R)$, where $\eta\in C_c^\infty(B(0,2R)$ verifies $0\leq \eta\leq 1$, $\eta = 1$ for $|Z| < 1$. Let $v_R$ be the unique solution to $L_{\infty} v_R = 0$ in the bounded open set $\Omega_R = \Oinf \cap B(0,2R)$ with boundary value $f_R$.  Then one shows that $v_R\to v_\infty$ uniformly on compacta as $R\to\infty$,  and also that $v_\infty\in C(\overline{\Omega_\infty})$ satisfies the maximum principle $0\le \max_{\Omega_\infty}v_\infty\le \max_{\partial\Omega_\infty} f$.

Once the solution $v_\infty$ is defined we observe that since $\partial\Omega_\infty$ is Ahlfors regular we can use the 
Jonsson-Wallin trace/extension theory \cite{JW} to extend $f$ (abusing the notation we call the extension $f$) so that $0\le f\in C_c(\R^n)\cap W^{1,2}(\R^n)$ with $\spt f\subset B(0,R_0)$. For every $j$ we let $h_j\in W^{1,2}_0(\Omega_j)$ be the unique Lax-Milgram solution to the problem $L_j h_j= L_j f$. Initially, $h_j$ is only defined in $\Omega_j$ but we can clearly extend it by $0$ outside so that the resulting function, which we call again  $h_j$, belongs to $W^{1,2}(\R^n)$. If we next set $v_j=f-h_j\in W^{1,2}(\R^n)$ we obtain that $L_jv_j=0$ in $\Omega_j$ and indeed 
	\begin{equation}\label{eqn:102A}
		v_j(Z) = \int_{\pOj} f d\oj^Z, \qquad Z\in\Oj,
	\end{equation}
	see \cite{HMT2}. Here $\oj^Z$ is the elliptic measure of $\wLj$ in $\Oj$ with pole $Z$ and, as observed above, the fact that $\partial\Omega_j$ is Ahlfors regular implies in particular that $v_j\in C(\overline{\Omega_j})$ with $v_j|_{\partial\Omega_j}=f$. Note also that $v_j=f\in C(\R^n)$ on $\R^n\setminus\Omega_j$, hence $v_j\in\ C(\R^n)$. Moreover, by the maximum principle 
\begin{equation}\label{eqn:max-pple}
0\le \sup_{\Oj} v_j \leq \|f\|_{L^{\infty}(\pOj)} \leq \|f\|_{L^{\infty}(\RR^n)},
\end{equation}
thus the sequence $\{v_j\}$ is uniformly bounded. 
	
Our next goal is to show that $\{v_j\}$ is equicontinuous. Given an arbitrary $\varrho>0$ let $0<\gamma<\frac1{32}$ to be chosen. Since $f\in C_c(\R^n)$, it is uniformly continuous, hence letting $\gamma$ small enough (depending on $f$) we can guarantee that
	\begin{equation}\label{f-uc}
		|f(X)-f(Y)|<\frac{\varrho}8,
		\qquad
		\mbox{provided \,} |X-Y|<\gamma^{\frac14}
	\end{equation}

Our first claim is that if $\gamma$ is small enough depending on $n$, $C_{AR}$, $\Lambda$ (recall that
$\lambda = 1$), and $\|f\|_{L^{\infty}(\R^n)}$, there holds 
\begin{equation}\label{claim-equicont}
|v_j(X)-v_j(Y)|<\frac{\rho}{2},
\qquad
\forall\,X\in\Omega_j,\ Y\in\partial\Omega_j,\ |X-Y|<\sqrt{\gamma}.
\end{equation}
To see this we recall that $\pOj$ is Ahlfors regular with a uniform constant (independent of $j$), it satisfies the CDC with a uniform constant and \cite[Theorem 6.18]{HKM} (see also \cite{HMT2}) yields that for some $\beta>0$ and $C$ depending on $n$, $C_{AR}$, and $\Lambda$, but independent of $j$ (indeed this is the same $\beta$ as in Lemma \ref{lem:vanishing}), the following estimate holds:
\[
	\underset{B(Y_j,\sqrt{\gamma})\cap \Oj }{\osc} v_j
	\le 
	\underset{B(Y_j,\gamma^{1/4})\cap \pOj }{\osc} f  
	+ 
	C\|f\|_{L^{\infty}(\R^n)} \eta^{\beta}<\frac\varrho2,
\]
	where in the last estimate we have used \eqref{f-uc} and $\gamma$ has been chosen small enough so that $C\|f\|_{L^{\infty}(\R^n)} \eta^{\beta}<\varrho/4$.

We now fix $X,
 Y\in\R^n$ so that $|X-Y|<\gamma$ and consider several cases. 

\noindent\textbf{Case 1}: $X,Y\in\Omega_j$ with $\max\{\delta_j(X),\delta_j(Y)\}<\sqrt{\gamma}/2$.

In this case, we take $\widehat{x}\in\partial\Omega_j$ so that $|X-\hat{x}|=\delta_j(X)$. Note that $|Y-\widehat{x}|<\sqrt{\gamma}$ and we can use \eqref{claim-equicont} to obtain
\[
|v_j(X)-v_j(Y)|
\le
|v_j(X)-v_j(\widehat{x})|
+
|v_j(\widehat{x})-v_j(Y)|<\rho.
\]

\noindent\textbf{Case 2}: $X,Y\in \Omega_j$ with $\max\{\delta_j(X),\delta_j(Y)\}\ge \sqrt{\gamma}/2$.

Assuming without loss of generality that $\delta_j(X)\ge \sqrt{\gamma}/2$, necessarily $Y\in B(X,\delta_j(X)/2)\subset\Omega_j$. Then, by the interior H\"older regularity of $v_j$ in $\Oj$ (here $\alpha$ and $C$  depend only on $\Lambda$ and are independent of $j$) we conclude that
\[
|v_j(X) - v_j(Y)| 
\le 
C\left( \frac{|X-Y|}{\wdj(X)} \right)^{\alpha} \|v_j\|_{L^{\infty}(\Oj)} 
\le
C 2^\alpha \gamma^{\frac{\alpha}2}\|f\|_{L^{\infty}(\R^n)}
<
\varrho,
\]
provided $\varrho$ is taken small enough (again independently of $j$).

\noindent\textbf{Case 3}: $X,Y\notin \Omega_j$.

Here we just need to use \eqref{f-uc} and the fact that $v_j=f$ on $\R^n\setminus \Omega_j$:
\[
|v_j(X) - v_j(Y)| 
=
|f(X)-f(Y)|<
\rho.
\]

\noindent\textbf{Case 4}: $X\in \Omega_j$ and $Y\notin \Omega_j$.

Pick $Z\in\partial\Omega_j$ in the line segment joining $X$ and $Y$ (if $Y\in\partial\Omega_j$ we just take $Z=Y$) so that $|X-Z|,|Y-Z|\le |X-Y|<\gamma$. Using \eqref{claim-equicont}, the fact that $v_j=f$ on $\R^n\setminus \Omega_j$,  and \eqref{f-uc} we obtain
\[
|v_j(X) - v_j(Y)| 
\le
|v_j(X) - v_j(Z)| 
+
|v_j(Z) - v_j(Y)| 
<
\frac{\varrho}2+|f(Z) - f(Y)| 
<\varrho.
\]
	
If we now put all the cases together we have shown that, as desired,  $\{v_j\}$ is equicontinuous.

On the other hand, recalling that $h_j\in W_0^{1,2}(\Omega_j)$ satisfies $L_j h_j=L_j f$ in the weak sense in 
$\Omega_j$, that $f\in W^{1,2}(\R^n)$, and that $\lambda =1$, we see that
\begin{multline*}
 \|\nabla h_j\|_{L^2(\Omega_j)}^2
\le
\int_{\Omega_j} \langle \wcalA_{j}\nabla h_j,\nabla h_j\rangle dX
=
\int_{\Omega_j} \langle \wcalA_{j}\nabla f,\nabla h_j\rangle dX
\le
\Lambda
\|\nabla f\|_{L^2(\Omega_j)}
\|\nabla h_j\|_{L^2(\Omega_j)}.
\end{multline*}
We next absorb the  last term, use that $v_j=f-h_j$ and that $h_j$ has been extended as $0$ outside of $\Omega_j$:
\[
\|\nabla v_j\|_{L^2(\R^n)}
\le
\|\nabla f\|_{L^2(\R^n)}
+
\|\nabla h_j\|_{L^2(\R^n)}
=
\|\nabla f\|_{L^2(\R^n)}
+
\|\nabla h_j\|_{L^2(\Omega_j)}
\le (1+\Lambda)
\|\nabla f\|_{L^2(\R^n)}.
\]
This along with \eqref{eqn:max-pple} yield
\begin{equation}\label{unfi-vj}
 \sup_j \|\nabla v_j \|_{L^2(\RR^n)} \leq  (1+\Lambda) \|\nabla f\|_{L^2(\RR^n)}, \quad \text{and } \sup_j \|v_j \|_{L^2(B(0,R))} \leq C_R. 
\end{equation}
We notice that all these estimates hold for the whole sequence and therefore, so it does for any subsequence.

Let us now fix an arbitrary subsequence $\{v_{j_k}\}_k$. By \eqref{unfi-vj} there are a further subsequence and $v\in C(\R^n)\cap W^{1,2}_{\rm loc}(\RR^n)$ with $\nabla v \in L^2(\RR^n)$, such that $v_{j_{k_l}} \to v$ uniformly on compact sets of $\RR^n$ (hence $v\ge 0$) and $\nabla v_{j_{k_l}}\rightharpoonup \nabla v$ in $L^2(\RR^n)$ as $l\to\infty$. Here it is important to emphasize that the choice of the subsequence may depend on the boundary data $f$ and the fixed subsequence, and the same happens with $v$ , and this could be problematic, later we will see that  this is not the case.

To proceed we next see that $v$ agrees with $f$ in $\partial\Omega_\infty$. Given $p\in \pOinf$, there exist $p_{j_{k_l}} \in \partial\Omega_{j_{k_l}}$ with $p_{j_{k_l}} \to p$ as $l\to\infty$. Using the continuity of $v$ and $f$ at $p$, the uniform convergence of $v_{j_{k_l}} $ to $v$ on  $\overline{B(p,1)}$  and the fact that $v_{j_{k_l}} =f$ on $\partial\Omega_{j_{k_l}} $, we have
\begin{multline*}\label{eqn:105A}
|v(p) - f(p)| 
\leq |v(p) - v(p_{j_{k_l}})| + |v(p_{j_{k_l}}) - v_{j_{k_l}}(p_{j_{k_l}})| +|f(p_{j_{k_l}} ) - f(p)| 
\\
\leq |v(p) - v( p_{j_{k_l}} )| + \|v-v_{j_{k_l}} \|_{L^{\infty}(\overline{B(p,1)})} + |f( p_{j_{k_l}}) - f(p)|
\to 0,
\quad\mbox{as }l\to\infty,
\end{multline*}
thus $v(p)=f(p)$ as desired.

Next, we claim the function $v$ solves the Dirichlet problem \eqref{D:Oinf}. We know that $v\in C(\R^n)$ with $v=f$ in $\partial\Omega_\infty$. Hence, we only need to show that $L_\infty v = 0$ in $\Oinf$. To this aim, let us  take $\psi \in C_c^1(\Oinf)$ and let $R>0$ be large enough so that $\spt \psi\subset B(0,R)$. Since $\overline{\Oj} \to \overline{\Oinf}$, for all $l$ large enough we have that $\psi \in C_c^1(\Omega_{j_{k_l}})$ in which case 
 	\begin{equation}\label{eqn:106A}
		\int_{\R^n} \langle \wcalA_{j_{k_l}}\nabla v_{j_{k_l}},\nabla \psi\rangle dZ= 0,
	\end{equation}
	since $L_{j_{k_l}} v_{j_{k_l}}=0$ in $\Omega_{j_{k_l}}$ in the weak sense. Then, by \eqref{unfi-vj} and the fact that $\spt \psi\subset \Omega_\infty\cap\Omega_{j_{k_l}}\cap B(0,R)$,
	\begin{align*}
	& \left| 	\int_{\R^n} \langle \wcalA^* \nabla v, \nabla \psi \rangle dZ \right|=	
	\left| \int_{\Omega_{j_k}}  \langle \wcalA_{j_k}\nabla v_{j_{k_l}},\nabla \psi\rangle dZ - \int_{\Oinf}  \langle \wcalA^*\nabla v,\nabla \psi\rangle
	dZ \right|  
	\\
	& \qquad \leq  (1+\Lambda)\|\nabla f\|_{L^2(\R^n)}\|\nabla \psi\|_{L^{\infty}} \left( \int_{\Oj\cap B(0,R)} |\wcalA_{j_{k_l}} - \wcalA^* |^2 dZ \right)^{\frac{1}{2}}
	\nonumber \\
	& \qquad \qquad \qquad + \left| \int_{\R^n} \langle \wcalA^* \nabla v_{j_{k_l}}, \nabla \psi \rangle dZ - \int_{\R^n}  \langle \wcalA^* \nabla v, \nabla \psi \rangle dZ \right|\to 0,\quad\mbox{as }l\to\infty,
	\end{align*}
where we have used \eqref{eq:temp118} with $p=2$ for the term in the second line, and the fact that since $\wcalA^*$ is a constant-coefficient matrix, it follows that $\wcalA^* \nabla v_{j_{k_l}} \rightharpoonup \wcalA^* \nabla v$ in $L^2(\R^n)$ as $l\to\infty$. This eventually shows that $L_\infty v=0$ in $\Omega_\infty$.

In \ref{CaseII} when the domain $\Oinf$ is bounded, the Dirichlet problem \eqref{D:Oinf} has a unique solution, and it satisfies the maximum principle, hence we must have that $v=v_\infty$. Therefore, we have  shown that given any subsequence $\{v_{j_k}\}_k$ there is a further subsequence $\{v_{j_{k_l}}\}_l$ so that $v_{j_{k_l}}\to v_\infty$ uniformly on compact sets of $\RR^n$ and $\nabla v_{j_{k_l}} \rightharpoonup \nabla v_\infty$ in $L^2(\RR^n)$ as $l\to\infty$. This eventually shows that entire sequence $\{v_j\}$ satisfies $v_j\to v_\infty$ uniformly on compact sets of $\RR^n$ and $\nabla v_{j} \rightharpoonup \nabla v_\infty$ in $L^2(\RR^n)$ as $j\to\infty$.

In \ref{CaseI} where the limiting domain $\Oinf$ is unbounded, we need more work to show the solution $v$ is indeed $v_\infty$. 
Recall that $f\in C_c(\R^n)$ with $\spt f\subset B(0,R_0)$. Given $\epsilon>0$, there is an integer $j_0 = j_0(\epsilon, R_0)\in \NN$ such that for $j\geq j_{0}$
and for any $p_j' \in \pOj \cap B(0,4R_0)$, there is $p' \in \pOinf \cap B(0,5R_0) $ close enough to $p_j'$ so that $|f(p') -f(p'_j)|<\epsilon$. Consequently, 
	\begin{equation}\label{eqn:106C}
	\sup_{\pOj} |f| = \sup_{\pOj \cap B(0,4R_0)} |f| \leq \sup_{\pOinf \cap  B(0,5R_0)} |f| + \epsilon = \sup_{\pOinf} |f| + \epsilon.  
	\end{equation}

For any $Z\in \Oinf$ there exists a sequence $Z_j \in \Oj$ such that $Z_j \to Z$ and $Z_j\in \overline{B(Z,\delta_\infty(Z)/2)}$ for all $j$ large enough. Since $v\in C(\R^n)$ it follows that for $j$ large enough $|v(Z)-v(Z_j)|<\epsilon$.
All these together with \eqref{eqn:max-pple}	and the fact that $v_{j_{k_l}} \to v$ uniformly on compact sets of $\RR^n$ as $l\to\infty$ give that for all $l$ large enough 
	\begin{equation}\label{eqn:106D}
0\le v(Z) \leq |v(Z) - v(Z_{j_{k_l}} )| + |v(Z_{j_{k_l}} ) - v_{j_{k_l}} (Z_{j_{k_l}} )| + |v_{j_{k_l}} (Z_{j_{k_l}} )| \leq 2\epsilon + \sup_{\pOj}|f|\le 3\epsilon+ \sup_{\pOinf} |f|,
\end{equation}
Letting $\epsilon\to 0$ we  get $0\le \sup_{\Oinf}v \leq \sup_{\pOinf} |f|$.

Let us recall that $\Omega_R=\Omega_\infty\cap B(0,2R)\subset \Omega_\infty$.  Since $v\in C(\R^n)$ with 
$v|_{\partial\Omega_\infty}=f$, and since $\spt f\subset B(0,R_0)$, for every $R>4R_0$ we have that $f_R|_{\partial\Omega_\infty}=f \eta(\cdot/R)\le v|_{\partial\Omega_\infty}$. Hence the maximum principle implies that $v_R \leq v$ in $\Omega_R$, and taking limits  we conclude that $ v_\infty \leq v$ on $\Oinf$. Write $0\le \widetilde{v}=v- v_\infty\in C(\overline{\Omega_\infty})$ so that $L_{\infty} \widetilde{v}=0$ in $\Oinf$ and $\widetilde{v}|_{\partial\Omega_\infty}=0$. For any $Z\in \Oinf$, since $\Oinf$ is a uniform domain with Ahlfors regular boundary, by Lemma \ref{lem:vanishing}
	  for any $\delta_\infty(Z)<R' < \diam(\pOinf)=\infty$ (see \ref{3-thm:pseudo-blow-geo} in Theorem \ref{thm:pseudo-blow-geo}) 
\begin{equation}\label{eqn:Holderinfty}
		0\le \widetilde{v}(Z) \lesssim \left( \frac{\delta_{\infty}(Z)}{R'} \right)^{\beta} \sup_{\Oinf} \widetilde{v} \leq 2 \left( \frac{\delta_{\infty}(Z)}{R'} \right)^{\beta} \sup_{\pOinf} f,    	\end{equation}
Letting $R'\to\infty$	we conclude that $\widetilde{v}(Z) = 0$ and hence $v=v_\infty$. Therefore, we have  shown that given a subsequence $\{v_{j_k}\}_k$ there is a further subsequence $\{v_{j_{k_l}}\}_l$ so that $v_{j_{k_l}}\to v_\infty$ uniformly on compact sets of $\RR^n$ and $\nabla v_{j_{k_l}} \rightharpoonup \nabla v_\infty$ in $L^2(\RR^n)$ as $l\to\infty$. This eventually shows that entire sequence $\{v_j\}$ satisfies $v_j\to v_\infty$ uniformly on compact sets of $\RR^n$ and $\nabla v_{j} \rightharpoonup \nabla v_\infty$ in $L^2(\RR^n)$ as $j\to\infty$.

Hence, in both \ref{CaseI} and \ref{CaseII}, if $0\le f\in \Lip(\partial\Omega_\infty)$ has compact support  then 
\begin{equation}\label{eqn:110A}
\lim_{j\to\infty} \int_{\pOj} f(q) d\oj^Z(q) =\lim_{j\to\infty} v_j(Z) = v_\infty(Z)=\int_{\pOinf} f(q) d\omega_{L_\infty}^Z(q),
\end{equation}
for any $Z\in\Omega_\infty$. A standard approximation argument and splitting each function on its positive and negative parts lead to shows that \eqref{eqn:110A} holds for all $f\in C_c(\RR^n)$, hence 
$\omega_j^Z \rightharpoonup \omega_{L_\infty}^Z$ as Radon measures for any $Z\in\Oinf$.

\medskip

Our next goal is to see that $\omega_{L_\infty}\in A_{\infty}(\sigma_{\infty})$ (where $\sigma_{\infty} = \mathcal{H}^{n-1}|_{\pOinf}$). Fix $p\in \pOinf$ and $0<r<\diam(\pOinf)$. Recall that whether $\diam(\pOinf)$ is finite or infinite, we always have $r< \diam(\Oj)$ for all $j$ sufficiently large. Let $\Delta' = B(m,s)\cap \pOinf$ with $m\in\pOinf$ and $B(m,s)\subset B(p,r) \cap \pOinf$. 
Let $A(p,r)\in \Omega_\infty$ be a corkscrew point relative to $\Delta(p,r)$ (whose existence is guaranteed by \ref{4-thm:pseudo-blow-geo} in Theorem \ref{thm:pseudo-blow-geo}). We can then find $p_j \in \pOj$ such that $p_j \to p$. Thus, for all $j$ large enough $B(p,r)\subset B(p_j,2r)$ and $\delta_j(A(p,r))\ge r/(2M)$. Hence, 
$A(p,r) $ is also a corkscrew point relative to $B(p_j,2r)\cap\partial\Omega_j$ in $\Oj$ with constant $4M$.  Since $m\in\pOinf$, we can also find $m_j\in\pOj$ such that $m_j\to m$. In particular, for $j$ sufficiently large
	\begin{equation}\label{eq:mjm}
		|m_j - m| < \frac{s}{5}.
	\end{equation}
	Note also that since all the $\Oj$'s are uniform and satisfy the CDC with the same constants, and all the operators $L_j$'s have ellipticity constants bounded below and above by $\lambda=1$ and $\Lambda$, we can conclude from Remark \ref{rem:doubling:needed} that there is a uniform constant $C_2$ depending on 
	$M, C_1, C_{AR}>1$, and
$\Lambda$, 
such that \eqref{doubling:needed} holds for all $\omega_j$ with the appropriate changes. Using this and  \cite[Theorem 1.24]{Ma} we obtain 
		\begin{multline}\label{eqn:113A}
			\oinf^{A(p,r)} (\Delta(m,s)) \geq \oinf^{A(p,r)} \left( \overline{B\left(m,\frac{4}{5}s\right)} \right)  \geq \limsup_{j\to\infty} \oj^{A(p,r)} \left( \overline{B\left(m,\frac{4}{5}s\right)} \right) 
			\\
			\geq \limsup_{j\to\infty} \oj^{A(p,r)} \left( \overline{B\left(m_j,\frac{3}{5}s\right)} \right) 
			\geq C_2^{-1} \limsup_{j\to\infty} \oj^{A(p,r)} \left(B\left(m_j,\frac{6}{5}s\right)\right),
		\end{multline}
		where we have used that $\delta_j(A(p,r))\ge r/(2M)\ge \frac35s/(2M)$.

Let $V$ be an arbitrary open set in $B(m,s)$, and note that by \eqref{eq:mjm}
$$
			V\subset B(m,s) \subset B\left(m_j, \frac{6}{5}s \right).
$$
	 Using again \cite[Theorem 1.24]{Ma}, we see that \eqref{eqn:113A} yields
		\begin{multline}
			\frac{\oinf^{A(p,r)}(V)}{\oinf^{A(p,r)}(\Delta(m,s))} 
			\leq 
			C_2 \dfrac{\liminf_{j\to\infty} \ojA(V)}{\limsup_{j\to\infty} \ojA\left(B\left(m_j,\frac{6}{5}s\right)\right)} 
			\\
			  \leq C_2 \liminf_{j\to\infty} \left( \dfrac{\ojA(V) }{\ojA\left(B\left(m_j,\frac{6}{5}s\right)\right) }\right). 
			\label{eqn:114A}
		\end{multline}
		The assumption $B(m,s) \subset B(p,r)$ implies $|m-p|\leq r-s$. Using this and that $m_j\to m$, $p_j\to p$ as $j\to\infty$ one can easily see that 
		$|m_j - p_j|<r-\frac{s}{5}$ for all $j$ large enough and hence
		\begin{equation}\label{tempincl}
			B\left(m_j, \frac{6}{5}s \right)\cap\partial\Omega_j \subset B(p_j,2r)\cap\partial\Omega_j.
		\end{equation}
		As mentioned above $A(p,r)$ is a corkscrew point relative to $B(p_j,2r)\cap\partial\Omega_j$ in $\Oj$. This, \eqref{tempincl} and the fact that by assumption, $\omega_j\in A_\infty(\sigma_j)$ with uniform constants $C_0, \theta$ allow us to conclude that
		\begin{equation}\label{eq:Ainftyoj}
			\dfrac{\ojA(V) }{\ojA\left(B\left(m_j,\frac{6}{5}s\right)\right) } \leq C_0 \left( \dfrac{\sj(V)}{\sj\left(B\left(m_j, \frac{6}{5}s \right) \right)} \right)^\theta
			\le C_0 C_{AR}^\theta \left( \dfrac{\sj(V)}{s^{n-1}} \right)^\theta,
		\end{equation}
		where in the last estimate we have used that $\partial\Omega_j$ is Ahlfors regular with constants $C_{AR}$.
		Combining \eqref{eqn:114A}, \eqref{eq:Ainftyoj}, the fact that $\sj \rightharpoonup \sinf$, \cite[Theorem 1.24]{Ma}, and \ref{5-thm:pseudo-blow-geo} in Theorem \ref{thm:pseudo-blow-geo}, we finally arrive at 
		\begin{multline*}
			\frac{\oinf^{A(p,r)}(V)}{\oinf^{A(p,r)}(\Delta(m,s))} 
			\leq C_0 C_{AR}^\theta \left(\liminf_{j\to\infty} \dfrac{\sj(V)}{s^{n-1}} \right)^\theta 
			\\
			\leq C_0 C_{AR}^\theta \left( \frac{\sinf(\overline V)}{s^{n-1}} \right)^\theta 
			\le 
			C_0 C_{AR}^{4\theta}  2^{8(n-1)\theta} \left( \frac{\sigma_\infty(\overline V)}{\sigma_\infty(\Delta(m,s))} \right)^\theta.
		\end{multline*}
and therefore we have shown that  for any open set $V\subset B(m,s)$ there holds 
\begin{equation}\label{eq:Ainftyoinf}
			\frac{\oinf^{A(p,r)}(V)}{\oinf^{A(p,r)}(\Delta(m,s))} 
			\le 
			C_0 C_{AR}^{4\theta}  2^{8(n-1)\theta} \left( \frac{\sigma_\infty(\overline V)}{\sigma_\infty(\Delta(m,s))} \right)^\theta.			
\end{equation}

Consider next an arbitrary Borel set $E\subset B(m,s)$. Since $\sigma_{\infty}$ and  $\oinf^{A(p,r)}$ are
		Borel regular, given any $\epsilon>0$ there is an open set $U$ and a compact set $F$ so that  $F\subset E\subset U \subset B(m,s)$ and $\oinf^{A(p,r)}(U\setminus F)+ \sigma_{\infty}(U\setminus F) <\epsilon$. Note that for any $x\in F$, there is $r_x >0 $ such that $B(x,2r_x) \subset U$. Using that $F$ is compact we can then show there exists a finite collection of points $\{x_i\}_{i=1}^m\subset F$ such that $F\subset \bigcup_{i=1}^m B(x_i, r_i) =:  V$ and $B(x_i, 2r_i) \subset U$ for $i= 1,\dots,m$. Consequently, $F\subset V \subset \overline V \subset U$ and $\sigma_{\infty}(\overline V \setminus F) \leq \sigma_{\infty}(U\setminus F) <\epsilon$. We next use \eqref{eq:Ainftyoinf} with $V$ to see that
\begin{multline*}
\frac{\oinf^{A(p,r)}(E)}{\oinf^{A(p,r)}(\Delta(m,s))} 
\leq 
\frac{\epsilon +\oinf^{A(p,r)}(F)}{\oinf^{A(p,r)}(\Delta(m,s))} 
\leq 
\frac{\epsilon +\oinf^{A(p,r)}(V)}{\oinf^{A(p,r)}(\Delta(m,s))} 
\\
\leq 
\frac{\epsilon}{\oinf^{A(p,r)}(\Delta(m,s))} 
+
C_0 C_{AR}^{4\theta}  2^{8(n-1)\theta} \left( \frac{\sigma_{\infty}(\overline V)}{\sigma_{\infty}(\Delta(m,s))} \right)^\theta 
\\
\leq 
\frac{\epsilon}{\oinf^{A(p,r)}(\Delta(m,s))} 
+
C_0 C_{AR}^{4\theta}  2^{8(n-1)\theta} \left( \frac{\sigma_{\infty}(E)+\epsilon}{\sigma_{\infty}(\Delta(m,s))} \right)^\theta. \label{eqn:119A}
\end{multline*}
Letting $\epsilon \to 0$ we obtain  as desired that $\oinf\in A_\infty(\sigma_\infty)$ with constants $C_0 C_{AR}^{4\theta}  2^{8(n-1)\theta} $ and $\theta$ and the proof is  complete. 
\end{proof}

\section{Proof of Theorem~\ref{thm:main:I}}\label{section:proof-main}

Applying Theorem \ref{thm:pseudo-blow-geo}, we obtain that $\Omega_\infty$ is a uniform domain with constants $4M$ and $2C_1$, whose boundary is Ahlfors regular with constant $2^{5(n-1)}C_{AR}^2$. Moreover, Theorem \ref{thm:blow-ana-pole} gives that $\omega_{L_{\infty}}\in A_\infty(\sigma_\infty)$ with constants 
$\widetilde{C}_0=C_2 C_{AR}^{4\theta} 2^{8(n-1)\theta}$ and $\widetilde{\theta}=\theta$. 
Here $L_\infty=-\divg(\mathcal{A}^* \nabla)$ with $\mathcal{A}^*$  a constant-coefficient real
 symmetric uniformly elliptic matrix with ellipticity constants 
$1=\lambda\le \Lambda<\infty$.
We can then invoke Theorem \ref{thm:hmu}, 
to see that $\Oinf$ satisfies the exterior corkscrew condition with 
constant
\[N_0=N_0(4M,2C_1, 2^{5(n-1)}C_{AR}^2,\Lambda, C_0  C_2 C_{AR}^{4\theta} 2^{8(n-1)\theta},\theta)\] 
(see introduction to Section \ref{comp-tt}). Therefore,  since $0\in\pOinf$,  $0<\frac12<\diam(\pOinf)$ (recall that $\diam(\pOinf) = \infty$ in \ref{CaseI}, and $\diam(\pOinf) = \diam(\Oinf) = R_0 \geq 1$) there exists $A_0=A^-(0,\frac12)$ so that 
\begin{equation}\label{eqn:201A}
	B\left( A_0,\frac{1}{2N_0} \right) \subset B\left(0,\frac12\right)\setminus\overline{\Oinf}.
\end{equation}
Hence
\begin{equation}
	\dist\left(B\left(A_0,\frac{1}{4N_0}\right), \R^n\setminus \overline{\Oinf}\right) 
	\geq \frac{1}{4N_0}.
\end{equation} 
Since $\overline{\Oj} \to \overline{\Oinf}$, it follows that for all $j$ large enough
\begin{equation}\label{eqn:203A}
	B\left( A_0,\frac{1}{4N_0} \right) 
	\subset  B\left(0,\frac12\right)\setminus\overline{\Oj} \subset B(0,1)\setminus\overline{\Oj}.
\end{equation}
Hence for all $j$ large enough $A_0$ is a corkscrew point relative to $B(0,1)\cap\partial\Omega_j$ for $\R^n\setminus \overline{\Oj}$ with constant $4N_0$. This contradicts our assumption that  $\Omega_j$ has no exterior corkscrew point with constant $N=4N_0$ for the surface ball $B(0,1)\cap\partial\Omega_j$ and the proof is complete.

\part{Large constant case}\label{part:large}

The extrapolation argument to augment small Carleson norm is in essence an inductive process combined with delicate stopping time arguments. It requires us to construct the so-called \textit{sawtooth domains} on which we have better control of the Carleson measure norm. We explain the construction of these sawtooth domains and auxiliary definitions in Section \ref{section:sawtooth}, to set the stage for the extrapolation argument.
In Section \ref{S:proof-by-extrapolation} 
we first state several key ingredients to be used in the proof, namely: the framework of the extrapolation argument in Theorem \ref{thm:extrapolation} proved in  \cite{HMM}; and Theorem \ref{thm:bhc}, a criterion for uniform rectifiability
proven as a combination of \cite{HMM}, \cite{GMT}. 
We then outline the proof of Theorem \ref{thm:main} using these ingredients and the small constant result proven in Part \ref{part:small} (i.e. Corollary \ref{corol:DKP-small}), and in the process reduce matters to two main steps, which are then carried out in two separate sections.
In Section \ref{s:small-extrapolation}, we prove a technical estimate showing that
a continuous parameter Carleson measure, restricted to a sawtooth subdomain, 
may be controlled quantitatively by a discretized version of itself.
Section \ref{sect:trsf} contains the most delicate technical part of the proof, involving transference of the
$A_\infty$ property to sawtooth subdomains. Finally in Section \ref{optimal} we discuss the optimality of Theorem \ref{thm:main} and present an important corollary.

\section{Construction of sawtooth domains and Discrete Carleson measures}\label{section:sawtooth}

\begin{lemma}[Dyadic decomposition of Ahlfors regular set, \cite{DS1, DS2, Ch}]\label{lm:ddAR}
	Let $E\subset \RR^n$ be an Ahlfors regular set. Then there exist constants $a_0, A_1, \gamma>0$, depending only on $n$ and the constants of Ahlfors regularity, such that for each $k\in\mathbb{Z}$, there is a collection of Borel sets (``dyadic cubes'')
	\[ \mathbb{D}_k : = \{Q_j^k \subset E: j\in \mathscr{J}_k \}, \]
	where $\mathscr{J}_k$ denotes some index set depending on $k$, satisfying the following properties.
\begin{enumerate}[label= \textup{(\roman*)}, itemsep=0.2cm] 		
		\item\label{1-lm:ddAR} $E = \bigcup_{j\in\mathscr{J}_k} Q_j^k$ for each $k\in\mathbb{Z}$.
		
		\item\label{2-lm:ddAR} If $m\geq k$ then either $Q_i^m \subset Q_j^k$ or $Q_i^m \cap Q_j^k = \emptyset$.
		
		\item\label{3-lm:ddAR} For each pair $(j,k)$ and each $m<k$, there is a unique $i\in\mathscr{J}_m$ such that $Q_j^k \subset Q_i^m$.
		
		\item\label{4-lm:ddAR} $\diam Q_j^k \leq A_1 2^{-k}$.
		
		\item\label{5-lm:ddAR} Each $Q_j^k$ contains some surface ball $\Delta(x_j^k, a_0 2^{-k}) := B(x_j^k, a_0 2^{-k}) \cap E$.
		
		\item\label{6-lm:ddAR} For all $(j, k)$ and all $\rho\in (0,1)$ 
		\begin{multline}\label{thin-boundary}
		\mathcal{H}^{n-1} \left( \left\{ q\in Q_j^k: \dist(q, E\setminus Q_j^k) \leq \rho 2^{-k} \right\} \right)
		\\
		 + \mathcal{H}^{n-1} \left( \left\{ q\in E\setminus Q_j^k: \dist(q, Q_j^k) \leq \rho 2^{-k} \right\} \right) \leq A_1 \rho^{\gamma} \mathcal{H}^{n-1} (Q_j^k).
\end{multline}
\end{enumerate}
	We shall denote by $\mathbb{D} = \mathbb{D}(E)$ the collection of all relevant $Q_j^k$, i.e.,
	\begin{equation}\label{all-cubes}
	\DD = \bigcup_k \DD_k, 
	\end{equation}
	where, if $\diam (E)$ is finite, the union runs 	over those $k$ such that $2^{-k} \lesssim  {\rm diam}(E)$.
\end{lemma}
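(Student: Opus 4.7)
The plan is to follow the classical construction of David, Semmes and Christ, whose broad outline adapts directly to any Ahlfors regular set $E \subset \mathbb{R}^n$. First, for each generation $k \in \mathbb{Z}$ (with $k$ restricted to those with $2^{-k} \lesssim \mathrm{diam}(E)$ when $E$ is bounded), I would select via a greedy/Zorn argument a maximal $2^{-k}$-separated net $\{x_j^k\}_{j\in \mathscr{J}_k} \subset E$. By maximality, the balls $B(x_j^k, 2^{-k})$ cover $E$, while the balls $B(x_j^k, 2^{-k}/2)$ are pairwise disjoint. Ahlfors regularity with constant $C_{AR}$ then forces $\mathcal{H}^{n-1}(B(x_j^k, 2^{-k}) \cap E) \sim 2^{-k(n-1)}$ and gives a uniform bound on the overlap of balls of comparable radii centered on the net.

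Next I would impose a hierarchical parent structure. For each $(j,k)$ pick $\pi(j,k) = (i, k-1) \in \mathscr{J}_{k-1} \times \{k-1\}$ with $|x_j^k - x_i^{k-1}|$ minimized; by maximality of the net at level $k-1$ this distance is $\le 2^{-(k-1)}$. Define preliminary cubes $\widetilde Q_j^k$ as the set of points in $E$ whose nearest-center at each later generation belongs to the chain of descendants of $(j,k)$; resolve ties by a fixed linear order on $\bigcup_k \mathscr{J}_k$ to make the assignment unambiguous. Properties \ref{1-lm:ddAR}--\ref{5-lm:ddAR} then follow by bookkeeping: \ref{1-lm:ddAR} and \ref{2-lm:ddAR} are built into the definition, \ref{3-lm:ddAR} is the unique-ancestor property, the diameter bound \ref{4-lm:ddAR} follows by summing $|x_j^k - x_{\pi(j,k)}^{k-1}| \le 2^{-(k-1)}$ across scales (giving $A_1 = O(1)$), and the inner surface ball \ref{5-lm:ddAR} is obtained by choosing $a_0 < 1/2$ so that $\Delta(x_j^k, a_0 2^{-k})$ cannot leak into any sibling cube.

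The main obstacle is the small boundary condition \ref{6-lm:ddAR}: the preliminary cubes $\widetilde Q_j^k$ need not have thin boundaries. I would invoke Christ's refinement: introduce a small random shift, or equivalently a layered perturbation, of the parent assignment so that at each scale only a geometrically small proportion of points lies within distance $\eta 2^{-k}$ of a ``sibling interface'', with the decay rate in $\eta$ controlled by $C_{AR}$. The idea is that the $\rho$-thin region of $Q_j^k$ decomposes into shells coming from boundaries inherited from ancestors at scales $2^{-m}$, $m \le k$, whose widths are $\rho 2^{-k}$; summing the Ahlfors regular bounds on each shell and using the hierarchy produces an estimate of the form $\rho^\gamma \mathcal{H}^{n-1}(Q_j^k)$ with $\gamma = \gamma(n, C_{AR}) > 0$.

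Since the construction is standard and depends only on $E$ being Ahlfors regular, I would present the argument as a sketch and refer the reader to \cite{DS1, DS2, Ch} for the detailed verification, in particular for the delicate thin-boundary step \ref{6-lm:ddAR}, which is the only nontrivial piece. The dependence of $a_0, A_1, \gamma$ on $n$ and $C_{AR}$ is tracked through the above estimates.
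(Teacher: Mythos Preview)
Your sketch is a faithful outline of the Christ/David--Semmes construction and is correct in spirit; the paper itself does not supply a proof of this lemma at all, but simply quotes it as a known result with citations to \cite{DS1, DS2, Ch}. So there is nothing to compare against: your proposal goes beyond the paper, and your decision to refer the reader to the cited sources for the thin-boundary step \ref{6-lm:ddAR} is exactly what the paper does for the entire statement.
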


\begin{remark}
	For a dyadic cube $Q\in \mathbb{D}_k$, we shall
set $\ell(Q) = 2^{-k}$, and we shall refer to this quantity as the ``length''
of $Q$.  Evidently, $\ell(Q)\approx \diam(Q).$	We will also write $x_Q$ for the ``center'' of $Q$, that is, the center of the ball appearing in \ref{5-lm:ddAR}. 
\end{remark}

Assume from now on that $\Omega$ is a uniform domain with Ahlfors regular boundary and set $\sigma=\mathcal{H}^{n-1}|_{\pO}$. Let $\dd=\dd(\pom)$ be the associated dyadic grid from the previous result. We first make a simple observation:

\begin{corollary}[Doubling property of the kernel]\label{cor:doublingPk}
		Let $Q\in \DD$ be a dyadic cube, and $\widetilde{Q}\in\dd$ be such that $C_1^{-1}\ell(Q)\le \ell(\widetilde{Q})\le C_1\ell(Q)$ and $\dist(Q,\widetilde{Q})\le C_1\ell(Q)$ for some $C_1\ge 1$. Suppose $\omega\in RH_p(\sigma)$ for some $p>1$, then for $X_Q$ the corkscrew relative to $Q$ we have 
		\begin{equation}
			\int_{\widetilde{Q}} \left( \textup{\textbf{k}}^{X_Q} \right)^p d\sigma \leq C \int_Q \left( \textup{\textbf{k}}^{X_Q} \right)^p d\sigma,
		\end{equation}
		with a constant $C$ depending on $C_1$ and the allowable constants and where $\textup{\textbf{k}}=d\omega/d\sigma$.
\end{corollary}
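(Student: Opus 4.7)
The plan is to reduce matters to the reverse H\"older estimate \eqref{eq1.wRH} on a surface ball containing $\widetilde{Q}$, transfer the pole from an auxiliary corkscrew to $X_Q$ via a Harnack chain of uniformly bounded length, and then use the doubling of elliptic measure together with Jensen's inequality to convert the bound into an integral on $Q$.

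For the setup, set $r^\ast := 2 A_1 C_1 \ell(Q)$ with $A_1$ as in Lemma~\ref{lm:ddAR}, and let $x_{\widetilde Q}$ be the center of $\widetilde Q$. Property~\ref{4-lm:ddAR} of that lemma together with the hypothesis $\ell(\widetilde Q)\le C_1 \ell(Q)$ gives $\widetilde Q \subset \Delta^\ast := \Delta(x_{\widetilde Q}, r^\ast/2)$. Let $X^\ast := A(x_{\widetilde Q}, r^\ast)$, so that $\delta(X^\ast) \gtrsim \ell(Q)$ and, using $\dist(Q,\widetilde Q) \le C_1 \ell(Q)$ together with property~\ref{4-lm:ddAR}, $|X_Q - X^\ast| \lesssim \ell(Q)$, with implicit constants depending on $C_1$ and the allowable parameters. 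By Definition~\ref{def:HCC}, $X_Q$ and $X^\ast$ are then joined by a Harnack chain of bounded length. Since for every Borel set $E \subset \pO$ the function $X \mapsto \omega^X(E)$ is a nonnegative $L$-solution in $\Omega$, Harnack's inequality applied along the chain gives $\omega^{X_Q}(E) \approx \omega^{X^\ast}(E)$ uniformly in $E$; specializing to shrinking surface balls and invoking Lebesgue differentiation with respect to the doubling measure $\sigma$ yields $\textbf{k}^{X_Q}(y) \approx \textbf{k}^{X^\ast}(y)$ for $\sigma$-a.e.\ $y \in \pO$.

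With this pointwise kernel comparison in hand, apply \eqref{eq1.wRH} with $(q,r) = (x_{\widetilde Q}, r^\ast)$ and $\Delta' = \Delta^\ast$, and combine with Ahlfors regularity (so $\sigma(\Delta^\ast) \approx \ell(Q)^{n-1} \approx \sigma(Q)$) to obtain
\[
\int_{\widetilde Q} (\textbf{k}^{X_Q})^p \, d\sigma \;\lesssim\; \int_{\Delta^\ast} (\textbf{k}^{X^\ast})^p \, d\sigma \;\lesssim\; \sigma(\Delta^\ast)^{1-p} \, \omega^{X^\ast}(\Delta^\ast)^p \;\approx\; \sigma(Q)^{1-p} \, \omega^{X_Q}(\Delta^\ast)^p.
\]
Since $\Delta^\ast \subset \Delta(x_Q, C_7 \ell(Q))$ for some $C_7 = C_7(C_1)$ while $\Delta(x_Q, a_0 \ell(Q)) \subset Q$ by property~\ref{5-lm:ddAR} of Lemma~\ref{lm:ddAR}, a bounded iteration of Remark~\ref{rem:doubling:needed}, preceded by a single Harnack shift of the pole to $A(x_Q, C_7 \ell(Q))$ to ensure its distance-to-boundary hypothesis, gives $\omega^{X_Q}(\Delta^\ast) \lesssim \omega^{X_Q}(Q)$. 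Finally, Jensen's inequality on $Q$ produces
\[
\omega^{X_Q}(Q)^p \;=\; \Bigl(\int_Q \textbf{k}^{X_Q}\, d\sigma\Bigr)^{p} \;\le\; \sigma(Q)^{p-1}\int_Q (\textbf{k}^{X_Q})^p \, d\sigma,
\]
and combining these estimates yields the claimed bound $\int_{\widetilde Q} (\textbf{k}^{X_Q})^p \, d\sigma \lesssim \int_Q (\textbf{k}^{X_Q})^p \, d\sigma$. The main care needed is the bookkeeping in the final paragraph: verifying that both the Harnack chain length and the number of doubling iterations depend only on $C_1$ and the allowable constants. I expect this to be routine but slightly technical.
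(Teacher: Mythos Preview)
Your proof is correct and follows essentially the same approach as the paper: apply the reverse H\"older estimate on a surface ball containing $\widetilde Q$, use doubling of $\omega$ to compare $\omega^{X_Q}(\widetilde Q)$ with $\omega^{X_Q}(Q)$, and finish with Jensen. The paper compresses this into a single displayed chain of inequalities and is somewhat cavalier about the pole (implicitly using the Harnack comparison you spelled out), so your version is simply a more explicit rendering of the same argument.
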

The proof is a simple corollary of the doubling property of the elliptic measure (see Lemma \ref{lm:doubling}):
		\begin{align*}
			\Big(\fint_{\widetilde Q} \left(\textbf{k}^{X_Q} \right)^p d\sigma\Big)^\frac1p
			& \lesssim 
			\frac{\omega^{X_Q}(\widetilde Q)}{\sigma(\widetilde Q)}
			\approx 
			\frac{\omega^{X_Q}( Q)}{\sigma( Q)}
			=
			\fint_{Q} \textbf{k}^{X_Q} d\sigma 
			\le
			\Big(\fint_{Q} \left(\textbf{k}^{X_Q} \right)^p d\sigma\Big)^\frac1p. 
		\end{align*}

Let $\mathcal{W}=\W(\Omega)$ denote a collection
of (closed) dyadic Whitney cubes   of $\Omega$ (just dyadically divide the standard Whitney cubes from \cite[Chapter VI]{St} into cubes with side length 1/8 as large),  so that the boxes  in $\mathcal{W}$
form a covering of $\Omega$ with non-overlapping interiors, and  which satisfy
\begin{equation}\label{eqWh1} 4\, {\rm{diam}}\,(I)\leq \dist(4 I,\pom) \leq  \dist(I,\pom) \leq 40 \, {\rm{diam}}\,(I).
\end{equation}
Let $X(I)$ denote the center of $I$, let $\ell(I)$ denote the side length of $I$,
and write $k=k_I$ if $\ell(I) = 2^{-k}$. We will use ``boxes'' to refer to the Whitney cubes as just constructed, and ``cubes'' for the dyadic cubes on $\pO$. 
Then for each pair $I, J \in \W$,
\begin{equation}\label{eq:Iequivsize}
\text{ if } I\cap J \neq \emptyset, \text{ then } 4^{-1} \leq \frac{\ell(I)}{\ell(J)} \leq 4.
\end{equation}
Since $I, J$ are dyadic boxes, then $I\cap J$ is either contained in a face of $I$, or contained in a face of $J$. 
By choosing $\tau_0<2^{-10}$  sufficiently small (depending on $n$), we may also suppose that there is $t\in(\frac12,1)$ so that if $0<\tau<\tau_0$, for every distinct pair $I, J\in \W(\Omega)$,
\begin{equation}\label{eq:tausmall1}
(1+4\tau) I \cap (1+4\tau) J \neq \emptyset \iff I\cap J \neq \emptyset;
\end{equation}
and
\begin{equation}\label{eq:tausmall2}
	t J \cap (1+ 4\,\tau) I = \emptyset.
\end{equation}
Also, $J\cap (1+\tau )I$ contains an $(n-1)$-dimensional cube with side length of the order of $\min\{\ell(I), \ell(J)\}$. This observation will become useful in Section \ref{sect:trsf}.  For such $\tau\in (0,\tau_0)$ fixed, we write $I^*=(1+\tau)I$, $I^{**}= (1+2\tau) I$, and $I^{**}= (1+4\tau) I$ for the ``fattening'' of $I\in \W$.

Following \cite[Section 3]{HM} we next introduce the notion of \textit{\bf Carleson region} and \textit{\bf discretized sawtooth}.  Given a cube $Q\in\dd$, the \textit{\bf discretized Carleson region $\dd_{Q}$} relative to $Q$ is defined by
\[
\dd_{Q}=\{Q'\in\dd:\, \, Q'\subset Q\}.
\]
Let $\F$ be family of disjoint cubes $\{Q_{j}\}\subset\dd$. The \textit{\bf global discretized sawtooth region} relative to $\F$ is the collection of cubes $Q\in\dd$  that are not contained in any $Q_{j}\in\F$;
\[
\dd_{\F}:=\dd\setminus \bigcup\limits_{Q_{j}\in\F}\dd_{Q_{j}}.
\]
For a given $Q\in\dd$ the {\bf local discretized sawtooth region} relative to $\F$ is the collection of cubes in $\dd_{Q}$ that are not in contained in any $Q_{j}\in\F$;
\begin{equation}\label{dfq}
\dd_{\F,Q}:=\dd_{Q}\setminus \bigcup\limits_{Q_{j}\in \F} \dd_{Q_{j}}=\dd_{\F}\cap \dd_{Q}.
\end{equation}
We also introduce the ``geometric'' Carleson and sawtooth regions. For any dyadic cube $Q\in\DD$, pick two parameters $\eta\ll 1$ and $K\gg 1$, and define
\begin{equation}\label{def:WQ0}
	\WW_Q^0 : = \{I\in\WW: \eta^{\frac{1}{4}} \ell(Q) \leq \ell(I) \leq K^{\frac{1}{2}} \ell(Q),\  \dist(I,Q) \leq K^{\frac{1}{2}} \ell(Q)\}.
\end{equation} 
Taking $K\ge 40^2 n$, if $I\in\W$ and we pick $Q_I\in \dd$ so that $\ell(Q_I)=\ell(I)$ and $\dist(I,\pom)=\dist(I,Q_I)$, then $I\in \W^0_{Q_I}$. 
Let $X_Q$ denote a corkscrew point for the surface ball $\Delta(x_Q, r_Q/2)$. We can guarantee that $X_Q $ is in some $I\in\WW_Q^0$ provided we choose $\eta$ small enough and $K$ large enough. For each $I\in\WW_Q^0$, there is a Harnack chain connecting $X(I)$ to $X_Q$, we call it $\mathcal{H}_I$. By the definition of $\WW_Q^0$ we may construct this Harnack chain so that it consists of a bounded number of balls (depending on the values of $\eta, K$). We let $\WW_Q$ denote the set of all $J\in\WW$ which meet at least one of the Harnack chains $\mathcal{H}_I$, with $I\in \WW_Q^0$, i.e.
\begin{equation}
	\WW_Q := \{J\in\WW: \text{there exists } I\in\WW_Q^0 \text{ for which } \mathcal{H}_I \cap J \neq \emptyset\}.
\end{equation}
Clearly $\WW_Q^0 \subset \WW_Q$. Besides, it follows from the construction of the augmented collections $\WW_Q$ and the properties of the Harnack chains that there are uniform constants $c$ and $C$ such that
\begin{equation}\label{def:WQ}
	c\eta^{\frac12 } \ell(Q) \leq \ell(I) \leq CK^{\frac{1}{2}} \ell(Q), \quad \dist(I,Q) \leq CK^{\frac{1}{2}} \ell(Q) 
\end{equation}
for any $I\in \WW_Q$. In particular once $\eta, K$ are fixed, for any $Q\in \DD$ the cardinality of $\WW_Q$ is uniformly bounded.
Finally, for every $Q$ we define its associated Whitney region
\begin{equation}\label{eq2.whitney3}
U_Q := \bigcup_{I\in\,\mathcal{W}_Q} I^*.
\end{equation}

We refer the reader to \cite[Section 3]{HM} or \cite[Section 2]{HMM2} for additional details.

For a given $Q\in\dd$, the {\bf Carleson box} relative to $Q$ is defined by
\begin{equation}\label{carleson-box}
T_{Q}:=\mbox{int}\left(\bigcup\limits_{Q'\in\dd_{Q}} U_{Q'}\right).
\end{equation}
For a given family $\F$ of disjoint cubes $\{Q_{j}\}\subset\dd$ and a given $Q\in\dd$ we define the {\bf local sawtooth region} relative to $\F$ by
\[
\Omega_{\F,Q}:=\mbox{int}\left(\bigcup\limits_{Q'\in\dd_{\F,Q}}U_{Q'}\right)
=
{\rm int }\,\left(\bigcup_{I\in\,\W_{\F,Q}} I^*\right),
\]
where $\W_{\F,Q}:=\bigcup_{Q'\in\dd_{\F,Q}}\W^*_{Q'}$.
Analogously, we can slightly fatten the Whitney boxes and use $I^{**}$ to define new fattened Whitney regions and sawtooth domains. More precisely,
\begin{equation}
T_{Q}^*:=\mbox{int}\left(\bigcup\limits_{Q'\in\dd_{Q}} U_{Q'}^*\right),
\qquad
\Omega_{\F,Q}^*:=\mbox{int}\left(\bigcup\limits_{Q'\in\dd_{\F,Q}}U_{Q'}^*\right)
,
\qquad
U_{Q'}^*:=\bigcup_{I\in\,\mathcal{W}^*_{Q'}} I^{**}.
\label{eq:fatten-objects}
\end{equation}
Similarly, we can define $T_Q^{**}$, $\Omega_{\F,Q}^{**}$ and $U_{Q}^{**}$ by using $I^{***}$ in place of $I^{**}$.

One can easily see that there is a constant $\kappa_0>0$ (depending only on the allowable parameters, $\eta$, and $K$)  so that 
\begin{equation}\label{tent-Q}
T_Q\subset T_{Q}^*\subset T_{Q}^{**}\subset\overline{T_Q^{**}}\subset \kappa_0 B_Q\cap\overline{\Omega}
=: B_Q^*\cap \overline{\Omega}, \qquad\forall\,Q\in\dd.
\end{equation}

Given a pairwise disjoint family $\F\subset\dd$ (we also allow $\F$ to be the null set) and a constant $\rho>0$, we derive another family $\F({\rho})\subset\dd$  from $\F$ as follows. Augment $\F$ by adding cubes $Q\in\dd$ whose side length $\ell(Q)\leq \rho$ and let $\F(\rho)$ denote the corresponding collection of maximal cubes with respect to the inclusion. Note that the corresponding discrete sawtooth region $\dd_{\F(\rho)}$ is the union of all cubes $Q\in\dd_{\F}$ such that $\ell(Q)>\rho$.
For a given constant $\rho$ and a cube $Q\in \dd$, let $\dd_{\F(\rho),Q}$ denote the local discrete sawtooth region and let $\Omega_{\F(\rho),Q}$ denote the local geometric sawtooth region relative to disjoint family $\F(\rho)$.

Given $Q\in\dd$ and $0<\epsilon<1$, if we take $\F_0=\emptyset$, one has that \
$\F_0(\epsilon \,\ell(Q))$ is the collection of $Q'\in \dd$ such that $\epsilon\,\ell(Q)/2<\ell(Q')\le \epsilon\,\ell(Q)$. We then introduce $U_{Q,\epsilon}=\Omega_{\F_0(\epsilon \,\ell(Q)),Q}$, which is a Whitney region relative to $Q$ whose distance to $\pom$ is of the order of $\epsilon\,\ell(Q)$.  For later use, we observe that given $Q_0\in\dd$, the sets $\{U_{Q,\epsilon}\}_{Q\in\dd_{Q_0}}$ have bounded overlap with constant that may depend on $\epsilon$. Indeed, suppose that there is $X\in U_{Q,\epsilon}\cap U_{Q',\epsilon}$ with $Q$, $Q'\in \dd_{Q_0}$. By construction $\ell(Q)\approx_\epsilon \delta(X)\approx_\epsilon \ell(Q')$ and
$$
\dist(Q,Q')\le \dist(X,Q)+\dist(X,Q')\lesssim_\epsilon \ell(Q)+\ell(Q')\approx_\epsilon \ell(Q).
$$
The bounded overlap property follows then at once.

\begin{lemma}[{\cite[Lemma 3.61]{HM}}]\label{lemma:sawtooth-inherit}
	Let $\Omega\subset \RR^n$ be a uniform domain with Ahlfors regular boundary. Then all of its Carleson boxes $T_Q$ and sawtooth domains $\Omega_{\F, Q}$, $\Omega_{\F, Q}^*$  are uniform domains with Ahlfors regular boundaries. In all the cases the implicit constants are uniform, and depend only on
	dimension and on the corresponding constants for $\Omega$. 
\end{lemma}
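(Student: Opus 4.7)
\textbf{Proof plan for Lemma \ref{lemma:sawtooth-inherit}.} The plan is to verify separately the interior corkscrew condition, the Harnack chain condition, and Ahlfors regularity of the boundary for the sawtooth $\Omega_{\F,Q}$; the cases of $T_Q$ and $\Omega_{\F,Q}^*$ will follow by the same arguments, since $T_Q$ corresponds to the choice $\F=\emptyset$ and $\Omega_{\F,Q}^*$ differs from $\Omega_{\F,Q}$ only by a controlled fattening (cf.\ \eqref{eq:fatten-objects}). The key point throughout is that the geometry of $\Omega_{\F,Q}$ is encoded by the collection of Whitney boxes $\WW_{\F,Q} = \bigcup_{Q' \in \dd_{\F,Q}} \WW_{Q'}$, and this collection was built precisely (via the augmentation from $\WW_{Q'}^0$ to $\WW_{Q'}$ through the Harnack chains $\mathcal{H}_I$) so as to inherit the corkscrew and Harnack-chain structure of $\Omega$.

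For the interior corkscrew at a point $p \in \partial \Omega_{\F,Q}$ at scale $r$, two cases arise. If $p \in \pom$, then $p$ lies in the closure of some $Q' \in \dd_{\F,Q}$; the Whitney decomposition then provides a box $I \in \WW$ of size $\ell(I) \sim r$ with $\dist(I,p) \lesssim r$, and the size/distance constraints \eqref{def:WQ0}--\eqref{def:WQ} defining $\WW_{Q''}$ for an appropriate ancestor $Q'' \supset Q'$ in $\dd_{\F,Q}$ force $I \in \WW_{\F,Q}$, so that $X(I)$ is the desired corkscrew. If instead $p$ lies on a face of some fattened box $I^*$ with $I \in \WW_{\F,Q}$, we take the corkscrew inside $tI \subset \Omega_{\F,Q}$ with $t<1$ as in \eqref{eq:tausmall2}, noting that $\ell(I)$ is comparable to the relevant scale $r$ up to the uniform constants controlling $\WW_{Q'}$.

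For the Harnack chain condition, given $X, Y \in \Omega_{\F,Q}$, we first connect each of $X$ and $Y$ to the central corkscrew $X_Q$ by a bounded-length chain staying inside $\Omega_{\F,Q}$: this is possible because, by construction, the augmented Harnack chains $\mathcal{H}_I$ attached to each $I \in \WW_{Q'}^0$ lie inside $\WW_{Q'} \subset \WW_{\F,Q}$, and the cardinality of each $\WW_{Q'}$ is uniformly bounded. Concatenating the two sub-chains yields a chain of balls inside $\Omega_{\F,Q}$ whose length is controlled in terms of $\log_2^+(|X-Y|/\min\{\delta(X),\delta(Y)\})$, as required.

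For Ahlfors regularity of $\partial \Omega_{\F,Q}$, split the boundary into the ``outer'' piece $\partial\Omega_{\F,Q} \cap \pom$ and the ``inner'' piece consisting of portions of faces of the $I^{*}$'s with $I \in \WW_{\F,Q}$. The upper bound for $\mathcal{H}^{n-1}(\partial\Omega_{\F,Q} \cap B(p,r))$ follows from a packing argument: only Whitney boxes $I$ with $\ell(I) \lesssim r$ contribute, they project with bounded overlap onto $\pom$, and $\sum \ell(I)^{n-1}$ is dominated by $\sigma(c\,\Delta(p,r)) \lesssim r^{n-1}$ using Ahlfors regularity of $\pom$. The matching lower bound at each $p \in \partial\Omega_{\F,Q}$ is obtained either from the face of a single Whitney box of size $\sim r$ adjacent to $p$, or, when $p \in \pom$, directly from the Ahlfors regularity of $\sigma$ on $\pom$. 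The most delicate point will be the Ahlfors regularity of the inner boundary near points of $\pom$, where $\partial\Omega_{\F,Q}$ transitions from genuine boundary of $\Omega$ to faces of Whitney boxes abutting stopping cubes of $\F$; here the thin-boundary property \eqref{thin-boundary} of the dyadic cubes is crucial to prevent the stopping cubes in $\F$ from accumulating too much $\mathcal{H}^{n-1}$-mass near the boundary of any fixed cube, which is what ensures that the packing estimate persists uniformly at all scales. This last point is the main technical obstacle.
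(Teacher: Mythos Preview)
The paper does not prove this lemma; it is quoted verbatim from \cite[Lemma~3.61]{HM} and used as a black box. So there is no ``paper's own proof'' to compare against, only the argument in \cite{HM}. Your sketch is broadly in the right spirit but contains a genuine gap and one misattribution.

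\textbf{Harnack chain.} Your plan is to connect an arbitrary pair $X,Y\in\Omega_{\F,Q}$ by routing both to the global corkscrew $X_Q$ and concatenating. This does not give the required bound on the chain length. If $X$ and $Y$ both lie in $U_{Q'}$ for some $Q'\in\DD_{\F,Q}$ that is many generations below $Q$, your concatenated chain has length of order $\log\big(\ell(Q)/\ell(Q')\big)$, whereas the Harnack chain condition demands length $\lesssim \log_2^+\big(|X-Y|/\min\{\delta(X),\delta(Y)\}\big)$, which may be bounded independently of $\ell(Q)/\ell(Q')$. The correct argument (and the one in \cite{HM}) is local: if $X\in U_{Q_1}$ and $Y\in U_{Q_2}$ with $Q_1,Q_2\in\DD_{\F,Q}$, one climbs from $Q_1$ and $Q_2$ only as far as a common dyadic ancestor $\widehat Q$ with $\ell(\widehat Q)\approx |X-Y|+\ell(Q_1)+\ell(Q_2)$; since $\DD_{\F,Q}$ is closed under passing to ancestors within $Q$, one has $\widehat Q\in\DD_{\F,Q}$, and the chain $X\to X_{Q_1}\to X_{\widehat Q}\to X_{Q_2}\to Y$ (built from the augmented families $\WW_{Q'}$ along the two dyadic towers) has the correct length.

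\textbf{Ahlfors regularity.} Your packing argument for the upper bound is correct, but the claim that the thin-boundary property \eqref{thin-boundary} is ``crucial'' here is a misattribution. The upper Ahlfors bound for the ``inner'' part of $\partial\Omega_{\F,Q}$ follows from the fact that the Whitney boxes $I$ with $\ell(I)\lesssim r$ meeting $B(p,r)$ have bounded overlap and project into a surface ball of radius $\approx r$ on $\pom$, so $\sum\ell(I)^{n-1}\lesssim r^{n-1}$ by Ahlfors regularity of $\pom$ alone; no thin-boundary estimate is needed. The thin-boundary property enters elsewhere in \cite{HM} (e.g., in the dyadic sawtooth/projection lemmas), not in the proof of Lemma~3.61.
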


We say that  $\PP$ is a \textbf{fundamental chord-arc subdomain} of $\Omega$ if there is $I\in \W$ and $m_1$ such that 
\begin{equation}\label{fundamental-casd}
\PP=\interior\left( \bigcup_{j=1}^{m_1} I_j^\ast\right) \hbox{ where } I_j\in\W \hbox{  and  } I\cap I_j \not =\emptyset.
\end{equation}
Note that the fact that $I\cap I_j \not =\emptyset$ ensures that $\ell(I)\approx \ell(I_j)$.  Moreover $\PP$ is a chord-arc domain with constants that only depend on $n$, $\tau$ and the constants used in the  construction of $\DD$ and $\W$ (see \cite[Lemma 2.47]{HMU} for a similar argument).

Given a sequence of non-negative numbers 
$\alpha=\{\alpha_Q\}_{Q\in\dd}$ we define the associated discrete ``measure'' $m=m_\alpha$: 
\begin{equation}\label{def:mQ}
	m(\DD') := \sum_{Q\in \DD'} \alpha_Q,
	\qquad \dd'\subset\dd
\end{equation}

\begin{defn}\label{discrete-carleson}
 Let $E\subset \RR^n$ be an Ahlfors regular set, and let  $\sigma$ be a dyadically doubling Borel measure on $E$ (not necessarily equal to $\mathcal{H}^{n-1}|_{\pO}$). We say that $m$ as defined in \eqref{def:mQ} is a discrete Carleson measure with respect to $\sigma$, if 
	\begin{equation}\label{mCarleson}
		\|m\|_{\C}: = \sup_{Q\in \DD} \frac{ m(\DD_Q)}{ \sigma(Q) } <\infty.
	\end{equation}
Also, fixed $Q_0\in\dd$ we say that $m$ is a discrete Carleson measure with respect to $\sigma$ in $Q_0$ if 
\begin{equation}\label{mCarleson-local}
\|m\|_{\C(Q_0)}: = \sup_{Q\in \DD_{Q_0}} \frac{ m(\DD_Q)}{ \sigma(Q) } <\infty.
\end{equation}
\end{defn}

\section{Proof by extrapolation}\label{S:proof-by-extrapolation}

In this section we present some powerful tools which will be key in the proof of our main result. After that we outline the roadmap to pass from the small constant case to the large constant case and thus finish proving Theorem \ref{thm:main}.

We start with \cite[Lemma 4.5]{HMM}, an extrapolation for Carleson measure result which in a nutshell describes how the relationship between a discrete Carleson measure
$m$ and another discrete measure $\widetilde m$ yields information about $\widetilde m$.

\begin{theorem}[{Extrapolation, \cite[Lemma 4.5]{HMM}}]\label{thm:extrapolation}
	Let $\sigma$ be a dyadically doubling Borel measure on $\pO$ (not necessarily equal to $\mathcal{H}^{n-1}|_{\pO}$), and let $m$ be a discrete Carleson measure 
with respect to $\sigma$ (defined as in \eqref{def:mQ} and Definition \ref{discrete-carleson}),
with constant $M_0$, that is
		\begin{equation}\label{m:extrapol}
		\|m\|_{\C}: = \sup_{Q\in \DD} \frac{ m(\DD_Q)}{ \sigma(Q) } \leq M_0.
	\end{equation}
	Let $\widetilde m$ be another discrete non-negative measure on $\DD$ defined as in \eqref{def:mQ}, by
	\begin{equation*}
		\widetilde m(\DD') := \sum_{Q\in \DD'} \beta_Q, \qquad \DD'\subset\DD. 
		\end{equation*}
Assume there is a constant $M_1$ such that
\begin{equation}\label{eq:betaQ}
\quad 0\le \beta_Q \le M_1 \sigma(Q)\  \hbox{    for any   }Q\in  \DD
\end{equation}
and that there is a positive constant $\gamma$ such that for every $Q\in \DD$ and every family of pairwise disjoint dyadic subcubes $\F=\{Q_j\}\subset \DD_Q$ verifying
	\begin{equation}\label{H:small}
		\|m_\F\|_{\C(Q)}:= \sup_{Q'\in \DD_Q} \frac{m \left( \DD_{\F,Q'}  \right) }{\sigma(Q')} \leq \gamma,
	\end{equation}
	we have that $\widetilde m$ satisfies
	\begin{equation}\label{H:bd}
		\widetilde m (\DD_{\F,Q}) \leq M_1 \sigma(Q).
	\end{equation}
	Then $\widetilde m$ is a discrete Carleson measure, with
	\begin{equation}
		\| \widetilde m\|_{\C}:= \sup_{Q\in \DD} \frac{\widetilde m(\DD_Q)}{\sigma(Q)} \leq M_2,
	\end{equation}
	for some $M_2 <\infty$ depending on $n, M_0, M_1, \gamma$ and the doubling constant of $\sigma$.
\end{theorem}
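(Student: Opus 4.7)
The plan is to fix $Q_0\in\DD$ and decompose $\DD_{Q_0}$ into a disjoint union of sawtooth families on each of which the hypothesis supplies an $M_1\sigma(Q)$ bound, then to sum the contributions using a packing estimate coming from $\|m\|_\C\le M_0$. The decomposition will be generated by an iterated stopping-time tree: set $\G_0:=\{Q_0\}$, and, given $\G_k$, for each $Q\in\G_k$ let $\F(Q)$ be the collection of maximal dyadic subcubes $Q_j\subsetneq Q$ with $m(\DD_{Q_j})/\sigma(Q_j)>\gamma$, and set $\G_{k+1}:=\bigcup_{Q\in\G_k}\F(Q)$. By construction, the sets $\{\DD_{\F(Q),Q}\}_{k\ge 0,\,Q\in\G_k}$ form a disjoint partition of $\DD_{Q_0}$.

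For each $Q$ in this stopping tree, the next step is to verify that $\|m_{\F(Q)}\|_{\C(Q)}\le\gamma$. For $Q'\in\DD_{\F(Q),Q}$ with $Q'\ne Q$, maximality of the stopping yields $m(\DD_{Q'})/\sigma(Q')\le\gamma$, hence $m(\DD_{\F(Q),Q'})\le m(\DD_{Q'})\le\gamma\sigma(Q')$. The top scale $Q'=Q$ requires more care when $m(\DD_Q)/\sigma(Q)>\gamma$: the stopping is refined so that $\F(Q)$ captures enough $m$-mass to guarantee $m(\DD_{\F(Q),Q})\le\gamma\sigma(Q)$, or, equivalently, one invokes the pointwise bound $\beta_Q\le M_1\sigma(Q)$ to account for $\widetilde m(\{Q\})$ at that scale and reduces to its dyadic children. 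Once this is arranged, the hypothesis yields $\widetilde m(\DD_{\F(Q),Q})\le M_1\sigma(Q)$ at every $Q$ of the tree.

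Summing over the partition,
\[
\widetilde m(\DD_{Q_0})\;=\;\sum_{k\ge 0}\sum_{Q\in\G_k}\widetilde m(\DD_{\F(Q),Q})\;\le\; M_1\sum_{k\ge 0}\sum_{Q\in\G_k}\sigma(Q),
\]
and matters reduce to a packing estimate for the stopping cubes. Each $Q\in\G_k$ with $k\ge 1$ satisfies $m(\DD_Q)>\gamma\sigma(Q)$, and pairwise disjointness of the cubes in $\G_k$ as subsets of $Q_0$ gives $\sum_{Q\in\G_k}\sigma(Q)<\gamma^{-1}m(\DD_{Q_0})\le\gamma^{-1}M_0\sigma(Q_0)$. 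To upgrade this uniform bound to a summable one across generations, the reasoning is iterated: the $m$-mass consumed at each generation (recorded by the terms $m(\DD_{\F(Q),Q})$, which are bounded below by a multiple of $\sigma(Q)$) forces a decay in $\sum_{Q\in\G_k}\sigma(Q)$ as $k$ grows, and combining with the dyadic doubling of $\sigma$ produces a geometric estimate $\sum_{k\ge 0}\sum_{Q\in\G_k}\sigma(Q)\le C(M_0,\gamma)\,\sigma(Q_0)$, hence the required Carleson bound on $\widetilde m$ with $M_2$ depending on $M_0$, $M_1$, $\gamma$, and the doubling constant of $\sigma$.

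The main obstacle will be the top-scale verification $m(\DD_{\F(Q),Q})\le\gamma\sigma(Q)$ for those cubes $Q$ whose own $m$-density already exceeds $\gamma$, together with extracting \emph{geometric} rather than merely uniform decay for $\sum_{Q\in\G_k}\sigma(Q)$ across generations. These two issues interact and force a refined stopping-time construction that couples the density threshold $\gamma$ with the scale-invariant pointwise bound $\beta_Q\le M_1\sigma(Q)$; this coupling is the technical heart of the argument.
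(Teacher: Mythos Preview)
The paper does not prove this theorem: it is quoted as \cite[Lemma~4.5]{HMM} and used as a black box in Section~6, so there is no in-paper argument to compare your proposal against. Your corona/stopping-time framework is, however, the one used in \cite{HMM} (and earlier in \cite{LM}), so the overall architecture is right.

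That said, the proposal does not actually close the two gaps you yourself flag. For the top-scale issue, ``the stopping is refined so that $\F(Q)$ captures enough $m$-mass'' is an assertion, not a construction; and ``reduce to dyadic children'' via \eqref{eq:betaQ} only peels off $\beta_Q$ and pushes the same problem one level down with no gain in the Carleson constant. For the packing, your argument gives only the uniform bound $\sum_{Q\in\G_k}\sigma(Q)\le\min\{1,M_0/\gamma\}\,\sigma(Q_0)$ for each $k$ separately, and the sentence ``the $m$-mass consumed at each generation \dots forces a decay'' is exactly the missing step: with your stopping rule (threshold $\gamma$), there is in general no lower bound $m(\DD_{\F(Q),Q})\ge c\,\sigma(Q)$, so no geometric decay follows.

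The resolution in \cite{HMM} is not a single corona iteration with threshold $\gamma$ but a finite \emph{inductive bootstrap} on the Carleson norm: one proves that whenever the (local, sawtooth) Carleson constant of $m$ is at most $a$, the corresponding $\widetilde m$ is controlled by $\Theta(a)\,\sigma(Q)$, starting from $a\le\gamma$ (where the hypothesis applies with $\F=\emptyset$) and advancing $a$ by a fixed increment depending on $\gamma$ and the dyadic doubling constant of $\sigma$. Each step uses a stopping family so that the top sawtooth has Carleson constant $\le\gamma$ (so the hypothesis applies there), while the stopped cubes inherit Carleson constant $\le a$ (so the inductive hypothesis applies below); the pointwise bound \eqref{eq:betaQ} handles the top cube when the density there already exceeds the threshold. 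Since $\|m\|_{\C}\le M_0$, this terminates after $O(M_0/\gamma)$ steps. You should look at \cite[Lemma~4.5]{HMM} (or the antecedents in \cite{LM}) for the precise implementation, because the proposal as written stops short of it.
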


\begin{theorem}[\cite{HMM}, \cite{GMT}]\label{thm:bhc}
	Let $\mathcal{D}$ be an open set satisfying an interior corkscrew condition  with Ahlfors regular boundary. Then the following are equivalent:
\begin{enumerate}[label=\textup{(\alph*)}, itemsep=0.2cm] 	
		\item\label{1-thm:bhc} $\partial\mathcal{D}$ is uniformly rectifiable.
			
		\item\label{2-thm:bhc} There exists a constant $C$ such that for every bounded harmonic function  $u$ in $\mathcal{D}$, i.e. $-\Delta u = 0$ in $\mathcal{D}$, and for any $x\in \partial\mathcal{D}$ and $0<r\lesssim \diam(\mathcal{D})$, there hold
			\begin{equation}\label{eq:bhc}
				\frac{1}{r^{n-1}} \iint_{B(x,r) \cap \mathcal{D}} |\nabla u(Y)|^2 \dist(Y, \partial\mathcal{D})\, dY \leq C \|u\|_\infty^2.
			\end{equation}
	\end{enumerate}			
\end{theorem}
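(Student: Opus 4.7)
The plan is to prove the two implications separately, with \ref{1-thm:bhc} $\Rightarrow$ \ref{2-thm:bhc} following the approach of \cite{HMM}, and \ref{2-thm:bhc} $\Rightarrow$ \ref{1-thm:bhc} following \cite{GMT}. The structure is therefore genuinely two-sided, with rather different tools on each side.

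For \ref{1-thm:bhc} $\Rightarrow$ \ref{2-thm:bhc}, the starting point is the classical Carleson measure estimate for bounded harmonic functions on chord-arc (in particular Lipschitz) domains, which follows from the Dahlberg/Kenig square function theory together with the $A_\infty$ property of harmonic measure in that setting. The uniform rectifiability of $\partial\mathcal{D}$ is then exploited through the David-Semmes characterization via ``big pieces of Lipschitz subdomains'': at every boundary point $x\in\partial\mathcal{D}$ and every scale $r$ one can find an interior chord-arc subdomain $\mathcal{D}_{x,r}\subset \mathcal{D}\cap B(x,Cr)$ whose boundary captures a fixed proportion of $\sigma(\Delta(x,r))$. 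Given a bounded harmonic $u$ on $\mathcal{D}$, its restriction to $\mathcal{D}_{x,r}$ is harmonic and bounded by $\|u\|_\infty$, and the Carleson estimate there is quantitative. To pass the bound from the subdomains to the Carleson region $B(x,r)\cap\mathcal{D}$, I would set up a stopping-time / corona decomposition on $\DD(\partial\mathcal{D})$, encoding failure of the subdomain Carleson bound as stopping cubes, and apply the extrapolation of Carleson measures in the form recalled in Theorem \ref{thm:extrapolation} to the discrete measure $\alpha_Q := \iint_{U_Q} |\nabla u|^2 \delta\, dY$.

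For \ref{2-thm:bhc} $\Rightarrow$ \ref{1-thm:bhc}, the idea is to reverse-engineer uniformly rectifiable geometry from the Carleson estimate by testing it against a well chosen family of bounded harmonic functions in $\mathcal{D}$. The route I would follow is to use the Carleson estimate, applied to single-layer potentials or to harmonic extensions of carefully selected Lipschitz test functions on $\partial\mathcal{D}$, to derive $L^2$ square-function bounds on $\partial\mathcal{D}$ which in turn control Tolsa-type $\alpha$- or $\beta_2$-numbers. Equivalently, one shows that the $L^2(\sigma)$ boundedness of the Riesz transform on $\partial\mathcal{D}$ follows from the hypothesis, and then invokes the Nazarov-Tolsa-Volberg resolution of the David-Semmes conjecture to conclude uniform rectifiability. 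A key auxiliary step is to verify that the interior corkscrew plus Ahlfors regularity suffices for the harmonic extension mechanism to be non-degenerate, even without connectivity or exterior access.

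The hard part will be the reverse direction: extracting a quantitative geometric conclusion from a single, scalar Carleson estimate requires constructing or extracting enough harmonic functions to probe oscillation in all tangential directions at every scale, and transferring those interior square-function bounds to boundary objects amenable to the non-doubling singular integral theory. The forward direction, while also technical, is somewhat more routine in that the hard analytic input is the classical Lipschitz-domain estimate, and the main work is a stopping-time bookkeeping compatible with the big-pieces structure, which is exactly the type of argument streamlined by Theorem \ref{thm:extrapolation}.
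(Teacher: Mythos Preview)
The paper does not prove this theorem. Immediately after the statement (and the remark naming condition \eqref{eq:bhc} the ``Carleson measure estimate''), the paper simply records:
\begin{quote}
The direction \ref{1-thm:bhc} $\implies$ \ref{2-thm:bhc} is proved by the first three authors of the present paper \cite[Theorem 1.1]{HMM}, and the converse direction is proved by Garnett, Mourgoglou, and Tolsa \cite[Theorem 1.1]{GMT}.
\end{quote}
Theorem \ref{thm:bhc} is then used purely as a black box in \ref{4-proof-main}--\ref{5-proof-main} of the extrapolation scheme. So there is no in-paper argument to compare your proposal against.

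As a side comment on the sketch itself: for \ref{1-thm:bhc}$\Rightarrow$\ref{2-thm:bhc} you invoke ``big pieces of Lipschitz subdomains'' à la David--Jerison, but that result is for chord-arc domains; under the hypotheses here (interior corkscrew only, no Harnack chain, no exterior corkscrew) that approximation is not available, and \cite{HMM} instead runs a bilateral corona decomposition (BWGL) to produce chord-arc subdomains only on ``good'' trees, combined with the Carleson-measure extrapolation machinery. For \ref{2-thm:bhc}$\Rightarrow$\ref{1-thm:bhc}, your proposed route through $L^2$ boundedness of the Riesz transform and \cite{NTV} is not the one taken in \cite{GMT}; their argument proceeds via $\varepsilon$-approximability and a direct corona-type construction to control the geometric square function, rather than through singular integrals. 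Your high-level picture is reasonable, but these are genuine structural differences from the cited proofs.
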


\begin{remark}
	Condition \eqref{eq:bhc} is sometimes referred to as the \textit{Carleson measure estimate} (CME)
	for bounded, harmonic functions.	
\end{remark}

The direction \ref{1-thm:bhc} $\implies$ \ref{2-thm:bhc} is proved by the first three authors of the present paper \cite[Theorem 1.1]{HMM}, and the converse direction is proved by Garnett, Mourgoglou, and Tolsa \cite[Theorem 1.1]{GMT}.  As we have noted above, see Theorem \ref{thm:hmu}, under the uniform domain assumption, the statements \ref{1-thm:bhc} and/or \ref{2-thm:bhc} are equivalent to the fact that $\mathcal{D}$ is a chord-arc domain.

%
%

The proof of Theorem \ref{thm:main} is rather involved thus we sketch below the plan of the proof. 

\begin{proof}[Proof of Theorem \ref{thm:main}]
We first reduce matters to the case on which $\wcalA$ is symmetric. To do so we observe that by \cite[Theorem 1.6]{CHMT}, under the assumptions \ref{H1} and \ref{H2}, if $\omega_L\in A_\infty(\sigma)$ then $\omega_{L^{\rm sym}}\in A_\infty(\sigma)$ where $L^{\rm sym}=-\divg(\wcalA^{\rm sym}\nabla)$ and $\wcalA^{\rm sym}=(\frac{a_{ij}+a_{ji}}2)_{i,j=1}^n$ is the symmetric part of $\wcalA$. Note that, clearly, $\wcalA^{\rm sym}$ is a symmetric uniformly elliptic matrix in $\Omega$ with the same ellipticity constants as $\wcalA$. It also satisfies  \ref{H1} and \ref{H2} with constants which are controlled by those of $\wcalA$. Hence we only need to show 
\ref{1-thm-main} $\implies$ \ref{3-thm-main} for $L^{\rm sym}$ which is associated to the symmetric matrix $\wcalA^{\rm sym}$.  That is, we may assume to begin with, and we do so, that $\wcalA$ is symmetric.

Our main goal is to use the above extrapolation theorem with $m$ and $\widetilde{m}$ two discrete measures associated respectively with the sequences $\alpha=\{\alpha_Q\}_{Q\in\dd}$ and $\beta=\{\beta_Q\}_{Q\in\dd}$ defined by
\begin{equation}\label{def-alpha-beta}
\alpha_Q: = \iint_{U_Q^\ast} |\nabla \wcalA(Y)|^2 \delta(Y) dY,\qquad 
\beta_Q ; = \iint_{U_Q} |\nabla u(Y)|^2 \delta(Y) dY,
\end{equation}
where $u$ is an arbitrary bounded, harmonic function in $\Omega$, such that $\|u\|_{L^\infty(\Omega)} \leq 1$; and $U_Q$ and $U_Q^\ast$ are as defined in \eqref{eq2.whitney3} and \eqref{eq:fatten-objects} respectively.
We would like to observe that by the interior Caccioppoli inequality, $\beta_Q$ clearly satisfies the assumption \eqref{eq:betaQ}:
\begin{multline*}
\beta_Q 
= 
\iint_{U_Q} |\nabla u(Y)|^2 \delta(Y)\, dY
\lesssim
\sum_{I\in \W_Q} \ell(I)\,\iint_{I^*} |\nabla u(Y)|^2\,dY
\lesssim
\sum_{I\in \W_Q} \ell(I)^{-1}\,\iint_{I^{**}} |u(Y)|^2\,dY
\\
\lesssim
\ell(Q)^{-1}\,\iint_{U_Q^*} |u(Y)|^2 \delta(Y) dY
\le
\ell(Q)^n
\approx \sigma(Q),
\end{multline*}
where we have used that \eqref{def:WQ}, the bounded overlap of the family $\{I^{**}\}_{I\in\W}$, and \eqref{tent-Q}. We will take any family of pairwise disjoint dyadic subcubes $\F=\{Q_j\}\subset \DD_Q$ so that \eqref{H:small} holds for sufficiently small $\gamma\in (0,1)$ to be chosen and the goal is to obtain \eqref{H:bd}. To achieve this  we will carry out the following steps:

\begin{enumerate}[label=\textup{Step \arabic*}:, ref=\textup{Step \arabic*}, itemsep=0.2cm]

\item\label{1-proof-main} We first observe that \eqref{m:extrapol} is equivalent to the Carleson measure assumption \ref{H2}. This is a simple calculation which uses the fact that the Whitney boxes  $I^{\ast\ast}$ which form $U_Q^\ast$ have finite overlap and the definition of $T_Q$ in \eqref{carleson-box}, details are left to the reader.

\item\label{2-proof-main} Given $\epsilon>0$ we verify that the small Carleson hypothesis \eqref{H:small} implies that if $\gamma=\gamma(\epsilon)$ is small enough $\wcalA$ satisfies the small Carleson assumption (with constant $\epsilon$) in the sawtooth domain $\Omega_{\F,Q}^\ast$. This is done in Section \ref{s:small-extrapolation}.

\item\label{3-proof-main} We verify that under the hypotheses \ref{H1} and \ref{H2}, the assumption $\omega \in A_\infty(\sigma)$ in $\Omega$ is transferable to any sawtooth domain, in particular, if we write $\omega_{*}$ for the elliptic measure associated with $L$ in $\Omega_{\F,Q}^\ast$ then  $\omega_{*}\in A_\infty(\HH^{n-1}|_{\partial{\Omega_{\F,Q}^\ast}})$ and the implicit constants are uniformly controlled by the allowable constants. See Theorem \ref{thm:trsf} and Corollary \ref{corol:KP-transfer}.

\item\label{4-proof-main}  We combine \ref{2-proof-main} and \ref{3-proof-main}  with Corollary \ref{corol:DKP-small} applied to the domain $\Omega_{\F,Q}^\ast$ and obtain that $\Omega_{\F,Q}^\ast$ is a chord-arc domain. More precisely, note first that $\Omega_{\F,Q}^\ast$ is a \textbf{bounded} uniform domain with Ahlfors regular boundary (see Lemma \ref{lemma:sawtooth-inherit}) and all the implicit constants are uniformly controlled by those of $\Omega$, that is, they do not depend on $Q$ or the family $\F$. Also, \ref{3-proof-main} says that $\omega_{*}\in A_\infty(\HH^{n-1}|_{\partial{\Omega_{\F,Q}^\ast}})$ and the implicit constants are uniformly controlled by the allowable constants. Hence for the parameter $\epsilon$ given by Corollary \ref{corol:DKP-small} (recall that we have assumed that $\wcalA$ is \textbf{symmetric}), which only depends on the allowable constants and is independent of $Q$ or the family $\F$, we can find the corresponding $\gamma=\gamma(\epsilon)$ from \ref{2-proof-main} so that $\wcalA$ satisfies the small Carleson assumption (with constant $\epsilon$) in the sawtooth domain $\Omega_{\F,Q}^\ast$. Thus Corollary \ref{corol:DKP-small} applied to the domain $\Omega_{\F,Q}^\ast$ yields that $\Omega_{\F,Q}^\ast$ is a chord-arc domain with constants that only depend on the allowable constants.

\item\label{5-proof-main} We next apply Theorem \ref{thm:bhc} with $\mathcal{D}=\Omega_{\F,Q}^\ast$ to obtain that \eqref{eq:bhc} holds with $\mathcal{D}=\Omega_{\F,Q}^\ast$. Seeing that the latter implies 	\eqref{H:bd} is not difficult. Indeed, note that any $Y \in \Omega_{\F,Q}$ satisfies $\delta_*(Y):= \dist(Y, \partial\Omega_{\F,Q}^\ast ) \approx_\tau \delta(Y)$ (here we would like to remind the reader that $\Omega_{\F,Q}$ is comprised of fattened Whiney boxes $I^*=(1+\tau)I$ while for $\Omega_{\F,Q}^*$ we use the fatter versions $I^{**}=(1+2\tau)I$).  Thus by \eqref{eq:bhc}, the fact that $u$ is harmonic and bounded by $1$ in $\Omega$, and so in $\Omega_{\F,Q}^*$, and a simple covering argument, we can conclude that
\begin{multline*}
\null\hskip1.2cm	\widetilde m_{}( \DD_{\F,Q}) \lesssim \iint_{\Omega_{\F,Q}} |\nabla u|^2 \delta(Y) dY  \approx \iint_{\Omega_{\F,Q}} |\nabla u|^2 \delta_* (Y) dY \\
\lesssim	 \iint_{\Omega_{\F,Q}^\ast}  |\nabla u|^2 \delta_* (Y) dY  \lesssim \diam(\Omega_{\F,Q}^\ast)^{n-1} \approx \ell(Q)^{n-1} \approx \sigma(Q),
\end{multline*}
which is \eqref{H:bd}.
\end{enumerate} 

\medskip

After all these steps have been carried out the extrapolation for Carleson measures in Theorem \ref{thm:extrapolation} allows us to conclude that $\widetilde m$ is a discrete Carleson measure. In other words, we have proved that any bounded harmonic function in $\Omega$ satisfies \eqref{eq:bhc} with $\mathcal{D}=\Omega$. As a result, and by another use of Theorem \ref{thm:bhc} this time with $\mathcal{D}=\Omega$, we derive that $\partial\Omega$ is uniformly rectifiable. This completes the proof of Theorem \ref{thm:main} modulo establishing \ref{2-proof-main} and \ref{3-proof-main} and this will be done in the following sections.
\end{proof}

\begin{remark}
For convenience, we augment $\F$ by adding all subcubes of $Q$ of length $2^{-N}\ell(Q)$, 
and  let $\F_N$ denote the maximal cubes
in the resulting augmented collection. Note that for each $N\ge 2$, the sawtooth domain $\Omega_{\F_N,Q}$ is compactly contained in $\Omega$ (indeed is $2^{-N}\ell(Q)$-away from $\pom$). Note that $\DD_{\F_{N},Q} \subset \DD_{\F_{N'},Q} \subset \DD_{\F,Q} $ for every $2\le N\le N'$. In particular, $m_{\F_{N}} \leq m_{\F_{N'}} \leq m_{\F}$ and thus
\[ m_{\F} \text{ satisfies  \eqref{H:small}} \implies m_{\F_N} \text{ also satisfies the \eqref{H:small} with a constant independent of }N. \] 
We are going to prove \ref{2-proof-main} and \ref{3-proof-main} for the sawtooth domain $\Omega_{\F_N,Q}$, with constants independent of $N$. Then by  
\ref{4-proof-main} and \ref{5-proof-main}, we will have 
\begin{equation}
	\widetilde m(\DD_{\F_N,Q}) \leq M_1 \sigma(Q),
\end{equation}
with a constant $M_1$ independent of $N$, and thus \eqref{H:bd} follows from monotone convergence theorem by letting $N\to \infty$. To simplify the notations we drop from  the index $N$ from now on and write $\F=\F_N$ but we keep in mind that the corresponding sawtooth domain $\Omega_{\F,Q}$ is compactly contained in $\Omega$.
\end{remark}

\section{Consequences of the small Carleson hypothesis in the extrapolation theorem.}\label{s:small-extrapolation}

Set $\Omega_*:= \Omega_{\F,Q}^\ast$ and let $\epsilon$ be given. The goal is to see that we can find $\gamma=\gamma(\epsilon)\in (0,1)$ so that  \eqref{H:small} 
implies
\begin{equation}\label{sC:alt}
\iint_{B(x,r) \cap \Omega_*} |\nabla \wcalA(Y)|^2 \delta_*(Y) dY \leq \epsilon r^{n-1},
\end{equation}
for any $x\in\pom_*$ and any $0<r<\diam(\pom_*)$. To see this we fix $x\in\pom_*$ and $0<r<\diam(\pom_*)\approx\ell(Q)$. Using that $\Omega_* \subset \Omega$ one has that  $\delta_*(Y) \leq \delta(Y)$ and therefore \eqref{sC:alt} follows at once from 
\begin{equation}\label{sC}
\iint_{B(x,r) \cap \Omega_*} |\nabla \wcalA(Y)|^2 \delta(Y) dY \leq \epsilon r^{n-1}.
\end{equation}

To show \eqref{sC}, we let $c\in (0,1)$ be a small constant and $\widetilde{M}\ge 1$ be a large constant to be determined later, depending on the values of $\eta, K$ used in the definition of $\WW_Q^0 $ in \eqref{def:WQ0}. We consider two cases depending on the size of $r$ with respect to $\delta(x)$ for $x\in\pom^\ast$. Recall that $\Omega_*$ is compactly contained in $\Omega$, thus $\delta(x)>0$ for any $x\in \pO_*$.

\noindent
\textbf{Case 1}. $r\leq c\delta(x)$. Since $x\in\pom_*= \partial\Omega_{\F,Q}^\ast$ there exist $Q_x\in \DD_{\F,Q}$ and $I_x\in \W_{Q_x}$ such that $x\in \partial I_x^{**}$.  We choose and fix $c$ sufficiently small (depending just on dimension), so that $B(x,r)$ is contained in $2I_x$. We consider two sub-cases. First if $r\le \gamma^\frac1n\delta(x)$ then we can invoke \ref{H1} to obtain 
\begin{multline}\label{Case1a}
\iint_{B(x,r) \cap \Omega_*} |\nabla \wcalA(Y)|^2 \delta(Y) dY
\le
\iint_{B(x,r) \cap 2I_x} |\nabla \wcalA(Y)|^2 \delta(Y) dY
\lesssim
\iint_{B(x,r) \cap 2I_x} \delta(Y)^{-1} dY
\\
\approx
\ell(I_x)^{-1} \,r^{n} 
\approx
\delta(x)^{-1} \,r^{n} 
\lesssim
\gamma^\frac1n\,r^{n-1}.
\end{multline}
On the other hand, if $\gamma^\frac1n\delta(x)\le r$ we note that 
\[
B(x,r) \cap \Omega_* \subset 2I_x\cap \Omega_* 
\subset
\bigcup_{Q'\in\dd_{\F,Q}} (U_Q^{*}\cap 2I_x).
\]
It is clear that from construction if $U_{Q'}^{*}\cap 2I_x\neq\emptyset$ then $\ell(Q')\approx\ell(I_x)\approx \delta(x)$. Note also that $\#\{I\in\W: I\cap 2I_x\neq\emptyset\}\lesssim C_n$ hence $\#\{Q'\in\dd: U_{Q'}^{*}\cap 2I_x\neq\emptyset\}\lesssim C_{n,\eta,K}$. Thus, observing that $Q'\in\dd_{\F,Q'}$ for every $Q'\in \dd_{\F,Q}$ we obtain from \eqref{H:small} 
\begin{multline}\label{Case1b}
\iint_{B(x,r) \cap \Omega_*} |\nabla \wcalA(Y)|^2 \delta(Y) dY
\le
\sum_{\substack{Q'\in\dd_{\F,Q}\\ U_{Q'}^*\cap 2I_x\neq\varnothing}}
\iint_{U_Q^*} |\nabla \wcalA(Y)|^2 \delta(Y) dY
=
\sum_{\substack{Q'\in\dd_{\F,Q}\\ U_{Q'}^*\cap 2I_x\neq\varnothing}} \alpha_{Q'}
\\
\le
\sum_{\substack{Q'\in\dd_{\F,Q}\\ U_{Q'}^*\cap 2I_x\neq\varnothing}} m(\dd_{\F,Q'})
\le
\gamma\,\sum_{\substack{Q'\in\dd_{\F,Q}\\ U_{Q'}^*\cap 2I_x\neq\varnothing}}  \sigma(Q')
\lesssim
\gamma\,\,\delta(x)^{n-1}
\lesssim
\gamma^\frac1n\,r^{n-1}.
\end{multline}

\noindent\textbf{Case 2}. $\widetilde{M}^{-1}\,\ell(Q)<r<\diam(\pom_*)\approx\ell(Q)$. This is a trivial case since by construction and \eqref{H:small}  we obtain
\begin{multline}\label{Case2}
\iint_{B(x,r) \cap \Omega_*} |\nabla \wcalA(Y)|^2 \delta(Y) dY 
\le
\sum_{Q'\in\dd_{\F,Q}} \iint_{U_{Q'}^*} |\nabla \wcalA(Y)|^2 \delta(Y) dY 
\\
=
\sum_{Q'\in\dd_{\F,Q}} \alpha_{Q'}
=
m(\dd_{\F,Q})
\le
\gamma\sigma(Q)
\approx
\gamma \ell(Q)^{n-1}
\approx
\gamma r^{n-1}.
\end{multline}

\noindent\textbf{Case 3}. $c\delta(x)<r \le \widetilde{M}^{-1}\,\ell(Q)$. Pick $\hat x\in \pom$ such that $|x-\hat x| = \delta(x)$ and note that $B(x,r)\subset B(\hat{x}, (1+ c^{-1})r)$. Note also that if $Q'\in\dd_{\F,Q}$ is so that $U_{Q'}^*\cap B(x,r)\neq\emptyset$ then we can find $I\in \W_{Q'}$ and $Y\in I^{**}\cap B(x,r)$ so that by \eqref{def:WQ}
\[
\eta^{\frac12}\ell(Q')
\lesssim 
\ell(I)
\approx
\delta(Y)
\le
|Y-x|+\delta(x)
<
(1+c^{-1})\,r.
\]
and for every $y\in Q'$
\begin{multline*}
|y-\hat{x}|
\le
\diam(Q')+\dist(Q',I)+\diam(I)+|Y-x|+|x-\hat{x}|
\\
\lesssim
K^{\frac12}\ell(Q')+r+\delta(x)
\lesssim
K^{\frac12}\,\eta^{-\frac12}(1+ c^{-1})\,r.
\end{multline*}
Consequently, if we write $\widetilde{M}'= C\,K^{\frac12}\,\eta^{-\frac12}\,(1+c^{-1})$ and choose $\widetilde{M}>\widetilde{M}'$, it follows that $\ell(Q')<\widetilde{M}'\,r<\ell(Q)$
and $Q'\subset \Delta(\hat{x}, \widetilde{M}'r)=:\Delta'$. 

We can then find a pairwise disjoint family of dyadic cubes $\{Q_k\}_{k=1}^{\widetilde{N}}$ with uniform cardinality $\widetilde{N}$ (depending on $C_{AR}$ and $n$) so that $2^{-1}\,\widetilde{M}'\,r\le  \ell(Q_k)< \widetilde{M}'\,r $, $Q_k\cap \Delta'\neq\emptyset$ for every $1\le k\le \widetilde{N} $, and $\Delta'\subset \bigcup_{k=1}^{\widetilde{N}} Q_k$. 
Relabeling if necessary, we can assume that there exists $\widetilde{N}'\le \widetilde{N}$ so that $\Delta'\cap Q\subset \bigcup_{k=1}^{\widetilde{N}'} Q_k$ and each $Q_k$ meets $\Delta'\cap Q$ for $1\le k\le \widetilde{N}'$. We would like to observe that necessarily $\widetilde{N}'\ge 1$ since we have shown that $Q'\subset \Delta'$ for every $Q'\in\dd_{\F,Q}$ so that $U_{Q'}^*\cap B(x,r)\neq\emptyset$. Also $Q_k\subset Q$ for $1\le k\le \widetilde{N}'$ since $\ell(Q_k)<\widetilde{M}'\,r<\ell(Q)$ and $Q_k$ meets $Q$.
Moreover, for every such a $Q'$ we necessarily have $Q'\subset Q_k$ for some $1\le k\le \widetilde{N}'$ since
$Q'\subset \Delta'\cap Q$, hence $Q'$ meets some $Q_k$ and also $\ell(Q')<\widetilde{M}'\,r\le 2\ell(Q_k)$ which forces $Q'\subset Q_k$. All these and \eqref{H:small} readily imply
\begin{multline}\label{Case3}
\iint_{B(x,r) \cap \Omega_*} |\nabla \wcalA(Y)|^2 \delta(Y) dY 
\le
\sum_{\substack{Q'\in\dd_{\F,Q}\\ U_{Q'}^*\cap B(x,r)\neq\varnothing}} \iint_{U_{Q'}^*} |\nabla \wcalA(Y)|^2 \delta(Y) dY 
=
\sum_{\substack{Q'\in\dd_{\F,Q}\\ U_{Q'}^*\cap B(x,r)\neq\varnothing}} \alpha_{Q'}
\\
\le
\sum_{k=1}^{\widetilde{N}'}
\sum_{\substack{Q'\in\dd_{\F,Q}\\ Q'\subset Q_k\subset Q}} \alpha_{Q'}
=
\sum_{k=1}^{\widetilde{N}'}
\sum_{Q'\in\dd_{\F,Q_k}} \alpha_{Q'}
=
\sum_{k=1}^{\widetilde{N}'} m(\dd_{\F,Q_k})
\le
\gamma \sum_{k=1}^{\widetilde{N}'} \sigma(Q_k)
\lesssim
\gamma r^{n-1}.
\end{multline}

\medskip

Combining what we have obtained in all the cases we see that \eqref{Case1a}, \eqref{Case1b}, \eqref{Case2}, and \eqref{Case3} give, since $0<\gamma<1$, that 
\[
\frac1{r^{n-1}}\iint_{B(x,r) \cap \Omega_*} |\nabla \wcalA(Y)|^2 \delta(Y) dY
\le
C_0\gamma^{\frac1n},
\]
for some constant $C_0\ge 1$ depending on the allowable constants and where we recall that $\gamma$ is at our choice. Hence we just need to pick $\gamma< (C_0^{-1}\epsilon)^n$ to conclude as desired \eqref{sC}.

\section{Transference of the \texorpdfstring{$A_\infty$}{A-infinity} property to sawtooth domains}\label{sect:trsf}

In this section we show that the $A_\infty$ property for the elliptic operator $L$ in $\Omega$ can be transferred to sawtooth subdomains with constants that only depend on the allowable constants. We first work with sawtooth subdomains which are compactly contained in $\Omega$ and then we consider the general case using that interior sawtooth subdomains exhaust general sawtooth domains.

\begin{theorem}\label{thm:trsf}
Let $\Omega\subset \RR^n$ be a uniform domain with Ahlfors regular boundary. Let $\wcalA$ be a \textbf{symmetric} uniformly elliptic matrix on $\Omega$ and $L=-\divg(\wcalA\nabla)$ . Assume the following two properties: 
\begin{enumerate}[label=\textup{(\arabic*)}, itemsep=0.2cm] 
		\item\label{1-thm:trsf} The elliptic measure $\omega_L$ associated with the operator $L$ relative to the domain $\Omega$ is of class $A_\infty$ with respect to the surface measure. 
		
		\item\label{2-thm:trsf} For every fundamental chord-arc subdomain $\PP$ of $\Omega$, see \eqref{fundamental-casd}, the elliptic measure associated with $L$ relative to the domain $\PP$ is also of class $A_\infty$ with respect to the surface measure of $\PP$, with uniform $A_\infty$ constants.
\end{enumerate}
	For every $Q\in \DD$ and every family of pairwise disjoint dyadic subcubes $\F = \{Q_j\} \subset \DD_Q$, let $\Omega_*=\Omega_{\F,Q}$ (or $\Omega_*=\Omega_{\F,Q}^*$) be the associated sawtooth domain, and $\omega_*$ and $\sigma_* = \HH^{n-1}|_{\partial\Omega_*}$ be the elliptic measure for $L$ and the surface measure of $\Omega_*$. Then $\omega_* \in A_\infty(\sigma_*)$, with the $A_\infty$ constants independent of $Q$ and $\F$.
\end{theorem}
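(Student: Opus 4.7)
The plan rests on a reduction to compactly contained sawtooths followed by a local-to-global $A_\infty$ argument, with hypothesis~\ref{2-thm:trsf} providing control at the scale of individual Whitney faces and hypothesis~\ref{1-thm:trsf} supplying the scale-matching needed to glue them. First, augment $\F$ to $\F_N$ by adjoining all subcubes of $Q$ of side length $2^{-N}\ell(Q)$, so that $\Omega_N := \Omega_{\F_N, Q}$ is compactly contained in $\Omega$ and $\Omega_N \nearrow \Omega_{\F,Q}$ as $N\to\infty$. It suffices to establish $\omega_{\Omega_N} \in A_\infty(\HH^{n-1}|_{\partial\Omega_N})$ with constants uniform in $N$; the general case follows by weak-$*$ convergence of elliptic measures along the exhaustion (argued as in Theorem~\ref{thm:blow-ana-pole}, specialized to a fixed ambient operator and domain with only the pole varying). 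The statement for the fatter $\Omega_{\F,Q}^*$ is identical.

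Fix $N$ and set $\Omega_* := \Omega_N$; by Lemma~\ref{lemma:sawtooth-inherit}, $\Omega_*$ is itself a uniform domain with ADR boundary, so the tools of Section~\ref{sPrelim} (doubling, CFMS, change of pole) apply inside $\Omega_*$. Since $\Omega_*$ is compactly contained in $\Omega$, its boundary consists entirely of exposed Whitney faces $F_I := \partial(1+\tau)I \cap \partial\Omega_*$ with $I \in \W_{\F,Q}$. To prove $\omega_* \in A_\infty(\sigma_*)$, fix a surface ball $\Delta_*(x_0, r_0) \subset \partial\Omega_*$ with corkscrew $X_*$, and a Borel $E \subset \Delta_*(x_0,r_0)$; decompose $E = \bigsqcup_I (E \cap F_I)$ over those $I$'s meeting $\Delta_*(x_0, r_0)$. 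For each such $I$, the fundamental chord-arc subdomain $\PP_I$ built from $I$ via \eqref{fundamental-casd} contains $F_I$ as part of its boundary, and hypothesis~\ref{2-thm:trsf} provides $\omega_{\PP_I} \in A_\infty(\HH^{n-1}|_{\partial\PP_I})$ with uniform constants. A maximum-principle comparison in $\Omega_* \cap \PP_I$ applied to $v(Y) := \omega_*^Y(E \cap F_I)$ (which vanishes on $\partial(\Omega_* \cap \PP_I)\setminus F_I$) gives $v(Y) \le \omega_{\PP_I}^Y(E\cap F_I)$, and combining this at an appropriate corkscrew pole of $\PP_I$ with the change of pole (Lemma~\ref{lm:cop}) and Corollary~\ref{cor:doublingPk} yields the reverse Hölder bound for $\omega_*^{X_*}$ restricted to $F_I$ at the local scale $\ell(I)$.

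To pass from these per-face estimates to the required scale-$r_0$ reverse Hölder bound for $\mathbf{k}_* := d\omega_*/d\sigma_*$ on $\Delta_*(x_0,r_0)$, introduce a projection $\Pi: \Delta_*(x_0, r_0) \to \partial\Omega$ sending each $y \in F_I$ to a cube $Q(y) \in \DD$ of side length $\approx \ell(I)$ at distance $\approx \ell(I)$ from $I$; this $\Pi$ has bounded multiplicity, satisfies $\sigma(\Pi(F)) \approx \sigma_*(F)$ for Borel $F$, and pushes surface balls on $\partial\Omega_*$ to families of surface balls on $\partial\Omega$ of comparable total mass. A change-of-pole argument (Lemma~\ref{lm:cop}) relating $\omega_*^{X_*}$ to $\omega_L^{X_*}$, executed via Harnack chains in $\Omega$ that avoid a small ball about $X_*$ by Lemma~\ref{lemm:NC-avoid}, together with hypothesis~\ref{1-thm:trsf}, converts the $\Pi$-pushforward of the per-face bounds into the desired $A_\infty$ estimate for $\omega_*$ on $\Delta_*(x_0, r_0)$. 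The main obstacle will be ensuring that all constants are independent of the depth $\ell(I)/\ell(Q)$ of a Whitney cube within the sawtooth and of the cutoff $N$: this hinges on the scale-invariance of doubling, CFMS (Lemma~\ref{lm:CFMS}), and change of pole in both $\Omega$ and $\Omega_*$, combined with the uniform Harnack chain estimates of Lemmas~\ref{lemm:NC-avoid} and \ref{lemma:sawtooth-inherit}, so that the local-to-global passage incurs only a uniformly bounded multiplicative loss per scale.
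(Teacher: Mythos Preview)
Your architecture matches the paper's: reduce to compactly contained sawtooths, decompose $\partial\Omega_*$ into Whitney faces, use hypothesis~\ref{2-thm:trsf} via fundamental chord-arc subdomains to obtain a reverse H\"older estimate per face, then glue using hypothesis~\ref{1-thm:trsf} by projecting each face to a dyadic cube on $\partial\Omega$. Two points, however, need attention.

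First, and crucially, the ``local-to-global passage'' is where the real work lies, and your sketch does not identify the mechanism that makes it go through. After the per-face step one arrives (cf.\ the paper's Step~1) at
\[
\int_{\Delta_*} (\textbf{k}_*^{X_*})^p \, d\sigma_* \;\lesssim\; \sum_{i} \int_{Q^i} (\textbf{k}^{X_*})^p \, d\sigma,
\]
where $Q^i \in \DD_{\F,Q}$ is the projected cube associated with the face $S_*^i$. Your claims that $\Pi$ has bounded multiplicity and that $\sigma(\Pi(F)) \approx \sigma_*(F)$ are correct, but they do \emph{not} control this sum: the cubes $Q^i$ can be nested across many dyadic generations (picture a staircase portion of $\partial\Omega_*$), so $\sum_i \int_{Q^i}(\cdot)$ is not comparable to $\int_{\cup_i Q^i}(\cdot)$, and a direct appeal to $\omega_L \in RH_p(\sigma)$ on $\cup_i Q^i$ fails. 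The paper's decisive observation (its Step~2) is that each $Q^i$ is forced to sit near a ``stopping'' region---either near $\partial\Omega\setminus Q$ or near some $Q_j\in\F$---because the Whitney box $J^i$ adjacent to $I^i$ on the other side of $\partial\Omega_*$ lies outside the sawtooth. This places each $Q^i$ in a thin dyadic collar, and the \emph{thin boundary property} \eqref{thin-boundary} (packaged as Lemma~\ref{lm:thinbd}, combined with Gehring self-improvement of $RH_p$) supplies the geometric decay needed to sum. Without this ingredient your constants will depend on $N$; ``scale-invariance of doubling, CFMS, and change of pole'' alone does not close the gap.

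Second, your passage from the compactly contained $\Omega_N$ to the general $\Omega_{\F,Q}$ via weak-$*$ convergence of elliptic measures is plausible but not immediate: the domains themselves change along the exhaustion, and one must argue that uniform $A_\infty$ survives the limit (boundaries, surface measures, and poles all move). The paper bypasses this entirely by invoking the equivalence of $A_\infty$ with Carleson measure estimates for bounded $L$-solutions (Theorem~\ref{thm:CMEAinfty}): the CME on each $\Omega_N$, with constant independent of $N$, passes to $\Omega_*$ by Fatou's lemma, and CME on $\Omega_*$ yields $\omega_*\in A_\infty(\sigma_*)$. This route is both shorter and more robust than a direct measure-theoretic limit.
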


Note that if $\wcalA$ is a non necessarily symmetric matrix satisfying hypotheses \ref{H1} and \ref{H2} in $\Omega$, we can easily verify it also satisfies the Kenig-Pipher condition relative to every fundamental chord-arc subdomain. Indeed, since $\PP\subset \Omega$ then $\delta_{\PP}(\cdot)\le \delta(\cdot)$ and \ref{H1} in $\PP$ is automatic. On the other hand, let $\PP=\interior \left( \bigcup_{j=1}^{m_1} I_j^\ast\right)$  with  $I_j\in\W $ and $ I\cap I_j \not =\emptyset$ and take $x\in\partial\PP$ and $r\le \diam\PP\lesssim \ell(I)$. Note that  \ref{H1} implies  $|\nabla \wcalA(Y)|^2\lesssim \ell(I)^{-2} $ for every $Y\in\PP$ since $\delta(Y)\approx\ell(I)$ , hence 
\begin{equation}\label{carleson-in-p}
\frac{1}{r^{n-1}}\iint_{B(x,r)\cap\PP}|\nabla \wcalA(Y)|^2 \delta_{\PP}(Y) dY\lesssim\frac{1}{r^{n-1}\ell(I)^2}\iint_{B(x,r)\cap\PP} \delta_{\PP}(Y) dY\lesssim 
\frac{r^{n+1}}{r^{n-1}\ell(I)^2}\lesssim 1.
\end{equation}
That is, \ref{H1} in $\PP$ holds as well. Thus by \cite{KP} (and the slight improvement in \cite{HMT1}), and the fact that chord-arc domains can be approximated by Lipschitz domains, one obtains that the elliptic measure for $L$ relative to $\PP$ is also of class $A_\infty$ with respect to the surface measure of $\PP$ and \ref{2-thm:trsf} in the previous result holds.  On the other hand, \cite[Theorem 1.6]{CHMT} asserts that for any uniform domain $\Omega$, and under the assumptions   \ref{H1} and \ref{H2}, one has that $\omega_L\in A_\infty(\sigma)$ if and only if $\omega_{L^{\rm sym}}\in A_\infty(\sigma)$ where $L^{\rm sym}$ is the operator associated with the symmetric matrix $\wcalA^{\rm sym}=(\frac{a_{ij}+a_{ji}}2)_{i,j=1}^n$. Note that $\wcalA^{\rm sym}$ is also a uniformly elliptic matrix in $\Omega$ with the same ellipticity constants as $\wcalA$ and satisfies  \ref{H1} and \ref{H2} with constants which are controlled by those of $\wcalA$. With all these observations we immediately get the following corollary:  
\begin{corollary}\label{corol:KP-transfer}
	Let $\Omega\subset \RR^n$ be a uniform domain with Ahlfors regular boundary. Suppose that $\wcalA$ is a (non necessarily symmetric) uniformly elliptic matrix on $\Omega$ 
	satisfying the hypotheses \textup{\ref{H1}} and \textup{\ref{H2}}, and that the elliptic measure $\omega_L$ associated with the operator $L$ relative to the domain $\Omega$ is of class $A_\infty$ with respect to the surface measure. Then the elliptic measure associated with $L$ relative to any sawtooth domain is of class $A_\infty$, with uniform constants.
\end{corollary}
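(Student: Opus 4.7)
The plan is to obtain Corollary \ref{corol:KP-transfer} as a direct consequence of Theorem \ref{thm:trsf}, after two preparatory reductions that are already sketched in the paragraph preceding the statement. The main point is that Theorem \ref{thm:trsf} has two hypotheses: symmetry of the matrix and the validity of $A_\infty$ on every fundamental chord-arc subdomain; I will install both from the given data.

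First I would reduce to the symmetric case. Writing $\wcalA^{\rm sym}=\tfrac{1}{2}(\wcalA+\wcalA^\top)$, the matrix $\wcalA^{\rm sym}$ satisfies \ref{def:UE} with the same ellipticity constants and clearly inherits \ref{H1} and \ref{H2} with bounds controlled by those for $\wcalA$ (since $|\nabla \wcalA^{\rm sym}|\le |\nabla \wcalA|$ pointwise). By \cite[Theorem 1.6]{CHMT}, the hypothesis $\omega_L\in A_\infty(\sigma)$ is equivalent to $\omega_{L^{\rm sym}}\in A_\infty(\sigma)$, with quantitative control of the constants. Moreover, the elliptic measures of $L$ and $L^{\rm sym}$ on a given sawtooth subdomain are again related by the same equivalence (since sawtooth subdomains are themselves uniform with Ahlfors regular boundary and \ref{H1}--\ref{H2} restrict to them with uniform constants, as observed in \eqref{carleson-in-p} below for the fundamental subdomains; the analogous verification on any sawtooth is essentially identical). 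Hence it suffices to prove the corollary assuming $\wcalA$ is symmetric.

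Next I would verify hypothesis \ref{2-thm:trsf} of Theorem \ref{thm:trsf}. Let $\PP$ be a fundamental chord-arc subdomain of $\Omega$, built from Whitney boxes of comparable size to a fixed $I\in\W$. Since $\delta_{\PP}(\cdot)\le \delta(\cdot)$, \ref{H1} is automatic in $\PP$, and the computation in \eqref{carleson-in-p} shows that the relaxed DKP condition \ref{H2} holds on $\PP$ with a uniform constant depending only on the allowable parameters. Since $\PP$ is a chord-arc domain with uniform constants (as noted right after \eqref{fundamental-casd}), the Kenig-Pipher theorem \cite{KP}, together with the reformulation in terms of \ref{H1}--\ref{H2} due to \cite{HMT1}, yields $\omega_L^{\PP}\in A_\infty(\mathcal{H}^{n-1}|_{\partial\PP})$ with uniform constants. (Equivalently, one may invoke the big-pieces-of-Lipschitz-subdomains approximation of \cite{DJe} to transfer the half-space/Lipschitz result of \cite{KP} to $\PP$.) This gives \ref{2-thm:trsf} uniformly in the choice of the fundamental subdomain.

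With both hypotheses of Theorem \ref{thm:trsf} in hand, I would simply apply it: for any $Q\in\DD$ and any family of pairwise disjoint $\F=\{Q_j\}\subset\DD_Q$, the elliptic measure $\omega_*$ of $L$ on the sawtooth domain $\Omega_*=\Omega_{\F,Q}$ (or $\Omega_{\F,Q}^*$) belongs to $A_\infty(\sigma_*)$ with constants independent of $Q$ and $\F$. Since the statement of the corollary is exactly this conclusion, the proof is complete. I do not expect any serious obstacle here: all the genuine work is encapsulated in Theorem \ref{thm:trsf} and in the cited results \cite{KP,HMT1,CHMT}, and the role of the corollary is only to package these inputs so that the extrapolation scheme outlined in Section \ref{S:proof-by-extrapolation} can be applied to (possibly non-symmetric) DKP operators.
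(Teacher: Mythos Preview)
Your proposal is correct and follows essentially the same route as the paper: verify \ref{2-thm:trsf} for $L$ on fundamental chord-arc subdomains via \eqref{carleson-in-p} and \cite{KP,HMT1}, reduce to the symmetric case using \cite[Theorem 1.6]{CHMT}, and then invoke Theorem \ref{thm:trsf}. The only point worth tightening is your parenthetical claim that \ref{H1}--\ref{H2} restrict to a general sawtooth ``essentially identically'' to \eqref{carleson-in-p}; the argument is still elementary (use $\delta_*\le\delta$, then split into $r\lesssim\delta(x)$ where \ref{H1} gives the bound, and $r\gtrsim\delta(x)$ where one passes to a nearby $\hat x\in\partial\Omega$ and uses \ref{H2}), but it is not literally the same computation as for fundamental subdomains.
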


\begin{proof}[Proof of Theorem \ref{thm:trsf}]

The proof Theorem \ref{thm:trsf} has several steps. We work with $\Omega_*=\Omega_{\F, Q}$ as the proof with $\Omega_{\F, Q}^*$ is identical. We first assume that the sawtooth domain $\Omega_\ast=\Omega_{\F, Q}$ is \textbf{compactly contained} in $\Omega$ and show that $\omega_\ast \in A_\infty(\sigma_*)$, with the $A_\infty$ constants independent of $Q$ and $\F$. Here we  use $\omega_*$ to denote the elliptic measure associated with $L$ relative to $\Omega_*$. 

Under the assumption that $\Omega_\ast$ is compactly contained in $\Omega$, for $Q$ fixed, let $N$ be an integer such that $\dist(\Omega_*,\pom) \approx 2^{-N} \ell(Q)$. 
Then $\Omega_*$ if formed by a union of fattened Whitney boxes of side length  controlled from below by $c\,2^{-N} \ell(Q)$ hence $\Omega_*$ clearly 
satisfies a \textit{qualitative exterior corkscrew condition}, that is, it satisfies the exterior corkscrew condition for surface balls up to a scale of the order of $2^{-N}\ell(Q)$. In the case of  the Kenig-Pipher operators, this information alone does not suffice to derive the desired $A_\infty$ property, with constant independent of $N$; however this does give us the qualitative absolute continuity $\omega_*^X \ll \sigma_*$ for any $X\in \Omega_*$ (since $\Omega_*$ is a chord-arc domain with constants depending on $N$). Note that Theorem \ref{thm:trsf} is nonetheless written for a more general class and it is not obvious whether we can automatically have the desired absolute continuity. This will be shown in the course of the proof.

Our main task is to then show that $\omega_* \in A_\infty(\sigma_*)$ with constants that depend only on the allowable constants. If we write $\textbf{k}_*:=d\omega_*/d\sigma_*$ for the Radon-Nikodym derivative, by the change of pole  formula Lemma \ref{lm:cop}, obtaining 
$\omega_* \in A_\infty(\sigma_*)$, it is equivalent to prove the following: 
there exists an exponent $p\in (1,\infty)$ and a constant $C$ depending only on the allowable constants such that
for any surface ball 
$\Delta_*=B_*\cap\partial\Omega_*$ centered at $\pom_*$, with radius smaller than the diameter of $\pom_*$, and for $X=X_{\Delta_*}\in \Omega_*\cap B_*$, a corkscrew point relative to $\Delta_*$, the following holds
\begin{equation}\label{RH}
	\int_{\Delta_*} \left(\textbf{k}_*^X\right)^p d\sigma_* \leq C \sigma_*(\Delta_*)^{1-p}.
\end{equation}
Since $\diam(\Omega_*) \approx \ell(Q)$, it is easy to see by a standard covering argument and Harnack's inequality that it suffices to prove \eqref{RH} for 
$r_* \le M_1^{-1} \ell(Q)$, 
where $M_1$ is a suitably large fixed constant.  By hypothesis \ref{1-thm:trsf}, $\omega_L\in A_\infty(\sigma)$, hence it belongs to the reverse H\"older class with some exponent $p_1>1$ (see \eqref{eq1.wRH}). Also, by hypothesis \ref{2-thm:trsf} we know that the elliptic measure relative to any fundamental chord-arc subdomain $\PP$ satisfies an $A_\infty$ condition with respect to the corresponding surface measure with uniform bounds. In turn, there exists $p_2>1$ and a uniform constant so that any of these elliptic measures belong to the reverse Hölder class with this exponent $p_2$ and with the same uniform constant  (see \eqref{eq1.wRH}).  We shall henceforth set $p:= \min \{p_1,p_2\}$, and it 
is for this $p$ that we shall prove \eqref{RH}. 

To start with the proof, recall that as observed above, since $\dist(\Omega_*, \pom) \approx 2^{-N}\ell(Q)$, it follows that all the dyadic cubes $Q'\in \DD_{\F,Q}$ have length $\ell(Q') \gtrsim 2^{-N}\ell(Q)$, and the cardinality of $\DD_{\F,Q}$ is bounded by a constant $C(N)$. Hence $\Omega_*=\Omega_{\F,Q}$ is formed by the finite union of Whitney regions $U_{Q'}$ with $Q' \in \DD_{\F,Q}$ satisfying $\ell(Q') \gtrsim 2^{-N}\ell(Q)$. In turn each $U_{Q'}$ is a polyhedral domain consisting of a finite number of fattened Whitney boxes with side length of the order of $\ell(Q')$. In particular there exists a finite index set $\N_*$ so that 
\begin{equation}\label{bdry-Omega*}
	\pO_* \subset \bigcup_{i\in \N_*} \partial \left((I^i)^* \right) \cap \pO_* = : \bigcup_{i\in \N_*} S_*^i.
\end{equation}
where $S_*^i \neq \emptyset$ for each $i\in \N_*$, $\interior((I^i)^*)\subset \Omega_*$, and $\ell(I^i)\gtrsim 2^{-N}$. 
For each $I^i$, with $i\in \N_*$, we pick $Q^i\in\DD_{\F,Q}$ such that $\W_{Q^i} \ni I^i$ (there could be more than one such a $Q^i$ in which case we just select one).  Note that different $I^i$'s may correspond to the same $Q^i$, but each $Q^i$ may only repeat up to a finitely many times, depending only on the allowable constants.  
Since $S_*^i$ is contained in the boundary of a fattened Whitney box $(I^i)^*$,
\begin{equation}\label{Sitmp}
	\diam(S_*^i) \lesssim \ell(I^i) \approx \ell(Q^i), \text{ and } \dist(S_*^i, \pO) \geq \dist((I^i)^*, \pO) \approx \ell(Q^i). 
\end{equation} 
On the other hand, the fact that $S_*^i \subset \pO_*$ means that $I^i$ intersects some $J^i\in \W$ so that if $J^i\in \W_{Q''}$ then  $Q'' \notin \DD_{\F,Q}$.
If we pick $\widetilde{Q}^i\in\dd$ so that $\ell(\widetilde{Q}^i)=\ell(J^i)$ and $\dist(J^i,\pom)=\dist(J^i, \widetilde{Q}^i)$ then as mentioned right below \eqref{def:WQ0} we have that $J^i\in \W^0_{\widetilde{Q}^i}\subset \W_{\widetilde{Q}^i}$, therefore $\widetilde{Q}^i\notin \DD_{\F,Q}$. Recalling \eqref{eq:tausmall2} and the comments after it, we know that $tJ^i \subset \Omega\setminus\Omega_*$ and  $\partial \left( (I^i)^* \right) \cap \pO_*$ contains  
an $(n-1)$-dimensional ball  with radius of the order of $\min\{\ell(I^i), \ell(J^i)\}\approx \ell(I^i)$. Denote that $(n-1)$-dimensional ball by  $\Delta_*^i \subset S_*^i$. 
This implies, combined with \eqref{Sitmp}, that
\begin{equation}\label{eq:sizeSi}
	r(\Delta_*^i) \approx \diam(S_*^i) \approx \ell(I^i) \approx \ell(Q^i)\ \text{\ and\ } \dist(S_*^i, \pO) \approx \dist(\Delta_*^i, \pO) \approx \ell(I^i) \approx \ell(Q^i).
\end{equation}

At this stage we consider several cases. In the \textbf{Base case}, see Lemma \ref{lemma:base-case}, we treat surface balls $\Delta_*$ with small radii so that $\Delta_*$ is contained in a uniformly bounded union of Whitney cubes of comparable sides. In the case when $\Delta_*$ is large we decompose the intersection of $\Delta_*$ in small pieces to which the base case can be applied (\textbf{Step 1}). We then put all the local estimates together to obtain a global one (\textbf{Step 2}). This requires Lemma \ref{lm:thinbd} and to consider several cases to account for all the small pieces.

Let $\Delta_* = B^*\cap  \pO_* \subset \Omega$, with $B_*=B(x_*,r_*)$, $x_*\in\pO_*$ and $0<r_*<\diam(\pom_*)$. Since $\Omega_*$ is a uniform domain (see Lemma \ref{lemma:sawtooth-inherit}), we can pick $X_{\Delta_*}\subset B_*\cap\Omega_*$, a Corkscrew point relative to $\Delta_*$ in $\Omega_*$, so that
$\delta_*(X):=\dist(X,\pom_*) \approx r_*$. 
 Write
\begin{equation}\label{eq:cover}
\Delta_* \subset \bigcup_{i\in \N_{\Delta_*}} S_*^i, \quad \text{ where }  \N_{\Delta_*}:=\{i:\in \N_{*}: \Delta_* \cap S_*^i \neq \emptyset\}.
\end{equation}

\begin{lemma}[Base case]\label{lemma:base-case}
Using the notation above we have that $\omega_*\ll \sigma_*$ in $\pom_*$.
Moreover if there exists  $i\in \N_{\Delta_*}$ such that $r_*\leq \frac{\tau}{8} \ell(I^i)$ then
\begin{equation}\label{conc-base-case}
\int_{\Delta_*} \left( \textup{\textbf{k}}_*^{X_{\Delta_*}} \right)^p d\sigma_*
\lesssim
\sigma_*(\Delta_*)^{1-p}\,,
\end{equation}
where $\textup{\textbf{k}}_* := d\hm_*/d\sigma_*$, $p$ is as above, and the implicit constant only depends on the allowable constants.
\end{lemma}

\begin{proof}
We first claim that 
\begin{equation}\label{eq:coversmall}
	2\Delta_* \subset \bigcup_{\substack{i'\in \N_*\\ I^{i'} \cap I^i \neq \varnothing} } S_*^{i'} \quad {\rm and}
	\quad 2B_*\cap\Omega_* \subset \bigcup_{\substack{i'\in \N_* \\ I^{i'} \cap I^i \neq \varnothing} } (I^{i'})^*.
\end{equation}
In fact, for any $i' \in \N_*$, if $S_*^{i'}$ intersects $2\Delta_*$, or if
$2B_*\cap\Omega_*$ intersects $ (I^{i'})^*$, then our current assumption gives 
\[
	\dist\big( (I^i)^*, (I^{i'})^* \big) \leq \dist \big((I^i)^* \cap 2B_*, (I^{i'})^* \cap 2B_* \big)
	 \leq \diam(2B_*) = 4r_*\leq \frac{\tau}{2} \ell(I^i),
\]
and thus 
\[ (I^i)^{**} \cap (I^{i'})^{**}\,\supset\, (I^i)^{**} \cap (I^{i'})^*\, \neq\, \emptyset. \]
By the choice of $\tau$, i.e., by \eqref{eq:tausmall1},  we then have $I^i \cap I^{i'} \neq \emptyset$ and the claim is proved. 
Next, let $m_1$ denote the maximal number of Whitney boxes intersecting $I^i$. Note that $m_1$ only depends on the constructions of the Whitney cubes, hence just on dimension. By relabeling \eqref{eq:coversmall} we write
\begin{equation}\label{eq:coversmall2}
	2\Delta_* \subset \bigcup_{i'=1}^{m_1} S_*^{i'}\quad {\rm and} \quad 2B_*\cap\Omega_* 
	\subset \bigcup_{i'=1}^{m_1}(I^{i'})^*.
\end{equation}
Moreover by \eqref{eq:sizeSi}, for each $i'=1, \dots, m_1$, we have $
	\diam(S_*^{i'}) \approx \ell(I^{i'}) \approx \ell(I^i)$.
Set  then 
$$\PP:= \interior\left( \bigcup_{i'=1}^{m_1} (I^{i'})^*\right)\subset\Omega_*,$$
which by construction is a fundamental chord-arc subdomain $\PP$ of $\Omega$, see \eqref{fundamental-casd}. 
Note that since $2B_*\cap\Omega_* $ is open then \eqref{eq:coversmall2} says that $2B_*\cap\Omega_* \subset \PP$ and hence $2B_*\cap\Omega_* =2B_*\cap\PP$. This and the fact that $\Omega_*$ and $\PP$ are open readily implies that $2B_*\cap\partial\Omega_* =2B_*\cap\partial\PP$. Moreover, $X_{\Delta_*}\in\PP$ (since $X\in B_*\cap\Omega_*$)
and 
\begin{equation}\label{eq5.13}
\dist(X_{\Delta_*},\partial\PP) \approx \delta_*(X_{\Delta_*}) \approx r_*\leq \frac{\tau}{8} \ell(I^i) \leq
\frac{\tau}{8}\diam(\PP)
\le 
\,\diam(\PP).
\end{equation}
Let $X_{\PP}$ be a Corkscrew point for the domain $\PP$, at the scale $ \ell(I^i) \approx\diam(\PP)$, i.e.,
$X_\PP$ is a Corkscrew point in $\PP$ relative to the surface ball consisting of the entire
boundary of $\PP$.  Thus in particular, $\dist(X_\PP,\partial\PP)\approx \diam(\PP)\ge \frac8{\tau} r_*$, hence $\dist(X_\PP,\partial\PP)\ge 2\,c_0\,r_*$ for some uniform $0<c_0<1/4$.  

Set $u_1(\cdot):=G_*(X_{\PP}, \cdot)$ and $u_2(\cdot):=G_{\PP}(X_{\PP},\cdot)$ in $2B_*\cap\Omega_* =2B_*\cap\PP$ where $G_*$ and $G_\PP$ are the Green functions for the operator $L$ and for the domains $\Omega_*$ and $\PP$ respectively, and where as observed above $X_\PP\in\PP\subset\Omega_*$. Fix $y\in\frac32B_*\cap\partial\Omega_*=\frac32B_*\cap\partial\PP$ and  note that $B(y,c_0 r_*)\subset 2B_*$. Note that if $Z\in B(y,c_0 r_*)$ then 
\[
2\,c_0r_*\le \dist(X_\PP,\partial\PP)
\le
|X_\PP-y|
\le
|X_\PP-Z|+|Z-y|
<
|X_\PP-Z|+c_0 r_*,
\]
and $|X_\PP-Z|> c_0 r_*$. As a consequence, $B(y,c_0 r_*)\subset 2B_*\setminus B(X_{\PP}, c_0 r_*)$. Hence, $L u_1=0$ and $L u_2=0$ in we weak sense in $B(y,c_0 r_*)\cap\Omega_* =B(y,c_0 r_*)\cap\PP$ and both are continuous in $B(y,c_0 r_*)\cap\overline{\Omega_*} =B(y,c_0 r_*)\cap\overline{\PP}$. In particular both vanish continuously in 
$B(y,c_0 r_*)\cap\partial\Omega_* =B(y,c_0 r_*)\cap\partial\PP$. This means that we can  use Lemma \ref{lm:comp} in $\mathcal{D}=\PP$ to obtain that for every $Z\in B(y, c_0\,r_*/8)$
\begin{equation}\label{comp-u1-u2}
\frac{u_1(Z)}{u_2(Z)}
\approx
\frac{u_1(X_{\Delta_\PP(y,c_0 r_*/2)}^\PP)}{u_2(X_{\Delta_\PP(y,c_0 r_*/2)}^\PP)}
\end{equation}
where $X_{\Delta_\PP(y,c_0 r_*/2)}^\PP$ is a corkscrew relative to $B(y,c_0 r_*/2)\cap\overline{\PP}$ for the fundamental chord-arc domain $\PP$. On the other hand Lemma \ref{lm:CFMS} applied in $\Omega_*$ (which is uniform with Ahlfors regular boundary and the implicit constants are uniformly controlled, see Lemma \ref{lemma:sawtooth-inherit}) and $\PP$ (a fundamental chord-arc domain) gives for any $0<s\le c_0 r_*/2$
\begin{equation}\label{CFMS-u1-u2}
u_1(X_{\Delta_*(y,s)}^*)\approx \omega_*^{X_{\PP}}(\Delta_*(y,s))\,s^{n-2},
\qquad 
u_2(X_{\Delta_\PP(y,s)}^\PP)\approx \omega_\PP^{X_{\PP}}(\Delta_\PP(y,s))\,s^{n-2},
\end{equation}
where $X_{\Delta_*(y,s)}^*$ is the corkscrew point relative to  $\Delta_*(y,s)= B(y,s)\cap\pom_*$ for the uniform domain $\Omega_*$,
$X_{\Delta_\PP(y,s)}^\PP$ is the corkscrew point relative to  $\Delta_\PP(y,s)= B(y,s)\cap\partial\PP$ for the fundamental chord-arc uniform domain $\PP$, and $\omega_\PP$ stands for the elliptic measure associated with the operator $L$ relative to $\PP$. Note that from the definition of corkscrew condition and the fact that $B(y,s)\cap\Omega_*=B(y,s)\cap\PP$ it follows that $X_{\Delta_*(y,s)}^*, X_{\Delta_\PP(y,s)}^*\in B(y,s)\cap\Omega_*=B(y,s)\cap\PP$ and also 
\[
\dist(X_{\Delta_*(y,s)}^*, \pom_*)\approx \dist(X_{\Delta_*(y,s)}^*, \partial\PP)\approx \dist(X_{\Delta_\PP(y,s)}^\PP, \pom_*)\approx
\dist(X_{\Delta_\PP(y,s)}^\PP, \partial\PP)\approx s.
\]
Consequently $u_1(X_{\Delta_*(y,s)}^*)\approx u_1(X_{\Delta_\PP(y,s)}^\PP)$ and $u_1(X_{\Delta_\PP(y,c_0 r_*/2)}^\PP)\approx u_1(X_{\Delta_*(y,c_0 r_*/2)}^*)$.
All these, together with \eqref{comp-u1-u2}, \eqref{CFMS-u1-u2}, and Lemma \ref{lm:Bourgain}, give for every  $0<s\le c_0 r_*/8$
\begin{multline}\label{comp-w*-wPP}
\frac{\omega_*^{X_{\PP}}(\Delta_*(y,s))}{\omega_\PP^{X_{\PP}}(\Delta_\PP(y,s))}
\approx
\frac{u_1(X_{\Delta_*(y,s)}^*)}{u_2(X_{\Delta_\PP(y,s)}^\PP)}
\approx
\frac{u_1(X_{\Delta_\PP(y,s)}^\PP)}{u_2(X_{\Delta_\PP(y,s)}^\PP)}
\approx
\frac{u_1(X_{\Delta_\PP(y,c_0 r_*/2)}^\PP)}{u_2(X_{\Delta_\PP(y,c_0 r_*/2)}^\PP))}
\\
\approx
\frac{u_1(X_{\Delta_*(y,c_0 r_*/2)}^*)}{u_2(X_{\Delta_\PP(y,c_0 r_*/2)}^\PP))}
\approx
\frac{\omega_*^{X_{\Delta_*(y,c_0 r_*/2)}}(\Delta_*(y,c_0 r_*/2))}{\omega_\PP^{X_{\Delta_\PP(y,c_0 r_*/2)}}(\Delta_\PP(y,c_0r_*/2))}
\approx 1
.
\end{multline}
With this in hand, we note that since $y\in\Delta_*(x_*,\frac32r_*)=\frac32B_*\cap\partial\Omega_*=\frac32B_*\cap\partial\PP=\ \Delta_\PP(x_*,\frac32r_*)$ and $0<s\le c_0 r_*/8$ are arbitrary we can easily conclude, using a Vitali covering argument and the fact that both $\omega_*^{X_{\PP}}$ and $\omega_\PP^{X_{\PP}}$ are outer regular  and doubling in $\Delta_*(x_*,\frac32r_*)=\Delta_\PP(x_*,\frac32r_*)$, that $\omega_*^{X_{\PP}}(F)\approx \omega_\PP^{X_{\PP}}(F)$ for any Borel set $F\subset \Delta_*(x_*,\frac32r_*)=\Delta_\PP(x_*,\frac32r_*)$. Hence $\omega_*^{X_{\PP}}\ll \omega_\PP^{X_{\PP}} \ll\omega_*^{X_{\PP}}$ in $\Delta_*(x_*,\frac32r_*)=\Delta_\PP(x_*,\frac32r_*)$. From hypothesis  \ref{2-thm:trsf} in Theorem \ref{thm:trsf} we know that  $\omega_\PP\ll \sigma_\PP:=\HH^{n-1}|_{\partial\PP}$, hence in particular  $\omega_*\ll\sigma_*$ in $\Delta_*(x_*,\frac32r_*)$. This, \eqref{comp-w*-wPP}, and Lebesgue's differentiation theorem readily imply that 
\begin{equation}\label{tmp1}
	\textbf{k}_*^{X_{\PP}} (y) \approx \textbf{k}_{\PP}^{X_{\PP}}(y), \quad \text{for $\HH^{n-1}$-almost all } y\in \Delta_*(x_*,r_*)=\Delta_\PP(x_*,r_*),
\end{equation} 
where $\textbf{k}_\PP := d\hm_\PP/d\sigma_\PP$ and $\textbf{k}_* := d\hm_*/d\sigma_\PP$. 

We next observe that Lemma \ref{lm:cop} applied with $\mathcal{D}=\Omega_*$ (along with Harnack's inequality  for the case $r_*\approx \ell(I^i)$) and Lebesgue's differentiation theorem  yield 
	\begin{equation}\label{tmp2}
		\textbf{k}_*^{X_{\Delta_\ast}}(y) \approx \frac1{\omega_*^{X_\PP}(\Delta_*)} \, \textbf{k}_*^{X_{\PP}}(y)\quad \text{for $\sigma_*$-almost all } y\in \Delta_*. 
	\end{equation} 
Since $\PP$ is a fundamental chord-arc subdomain $\PP$ of $\Omega$, see \eqref{fundamental-casd}, as observed above $\omega_\PP$ belongs to the reverse Hölder class 
with exponent $p_2>1$ and so with exponent $p= \min \{p_1, p_2\}$. We find that since ${\sigma_\ast} ={\sigma_\PP}$ in $\Delta_\ast=\Delta_*(x_*,r_*)=\Delta_{\PP}(x_*,r_*)$
\begin{multline*}
		\int_{\Delta_*} \left( \textbf{k}_*^{X_{\Delta_\ast}} \right)^p d\sigma_*
		\lesssim
		\frac{1}{\big(\omega_*^{X_\PP}(\Delta_*)\big)^{p}} \,
		\int_{\Delta_*} \left( \textbf{k}_*^{X_\PP} \right)^p d\sigma_*
		\approx
		\frac{\sigma_\PP(\Delta_\PP(x_*,r_*))}{\big(\omega_*^{X_\PP}(\Delta_*)\big)^{p}} \,
		\fint_{\Delta_\PP(x_*,r_*)} \left( \textbf{k}_\PP^{X_\PP} \right)^p d\sigma_\PP\\
\lesssim 
\frac{\sigma_\PP(\Delta_\PP(x_*,r_*))}{\big(\omega_*^{X_\PP}(\Delta_*)\big)^{p}} 
\left(\frac{\omega_\PP^{X_\PP}(\Delta_\PP(x_*,r_*))}{\sigma_\PP(\Delta_\PP(x_*,r_*))}\right)^p
		\approx\, \sigma_*(\Delta_*)^{1-p}\,,
	\end{multline*}
where we have used \eqref{tmp2}, \eqref{tmp1}, that ${\sigma_\ast} ={\sigma_\PP}$,  the reverse Hölder estimate with exponent $p$ for $\textbf{k}_\PP$, and that both $\pom_*$ and $\partial\PP$ are Ahlfors regular sets with uniform bounds.

To complete our proof we need to see that $\omega_*\ll\sigma_*$ in $\pom_*$. Let us observe that we have already obtained that $\omega_*\ll\sigma_*$ in $\Delta_*(x_*,\frac32r_*)$ where $x_*\in \pom_*$ is arbitrary and $r_*\le\frac{\tau}{8}\ell(I^i)$ for some $i\in \mathcal{N}_{\Delta_*}$. We may cover $\pom_*$ by a finite union of surface balls $\Delta_*(x_j, r_j)$, with $r_j=\frac{2^{-N}}{\tilde{M}}\ell(Q)$, where $\widetilde{M}$ is large enough to be chosen, whose cardinality may depend on $N$ and $\widetilde{M}$. Note that for every $i\in \mathcal{N}_*$ we have, as observed before, that $\ell(I^i)\gtrsim 2^{-N}\ell(Q)> \frac{8}{\tau} \frac{2^{-N}}{\tilde{M}}\ell(Q)$ if we pick $\widetilde{M}$ large enough. Hence, for every $j$, it follows that $r_j<\frac{\tau}{8}\ell(I^i)$ for every $i\in\mathcal{N}_*$ and in particular for every $i\in \mathcal{N}_{\Delta_*(x_j,r_j)}$. Hence the previous argument yields that
$\omega_*\ll\sigma_*$ in $\Delta_*(x_j,\frac32r_j)$ for every $j$ and consequently $\omega_*\ll\sigma_*$ in $\pom_*$. 
\end{proof}

\begin{remark}
	We would like to emphasize that the fact that  $\omega_*\ll\sigma_*$ in $\pom_*$ is automatic for the Kenig-Pipher operators. In fact as observed above $\Omega_*$ is a chord-arc domain and hence $\omega_*\in A_\infty(\sigma_*)$ (albeit with constants which may depend on $N$). The previous argument proves that the more general hypothesis  \ref{2-thm:trsf} in Theorem \ref{thm:trsf} also yields $\omega_*\ll\sigma_*$ in $\pom_*$.
	 \end{remark}

Once the \textbf{Base case} has been established we can focus on proving the $A_\infty$ property for the sawtooth. With this goal in mind we fix a surface ball $\Delta_* = B_*\cap  \pO_* \subset \Omega$, with $B_*=B(x_*,r_*)$, $x_*\in\pO_*$ and $0<r_*<\diam(\pom_*)$. Let $X:=X_{\Delta_*}\subset B_*\cap\Omega_*$ be a Corkscrew point relative to $\Delta_*$ in $\Omega_*$, so that
$\delta_*(X) \approx r_*$. Our goal is to show \eqref{RH}. As explained above we may assume that $r_* \le M_1^{-1} \ell(Q)$, for some $M_1$ large enough to be chosen. The \textbf{Base case} (Lemma \ref{lemma:base-case})
yields \eqref{RH} when $r_* <\frac{\tau}{8} \ell(I^i)$ for some $i\in \N_{\Delta_*}$. Hence we may assume from now on that $r_* \geq \frac{\tau}{8} \ell(I^i)$ for every $i\in \N_{\Delta_*}$.

\noindent\textbf{Step 1.} Show that 
\begin{align}\label{eq:tosumN}
\int_{\Delta_*} \left(\textbf{k}_*^X \right)^p d\sigma_* \lesssim \sum_{i\in \N_{\Delta_*}} \int_{Q^i} \left(\textbf{k}^X \right)^p d\sigma,
\end{align}
where we recall that $Q^i\in\dd_{\F,Q}$ is so that $I^i\in\W_{Q^i}$ for every $i\in \mathcal{N}_{*}$, and where $\textbf{k}=d\omega_L/d\sigma$.

To see this, by \eqref{eq:cover}, it suffices to obtain 
	\begin{equation}\label{eq:Si} 
\int_{S_*^i} \left( \textbf{k}_*^X \right)^p d\sigma_* 
\lesssim \int_{Q^i} \left(\textbf{k}^X \right)^p d\sigma.
\end{equation}
for each $i\in \N_{\Delta_*}$. Fix then such an $i$ and cover $S_*^i$ by a uniformly bounded number of
surface balls centered at $\pom_*$ with small radius $\Delta_*^{i,l}=B_*^{i,l}\cap \pO_*$ where $S_*^i \cap \Delta_*^{i,l} \neq \emptyset$ and $r(\Delta_*^{i,l}) \approx c \diam(S_*^i)\approx c\,\ell(I^i) $, the constant $c$ is chosen sufficiently small 
(depending on $\tau$), so that $r(\Delta_*^{i,l})\ll (\tau/8)\ell(I^i) $. Hence in the present scenario,
\begin{equation}\label{eq5.20}
\delta_*(X)\approx r_* \gg r(\Delta_*^{i,l})\,.
\end{equation}
We further choose $c$ small enough so that
\begin{equation}\label{eq5.21}
		2\Delta_*^{i,l} \subset \bigcup_{\substack{i'\in \N_* \\ I^{i'} \cap I^i\neq \varnothing} } S_*^{i'}\quad 
 {\rm and}
	\quad 2B^{i,l}_*\cap\Omega_* \subset 
	\bigcup_{\substack{i'\in \N_* \\ I^{i'} \cap I^i \neq \varnothing} } (I^{i'})^*.
	\end{equation} 
Note that there are at most a uniformly bounded number of such $i'$, for each $l$.  In each $\Delta_*^{i,l}$ we can use the \textbf{Base Case}, Lemma \ref{lemma:base-case}, since by construction $r(\Delta_*^{i,l})\ll (\tau/8)\ell(I^i) $ and hence \eqref{conc-base-case} implies that
\begin{equation}\label{use-bc}
\int_{\Delta_*^{i,l}} \Big( \textbf{k}_*^{X_{\Delta_*^{i,l}}} \Big)^p d\sigma_*
\lesssim
\sigma_*(\Delta_*^{i,l})^{1-p}.
\end{equation}
where $X_{\Delta_*^{i,l}}$ is a corkscrew point relative to $\Delta_*^{i,l}$ in $\Omega_*$. Using Lemma \ref{lm:cop} applied with $\mathcal{D}=\Omega_*$ and Lebesgue's differentiation theorem  we have that $\textbf{k}_*^{X}(y)\approx \omega_*^X(\Delta_*^{i,l})\,\textbf{k}_*^{X_{\Delta_*^{i,l}}}(y)$ for $\sigma_*$-a.e. $y\in \Delta_*^{i,l}$. As a result, using \eqref{use-bc}
\begin{multline}
\int_{\Delta_*^{i,l}} \big( \textbf{k}_*^X \big)^p d\sigma_*
\approx
\big(\omega^X_*(\Delta_*^{i,l})\big)^p
\int_{\Delta_*^{i,l}} \Big( \textbf{k}_*^{X_{\Delta_*^{i,l}}} \Big)^p d\sigma_*
\lesssim
\big(\omega^X_*(\Delta_*^{i,l})\big)^p\,\sigma_*(\Delta_*^{i,l})^{1-p}
\\
=\sigma_*(\Delta_*^{i,l}) \left( \frac{\omega_*^X(\Delta_*^{i,l})}{\sigma_*(\Delta_*^{i,l}) } \right)^p \lesssim
\sigma_*(\Delta_*^i) \left( \frac{\omega_*^X(\Delta_*^{i})}{\sigma_*(\Delta_*^{i}) } \right)^p \label{eq:KP},
\end{multline}
where we used the Ahlfors regularity of $\sigma_\ast$ and the doubling properties of $\omega_\ast$.
We claim that  
	\begin{equation}\label{eq:cohm}
\frac{\omega_*^X(\Delta_*^{i})}{\sigma_*(\Delta_*^{i}) } \lesssim \frac{ \omega^X(Q^i) }{\sigma(Q^i)}.
\end{equation}
To see this write $u_1(Y)=\omega^Y_*(\Delta_*^{i} )$ and $u_2(Y)=\omega^Y(Q^i)$ for every $Y\in \Omega_*$ and note that $L u_1=L u_2=0$ in $\Omega_*\subset\Omega$. For $Y\in \Delta_*^{i}\subset \overline{\Omega_*}\subset \Omega$ we have $u_2(Y)\gtrsim 1$ by Lemma \ref{lm:Bourgain} applied in $\mathcal{D}=\Omega$, Harnack's inequality, \eqref{eq:sizeSi}, and \eqref{def:WQ}. Thus the maximum principle applied in the bounded open set $\Omega_*$ yields that $u_1(Y)\lesssim u_2(Y)$ for every $Y\in\Omega_*$, hence in particular for $Y=X$. This and the fact that $\pom$ and $\pom_*$ are Ahlfors regular (see Lemma \ref{lemma:sawtooth-inherit}) give  at desired \eqref{eq:cohm}.

Combining \eqref{eq:KP} and \eqref{eq:cohm}, and using Hölder's inequality and Ahlfors regularity of $\sigma, \sigma_*$, we get
	\begin{align}
		\int_{\Delta_*^{i,l}} \left( \textbf{k}_*^X \right)^p d\sigma_* \lesssim \sigma(Q^i) \left( \frac{ \omega^X(Q^i) }{\sigma(Q^i)} \right)^p \lesssim \int_{Q^i} \left(\textbf{k}^X \right)^p d\sigma.
	\end{align}
We recall that $S_*^i$ is covered by a uniformly bounded number of surface balls $\Delta_*^{i,l}$. Thus summing in $l$ we conclude \eqref{eq:Si} as desired. This completes \textbf{Step 1}.

\noindent\textbf{Step 2.} Study the interaction of the elements of the family $\{Q^i: i\in \N_{\Delta_*}\}$. 
	
We first note that	for every $i\in \N_{\Delta_*}$
\begin{equation}\label{eq5.24}
 \dist(\Delta_*, Q) \leq \dist(S_*^i\cap \Delta_*,Q) \lesssim \ell(Q^i)\approx \ell(I^i)\lesssim r_* 
\end{equation}
Pick $\hat x\in \overline Q$ such that $\dist(\hat x, \Delta_*) = \dist(Q, \Delta_*)$. If $\hat{x}\in \overline Q \setminus Q$, we 
	replace it by a point, which we call again $\hat x$, belonging to $B(\hat x, r_*/2) \cap Q$, so that $\hat x \in Q$ and $\dist(\hat x, \Delta_*) \lesssim r_*$. We claim that 
	there is a large constant $C>1$ such that $Q^i \subset \Delta_1$ where $\Delta_1:=B(\hat x, Cr_*)\cap\pom$. Indeed if $y\in Q^i$ then
	\[
	|y-\hat x|\le \diam(Q^i)+\dist(Q^i, I^i)+\diam(I^i)+|y^i-\hat x|\lesssim r_*,
	\]
	where we have picked $y^i \in S_*^i \cap \Delta_*$ for each $i\in\mathcal{N}_{\Delta_*}$.

Consider next the covering 
$\Delta_1\subset \cup_{k=1}^{N_1} P_k$, where $N_1$ depends on Ahlfors regularity and dimension, and $\{P_k\}_{k=1}^{N_1}$ is a pairwise disjoint collection of dyadic
	cubes on $\pO$, of the same generation, with length $\ell(P_k)\approx r_*$.  Since in the present scenario,
$\ell(Q^i) \lesssim r_*$,
we may further suppose that $\ell(P_k) \geq\ell(Q^i)$ for every $i$.   Moreover, since we have assumed that  $r_* \leq M_1^{-1} \ell(Q)$, taking $M_1$ large enough we may assume that $\ell(P_k)\le \ell(Q)$ for every $1\le k\le N_1$. 

Note that 
\[ \bigcup_{i\in \N_{\Delta_*}} Q^i \subset \Delta_1 \subset \bigcup_{k=1}^{N_1} P_k.\]
By relabeling if needed, we may assume that there exists $N_2$,  $1\le N_2\le N_1$, such that $P_k$ meets some $Q^i$, $i\in \N_{\Delta_*}$, for each $1\le k\le N_2$. 
Hence $\bigcup_{i\in \N_{\Delta_*}} Q^i \subset \bigcup_{k=2}^{N_2} P_k$ and, necessarily, $Q^i\subset P_k\subset Q$, and   since $Q^i \in \DD_{\F,Q}$, it follows that $P_k\in \DD_{\F,Q}$ for $1\le k\le N_2$.

For future reference, we record the following observation.  Recall that $X$ is a Corkscrew point relative to
$\Delta_*=B_*\cap\pO_*$, for the domain $\Omega_*$; i.e.,  $X\in B_*\cap\Omega_*$, with
$\delta_*(X) \approx r_*$.  By \eqref{eq5.24} and for every $1\le k\le N_2$ if we pick some $i$ so that $Q^i\subset P_k$ we have
\[
r_*
\approx 
\delta_*(X) 
\le
\delta(X) 
\le  
\dist(X,P_k)
\le 
\dist(X, Q^i)
\le
|X-x_*|+2\,r_*+\dist(\Delta_*, Q_i)
\lesssim
r_*
\approx \ell(P_k).
\]
Recalling that $X_{P_k}$ denotes a corkscrew point relative to the dyadic cube $P_k$ we then have that $\delta(X)\approx \ell(P_k)\approx \delta(X_{P_k})$ and also $|X-X_{P_k}|\lesssim \ell(P_k)$, hence by Harnack's inequality $\omega^X\approx\omega^{X_{P_k}}$ and eventually $\textbf{k}^X\approx \textbf{k}^{X_{P_k}}$, $\sigma$-a.e. in $\pom$. On the other hand, we have already mentioned that hypothesis \ref{1-thm:trsf} in Theorem \ref{thm:trsf} says that $\omega\in RH_{p_1}(\sigma)$, which clearly implies $\omega\in RH_{p}(\sigma)$ since $p\le p_1$. Note that this reverse Hölder condition is written for surface balls, but it is straightforward to see, using Lemmas \ref{lm:ddAR} and \ref{lm:doubling}, that the same reverse Hölder estimates hold for any dyadic cube. All these,  and the fact that both $\pom$ and $\pom_*$ are Ahlfors regular (see Lemma \ref{lemma:sawtooth-inherit}) lead to
\begin{equation}\label{eq5.25}
\int_{P_k} \Big(\textbf{k}^{X_{P_k}}\Big)^{p} d\sigma
\lesssim 
\sigma(P_k)\bigg(\frac{\omega^{{X_{P_k}}}(P_k)}{\sigma(P_k)}\bigg)^p
\le 
\sigma(P_k)^{1-p} \approx \sigma_*(\Delta_*)^{1-p}\,,
 \end{equation}
for each $k$, with uniform implicit constants.

As mentioned above, for every $i\in\N_*$, there exists $J^i\in\W$ so that $I^i\cap J^i\neq\emptyset$ and so that if we pick $\widetilde{Q}^i\in\dd$ with $\ell(\widetilde{Q}^i)=\ell(J^i)$ and $\dist(J,\pom)=\dist(J^i, \widetilde{Q}^i)$ then $\widetilde{Q}^i\notin \DD_{\F,Q}$.  In particular 
	\begin{equation}\label{bdQngh}
		\ell(\widetilde{Q}^i) \approx \ell(Q^i)\quad \text{ and }\quad \dist(\widetilde{Q}^i, Q^i) \lesssim \ell(Q^i). 
	\end{equation}  
By the definition of $\DD_{\F,Q}$, $\widetilde{Q}^i \notin \DD_{\F,Q}$ means either $\widetilde{Q}^i \subset \pom\setminus Q$, or $\widetilde{Q}^i \subset Q_j $, for some
	$Q_j\in \F$. Given $1 \le k\le N_2$, for each $i\in \N_{\Delta_*}$, we say $i\in \N_0(k)$, 
if the first case happens, with $Q^i\subset P_k$; 
and if the second case happens with $Q_j\in \F$, and with $Q^i\subset P_k$,
we say $i\in \N_j(k)$.
	For the second case we remark that
	\begin{equation}\label{eq:Qjclose}
		\dist(Q_j, P_k) \leq \dist(\widetilde{Q}^i, Q^i) \lesssim \ell(Q^i) \leq \ell(P_k).
	\end{equation}
For each $k$, $1\le k\le N_2$, we set
\[
\F_1(k):=\{Q_j\in\F: \exists\, i\in\mathcal{N}_j(k),\ \ell(Q_j) \geq \ell(P_k)\}
\]
and
\[
\F_2(k):=\{Q_j\in\F: \exists\, i\in\mathcal{N}_j(k),\ \ell(Q_j) <\ell(P_k)\}.
\]

With the previous notation, \eqref{eq:tosumN}, and the fact that $\bigcup_{i\in \N_{\Delta_*}} Q^i \subset \bigcup_{k=2}^{N_2} P_k$
we obtain
\begin{align}\label{eq:tosumN:cont}
\int_{\Delta_*} \left(\textbf{k}_*^X \right)^p d\sigma_* 
&\lesssim \sum_{i\in \N_{\Delta_*}} \int_{Q^i} \left(\textbf{k}^X \right)^p d\sigma
\\
&\le
\sum_{k=1}^{N_2} \Bigg(\sum_{i\in \N_0(k)}\int_{Q^i} \left(\textbf{k}^X \right)^p d\sigma+\sum_{Q_j\in\F}\sum_{i\in \N_j(k)}\int_{Q^i} \left(\textbf{k}^X \right)^p d\sigma
\bigg)
\nonumber \\
&\lesssim
\sum_{k=1}^{N_2} \Bigg(\sum_{i\in \N_0(k)}\int_{Q^i} \left(\textbf{k}^{X_{P_k}} \right)^p d\sigma+\sum_{Q_j\in\F_1(k)}\sum_{i\in \N_j(k)}\int_{Q^i} \left(\textbf{k}^{X_{P_k}} 
\right)^p d\sigma 
\nonumber \\
&\hskip3cm
+ \sum_{Q_j\in\F_2(k)}\sum_{i\in \N_j(k)}\int_{Q^i} \left(\textbf{k}^{X_{P_k}} 
\right)^p d\sigma\bigg)
\nonumber ,
\end{align}
where we have used that $\textbf{k}^X\approx \textbf{k}^{X_{P_k}}$, $\sigma$-a.e. in $\pom$. 
	
	At this stage we need the following lemma. We defer its proof until later.
	\begin{lemma}\label{lm:thinbd} 
		Let $\mathcal{D}$ be an open set with Ahlfors regular boundary and write $\sigma=\HH^{n-1}|_{\partial\mathcal{D}}$.
		Let $Q\in \dd=\DD(\partial\mathcal{D})$ and suppose that $\DD'\subset\DD$ is such that each 
$Q'\in \DD'$ satisfies one of the following conditions for some $C_1\ge 1$:
		\begin{itemize}\itemsep=0.2cm
			\item $Q' \subset Q$ and $\dist(Q',\pom\setminus Q) \leq C_1 \ell(Q')$.
			\item $Q' \cap Q = \emptyset$, $\ell(Q') \leq C_1 \ell(Q)$ and $\dist(Q', Q) \leq C_1 \ell(Q')$.
		\end{itemize} 
Then there is a subcollection of distinct cubes 
$\{\widetilde{Q}_m\}_{m=1}^{N_2}$, all of the same generation, with $N_2 = N_2(n,C_{AR}, C_1)$, 
satisfying $\ell(Q)\le \ell(\widetilde Q_m) \le C_2 \ell(Q)$ and $\dist(\widetilde{Q}_m,Q) \le  C_2\ell(Q)$, 
with $C_2 = C_2(n,C_{AR}, C_1)$, for every $m$, such that for any $s>1$ if $0\le h\in L_{\rm loc}^s(\partial\mathcal{D},\sigma)$ then 
		\begin{equation}\label{ccl:thinbd}
			\sum_{Q' \in \DD'} \int_{Q'} h d\sigma \le C_3 \sigma(Q) 
\sum_{m=1}^{N_2} \left( \fint_{ \widetilde Q_m} h^s d\sigma \right)^{\frac{1}{s}}
		\end{equation}
		where $C_3=C_3(n,C_{AR}, C_1,s)$.

As a consequence, if there exists $C_1'$ so that for each $m$, $1\le m\le N_2$, there holds
\begin{equation}\label{RH-h}
\left( \fint_{ \widetilde Q_m} h^s d\sigma \right)^{\frac{1}{s}}\le C_1' \fint_{ \widetilde Q_m} h\, d\sigma
\end{equation}
then
\begin{equation}\label{ccl:thinbd:Ch}
\sum_{Q' \in \DD'} \int_{Q'} h d\sigma \le C_3' 
\sum_{m=1}^{N_2} \int_{\widetilde Q_m } h d\sigma 
\end{equation}
with $C_3'=C_3'(n,C_{AR}, C_1,s, C_1')$.
\end{lemma}

	\begin{remark} 
It follows from the proof of that if $Q'\subset Q$ for all $Q'\in \DD'$ (i.e., we only consider the first case), then 
there is only one $\widetilde{Q}_m$, namely the unique one containing $Q$ satisfying the given conditions. 	
	\end{remark}

\begin{remark}\label{rem:daydic-lemma:omega}
Suppose that we are under the assumptions of the previous result. Assume further that $\mathcal{D}$ is a uniform domain with Ahlfors regular boundary and that $\omega_L\in RH_p(\sigma)$. Then, if $\textbf{k}_L=d\omega_L/d\sigma$ it follows that 
\begin{equation}
\label{ccl:thinbd2}
\sum_{Q' \in \DD'} \int_{Q'} \left(\textbf{k}_L^{X_Q}\right)^p d\sigma \lesssim \int_{Q} \left( \textbf{k}^{X_Q} \right)^p d\sigma.
\end{equation}
with an implicit constant depending on the allowable constants of $\mathcal{D}$, $C_1$, $p$, and the implicit constant in the condition  $\omega_L\in RH_p(\sigma)$.

To see this we recall that from Gehring's Lemma it follows that there exists $s>1$ such that  $\omega_L\in RH_{ps}(\sigma)$. This, combined with Harnack's inequality, implies that \eqref{RH-h} holds with $h=\big(\textbf{k}_L^{X_Q}\big)^p$. As a result \eqref{ccl:thinbd:Ch} readily gives \eqref{ccl:thinbd2}: 
\begin{multline*}
\sum_{Q' \in \DD'} \int_{Q'}\left(\textbf{k}_L^{X_Q}\right)^pd\sigma \lesssim 
\sum_{m=1}^{N_2} \int_{\widetilde Q_m } \left(\textbf{k}_L^{X_Q}\right)^p d\sigma 
\approx
\sum_{m=1}^{N_2} \int_{\widetilde Q_m } \left(\textbf{k}_L^{X_{\widetilde Q_m}}\right)^p d\sigma 
\\
\lesssim
\sum_{m=1}^{N_2} \sigma(\widetilde Q_m)^{1-p}
\lesssim
\sigma(Q)^{1-p},
\end{multline*}		
where we have used Harnack's inequality (to change the pole of the elliptic measure from $X_Q$ to $X_{\widetilde Q_m}$ and the fact that $N_2$ is uniformly bounded). 
\end{remark}
	
We will use the previous remark to estimate \eqref{eq:tosumN:cont}. Fixed then $1\le k\le N_2$ and we split the proof in three different steps.

\noindent\textbf{Step 2.1.} Estimate for $\N_0(k)$. 

If $i\in \N_0(k)$ we have $\widetilde{Q}^i \subset \pom\setminus Q \subset \pom\setminus P_k$ and 
	\[ \dist(Q^i, \pom\setminus P_k) \leq \dist(Q^i, \widetilde{Q}^i) \lesssim \ell(Q^i). \]
	Since $Q^i \subset P_k$,  we may apply Lemma \ref{lm:thinbd} to $P_k$ 
	and the collection $\DD' := \{Q^i: i\in \N_0(k)\}$ (note that we are in the first scenario), to obtain by Remark \ref{rem:daydic-lemma:omega}
\begin{equation}\label{eq:N0}
\sum_{i\in \N_0(k)} 
\int_{Q^i} \left(\textbf{k}^{X_{P_k}}\right)^p d\sigma
\lesssim 
\int_{P_k} \left(\textbf{k}^{X_{P_k}} \right)^p d\sigma  
\lesssim \sigma_*(\Delta_*)^{1-p},
	\end{equation}
 where in the last inequality we have used \eqref{eq5.25}.

\noindent\textbf{Step 2.2.} Estimate for $Q_j\in \F_1(k)$.

	By \eqref{eq:Qjclose}, the cardinality of $\F_1(k)$ is uniformly bounded. Moreover, for each $Q_j\in \F_1(k)$ we necessarily have $Q_j \cap P_k = \emptyset$, since otherwise, the 
condition $\ell(Q_j) \geq \ell(P_k)$ 
guarantees that $P_k\subset Q_j$, and thus $Q^i \subset P_k \subset Q_j\in\F$. This 
contradicts that $Q^i \in \DD_{\F,Q}$. On the other hand $Q_j \cap P_k = \emptyset$ implies 
$\widetilde{Q}^i\subset Q_j\subset \pom\setminus P_k $, for each $i\in \N_j(k)$. 
Combined with \eqref{bdQngh}, this yields
	\[ \dist(Q^i, \pom\setminus P_k) \leq \dist(Q^i, \widetilde{Q}^i) \lesssim \ell(Q^i). \]

	Applying Lemma \ref{lm:thinbd} to $P_k$
	and the collection $\DD' = \{Q^i: i\in \N_j(k)\} $ (note that we are in the first scenario), we obtain from \eqref{ccl:thinbd2}
\begin{equation}\label{eq:Nj1}
		\sum_{i\in \N_j(k)} \int_{Q^i} \left(\textbf{k}^{X_{P_k}} \right)^p d\sigma \lesssim 
\int_{P_k} \left(\textbf{k}^{X_{P_k}} \right)^p d\sigma 
\lesssim \sigma_*(\Delta_*)^{1-p}, 
	\end{equation}
 where again we have used \eqref{eq5.25}.  
	The above estimate holds for each $Q_j\in \F_1(k)$, which as uniformly bounded cardinality, hence
	\begin{equation}\label{eq:Nj1all}
		\sum_{Q_j\in \F_1(k)} \sum_{i\in \N_j(k)} \int_{Q^i} \left(\textbf{k}^{X_{P_k}}  \right)^p d\sigma  
		\lesssim \sigma_*(\Delta_*)^{1-p}. 
	\end{equation}

\noindent\textbf{Step 2.3.} Estimate for $Q_j\in \F_2(k)$. 

For each $Q_j\in \F_2(k)$ we claim that
	\begin{equation}\label{eq:Nj2}
		\sum_{i\in \N_j(k)} 
		\int_{Q^i} \left(\textbf{k}^{X_{P_k}} \right)^p d\sigma \lesssim \int_{Q_j} \left( \textbf{k}^{X_{P_k}}  \right)^p d\sigma.
	\end{equation}
	In fact, for each $i\in \N_j(k)$, by \eqref{bdQngh} and $\widetilde{Q}^i \subset Q_j$, we have
	\begin{equation}\label{sizeupbd}
		\ell(Q^i) \approx \ell(\widetilde{Q}^i) \leq \ell(Q_j).
	\end{equation}
	Since $Q^i \in \DD_{\F,Q}$, we either have $Q^i \cap Q_j = \emptyset$, or $Q_j \subsetneq Q^i$. 
In the first case, note that
\[
\dist(Q^i,Q_j) \leq \dist(Q^i,\widetilde{Q}^i))\lesssim\ell(Q^i),
\]
hence $Q^i\cup Q_j\subset \Delta(x_{Q_j}, C\,\ell(Q_j))$ which $x_{Q_j}$ being the center of $Q_j$ an a uniform constant $C$. By Lemma \ref{lm:cop} applied with $\mathcal{D}=\Omega$ (or Harnack's inequality if $\ell(Q_j)\approx \ell(P_k)$), Lebesgue's differentiation theorem, Lemma \ref{lm:doubling}, and Harnack's inequality one can see that
\[
\textbf{k}^{X_{P_k}}(y)\approx \omega^{X_{P_k}}(Q_j)\, \textbf{k}^{X_{Q_j}}(y),
\qquad
\text{for $\sigma$-a.e. $y\in  \Delta(x_{Q_j}, C\,\ell(Q_j))$}.
\]
This, Lemma \ref{lm:thinbd} with $Q_j$ and 
the collection $\DD': = \{Q^i: i\in \N_j(k), Q^i \cap Q_j = \emptyset \}$ (we are in the second scenario), and Remark \ref{rem:daydic-lemma:omega} lead to
\begin{multline}\label{eq:Nj2out}
	\sum_{\substack{i\in \N_j(k) \\ Q^i \cap Q_j = \varnothing} } \int_{Q^i} \left(\textbf{k}^{X_{P_k}} \right)^p d\sigma 
	\approx
		\left(\omega^{X_{P_k}}(Q_j)\right)^p \sum_{\substack{i\in \N_j(k) \\ Q^i \cap Q_j = \varnothing} } \int_{Q^i} \left(\textbf{k}^{X_{Q_j}} \right)^p d\sigma 
		\\
		\lesssim \left(\omega^{X_{P_k}}(Q_j) \right)^p\int_{Q_j} \left( \textbf{k}^{X_{Q_j}} \right)^p d\sigma
		\approx
		\int_{Q_j} \left( \textbf{k}^{X_{P_k}} \right)^p d\sigma
	.
\end{multline}

On the other hand, if $Q_j \subsetneq Q^i$, then \eqref{sizeupbd} gives $\ell(Q_j)\approx \ell(Q^i)$, hence the cardinality of $\{Q^i: i\in \N_j(k), Q_j \subsetneq Q^i\}$ is uniformly bounded. On the other hand, by Lemma \ref{lm:cop} applied with $\mathcal{D}=\Omega$ (or Harnack's inequality if $\ell(Q^i)\approx \ell(P_k)$), Lebesgue's differentiation theorem, Lemma \ref{lm:doubling}, and Harnack's inequality we readily obtain 
\[
\textbf{k}^{X_{P_k}}(y)\approx \omega^{X_{P_k}}(Q_j)\, \textbf{k}^{X_{Q_j}}(y),
\qquad
\text{for $\sigma$-a.e. $y\in  Q^i$}.
\]
Thus, using  Corollary \ref{cor:doublingPk} we have
\begin{align}\label{eq:Nj2in}
\sum_{\substack{i\in \N_j(k) \\ Q^i \supsetneq Q_j}} \int_{Q^i} \left(\textbf{k}^{X_{P_k}} \right)^p d\sigma 
& \approx
\left ( \omega^{X_{P_k}}(Q_j)\right)^p\,\sum_{\substack{i\in \N_j(k) \\ Q^i \supsetneq Q_j}}  \int_{Q^i} \left(\textbf{k}^{X_{Q_j}} \right)^p d\sigma 
\\ \nonumber
 &\lesssim
\left ( \omega^{X_{P_k}}(Q_j)\right)^p\, \sum_{\substack{i\in \N_j(k) \\ Q^i \supsetneq Q_j}} \int_{Q_j} \left(\textbf{k}^{X_{Q_j}} \right)^p d\sigma 
\\ \nonumber
 &\lesssim 
\left ( \omega^{X_{P_k}}(Q_j)\right)^p\, \int_{Q_j} \left(\textbf{k}^{X_{Q_j}} \right)^p d\sigma 
\\ \nonumber
 &\approx
 \int_{Q_j} \left(\textbf{k}^{X_{P_k}} \right)^p d\sigma.
\end{align}
The claim \eqref{eq:Nj2} now follows from \eqref{eq:Nj2out} and \eqref{eq:Nj2in}. 
	
To continue, let us recall that for each $Q_j\in \F_2(k)$, 
	\[ \ell(Q_j) < \ell(P_k)\quad \text{ and }\quad  \dist(Q_j,P_k) \lesssim \ell(P_k), \]
	where the second inequality is \eqref{eq:Qjclose}.
Consequently, each $Q_j\in \F_2(k)$, is contained in some $P\in
	{\bf N}(P_k):= \{P\in\dd: \ell(P) =\ell(P_k),\  \dist(P,P_k) \lesssim \ell(P_k)\}$ and, clearly, the cardinality of ${\bf N}(P_k)$ is uniformly bounded.
Recalling that $\F=\{Q_j\}_j$ is a pairwise disjoint family of cubes, 
by \eqref{eq:Nj2}, Corollary \ref{cor:doublingPk}, and \eqref{eq5.25}, we arrive at
\begin{multline}
		\sum_{Q_j\in \F_2(k)} \sum_{i\in \N_j(k)} 
		\int_{Q^i} \left(\textbf{k}^{X_{P_k}} \right)^p d\sigma \lesssim 
		\sum_{Q_j\in \F_2(k)} \int_{Q_j} \left(\textbf{k}^{X_{P_k}} \right)^p d\sigma \\[4pt] = \,
		\int_{\bigcup\limits_{Q_j\in \F_2(k)} Q_j} \left(\textbf{k}^{X_{P_k}} \right)^p d\sigma 
		 \le \sum_{P\in {\bf N}(P_k)}\int_{P} \left(\textbf{k}^{X_{P_k}} \right)^p d\sigma  
		\lesssim \int_{P_k} \left(\textbf{k}^{X_{P_k}} \right)^p d\sigma \label{eq:Nj2all}
\lesssim\sigma_*(\Delta_*)^{1-p}.
	\end{multline}

\noindent\textbf{Step 2.4.} Final estimate.

We finally combine \eqref{eq:tosumN:cont} with \eqref{eq:N0}, \eqref{eq:Nj1all}, and \eqref{eq:Nj2all}, and use the fact that $N_2\le N_1=N_1(n, C_{AR})$  
to conclude that 
	\begin{align}
\int_{\Delta_*} \left(\textbf{k}_*^X \right)^p d\sigma_*
\lesssim
\sum_{k=1}^{N_2} \sigma_*(\Delta_*)^{1-p}
\lesssim \sigma_*(\Delta_*)^{1-p}.
\label{eq:sumallN}
	\end{align} 
Hence, we have obtained the desired estimate \eqref{RH},
and therefore the proof of Theorem \ref{thm:trsf} is complete, 
provided that the sawtooth domain $\Omega_\ast$ is \textbf{compactly contained} in $\Omega$ and modulo the proof of Lemma \ref{lm:thinbd}.

To consider the general case we need the following theorem which generalizes \cite[Theorem 4.1]{KKiPT} and \cite{DJK} (see also \cite{DKP, Zh}):
\begin{theorem}[{\cite[Theorem 1.1]{CHMT}}]\label{thm:CMEAinfty}
	Let $\mathcal{D}$ be uniform domain $\mathcal{D}$ Ahlfors regular boundary, and let $\wcalA$ be a real (non necessarily symmetric) uniformly elliptic matrix on  $\mathcal{D}$. The following are equivalent:
	\begin{enumerate}[label=\textup{(\arabic*)}, itemsep=0.2cm] 	
		\item\label{1-thm:CMEAinfty}  The elliptic measure $\omega_L$ associated with the operator $L=-\divg(\wcalA\nabla)$ is of class $A_\infty$ with respect to the surface measure. 
		
		\item\label{2-thm:CMEAinfty} Any bounded weak solution to $Lu=0$ satisfies the Carleson measure estimate 
		\begin{equation}
		\sup_{\substack{x\in\partial\mathcal{D} \\ 0<r<\infty}} \frac1{r^n}\iint_{B(x,r) \cap \mathcal{D}} |\nabla u(Y)|^2 \dist(Y,\partial\mathcal{D})\, dY \leq C\|u\|_{L^\infty(\mathcal{D})}^2.
		\end{equation}
	\end{enumerate}
The involved constants depend on the allowable constants and the constant appearing in the corresponding hypothesis of the implication in question. 
\end{theorem} 

Consider next a \textbf{general} sawtooth domain $\Omega_* = \Omega_{\F,Q}$ which, although bounded, is not necessarily compactly contained in $\Omega$. By \ref{2-thm:CMEAinfty} $\implies$ \ref{1-thm:CMEAinfty} in Theorem \ref{thm:CMEAinfty} with $\mathcal{D}=\Omega$, in order to obtain that the elliptic measure associated with $L$ relative to $\Omega_*$ belongs to $A_\infty$ with respect to the surface measure, we just need to see that \ref{2-thm:CMEAinfty} holds with $\mathcal{D}=\Omega_*$. With this goal in mind we take $u$, a bounded weak solution to $Lu=0$ in $\Omega_*$, and let $x\in\pom$ and $0<r<\infty$.

Given $N\ge 1$ we recall the definition of $\F_N:=\F(2^{-N}\,\ell(Q))$ in Section \ref{section:sawtooth}  and write $\Omega_*^N := \Omega_{\F_N,Q}$. Note that by construction $\ell(Q')> 2^{-N}\,\ell(Q)$ for every $Q'\in\dd_{\F_N, Q}$, hence $\Omega_*^N$ is compactly contained in $\Omega$ (indeed is at distance of the order $2^{-N}\,\ell(Q)$ to $\pom$). Then we can apply the previous case to obtain that for each $N$, the associated elliptic measure associated with $L$ relative to $\Omega_*^N$ satisfies the $A_\infty$ property with respect to the surface measure of $\partial\Omega_\ast^N$, and the implicit constants depend only on the allowable constants. Hence 
\ref{1-thm:CMEAinfty} $\implies$ \ref{2-thm:CMEAinfty} in Theorem \ref{thm:CMEAinfty} with $\mathcal{D}=\Omega_*^N$ implies
\begin{equation}\label{CME-N}
\sup_{\substack{z\in\pom_*^N \\ 0<s<\infty}} \frac1{s^n}\iint_{B(z,s) \cap \Omega_*^N} |\nabla u(Y)|^2 \delta_*^N(Y)\, dY \lesssim \|u\|_{L^\infty(\Omega_N^*)}^2
\le
C\|u\|_{L^\infty(\Omega_*)}^2,
\end{equation}
since $u$ is a bounded weak solution to $Lu=0$ in $\Omega_*$ and so in each $\Omega_N^*$,  where $\delta_*^N=\dist(\cdot\,,\pom_*^N)$ and where the implicit constants depend only on the allowable constants.

Let $\omega_*$ and $\omega_*^N$ denote the elliptic measures to $L$ relative to $\Omega_*$ and $\Omega_*^N$ respectively, and let $\sigma_*^N := \mathcal{H}^{n-1}|_{\pO_*^N}$ denote the surface measure of $\partial\Omega_\ast^N$. By construction $\{\Omega_*^N\}_{N\ge 1}$ is an increasing sequence of sets with $\Omega_\ast=\cup_{N\ge 1}\Omega_*^N$. Hence, for any $Y\in \Omega_*$ there is $N_Y\ge 1$ such that $Y\in\Omega_\ast^N$ for all $N\ge N_Y$.  Clearly $\delta_*^N(Y) \nearrow \delta_*(Y)$ as $N\to\infty$ and
\[ 
|\nabla u(Y)|^2 \delta_*^N(Y) \chi_{\Omega_*^N}(Y) \nearrow |\nabla u(Y)|^2 \delta_*(Y) \chi_{\Omega_*}(Y), \quad \text{ as } N\to \infty.
\]
On the other hand since $x\in \pO_*$, using the Corkscrew condition we can find a sequence $\{x_N \}_{N\ge 1}$ with $x_N\in \pO_*^N$ such that $x_N \to x$. In particular, $B(x,r) \subset B(x_N,2r)$ for sufficiently large $N$. By Fatou's Lemma and \eqref{CME-N} it then follows
	\begin{multline}\label{tt-2}
		\iint_{B(x,r) \cap \Omega_*} |\nabla u(Y)|^2 \delta_*(Y)\, dY 
		\leq \liminf_{N\to\infty} \iint_{B(x,r) \cap \Omega_*^N} |\nabla u(Y)|^2 \delta_*^N(Y)\, dY 
		\\
			\leq \liminf_{N\to\infty} \iint_{B(x_N,2r) \cap \Omega_*^N} |\nabla u|^2 \delta_*^N(y)\, dY
			\lesssim
			r^n\,
			\|u\|_{L^\infty(\Omega_*)}^2,
		\end{multline}
		where the implicit constant depend only on the allowable constants. Since $x$, $r$, and $u$ are arbitrary we have obtained as desired \ref{2-thm:CMEAinfty} in Theorem \ref{thm:CMEAinfty} for $\mathcal{D}=\Omega_*$ and as a result we conclude that $\omega_*\in A_\infty(\sigma_*)$. This completes the proof for an arbitrary sawtooth domain $\Omega_*$, and therefore the proof of Theorem \ref{thm:trsf} modulo the proof of Lemma \ref{lm:thinbd}.
	\end{proof}

	\begin{proof}[Proof of Lemma \ref{lm:thinbd}]
For fixed $k\in \ZZ$, write $\DD'_k:=\{Q'\in\dd': \ell(Q') = 2^{-k} \ell(Q)\}$, which is a pairwise disjoint family. In the first case since $Q'\subset Q$, we have that  $k\ge 0$; in the second case since $\ell(Q') \leq C_2 \ell(Q)$, we may assume that $k\ge -\log_2 C_2$. Set then $k_0=0$ in the first case and $k_0$ the integer part of $\log_2 C_1$. We define for $k\ge -k_0$
		\[ A_k^+ = \{x\in Q: \dist(x,\pom\setminus Q) \lesssim 2^{-k} \ell(Q) \}, \ \ \quad A_k^- = \{x\in\partial\Omega\setminus Q: \dist(x,Q) \lesssim 2^{-k} \ell(Q) \}, \]
		so that for
		appropriate choices of the implicit constants, 
each $Q'\in \DD'_k$ is contained in either $A_k^+$ (the first case) or $A_k^-$ (the second case).
		Recall that by the thin boundary property of the 
dyadic decomposition $\DD$ (cf. \eqref{thin-boundary}), there is $\gamma \in (0,1)$ such that for all $k$ under consideration,
		\[ 
			\sigma(A_k^+) \lesssim 2^{-k\gamma} \sigma(Q), \quad \sigma(A_k^-) \lesssim 2^{-k\gamma}\sigma(Q). 
		\]
Set 
$$\F_-:= \big\{Q'\subset \pom\setminus Q: \ell(Q') \leq C_1 \ell(Q),\ \dist(Q',Q) \leq C_1 \ell(Q')\big\}.$$
Observe that each $Q'\in \F_-$ is contained in some dyadic cube $\widetilde{Q}$, with
 $\ell(\widetilde Q)\approx \ell(Q)$ and $\dist(\widetilde{Q},Q) \lesssim \ell(Q)$  depending on $C_1$.
We may therefore define a collection of distinct cubes 
$\F_*:= \{\widetilde{Q}_m\}_{m=1}^{N_2}$, all of the same dyadic generation, 
one of which (say, $\widetilde{Q}_1$) contains $Q$, with $\ell(\widetilde Q_m) \approx \ell(Q)$, and with
$\dist(\widetilde{Q}_m,Q) \lesssim \ell(Q)$  for every $m$,
such that each $Q'\in\F_-$ is contained in some $\widetilde{Q}_m\in\F_*$, and 
$$\bigcup_k A_k^+ \,\subset\, Q\,\subset \,\widetilde{Q}_1\,, \quad {\rm and} \quad 
\bigcup_k A_k^- \subset \, \bigcup_{m=2}^N \widetilde{Q}_m.$$
Clearly, we have $\#\F_* = N_2= N_2(n,C_{AR}, C_1)$. Using all the previous observations we get for any $s>1$
\begin{align}\label{tt-100}
		\sum_{Q' \in \DD'} \int_{Q'} h\, d\sigma 
		&=
			 \sum_{k=-k_0}^\infty \sum_{Q' \in \DD'_k} \int_{Q'} h\, d\sigma 
\\
&			\leq  \sum_{k=-k_0}^\infty \int_{A_k^+ \cup A_k^-} h\, d\sigma
			\nonumber\\ 
			& \leq \sum_{k=-k_0}^\infty \sigma(A_k^+ \cup A_k^-)^{1-\frac1s} 
			\left( \int_{\cup_m\widetilde Q_m} h^s d\sigma \right)^{\frac{1}{s}}
			\nonumber\\
			& \lesssim \sum_{k=-k_0}^\infty \left(2^{-k\gamma }\sigma(Q) \right)^{1-\frac{1}{s}} 
	 \left( \int_{\cup_m\widetilde {Q}_m} h^s\, d\sigma \right)^{\frac{1}{s}} 
	\nonumber \\
			& \lesssim \sigma(Q) \left(\sum_m \fint_{\widetilde Q_m} h^s\, d\sigma \right)^{\frac{1}{s}}
	\nonumber\\
		&	\lesssim \sigma(Q) \sum_m 
		\left( \fint_{\widetilde Q_m} h^s d\sigma \right)^{\frac{1}{s}}.\nonumber
		\end{align}
This shows \eqref{ccl:thinbd}. To obtain \eqref{ccl:thinbd:Ch} we combine \eqref{tt-100} together with \eqref{RH-h} and the fact that $\sigma(Q)\approx\sigma(Q_m)$ for every $1\le m\le N_2$ by the Ahlfors regular property and the construction of the family $\F_*$. 
\end{proof}

\section{Optimality}\label{optimal}

As we mentioned in the introduction, the class of elliptic operators we consider is optimal to guarantee the $A_\infty$ property. In this section we illustrate the optimality from two different points of view. See Proposition \ref{p:bruno} and Theorem \ref{thm:oscop}.

As mentioned right after Definition \ref{def:AinftyHMU}, one has that $\omega_L\in A_\infty(\sigma)$ if and only if 
$\omega_L\in RH_q(\sigma)$ for some $q>1$ in the following sense: $\omega_L \ll \sigma$ and the
Radon-Nikodym derivative $\textbf{k}_L:= d\omega_L/d\sigma$ satisfies the reverse H\"older estimate \eqref{eq1.wRH}. We can then define the $RH_q(\sigma)$-characteristic of $\omega_L$ as folows 
\begin{equation}\label{eq1.wRH-char}
[\omega_L]_{RH_q}:=\sup\left(\fint_{\Delta'} \big(\textbf{k}_L^{A(q,r)}\big)^q d\sigma \right)^{\frac1q} \left(\fint_{\Delta'} \textbf{k}_L^{A(q,r)} \,d\sigma\right)^{-1},
\end{equation}
where the sup runs over all $q\in\partial\Omega$, $0<r<\diam (\Omega)$, and all surface balls $\Delta'=B'\cap \pO$ centered at $\pO$ with $B'\subset B(q,r)$. 

The following example, based on the work in \cite{MM} and communicated to us by Bruno Guiseppe  Poggi Cevallos,  illustrates the relationship between the size of the constant in the DKP condition and the $RH_q(\sigma)$-characteristic of elliptic measure. 

\begin{prop}[\cite{MM, P}]\label{p:bruno}
There exist ${\mathcal A}$ and a sequence $\{{\mathcal A}_j\}_j$ of diagonal elliptic matrices with smooth, bounded, real coefficients in $\RR^{n}_+$, uniformly continuous on $\overline{\RR^{n}_+}$, such that ${\mathcal A}_j$ converges to ${\mathcal A}$ uniformly on $\overline{\RR^{n}_+}$ and the following hold: 
\begin{enumerate}[label=\textup{(\arabic*)}, itemsep=0.2cm]
\item 
$\displaystyle{
\sup_{\substack{q\in\RR^{n-1} \\ 0< r<\infty} } \frac{1}{r^{n-1}} \iint_{B(q,r) \cap\RR^{n}_+} 
\bigg(
\sup_{Y\in B(X,\frac{\delta(X)}{2})} |\nabla \mathcal{A}_j(Y)|^2 \delta(Y)\bigg)dX \gtrsim j.
}$

\item For each $q>1$, one has $\omega_j\in RH_q(\sigma)$ with $\displaystyle{\lim_{j\to\infty}[\omega_j]_{RH_q}=\infty}$, where $\omega_j$ denotes the elliptic measure associated with the operator $L_j=-\divg({\mathcal A}_j(\cdot)\nabla)$.

\item The elliptic measure associated with the operator $L=-\divg({\mathcal A}(\cdot)\nabla)$ is singular with respect to the Lebesgue measure on $\partial \RR^{n}_+=\RR^{n-1}$.
\end{enumerate}
\end{prop}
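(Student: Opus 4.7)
The plan is to take the classical counterexample of Modica and Mortola \cite{MM} as the fundamental input and produce the sequence $\{\mathcal{A}_j\}$ by mollification. Recall that \cite{MM} exhibits a diagonal, uniformly elliptic, uniformly continuous matrix $\mathcal{A}$ on $\overline{\RR^n_+}$ whose associated elliptic measure $\omega_L$ is mutually singular with the Lebesgue measure on $\partial\RR^n_+=\RR^{n-1}$; this already supplies item (3). After an even extension of the coefficients across $\partial\RR^n_+$ I would set $\mathcal{A}_j:=\mathcal{A}\ast \phi_{1/j}$ with a standard mollifier. This yields smooth, diagonal, uniformly elliptic matrices (the same ellipticity bounds as $\mathcal{A}$, since mollification preserves pointwise convex constraints), uniformly continuous on $\overline{\RR^n_+}$, and $\mathcal{A}_j\to\mathcal{A}$ uniformly there by uniform continuity of $\mathcal{A}$.

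For item (2), each $\mathcal{A}_j$ is smooth with bounded derivatives on each half-space slab $\{0<x_n<R\}$, so by \cite{KP} the elliptic measure $\omega_j$ of $L_j=-\divg(\mathcal{A}_j\nabla)$ belongs to $A_\infty(\sigma)$, and hence to $RH_q(\sigma)$ for every $q>1$ with some finite characteristic $[\omega_j]_{RH_q}$. The blow-up as $j\to\infty$ is obtained by contradiction: if $[\omega_j]_{RH_q}$ stayed bounded along a subsequence, then one could pass to the limit using an adaptation to the half-space of the compactness machinery of Section~\ref{sect:blowup} (in particular Theorem \ref{thm:blow-ana-pole}, whose half-space analogue is simpler since the domain is already fixed) to conclude that the weak limit of $\omega_j$ equals $\omega_L$ and still satisfies a reverse Hölder estimate against $\sigma$. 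This forces $\omega_L\ll\sigma$, contradicting item (3).

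Item (1) then follows from the same contrapositive principle combined with the Dahlberg--Kenig--Pipher theorem. If the quantity in (1) were bounded by some $M<\infty$ uniformly in $j$, then \cite{KP} would yield $\omega_j\in A_\infty(\sigma)$ with constants depending only on $M$ and the ellipticity bounds, and hence uniform $RH_q$ characteristics for a fixed $q>1$, contradicting item (2). The quantitative lower bound $\gtrsim j$, as opposed to mere unboundedness, is extracted by a Hölder/Dini analysis of the mollification: a careful examination of the Modica--Mortola construction shows that its modulus of continuity near $\partial\RR^n_+$ is sharp, so for the mollified matrix at scale $1/j$ the gradient $|\nabla\mathcal{A}_j|$ at height $\delta\approx 1/j$ is of order $j$ times the corresponding modulus, and integrating the weighted $|\nabla\mathcal{A}_j|^2\delta$ over a Carleson box of size $1/j$ yields the desired rate.

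The main obstacle is the precise quantitative rate in item (1); items (2) and (3) together with the softer statement that the Carleson norm diverges would already follow from the compactness/extrapolation machinery already developed in Part \ref{part:small}. Producing the linear-in-$j$ lower bound requires choosing the mollification parameter in tight coordination with the explicit oscillation profile of the matrix of \cite{MM} near the boundary, and verifying that the pointwise supremum appearing inside the integrand of \eqref{KP-cond} is faithfully captured at the mollification scale. Once this matching is carried out, the uniform convergence $\mathcal{A}_j\to\mathcal{A}$, the smoothness of each $\mathcal{A}_j$, and the singularity of $\omega_L$ assemble into the full proposition.
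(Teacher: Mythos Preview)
The paper does not actually prove this proposition: it is stated with attribution to \cite{MM, P} and presented as an example ``communicated to us by Bruno Giuseppe Poggi Cevallos'', with no argument given in the text. So there is no in-paper proof to compare your proposal against; the authors simply defer to the Modica--Mortola construction and Poggi's note.

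Your sketch is a reasonable outline of how such a result could be obtained, and the broad strategy (start from the Modica--Mortola singular example, mollify to produce smooth approximants, argue by contradiction via stability/compactness that the $RH_q$ constants must blow up, and deduce that the Carleson norms diverge from Kenig--Pipher) is in the right spirit. Two caveats: first, invoking the compactness machinery of Section~\ref{sect:blowup} is overkill here since the domain is fixed and the matrices converge uniformly---one can pass to the limit in the elliptic measures by more direct perturbation/stability arguments. Second, the precise linear rate $\gtrsim j$ in item (1) really does require going into the explicit form of the Modica--Mortola coefficients and is not a soft consequence of the other items; your acknowledgment of this as ``the main obstacle'' is correct, and resolving it amounts to reproducing the computation in \cite{P} rather than anything in the present paper.
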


On the other hand, we can immediately extend Theorem \ref{thm:main} to a larger and optimal class of elliptic operators, pertaining to the condition on the oscillation of the coefficient matrix:

\begin{corollary}\label{cOsc}
Let $\Omega\subset \RR^n$, $n\ge 3$, be a uniform domain with Ahlfors regular boundary. Let $\wcalA$ be a (not necessarily symmetric) uniformly elliptic matrix on $\Omega$  such that 
	\begin{equation}\label{eq:oscCM}
		\sup_{\substack{q\in\pO\\0< r<\diam(\Omega)} } \frac{1}{r^{n-1}} \iint_{B(q,r) \cap\Omega} \frac{\osc(\wcalA, X)^2}{\delta(X)} \,dX<\infty.
	\end{equation}  where
	$\osc(\wcalA, X) := \sup_{Y,Z\in B(X, \delta(X)/2)}   |\wcalA(Y) -\wcalA(Z)|$. Then the following are equivalent:
\begin{enumerate}[label=\textup{(\alph*)}, itemsep=0.2cm] 
		\item\label{1-corol-osc} The elliptic measure $\omega_L$ associated with the operator $L=-\divg(\wcalA(\cdot)\nabla)$ is of class $A_\infty$ with respect to the surface measure. 
		\item\label{3-corol-osc} $\pO$ is uniformly rectifiable.
		\item\label{2-corol-osc} $\Omega$ is a chord-arc domain.
		\end{enumerate} 
\end{corollary}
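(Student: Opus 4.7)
The plan is to reduce Corollary \ref{cOsc} to Theorem \ref{thm:main} by constructing a smooth approximation $\widetilde{\mathcal{A}}$ of $\mathcal{A}$ that satisfies the DKP conditions \ref{H1}--\ref{H2}, and then transferring the $A_\infty$ property between $L$ and $\widetilde{L} := -\divg(\widetilde{\mathcal{A}} \nabla)$ via a Carleson-type perturbation theorem. The geometric equivalence \ref{3-corol-osc} $\Leftrightarrow$ \ref{2-corol-osc} is already known from the discussion after Theorem \ref{thm:main}, so the only real task is to establish $\omega_L \in A_\infty(\sigma) \Leftrightarrow \pO$ uniformly rectifiable.

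First, I would define $\widetilde{\mathcal{A}}$ by variable-scale mollification: fix $\phi \in C_c^\infty(B(0,1))$ nonnegative with $\int \phi = 1$, and set
\[
\widetilde{\mathcal{A}}(X) := \int_{\RR^n} \phi_{\delta(X)/8}(X-Y)\,\mathcal{A}(Y)\,dY, \qquad X \in \Omega,
\]
where $\phi_t(Z) = t^{-n}\phi(Z/t)$. Since $B(X,\delta(X)/8) \subset B(X,\delta(X)/2) \subset \Omega$, the average is well defined, $\widetilde{\mathcal{A}} \in C^\infty(\Omega)$ inherits the ellipticity constants of $\mathcal{A}$, and a direct computation (using that $\delta$ is $1$-Lipschitz, so differentiating the variable scale produces comparable contributions to differentiating the kernel) yields the pointwise bounds
\[
|\widetilde{\mathcal{A}}(X) - \mathcal{A}(X)| \lesssim \osc(\mathcal{A},X), \qquad |\nabla \widetilde{\mathcal{A}}(X)|\,\delta(X) \lesssim \osc(\mathcal{A},X).
\]
Since $\osc(\mathcal{A},X) \le 2\|\mathcal{A}\|_\infty$, the matrix $\widetilde{\mathcal{A}}$ satisfies \ref{H1}. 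Squaring the gradient bound and dividing by $\delta(X)$ gives
\[
|\nabla \widetilde{\mathcal{A}}(X)|^2\,\delta(X) \lesssim \frac{\osc(\mathcal{A},X)^2}{\delta(X)},
\]
so the hypothesis \eqref{eq:oscCM} delivers \ref{H2} for $\widetilde{\mathcal{A}}$.

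Next, the disagreement $\mathcal{E}(X) := |\mathcal{A}(X) - \widetilde{\mathcal{A}}(X)| \lesssim \osc(\mathcal{A},X)$ satisfies, again by \eqref{eq:oscCM}, the Carleson estimate
\[
\sup_{q \in \pO,\ 0<r<\diam(\Omega)} \frac{1}{r^{n-1}} \iint_{B(q,r)\cap\Omega} \frac{\mathcal{E}(X)^2}{\delta(X)}\,dX < \infty.
\]
This is precisely the scale-invariant smallness hypothesis of the Carleson perturbation theory for elliptic operators on uniform domains with Ahlfors regular boundary (developed in the tradition of Fefferman--Kenig--Pipher, extended to chord-arc and more general settings by Milakis--Pipher--Toro and Cavero--Hofmann--Martell(--Toro)). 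Invoking that theory yields $\omega_L \in A_\infty(\sigma) \Leftrightarrow \omega_{\widetilde{L}} \in A_\infty(\sigma)$.

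With this equivalence in hand, Theorem \ref{thm:main} applied to $\widetilde{L}$ (which verifies \ref{H1}--\ref{H2}) gives the equivalence of \ref{1-corol-osc}, \ref{2-corol-osc}, and \ref{3-corol-osc} for $\widetilde{L}$, hence by the perturbation transfer for $L$. The step I expect to require the most care is the invocation of the Carleson perturbation result in the generality of uniform domains with Ahlfors regular boundary (as opposed to chord-arc domains, where the perturbation theory is classical); if only the chord-arc version is directly available, one can still proceed by first applying Theorem \ref{thm:main} to $\widetilde{L}$ to conclude that $\omega_{\widetilde{L}} \in A_\infty(\sigma)$ forces $\Omega$ to be chord-arc, and then running the perturbation argument on the resulting chord-arc domain, thereby avoiding a circular dependence.
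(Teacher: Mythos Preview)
Your proposal is correct and follows essentially the same route as the paper: define $\widetilde{\mathcal{A}}$ by mollifying $\mathcal{A}$ at Whitney scale, verify that $\widetilde{\mathcal{A}}$ satisfies \ref{H1}--\ref{H2} via the gradient bound $|\nabla\widetilde{\mathcal{A}}|\,\delta\lesssim\osc(\mathcal{A},\cdot)$, transfer the $A_\infty$ property between $L$ and $\widetilde{L}$ using the Carleson perturbation theorem of Cavero--Hofmann--Martell--Toro (which the paper cites as \cite[Theorem~1.3]{CHMT} and which is indeed valid in uniform domains with Ahlfors regular boundary, so your closing caveat is unnecessary), and then invoke Theorem~\ref{thm:main} for $\widetilde{\mathcal{A}}$. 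The only cosmetic difference is that the paper states the disagreement estimate in the slightly stronger form $\sup_{Y\in B(X,\delta(X)/4)}|\mathcal{A}(Y)-\widetilde{\mathcal{A}}(Y)|\le\osc(\mathcal{A},X)$, which is what \cite{CHMT} literally requires, but this follows from your pointwise bound by the same geometric reasoning.
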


\begin{proof}
Let $\varphi$ be a non-negative radial, smooth bump function supported in the unit ball, such that $\int_{\RR^n} \varphi \,dX = 1$. We define for $X\in\Omega$ and $t\in (0, \delta(X))$
	\[ P_t \, \wcalA(X) := \varphi_t * \wcalA(X) = \iint_{\RR^n} \frac{1}{t^n} \,\varphi\left( \frac{X-Y}{t} \right) \wcalA(Y) \,dY,  \]
	and write for $X\in\Omega$
	\begin{equation}\label{eq:oscdcp}
		\wcalA(X) = P_{\frac{\delta(X)}{4}} \, \wcalA(X) + \left( \Id - P_{\frac{\delta(X)}{4}} \right) \wcalA(X) =: \widetilde{\wcalA}(X) + \left( \wcalA(X) - \widetilde{\wcalA}(X) \right).
	\end{equation} 
It is easy to see that $\widetilde{\wcalA}$ is uniformly elliptic with the same constants as $\wcalA$ and also that for every $X\in\Omega$
	\begin{equation}\label{eq:osc}
		|\nabla \widetilde{\wcalA}(X)| \leq C\, \frac{\osc(\wcalA, X)}{\delta(X)} \qquad\mbox{and}\qquad \sup_{Y\in B(X, \delta(X)/4)} |\wcalA(Y) - \widetilde{\wcalA}(Y)| \leq \osc(\wcalA, X).
	\end{equation}
	Note that under assumption \eqref{eq:oscCM}, the second estimate in \eqref{eq:osc} allows us to invoke \cite[Theorem 1.3]{CHMT} to obtain that $\omega_{L_\wcalA} \in A_\infty(\sigma)$  if and only if $\omega_{L_{\widetilde{\wcalA}}} \in A_\infty(\sigma)$. On the other hand, the first estimate in \eqref{eq:osc} readily implies that $\widetilde{\wcalA}$ satisfies  \ref{H1} and \ref{H2}. Hence,  Theorem \ref{thm:main} applied to $\widetilde{\wcalA}$ gives at once the desired equivalences. 
	We remark that the direction $(c)\implies (a)$ was also proved earlier in \cite[Theorem 2.4]{Rios}.
\end{proof}

Corollary \ref{cOsc} is sharp in terms of the class of operators satisfying \eqref{eq:oscCM}. We recall the following examples that illustrate this fact.

\begin{theorem}{\cite[Theorem 4.11]{FKP}}\label{thm:oscop}
	Suppose $\alpha$ is a non-negative function defined on $\RR^2_+$ satisfying the doubling condition: $\alpha(X) \leq C\alpha(X_0)$ for any $X_0\in \RR^2_+$ and $X\in B(X_0, \delta(X_0)/2)$. Assume that $\alpha^2(x,t) dx \, dt/t$ is not a Carleson measure in the unit square. Then there exists an elliptic operator $L= - \divg(\mathcal{A}(\cdot)\nabla)$ on $\RR^2_+$, such that
	\begin{enumerate}[label=\textup{(\arabic*)}, itemsep=0.2cm]
		\item For any interval $I \subset \RR$ and $T(I)=I\times [0,\ell(I)]$,
		\begin{equation}\label{eq:oscbd}
			\frac{1}{|I|} \iint_{T(I)} \frac{\osc^2(\mathcal{A}(x,t))}{t} dx \, dt \leq C\left[ \frac{1}{|I|} \iint_{T(2I)} \frac{\alpha^2(x,t)}{t} dx \, dt + 1 \right]; 
		\end{equation} 
		\item The elliptic measure $\omega_{L}$ is not of class $A_\infty(dx)$ on the unit interval $[0,1]$.
	\end{enumerate}
\end{theorem}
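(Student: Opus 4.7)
The plan is to construct the matrix $\mathcal{A}$ from $\alpha$ via a Modica--Mortola type scheme in the spirit of \cite{MM}, organized by a stopping-time argument that extracts from the failure of the Carleson condition on $\alpha^{2}\,dx\,dt/t$ a coherent geometric distortion at every scale. The local oscillation of $\mathcal{A}$ will be controlled pointwise by $\alpha$ plus a harmless constant, giving (1) essentially by construction, while the accumulated distortion will be used to produce a singular set for $\omega_L$, giving (2).

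The first step is to fix a dyadic decomposition of the unit square and to exploit the hypothesis together with the doubling of $\alpha$ to select a sequence of dyadic intervals $\{I_k\}$ in $[0,1]$ for which
\[
\frac{1}{|I_k|}\iint_{T(I_k)}\frac{\alpha^{2}(x,t)}{t}\,dx\,dt\,\to\,\infty .
\]
On each Carleson region $T(I_k)$ a Calder\'on--Zygmund stopping-time selection produces disjoint maximal subregions where the localized average is already large while the averages above them are bounded; iterating this selection inside each subregion yields a dense hierarchy of stopping generations of Carleson boxes carrying most of the $\alpha^{2}\,dx\,dt/t$ mass.

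The second step is to define $\mathcal{A}$ Whitney box by Whitney box. Using the doubling of $\alpha$ one associates to each Whitney box $Q$ a scalar $\alpha_{Q}\approx \alpha(X_{Q})$ and sets $\mathcal{A}$ there equal to a rotation (or shear) of the identity by an angle comparable to $\alpha_{Q}$; the direction is chosen to be constant on each stopping generation and is ``flipped'' in a coherent way between successive generations so that, viewed at the bottom of a tower of Carleson regions, the total twist accumulates to a large amount. After a standard mollification adapted to the Whitney decomposition, the pointwise bound $\osc(\mathcal{A},X)\lesssim \alpha(X)+\mathbf{1}_{T([0,2])}(X)$ holds; integrating against $dx\,dt/t$ over $T(I)$, together with the doubling of $\alpha$ (which is what allows one to pass from $T(I)$ on the left to $T(2I)$ on the right), yields (1), the $+1$ on the right absorbing the contribution of the transition layer and of the unperturbed background outside $T(2I)$.

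The main obstacle is (2). The strategy is to convert the divergent Carleson integral into a divergent boundary distortion of the $L$-harmonic measure. Concretely, one shows that along the stopping-time hierarchy constructed above, each generation contributes a definite multiplicative drift of the $L$-harmonic measure toward a nested Cantor-like subset $E\subset[0,1]$: for each $k$ one builds $E_{k}\subset I_{k}$ with $|E_{k}|/|I_{k}|\to 0$ yet $\omega_{L}^{X_{k}}(E_{k})/\omega_{L}^{X_{k}}(I_{k})\ge c>0$, where $X_{k}$ is a corkscrew point above $I_{k}$; such a sequence of pairs trivially violates every reverse H\"older inequality \eqref{eq1.wRH}. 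The delicate point, and the heart of the argument, is the quantitative comparison between the cumulative twist of $\mathcal{A}$ on $T(I_{k})$ (measured by the divergent integral above) and the exponential concentration of $L$-harmonic measure on $E_{k}$; this is done by pulling $L$-solutions back to harmonic functions on a Lipschitz graph domain whose graph Lipschitz constant is controlled by $\alpha$ but whose logarithm winds by an unbounded amount through the stopping hierarchy, a mechanism that is precisely the converse of the one used by Dahlberg--Kenig--Pipher to derive $A_{\infty}$ from the \emph{finite} Carleson condition in \cite{KP}.
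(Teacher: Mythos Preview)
The paper does not prove this statement; it is quoted verbatim from \cite[Theorem 4.11]{FKP} and used only to illustrate optimality. There is therefore no proof in the paper to compare your proposal against. The only information the paper offers about the argument is the remark immediately following the statement: the examples in \cite{FKP} are built via quasi-conformal maps in $\RR^{2}$, with the higher-dimensional versions obtained by adding a Laplacian in the remaining tangential variables as in \cite{CFK}.

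Your outline is in the right spirit---a Modica--Mortola/Caffarelli--Fabes--Kenig style construction in which the coefficient oscillation is dictated by $\alpha$ Whitney-box by Whitney-box, and the divergent Carleson integral is converted into singular concentration of elliptic measure---but it diverges from the actual \cite{FKP} mechanism in an important respect. In \cite{FKP} one does not build $\mathcal{A}$ by rotating the identity and then analyze $L$-solutions directly; rather one constructs a quasi-conformal map $f$ of $\RR^{2}_{+}$ whose dilatation is governed by $\alpha$, and takes $L$ to be the pull-back of the Laplacian under $f$. The oscillation bound \eqref{eq:oscbd} then comes from the pointwise control of the Beltrami coefficient by $\alpha$, and the failure of $A_{\infty}$ is read off from the boundary behavior of $f$ (the image of $[0,1]$ under $f$ has a singular harmonic measure with respect to arclength). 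Your ``pull back to harmonic functions on a Lipschitz graph domain'' is close in spirit but not the device actually used; the quasi-conformal framework is what makes the link between the divergent integral and the singular boundary distortion both precise and short. As written, your Step~2 for part~(2)---the ``quantitative comparison between the cumulative twist and the exponential concentration''---is the entire content of the theorem and is asserted rather than argued; absent the quasi-conformal change of variables (or an equivalent explicit computation), this step is a genuine gap.
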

\begin{remarks}
	The examples above are constructed using quasi-conformal maps in $\RR^2$. In \cite{FKP} the authors show the estimate \eqref{eq:oscbd} holds when $\osc (\mathcal{A,}(x,t))$ is replaced by the oscillation of elliptic matrix $\mathcal{A}(X)$ minus the identity matrix, i.e., $a(X): = \sup_{Y\in B(X, \delta(X)/2 )} |\mathcal{A}(Y) - \Id|$. It is easy to see that for those examples \eqref{eq:oscbd} follows. As in \cite[Theorem 3]{CFK}, one can extend the 2 dimensional examples to $\RR^n_+$ with $n\geq 3$ by using the Laplacian operator in the remaining tangential directions.

\end{remarks}

\addtocontents{toc}{\protect\vspace{10pt}}

\end{document}